\documentclass[a4paper,12pt]{amsart}
\usepackage{amsmath,amssymb,amsfonts,amsthm,exscale,calc}
\usepackage[latin1]{inputenc}
\usepackage{latexsym,textcomp}
\usepackage{graphicx,epsf}
\usepackage{dsfont}
\usepackage[german, english]{babel}

\usepackage[dvips, bookmarks, colorlinks=false, breaklinks=true]{hyperref}

\selectlanguage{german}


\newtheorem{lemma}{Lemma}[section]
\newtheorem{theorem}[lemma]{Theorem}

\newtheorem{proposition}[lemma]{Proposition}
\newtheorem{corollary}[lemma]{Corollary}

\newtheorem{problem}{Problem}

\theoremstyle{definition}
\newtheorem{definition}[lemma]{Definition}
\newtheorem{example}[lemma]{Example}

\theoremstyle{remark}

\numberwithin{equation}{section}
\newcommand{\comment}[1]{}

\newcommand{\R}{{\mathbb R}}
\newcommand{\C}{{\mathbb C}}

\newcommand{\N}{{\mathbb N}}

\newcommand{\A}{{\mathcal{A}}}
\newcommand{\LL}{{\mathcal{L}}}

\newcommand{\Lip}{C_{\mbox{\scriptsize{Lip}}}}
\newcommand{\diam}{{\mathrm {diam}\,}}

\newcommand{\cpt}{{canonically compactifiable}}

\newcommand{\al}{{\alpha}}

\newcommand{\de}{{\delta}}
\newcommand{\eps}{{\varepsilon}}

\newcommand{\ph}{{\varphi}}

\newcommand{\gm}{{\gamma}}
\newcommand{\si}{{\sigma}}
\newcommand{\hrt}{{\heartsuit}}

\newcommand{\as}[1]{\left\langle #1\right\rangle}

\newcommand{\aV}[1]{\left\Vert #1\right\Vert}
\newcommand{\ov}[1]{\overline{ #1}}
\newcommand{\ow}[1]{\widetilde{ #1}}
\newcommand{\oh}[1]{\widehat{ #1}}

\newcommand{\Hm}[1]{\leavevmode{\marginpar{\tiny%
$\hbox to 0mm{\hspace*{-0.5mm}$\leftarrow$\hss}%
\vcenter{\vrule depth 0.1mm height 0.1mm width \the\marginparwidth}%
\hbox to 0mm{\hss$\rightarrow$\hspace*{-0.5mm}}$\\\relax\raggedright
#1}}}


\begin{document}

\title[Graphs of finite measure]{Graphs of finite measure}

\author[Georgakopoulos]{Agelos Georgakopoulos} \address{Mathematics Institute, Zeeman Building \\ University of Warwick \\ Coventry CV4 7AL} \email{a.georgakopoulos@warwick.ac.uk}

\author[Haeseler]{Sebastian Haeseler} \address{Mathematisches Institut \\ Friedrich Schiller Universit{\"a}t Jena \\ 07743 Jena, Germany } \email{sebastian.haeseler@uni-jena.de}

\author[Keller]{Matthias Keller} 
\address{Einstein Institute of Mathematics, The Hebrew University of Jerusalem, Jerusalem 91904, Israel}\email{mkeller@ma.huji.ac.il}

\author[Lenz]{Daniel Lenz} \address{Mathematisches Institut \\Friedrich Schiller Universit{\"a}t Jena \\07743 Jena, Germany } \email{daniel.lenz@uni-jena.de}

\author[Wojciechowski]{Rados{\l}aw K. Wojciechowski}\address{York College of the City University of New York \\ Jamaica, NY 11451 \\ USA  } \email{rwojciechowski@gc.cuny.edu}

\date{\today}

\begin{abstract}
We consider  weighted graphs  with an infinite set of vertices. We show that boundedness of all functions of finite energy can be seen as  a notion of  `relative compactness'  for such graphs  and study sufficient and necessary conditions for  this property in terms of various  metrics. We then equip graphs satisfying  this property  with a finite measure and investigate   the associated Laplacian and its semigroup. In this context, our results include the trace class property for the semigroup, uniqueness and existence of solutions to  the Dirichlet problem with boundary arising from the natural compactification,  an explicit description of the domain of the Dirichlet Laplacian,   convergence of the heat semigroup for large times as well as stochastic incompleteness and transience of the corresponding random walk in continuous time.  \end{abstract}

\maketitle

\tableofcontents


\section{Introduction}

The study of the spectral and potential theory  of infinite graphs has been a flourishing subject in recent years. In particular, the case of infinite graphs with possibly unbounded vertex degree has received substantial attention, e.g.,  \cite{BJH,CdVTHT, Geo2,Geo,GK,HKLW, Hua1, HKMW, Jor, JP,JP2, JL,KL1,KL2, KLVW, LyonsBook, TH, Woj1, Woj2} and references therein.   In this paper, we want to contribute to this topic by having a closer look at a remarkable subclass of infinite graphs, namely, those which are `relatively compact.'

Indeed, the  subsequent considerations can be divided into two parts. In the first part, we present a class of graphs which have many claims to be considered as relatively compact in a natural sense.  In the second part, we then study spectral, probabilistic and semigroup aspects of these graphs.

Let us discuss this in somewhat more detail and with the notation introduced and discussed in later sections. In fact,  in order to provide  here  a concise overview,  we will  refrain  from giving precise definitions but  just indicate at which places they can be found in the paper.

We consider a countably infinite set $X$ together with a graph structure  $(b,c)$. Here, $b:X\times X\longrightarrow [0,\infty)$ is symmetric, vanishes on the diagonal and satisfies  the summability condition
$$\sum_{y\in X} b(x,y) <\infty$$
for all $x\in X$.  The function $c:X \to [0,\infty)$ is often assumed to be zero in the literature. Here, we mostly do not need $c$ to vanish but can rather deal with summable $c$ or even arbitrary $c$.  A graph $(b,c)$ over $X$ naturally gives rise to a form $\ow Q$ on the domain $\ow D$ of functions of finite energy and the  associated operator $\LL$ (Section~\ref{section-setup}).

\medskip

Our aim in  the first part of the paper  is to capture a situation in which the graph  $(b,c)$ over $X$  can be considered to be relatively compact. Various (classes of)  metrics have been considered on graphs (Section \ref{section-analysis}).  First, there is a natural length structure, using the inverse of the edge weights $b$,  giving rise to the metric $d$.  This metric, and its completion, has found various applications, some of which are described in \cite{Geo}.  It has also recently  appeared  in the study of spectral properties, e.g., \cite{CdVTHT,HKLW,TH}. Another relevant metric is $\varrho$. It is  the square root of the well-known free effective resistance metric $r$ (as shown below in Theorem~\ref{t:rhofree}). The free effective resistance plays an important role in the investigation of networks, see, e.g., the remarkable study of Kigami \cite{Kig} or the influential textbook of Lyons/Peres  \cite{LyonsBook} and recent work of Jorgensen/Pearse  \cite{JP2}.  Here, we focus on $\varrho$ rather than $r$, as it is very close in spirit to concepts found in  both the geometry of Dirichlet spaces and  non-commutative geometry. Indeed, it has already been studied in the context of graphs and non-commutative geometry by Davies in \cite{Davies2} and, very recently, it has appeared in  a similar spirit in \cite{HKT2}.
 Finally, there are the so-called intrinsic metrics $\sigma$.
They have been brought forward and  first studied systematically in \cite{FLW} (see \cite{Uem} for earlier appearances as well). Subsequently, they have  become a main tool in certain geometric and spectral theoretic considerations, see, e.g., \cite{BHK, BKW,Fol,Fol2,GHM,HKW,HuK,MU,MUW}.   They are always defined with respect to some
measure $m$ on $X$. In our situation, this measure should be finite as we want to deal with a relatively compact case.

Given this situation, each of  the following
statements seems to be a good candidate to express the relative compactness of the graph $(b,c) $ over $X$:

\begin{itemize}

\item[$(A)$] $X$ is totally bounded with respect to $d$.
\item[$(B)$]  $X$ is totally bounded with respect to $\varrho$.
\item[$(C)$]  $X$ is totally bounded with respect to any  metric $\sigma $ which is intrinsic with respect to a finite measure.
\end{itemize}
While these are certainly sensible  requirements, they have the disadvantage that $(A)$ and $(B)$ are rather strong and $(C)$ is hard to get a grip on.

On the other hand, letting $\ow D$ denote the functions of finite energy, there is  the - at first seemingly - unrelated notion:

\begin{itemize}
\item[$(D)$]  $\ow D$ consists only of bounded functions.
\end{itemize}
In this case, there  naturally appears a compactification  $K=X\cup\partial X$  of $X$ such that the closure of $\ow D$ can be realized canonically as an algebra of continuous functions on $K$ (Subsection \ref{secBoCo}). This compactification $K$ is obtained by $C^\ast$-algebra techniques and turns out to coincide with the Royden compactification $R$ (Theorem~\ref{KR}).
Now, one of the main results of the first part of the paper shows that (when $c \equiv 0$) the implications
$$ (A)\Longrightarrow (B)\Longrightarrow (C) \Longrightarrow (D)$$
hold (Theorem \ref{implication-D}).
Moreover, examples show that the implications are essentially strict. More precisely, as shown by examples in   Section~\ref{counter},   neither does $(B)$ imply $(A)$ nor does $(C)$ imply  $(B)$.
As it is, therefore,
both the most general and also a completely canonical way of phrasing the relative  compactness of $X$, we single out graphs satisfying $(D)$ as the \textit{canonically compactifiable} ones.

 Our corresponding considerations in Sections \ref{section-analysis} and \ref{section-topology} also show that there are  unique continuous maps extending the identity on $X$ giving rise to the following chain of maps:
$$ \ov{X}^d \stackrel{\iota}{\longrightarrow} \ov{X}^\varrho \stackrel{\kappa}{\longrightarrow} K=R \stackrel{\lambda}{\longrightarrow} \ov{X}^\sigma.$$
This (and more) is summarized in Theorem \ref{implication-D}.
While there is no reason why these maps should be homeomorphisms in general, there is one prominent situation in which they are, namely, if $c\equiv0$ and $1/b$ is summable, see Section~\ref{key-example}.

Along the way, we also give a characterization of $\varrho$ via intrinsic metrics  in the case $c\equiv0$ (Theorem \ref{characterization-varrho-via-sigma})  as well as various necessary and/or sufficient conditions for validity of the conditions $(A)$, $(B)$, $(C)$ and $(D)$ (Theorem \ref{diam-d-finite-implies-C}, Theorem \ref{characterization-cpt}, Corollary  \ref{sufficient-condition-cpt-d} and Corollary \ref{sufficient-condition-cpt-intrinsic}).

The preceding  considerations are the content of Sections~\ref{section-analysis}  and \ref{section-topology}.   They give our treatment of what it means for a graph to be relatively compact.   This is the first part of the paper.

\medskip

The second part then deals with spectral, probabilistic and semigroup  features of graphs satisfying $(D)$.

\smallskip

More specifically, in Section~\ref{Section-Operator}, we  first study  selfadjoint Laplacians arising when  graphs satisfying $(D)$ are equipped with a finite measure. These operators are shown to have purely discrete spectrum. In fact, their resolvents and semigroups are shown to even be trace class (Section~\ref{Discrete}). We then study the Dirichlet problem on graphs satisfying $(D)$ only, with no assumptions on the measure.  Here, we show  (in Section~\ref{Boundary}) that, for any continuous function $\varphi$ on $ \partial X$, there exists a unique $f\in \ow D$ satisfying
$$\LL  f = 0\;\:\mbox{on $X$ and }\: f|_{\partial X} = \varphi.$$

\smallskip

In  Section~\ref{Section-Geometry}, we then study convergence of the heat kernels for times tending to $\infty$.

\smallskip

Finally, in Section~\ref{Section-Probability}, we recall a result of \cite{Schm} giving  that for graphs $(b,c)$ with $c\equiv0$    over $X$ equipped with a finite measure stochastic completeness, recurrence and the equality $Q^{(D)} = Q^{(N)}$ are all equivalent. We then use this to show that none of these properties hold  if the graph additionally satisfies $(D)$.

\medskip

We are not aware of any work related to ours in the spirit of studying infinite, relatively compact graphs.  However, of course,  in terms of methods and intermediate results
there are certainly  points of contact to various works. This is discussed in the corresponding places in the paper. Here, we already want to mention the following:

The idea of embedding a graph in a compact space $K$ via $C^\ast$-algebras (as done in our considerations below) has quite some history. It goes back at least to the work of Yamasaki \cite{Yam} and Kayano and Yamasaki \cite{KY}  (see the discussion in  \cite{Soa} as well). However, the focus of these works is completely different from ours. In these works, the corresponding constructions are carried out for arbitrary graphs (satisfying the assumption  $c\equiv0$), whereas we use it to single out the special class of graphs where $\ow D$ consists of bounded functions. For a recent treatment  of how to  embed  a general Dirichlet space into a locally  compact one, we refer the reader to \cite{HKT}.

Also, there is work of Carlson on the Dirichlet Problem, both for discrete graphs and for  selfadjoint continuum Laplacians on  metric graphs satisfying certain compactness type properties, which is somewhat similar to our considerations on the Dirichlet Problem \cite{Car,CarAft} (see corresponding section for more detailed discussion).

\medskip

\textbf{Acknowledgements.} {\small D.L. and  M.K. gratefully acknowledge partial support from the German Research  Foundation (DFG).
D.L. also takes the opportunity to thank Marcel Schmidt and Peter Stollmann for most useful discussions on a wide array of topics related to the present work and A. Teplyaev for pointing out some  references. A substantial  part of this work was done while D.L. was visiting   the Department of Mathematics of Universit\'{e} Lyon 1 and the Department of Mathematics of Geneva University  and he expresses his warmest thanks to these institutions. M.K. furthermore acknowledges the financial support of the ESF for the research stay at TU Graz where part of this work was done, the Golda Meir Fellowship, the Israel Science Foundation (grant no. 1105/10 and  no. 225/10).  R.W. would like to thank J\'ozef Dodziuk for insightful discussions and acknowledges financial support provided by PSC-CUNY Awards, jointly funded by The Professional Staff Congress and The City University of New York.  Furthermore, M.K., D.L., and R.W. would like to thank the organizers of the LMS EPSRC Durham Symposium on ``Graph Theory and Interactions'' and the generous hospitality of the Department of Mathematical Sciences at Durham University where the (almost) final parts of this  work were carried out.}


\section{The basic  set up: Weighted graphs, forms and Laplacians}\label{section-setup}
In this section, we introduce the basic set up for our considerations.  This includes a discussion of weighted graphs as well as of the associated forms and operators.


\subsection{Weighted graphs} \label{secWG}
Let $X$ be an infinitely countable set.  
A \emph{weighted graph} over $X$ is a pair $(b,c)$ such that $b:X\times X\longrightarrow[0,\infty)$ is symmetric, has zero diagonal, and satisfies
\begin{align*}
    \sum_{y\in X}b(x,y)<\infty
\end{align*}
for all $x\in X$ and $c:X\longrightarrow[0,\infty)$ is arbitrary. We call $X$ the \emph{vertex set}, $b$ the \emph{edge weight} and $c$ the \emph{killing term} or \emph{potential}. We say that $x,y\in X$ are \emph{neighbors} or \emph{connected} by an edge of weight $b(x,y)$, if $b(x,y)>0$.  
If the number of neighbors of each vertex is finite, then we call $(b,c)$ or $b$ \emph{locally finite}. A finite sequence $(x_{0},\ldots,x_{n})$ of pairwise distinct vertices such that $b(x_{i-1},x_{i})>0$ for $i=1,\ldots,n$ is called a \emph{path} from $x_0$ to $x_n$.
We say that $(b,c)$ or $b$ is \emph{connected} if, for every two distinct vertices $x,y\in X$, there is a path from $x$ to $y$.

 All of our main results will assume that $(b,c)$ is connected.  This is mostly for convenience. By extending definitions properly and restricting attention to connected components, we could deal with the more general situation.

Often we will not only be given an infinite countable set $X$ together with a  weighted graph $(b, c)$ but also a function  $m:X\longrightarrow[0,\infty)$.  This function gives naturally rise to a measure on $X$ via $$m(A) := \sum_{x\in A} m(x)$$
for $A\subseteq X$  and we will not distinguish it from this measure.
Then, $(X,m)$ can be considered as a measure space and we will speak about graphs $(b,c)$ over $(X,m)$. For our applications to spectral theory we will then often  assume that $m$ does not vanish on any point. Moreover, the total mass
$$m(X) = \sum_{x\in X} m(x)$$
of $X$
will appear prominently in various parts of our discussion.


\subsection{Formal Laplacians}\label{Subsection-Formal}
Let $C(X)$ be the space of complex valued functions on $X$ and let $C_{c}(X)$ be the subspace of functions with finite support.  Let $\ell^{2}(X,m)$ be the complex Hilbert space of square summable functions equipped with the scalar product
\begin{align*}
    \as{u,v}_m=\sum_{x\in X}\overline{u(x)}v(x) m(x).
\end{align*}
We may  drop the subscript $m$ and also write $\as{u,v}=\sum_{X}\overline{u}v m$ and denote the corresponding norm by $\aV{\cdot}$.  We denote the space of bounded functions on $X$  by $\ell^{\infty}(X)$ and denote by $\aV{\cdot}_{\infty}$ the supremum norm.

Given a weighted graph $(b,c)$ over  $X$ we introduce the associated  \emph{formal Laplacian} $\LL$  acting on
\begin{align*}
\ow F=\{f\in C(X) : \sum_{y\in X}b(x,y)|f(y)|<\infty\mbox{ for all }x\in X\}
\end{align*}
as
\begin{align*}
\LL f (x) = \sum_{y\in X}b(x,y)(f(x)-f(y))+ c(x ) f(x).
\end{align*}
If there is, furthermore, a positive measure $m$ on $X$ at our disposal, we will also consider the operator $\ow L = \frac{1}{m}  \LL$ acting as
\begin{align*}
\ow L f(x)=\frac{1}{m(x)}\sum_{y\in X}b(x,y)(f(x)-f(y))+\frac{c(x)}{m(x)}f(x).
\end{align*}
We can think of  $\ow L$ and $\LL$ as  discrete analogues of the Laplace Beltrami operator on a Riemannian manifold plus a potential. We will be interested in properties of  $\ow L$  and of its selfadjoint realizations. Note that $f\in \ow F$ satisfies  $\LL f = 0$ or $\LL f\leq 0$ or $\LL f \geq 0 $ if and only if it satisfies the corresponding statements with $\LL$ replaced by $\ow L$ for any positive measure $m$.


\subsection{Quadratic forms}\label{Quadratic}
Given a weighted graph $(b,c)$ over  $X$ we define the \emph{generalized form} $\ow Q: C(X)\longrightarrow[0,\infty]$ by
\begin{align*}
\ow Q(f):=\frac{1}{2}\sum_{x,y\in X}b(x,y)|f(x)-f(y)|^{2}+\sum_{x\in X}c(x)|f(x)|^{2}
\end{align*}
and define the \emph{generalized form domain} by
\begin{align*}
    \ow D:=\{f\in C(X) :  \ow Q(f)<\infty\}.
\end{align*}
We think of $\ow Q (f)$ as the energy of the function $f$ and, accordingly,
functions in $\ow D$ are said to have \emph{finite energy}.
Clearly,  $C_{c}(X)\subseteq \ow D$ as $b(x,\cdot)$ is summable for every $x\in X$.

Since $\ow Q^{1/2}$ is a seminorm and satisfies the parallelogram identity
\begin{align*}
\ow Q(f+g)+\ow Q(f-g)=2(\ow Q(f)+\ow Q(g)),\quad f,g\in\ow D,
\end{align*}
it gives, by polarization, a semi scalar product on $\ow D$ via
\begin{align*}
 \ow Q(f,g)=\frac{1}{2}\sum_{x,y\in X} b(x,y)\overline{(f(x)-f(y))}(g(x)-g(y))+\sum_{x\in X}c(x)\overline{f(x)}g(x).
\end{align*}
In the case when $c\not\equiv 0$ and $b$ is connected, the form $\ow Q$ defines a scalar product.

Obviously, $\ow Q$ is compatible with normal contractions in the sense that $\ow Q(C f)\leq \ow Q (f)$ holds for any $f\in C(X)$ and any normal contraction $C : \C \longrightarrow \C$. (Here,  $C : \C \longrightarrow \C $ is a \textit{normal contraction} if both $|C (p) |\leq |p|$ and $|C(p) - C(q)| \leq |p - q|$ hold for all $p,q\in \C$. )

We now assume that we are additionally given a measure $m$ on $X$ of full support. Then, suitable restrictions of $\ow Q$ are in correspondence with certain selfadjoint restrictions of $\ow L$ on the Hilbert space $\ell^2 (X,m)$. This is discussed next:

Let  $Q$ be a closed non-negative form on $\ell^2 (X,m)$ whose domain $D=D(Q)$ satisfies $C_{c}(X)\subseteq D\subseteq \ow D\cap \ell^{2}(X,m)$ and which satisfies
\begin{align*}
Q(u,v)=\ow Q(u,v),
\end{align*}
for $u,v\in D$. For such a form we define $Q(u):=Q(u,u)$ for $u\in D$ and $Q(u):=\infty$ for $u\not\in D$.

 In \cite{HK,HKLW} an `integration by parts'  was shown that allows one to pair functions in $\ow F$ and $C_{c}(X)$ via $\ow Q$.
  More precisely,  for $f\in \ow F$ and $v\in C_{c}(X)$ it was shown that
\begin{align*}
  \frac{1}{2} \sum_{x,y\in X}  & b(x,y)\ov{(f(x)-f(y))}(v(x)-v(y))+\sum_{x\in X}c(x)\ov{f(x)}v(x)\\
&=\sum_{x\in X}\ov{f(x)}(\ow L v)(x)m(x) = \sum_{x\in X} \ov{(\ow L f)(x)} v(x)m(x),
\end{align*}
where all  sums converge absolutely. Moreover, it is shown there that  $\ow D$ is a subset of $\ow F$ and that for $f \in \ow D$ the preceding sums all agree with $\ow Q (f,v)$. Note that this immediately gives
\begin{align*}
 \frac{1}{2} \sum_{x,y\in X} & b(x,y)\ov{(f(x)-f(y))}(v(x)-v(y))+\sum_{x\in X}c(x)\ov{f(x)}v(x)\\
&=\sum_{x\in X}\ov{f(x)}(\LL v)(x) = \sum_{x\in X} \ov{(\LL  f)(x)} v(x),
\end{align*}
for $f\in \ow F$ and $v\in C_c (X)$.

By \cite[Proposition~3.3]{HKLW} we then have that the self adjoint operator $L$ associated to  a form $Q$ as above satisfies
\begin{align*}
Lu=\ow Lu
\end{align*}
for all $u\in D(L)$.


There are two natural examples, $Q^{(N)}$ and $Q^{(D)}$, which we refer to as the \emph{forms with Neumann} and \emph{Dirichlet boundary conditions}, respectively.
The form $Q^{(N)}$ has the domain
\begin{align*}
D(Q^{(N)})=\ow D\cap \ell^{2}(X,m)=\{u\in\ell^{2}(X,m) :  \ow Q(u)<\infty\}.
\end{align*}
The form $Q^{(D)}$ has the domain
\begin{align*}
D(Q^{(D)})=\overline{C_{c}(X)}^{\aV{\cdot}_{\ow Q}},
\end{align*}
where
\begin{align*}
\aV{u}_{\ow Q}:={(\ow Q(u)+\aV{u}^{2})}^{\frac{1}{2}}.
\end{align*}
We denote the corresponding self adjoint operators by $L^{(N)}$ and $L^{(D)}$.

 By construction, both $Q^{(N)}$ and $Q^{(D)}$ are Dirichlet forms as $\ow Q$ is  compatible with normal contractions.
Moreover, $Q^{(D)}$ is regular, i.e., $D(Q^{(D)})\cap C_{c}(X)$ is dense in $D(Q^{(D)})$ with respect to ${\aV{\cdot}}_{\ow Q}$ and in $C_{c}(X)$ with respect to $\aV{\cdot}_{\infty}$.

The following basic lemma shows that all forms in question are sandwiched by the forms with Dirichlet and Neumann boundary conditions.  Here, we write $Q_{1}\subseteq Q_{2}$ if $D(Q_{1})\subseteq D(Q_{2})$ and $Q_{1}(u)=Q_{2}(u)$ for $u\in D(Q_{1})$.

\begin{lemma} If $Q$ is a symmetric, closed quadratic form with domain $D$, then $Q^{(D)}\subseteq Q\subseteq Q^{(N)}$ if and only if $C_{c}(X)\subseteq D\subseteq\ow D\cap\ell^{2}(X,m)$ and $Q(u)=\ow Q(u)$ for all $u\in D$.
\end{lemma}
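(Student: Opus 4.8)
The plan is to prove the two implications separately, observing that one direction is essentially a transcription of the definition of the form ordering $\subseteq$, while the other rests on the \emph{closedness} of $Q$.

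For the forward implication I would assume $Q^{(D)}\subseteq Q\subseteq Q^{(N)}$. From $Q\subseteq Q^{(N)}$ the definition of $\subseteq$ yields at once $D\subseteq D(Q^{(N)})=\ow D\cap\ell^{2}(X,m)$, and since $Q^{(N)}$ is the restriction of $\ow Q$ to its domain, $Q(u)=Q^{(N)}(u)=\ow Q(u)$ for all $u\in D$. For the inclusion $C_{c}(X)\subseteq D$, I would note that $C_{c}(X)\subseteq\overline{C_{c}(X)}^{\aV{\cdot}_{\ow Q}}=D(Q^{(D)})$ and that $Q^{(D)}\subseteq Q$ gives $D(Q^{(D)})\subseteq D$, so $C_{c}(X)\subseteq D$. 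This settles the forward direction purely by unwinding definitions.

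For the reverse implication I would assume $C_{c}(X)\subseteq D\subseteq\ow D\cap\ell^{2}(X,m)$ and $Q=\ow Q$ on $D$. The inclusion $Q\subseteq Q^{(N)}$ is then immediate, since $D\subseteq\ow D\cap\ell^{2}(X,m)=D(Q^{(N)})$ and $Q(u)=\ow Q(u)=Q^{(N)}(u)$ for $u\in D$. The substance lies in proving $Q^{(D)}\subseteq Q$. The key observation is that, because $Q=\ow Q$ on $D$, the form norm $\aV{u}_{Q}=(Q(u)+\aV{u}^{2})^{1/2}$ coincides with $\aV{u}_{\ow Q}$ on $C_{c}(X)\subseteq D$; indeed, for $u_{n},u_{m}\in C_{c}(X)$ the difference lies in $C_{c}(X)\subseteq D$, so $\aV{u_{n}-u_{m}}_{Q}=\aV{u_{n}-u_{m}}_{\ow Q}$. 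Given $u\in D(Q^{(D)})=\overline{C_{c}(X)}^{\aV{\cdot}_{\ow Q}}$, I would choose $u_{n}\in C_{c}(X)$ with $\aV{u_{n}-u}_{\ow Q}\to0$; then $(u_{n})$ is $\aV{\cdot}_{Q}$-Cauchy in $D$, and since $Q$ is closed, $D$ is complete for $\aV{\cdot}_{Q}$, so $(u_{n})$ converges in $\aV{\cdot}_{Q}$ to some $\tilde u\in D$. As $\aV{\cdot}_{Q}$-convergence forces $\ell^{2}$-convergence and $u_{n}\to u$ in $\ell^{2}$ as well, I conclude $\tilde u=u$, whence $u\in D$, and passing to the limit along the sequence gives $Q(u)=\lim_{n}Q(u_{n})=\lim_{n}\ow Q(u_{n})=Q^{(D)}(u)$.

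The only nontrivial step, and the one deserving care, is this last inclusion $Q^{(D)}\subseteq Q$: it is precisely where closedness of $Q$ enters, guaranteeing that the abstract $\ow Q$-completion of $C_{c}(X)$ is realized inside $D$ and that the values of $Q$ and $\ow Q$ agree on it. Everything else reduces to inserting the definitions of $Q^{(N)}$, $Q^{(D)}$, and of the ordering $\subseteq$.
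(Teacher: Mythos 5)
Your proof is correct and follows the same route as the paper: the forward direction by unwinding the definitions of $Q^{(D)}$, $Q^{(N)}$ and the ordering $\subseteq$, and the reverse direction by using closedness of $Q$ to realize $D(Q^{(D)})=\overline{C_{c}(X)}^{\aV{\cdot}_{\ow Q}}$ inside $D$. The paper states this last step in one line (``$D(Q^{(D)})\subseteq D$ since $Q$ is closed and $C_{c}(X)\subseteq D$''); your argument is simply that same step with the Cauchy-sequence details written out.
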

\begin{proof} The `only if' part of the statement  is clear since $C_{c}(X)\subseteq D(Q^{(D)})$, $D(Q^{(N)})=\ow D\cap\ell^{2}(X,m)$ and $Q^{(N)}(u)=\ow Q(u)$ for $u\in D(Q^{(N)})$. For the `if' part of the statement  notice that $D(Q^{(D)})\subseteq D$ since $Q$ is closed and $C_c(X)\subseteq D$. Moreover,  $D\subseteq\ow D\cap\ell^{2}(X,m)=D(Q^{(N)})$ and $Q^{(D)}(u)=\ow Q(u)=Q(u)$ for $u\in D(Q^{(D)})$.
\end{proof}



\textbf{Remark.}  Let us shortly comment on the role of $c$ in our considerations. In  the study of graphs, one usually sets $c\equiv0$, i.e., one  considers just a function  $b$ as defined above on a countable set $X$. Here, our point of view is different. We study regular Dirichlet forms on the measure space $(X,m)$. These forms are naturally in one-to-one correspondence with graphs $(b,c)$ as discussed in \cite{KL1}. From this point of view, the appearance of $c$ is quite natural. For the subsequent considerations, two regimes of $c\not \equiv 0$ can be distinguished. If $0 < \sum_{x\in X} c(x) < \infty$, one can add  a point $\infty$ to the graph and connect it to $x\in X$ with an edge of weight $c(x)$. In this way, one obtains a new graph without $c$ and with one additional point. The relevant set of functions then consists of those vanishing on this additional point. If $\sum_{x\in X} c(x) = \infty$, such a construction is not possible. A more detailed description of this is given in Appendix \ref{Reducing}.


\section{Analysis: The metrics $\varrho$ and $d$  and intrinsic metrics}\label{section-analysis}
The subsequent considerations deal with various aspects of metrics on graphs over discrete sets. The interplay between properties of the metric and the graph structure is the main focus.

\bigskip

We start by recalling some basic notation on metric spaces used throughout the section:
As usual, for a space $X$ with a metric $\delta$ we denote the completion of $X$ with respect to $\delta$ by $\ov{X}^\delta$. Obviously, $\delta$ can then be extended to a metric on $\ov{X}^\delta$ which will be denoted by the same letter.

Also, we define  the distance to  a set $A\subset \ov{X}^\delta$ by
$$\delta_A : X\longrightarrow [0,\infty), \: \; \delta_A (x) := \inf\{ \delta(x,a): a\in A\}.$$
The diameter of  $X$ with respect to $\delta$ is given by
$$\mbox{diam}_\delta (X) := \sup\{\delta (x,y) : x,y\in X\}.$$
If the metric $\delta$ is understood from the context, we only write $\mbox{diam} X $ for $\diam_\delta (X)$.

The set of Lipschitz continuous functions  on a metric space $X$ is denoted by $\Lip (X)$, where the dependence on the metric is suppressed.

As usual, a metric space is called \textit{totally bounded} if, for any $\varepsilon >0$, it can be covered by finitely many balls of radius $\varepsilon$. A metric space is compact if and only if it is totally bounded and complete.

In certain cases, it will also be convenient for us to be able to work with pseudometrics.
A pseudometric  on  a set $X$ is a function that satisfies all of the conditions of a metric, except that it may  be zero outside of the diagonal. Whenever $\delta$ is a pseudometric on a set $X$,
it will induce a metric $\widetilde{\delta}$  on the set $\widetilde{X}$ which is the quotient of $X$ arising from identifying points whose distance is zero. By a slight abuse of notation, we then denote the completion of $\widetilde{X}$ with respect to  $\widetilde{\delta}$ by $\ov{X}^\delta$ and refer to it as  the completion of $X$ with respect to $\delta$.


\subsection{The metrics $d$ and $\varrho$}\label{Metrics}
In this subsection, we discuss two well-known metrics on graphs.  The metric $d$ has featured prominently in the work of one of the authors \cite{Geo}  and has also played a role in the spectral theoretic considerations of \cite{CdVTHT,TH}.
The metric $\varrho$ seems to be less known. It has played some role in work of Davies \cite{Davies2}. It is a very natural object from both the point of view of non-commutative geometry and that of Dirichlet forms (see remark below for further discussion).
\bigskip

In this subsection, we consider a
weighted graph   $(b,c)$ over the discrete set $X$. Let us emphasize that our considerations do not require any measure $m$ on $X$.

We fix a vertex $o\in X$ and define a semi scalar product $\as{\cdot,\cdot}_{o}$ on $\ow D$ by
\begin{align*}
\as{f,g}_{o}=\ow Q(f,g)+\ov{f(o)}g(o),
\end{align*}
for $f,g\in \ow D$ and the corresponding semi norm
\begin{align*}
    \aV{f}_{o}:={\as{f,f}_{o}}^{\frac{1}{2}}=
    {(\ow Q(f)+|f(o)|^{2})}^{\frac{1}{2}}.
\end{align*}
If $b$ is connected, then $\as{\cdot,\cdot}_{o}$ defines a scalar product and $\aV{\cdot}_{o}$ defines a norm on $\ow D$. We call $\ow D$ equipped with $\as{\cdot,\cdot}_{o}$ the \emph{Yamasaki space} after \cite{Yam}, where a first study of this space was undertaken (see later work of Soardi \cite{Soa} for a systematic  investigation as well).

Assume that $b$ is connected. We define the  functions   $\varrho$ and $\varrho_{o}$ on $X\times X$ by
\begin{align*}
\varrho(x,y)&:=\sup\{|f(x)-f(y)| : f\in \ow D,\ow Q({f})\leq1\}, \\
\varrho_{o}(x,y)&:=\sup\{|f(x)-f(y)| : f\in \ow D,\aV{f}_{o}\leq1\},
\end{align*}
for $x,y\in X$.
It is not hard to see that both $\varrho$ and $\varrho_o$ are metrics. In fact, every defining feature of a metric is  clear except for the finiteness of the values. This finiteness can easily be inferred from connectedness (an explicit bound is given below in Lemma \ref{l:DLip}).
Clearly, all functions in $\ow D$ are Lipschitz continuous with respect to $\varrho$ and $\varrho_{o}$ (with Lipschitz constant bounded by  $\ow Q^{\frac{1}{2}} (f)$ and $\aV{f}_o$, respectively).  The metric $\varrho$  is essentially the smallest metric making all elements in $\ow D$ Lipschitz continuous with constant given by the respective values of the form. \\

\textbf{Remarks.}(a) Using the material presented below, it  can be  shown that in the definitions of $\varrho$ and $\varrho_0$ the supremum can  be replaced by a maximum. Indeed, the argument for $\varrho_o$ can be given as follows: Choose for $x,y\in X$ a sequence $f_n$ with $\|f_n\|_o \leq 1$ and  $\varrho_o (x,y) = \lim_{n\to \infty} |f_n (x) - f_n (y)|$. Then, by Lemma \ref{l:pointevaluation}, the sequence $(f_n (p))$ must be bounded for any $p\in X$. By a standard diagonal sequence argument, we can then assume that $(f_n)$ converges pointwise to a function $f$. This gives, in particular,
$$\varrho_o (x,y) = \lim_{n\to \infty}  |f_n (x) - f_n (y) | = |f(x) - f(y)|.$$
 Moreover, by Fatou's lemma and the pointwise convergence, we have $\ow Q (f) + |f(o)|^2 \leq \liminf_{n\to \infty} ( \ow Q (f_n) + |f_n(o)|^2 ) \leq 1$.
  Thus, the desired statement follows.

 Note that the crucial ingredient in the above reasoning is to show pointwise boundedness of the sequence $(f_n)$.  If $c\not \equiv 0$, essentially the same argument works for $\varrho$ (with the role of $o$ played by any point $p$ with $c(p) >0$). If $c\equiv0$, then each of the $f_n$ can be shifted by a constant. In particular, without loss of generality one can assume that $f_n (o) = 0$ for all $n$. Now, the argument can be concluded as in the case of $\varrho_o$ (compare Proposition \ref{p:rho} as well).

  (b) In Subection~\ref{seceffres}, we express $\varrho$ in the terminology of electrical networks: it turns out that its square equals the free effective resistance. In this sense, the metric $\varrho$ is essentially a variant of a well-known quantity in graph theory. From our point of view, it has two structural  advantages over the free effective resistance: First of all, it is closely tied to the intrinsic metrics discussed in the next chapter, see, e.g., Theorem \ref{characterization-varrho-via-sigma}. Intrinsic metrics, in turn, are a key concept in the study of the geometry of Dirichlet forms.  Moreover, its definition is  close in spirit to non-commutative geometry. In fact, it has been considered in the context of non-commutative geometry and graphs by Davies  in  \cite{Davies2} and also, very recently, by Hinz / Kelleher / Teplyaev \cite{HKT2}.

\medskip


Obviously, $\varrho_{o} \leq \varrho$ by definition. While the opposite inequality is wrong,  it turns out that, in general, these two metrics are comparable, i.e., there is no need to distinguish between $\varrho$ and $\varrho_{o}$. This is the content of the next proposition.

\begin{proposition}\label{p:rho} If $(b,c)$ is connected, then there exists $o\in X$ such that $\varrho$ and $\varrho_{o}$ are equivalent as metrics. Moreover, $\varrho \equiv \varrho_{o}$ whenever $c\equiv0$.
\end{proposition}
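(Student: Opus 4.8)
The plan is to reduce both assertions to the single nontrivial inequality $\varrho \le C\varrho_o$ for a suitable constant $C$. The reverse inequality $\varrho_o \le \varrho$ is immediate and requires no hypotheses: since $\aV{f}_o^2 = \ow Q(f) + |f(o)|^2 \ge \ow Q(f)$, the competitor set $\{f\in\ow D : \aV{f}_o \le 1\}$ defining $\varrho_o$ is contained in the set $\{f\in\ow D : \ow Q(f)\le 1\}$ defining $\varrho$, so the supremum for $\varrho_o$ is taken over fewer functions. Thus the entire content of the statement is an upper bound on $\varrho$ in terms of $\varrho_o$, and the whole difficulty is to gain uniform control of the value $f(o)$ over competitors $f$ with $\ow Q(f)\le 1$.

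For the ``moreover'' clause I would assume $c\equiv 0$ and fix an arbitrary $o\in X$, proving the \emph{stronger} statement that equality holds for every such $o$. The key observation is that $\ow Q$ is then invariant under adding constants, $\ow Q(f+\alpha)=\ow Q(f)$, because only the differences $f(x)-f(y)$ enter. Given any $f$ with $\ow Q(f)\le 1$, I would pass to $g := f - f(o)$, which satisfies $g(o)=0$, has the same energy $\ow Q(g)=\ow Q(f)\le 1$, and hence $\aV{g}_o = \ow Q(g)^{1/2}\le 1$; thus $g$ is admissible for $\varrho_o$. Since $|g(x)-g(y)| = |f(x)-f(y)|$, taking the supremum over all admissible $f$ yields $\varrho(x,y)\le\varrho_o(x,y)$, and combined with the trivial inequality this gives $\varrho\equiv\varrho_o$.

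For the first statement in the remaining case $c\not\equiv 0$, I would choose $o\in X$ with $c(o)>0$, which exists precisely because $c$ does not vanish identically. Here the potential term pins down the value at $o$ directly: for any $f$ with $\ow Q(f)\le 1$ one has $c(o)|f(o)|^2 \le \ow Q(f)\le 1$, whence $\aV{f}_o^2 = \ow Q(f)+|f(o)|^2 \le 1 + 1/c(o) =: C^2$. Rescaling, the function $f/C$ satisfies $\aV{f/C}_o = \aV{f}_o/C \le 1$ and so is admissible for $\varrho_o$; therefore $|f(x)-f(y)| = C\,|(f/C)(x)-(f/C)(y)| \le C\,\varrho_o(x,y)$. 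Passing to the supremum over admissible $f$ gives $\varrho(x,y)\le C\,\varrho_o(x,y)$, and together with $\varrho_o\le\varrho$ this shows that $\varrho$ and $\varrho_o$ are equivalent.

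The heart of the argument, and the only place where the choice of $o$ and the assumption on $c$ really enter, is the uniform bound on $f(o)$ over the competitor set $\{\ow Q(f)\le 1\}$. In the general case this comes for free from the killing term $c(o)>0$, while in the case $c\equiv 0$ the translation invariance of $\ow Q$ lets me normalize $f(o)=0$ outright. I would expect no genuine obstacle beyond verifying that rescaling by $1/C$, respectively translating by $-f(o)$, keeps the competitors inside the admissible set for $\varrho_o$, which is routine from the definitions of $\aV{\cdot}_o$ and $\ow Q$ (connectedness being used only to guarantee that $\varrho$ and $\varrho_o$ are genuine metrics in the first place).
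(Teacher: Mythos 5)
Your proof is correct and follows essentially the same route as the paper: the trivial inclusion of competitor sets gives $\varrho_o \le \varrho$, the translation $f \mapsto f - f(o)$ handles $c \equiv 0$, and choosing $o$ with $c(o) > 0$ yields the bound $\aV{f}_o^2 \le (1 + 1/c(o))\,\ow Q(f)$ in the remaining case. The only cosmetic difference is that you phrase the last step via rescaling competitors by $1/C$, whereas the paper states the norm comparison directly; the content is identical.
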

\begin{proof}Clearly, $\varrho_{o}\leq\varrho$.

If $c\equiv0$, then let $o\in X$ be arbitrary. For every $f\in\ow D$ with $\ow Q(f)\leq1$  we have $(f-f(o))\in\ow D$ and $\aV{f-f(o)}_{o}^2=\ow Q(f)$. Hence, $\varrho\leq\varrho_{o}$.

If $c\not\equiv0$, then choose  $o\in X$ with $c(o)>0$. Then $\aV{f}_{o}^{2}\leq(1+\frac{1}{c(o)})\ow Q(f)$. Therefore, $\varrho\leq C\varrho_{o}$ for some $C>0$.
\end{proof}

 For later use we also introduce the following notation.
\begin{definition} \label{def:Dzero} Let $(b,c)$ be a connected graph over $X$ and let $o\in X$ be fixed.  Define
$\ow D_o$ to be   the closure of $C_c (X)$ in $\ow D$ with respect to $\aV{\cdot}_o$.
\end{definition}

We introduce another family of metrics that emerge more directly from the geometry of the graph.

\begin{definition}\label{length_function} Let $(b,c)$ be a graph over $X$. A function $\ell : X\times X\longrightarrow [0,\infty)$ is called a \emph{length function} if, for $x,y\in X$, the equality  $\ell (x,y) = 0$ holds  if  $b(x,y) = 0$ holds.
Whenever $\ell$ is a length function and
$\gm=(\gm_{0},\ldots,\gm_{n})$ is  a path in $X$, the \emph{$\ell$-length} $\ell (\gm)$ of $\gm$ is defined  to be
\begin{align*}
\ell (\gm):=\sum_{i=1}^{n} \ell (\gamma_{i-1},\gamma_i).
\end{align*}
\end{definition}

\textbf{Remark.}  Note that we admit length functions $\ell$ with  $\ell (x,y) =0$ even if $b(x,y)>0$. This will simplify some considerations later on in the proof of Lemma \ref{l:boundedness_sigma_trees}. Let us emphasize, however, that these considerations would (with some additional effort) also work if we were to allow only length functions with $\ell (x,y) >0$ whenever $b(x,y) >0$.

\bigskip

Any length function $\ell$ on a connected  graph $(b,c)$ gives rise to a pseudometric $d_\ell$ defined  via
\begin{align*}
d_\ell  (x,y):=\inf\{\ell(\gm) : \gm \mbox{ is a path connecting $x$ and $y$}\}.
\end{align*}

\textbf{Remark.}   The assumption of connectedness of the graph forces that $d_\ell(x,y) < \infty$ for all $x, y \in X$.  Likewise, $\varrho(x,y) < \infty$ in this case, as follows from Lemma \ref{l:DLip} below. If the graph is not connected, the above definitions will still give pseudometrics on each connected component.   In the case that $c\equiv0$, $\varrho(x,y)=\infty$ for $x$ and $y$ in different connected components.   Likewise, we can extend the definition of $d_\ell$ to a function with values in $[0,\infty]$ by setting it equal to $\infty$ on any two points which do not belong to the same connected component.  \\ 

\smallskip

A natural choice of length functions is $\ell (x,y) = \frac{1}{b(x,y)^s}$ with a fixed parameter $s>0$.  The associated pseudometric is actually a metric (due to the summability assumption on $b$) and  will be denoted by $d_s$.
For $s=1$, we just write  $d$ instead of $d_1$.   Thus, we have
$$d (x,y) = \inf\{ \sum_{i=1}^n \frac{1}{b (x_{i-1}, x_i)} :  (x_0,\ldots, x_n) \;\:\mbox{is a path from $x$ to $y$}\}. $$
Let us remark that it is quite  natural to consider the inverse of $b(x,y)$ as a length since $b(x,y)$ represents the strength of the interaction between two vertices. In this context, we also  note that if  edges are replaced by real intervals of that length, then Brownian motion on the resulting space corresponds to the Dirichlet form $\ow Q$ given above (see \cite{GK} for details).

By a well-known inequality,  we  infer
$$ d^s \leq d_s$$
for all $0 < s \leq 1$.

\medskip

\textbf{Remark.} \label{rem-dm} Given a measure $m:X\to (0,\infty)$,  one obtains  another natural    length function,  namely,
$$\ell_{m}  (x,y):= \frac{\sqrt{m(x)m(y)}}{b(x,y)}. $$
Lengths of this type appear naturally when one deals with the unitary transformation $f\mapsto m^{1/2} f$ mapping the space $\ow D \cap \ell^2(X,m)$ to the space $D(Q_m) = \ow D_m \cap \ell^2(X,1)$.  Here, $\ow D_m$ are the functions of finite energy with respect to $Q_m$ where $Q_m$ is defined by
\[Q_m(f)= \frac{1}{2} \sum_{x,y\in X} b_m(x,y) |f(x)- f(y)|^2 + \sum_{x\in X} c_m(x) |f(x)|^2\]
with
\[b_m(x,y)= \frac{b(x,y)}{\sqrt{m(x)m(y)}}\]
and
\[c_m(x)= \frac{c(x)}{m(x)}+ \frac{1}{m(x)} \sum_{y\in X} b(x,y) - \sum_{y\in X} \frac{b(x,y)}{\sqrt{m(x)m(y)}}.\]
For this unitary transformation to be well defined, we need to assume that $\sum_{y\in X} \frac{b(x,y)}{\sqrt{m(y)}} < \infty$ for all $x\in X$. Note that, in this procedure, the potential $c_m$ will not necessarily stay nonnegative and thus, in a certain sense, we leave the setting of discrete Dirichlet spaces we consider.

\medskip

The following lemma shows that functions in $\ow D$ are $\frac{1}{2}$-H\"older continuous with respect to $d$.

\begin{lemma}\label{l:DLip} Let $ (b,c)$ be a connected graph over $X$. Then, for any  $f\in \ow D$ and all  $x,y\in X$, the inequality
\begin{align*}
|f(x)-f(y)|^{2}  \leq \ow Q(f) d(x,y)
\end{align*}
holds.
In particular, $\varrho^{2} \leq d$ and $\varrho \leq d_{1/2}$.  If, in addition, $c(x),c(y)>0$, then
\begin{align*}
|f(x)-f(y)|^{2}  \leq \ow Q(f) (c(x)^{-1} + c(y)^{-1})
\end{align*}
\end{lemma}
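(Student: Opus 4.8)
The plan is to first establish the basic estimate $|f(x)-f(y)|^2 \le \ow Q(f)\, d(x,y)$ by a path argument and then read off the remaining assertions as corollaries. I would fix $f\in\ow D$ and vertices $x,y\in X$, and let $\gm=(\gm_0,\dots,\gm_n)$ be any path from $x$ to $y$. Telescoping gives $f(x)-f(y)=\sum_{i=1}^n (f(\gm_{i-1})-f(\gm_i))$, and combining the triangle inequality with the Cauchy--Schwarz inequality applied to the factorization
\[
|f(\gm_{i-1})-f(\gm_i)| = \sqrt{b(\gm_{i-1},\gm_i)}\,|f(\gm_{i-1})-f(\gm_i)|\cdot \frac{1}{\sqrt{b(\gm_{i-1},\gm_i)}}
\]
yields
\[
|f(x)-f(y)| \le \Big(\sum_{i=1}^n b(\gm_{i-1},\gm_i)|f(\gm_{i-1})-f(\gm_i)|^2\Big)^{1/2}\Big(\sum_{i=1}^n \tfrac{1}{b(\gm_{i-1},\gm_i)}\Big)^{1/2}.
\]
The crucial point, which I expect to be the only place requiring genuine care, is bounding the first factor by $\ow Q(f)^{1/2}$: since the vertices of a path are pairwise distinct, each unordered edge $\{\gm_{i-1},\gm_i\}$ occurs at most once along $\gm$, so the edge sum is dominated by $\tfrac12\sum_{x,y} b(x,y)|f(x)-f(y)|^2\le \ow Q(f)$ (using also $c\ge 0$ to drop the potential term). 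The second factor is exactly $\ell(\gm)^{1/2}$ for the length function $\ell(\cdot,\cdot)=1/b(\cdot,\cdot)$ defining $d$. Squaring and taking the infimum over all paths $\gm$ from $x$ to $y$ then gives the asserted inequality with $d(x,y)$.

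For the two consequences I would argue directly. Testing the definition of $\varrho$ with functions $f\in\ow D$ satisfying $\ow Q(f)\le 1$ and invoking the main inequality gives $|f(x)-f(y)|^2\le d(x,y)$ for all such $f$, whence $\varrho(x,y)^2\le d(x,y)$ upon taking the supremum. Finally, $\varrho\le d^{1/2}$ combined with the already recorded inequality $d^{1/2}\le d_{1/2}$ (the case $s=1/2$ of $d^s\le d_s$) yields $\varrho\le d_{1/2}$.

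The additional estimate in the case $c(x),c(y)>0$ I would obtain by exactly the same Cauchy--Schwarz idea, now weighting with $c$ rather than $b$. Writing $|f(x)-f(y)|\le |f(x)|+|f(y)|$ and factoring each summand as $\tfrac{1}{\sqrt{c(\cdot)}}\cdot \sqrt{c(\cdot)}\,|f(\cdot)|$, Cauchy--Schwarz for the two-term sum gives
\[
|f(x)-f(y)| \le \big(c(x)^{-1}+c(y)^{-1}\big)^{1/2}\big(c(x)|f(x)|^2+c(y)|f(y)|^2\big)^{1/2}.
\]
Since $c(x)|f(x)|^2+c(y)|f(y)|^2\le \sum_{z\in X} c(z)|f(z)|^2\le \ow Q(f)$, squaring yields the claimed bound with the sharp constant. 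No genuine difficulty is expected in this last step; the whole lemma rests on selecting the right Cauchy--Schwarz splitting, and the single subtle bookkeeping point is the no-over-counting of edges along a path noted above.
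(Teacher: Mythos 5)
Your proof is correct and follows essentially the same route as the paper: the same telescoping-plus-Cauchy--Schwarz estimate along a path (with the edge-sum bounded by $\ow Q(f)$, which the paper does implicitly and you justify explicitly via the pairwise distinctness of path vertices), the same deduction of $\varrho^2\le d$ and $\varrho\le d_{1/2}$ from the definition of $\varrho$ and $d^s\le d_s$, and the same two-term Cauchy--Schwarz inequality for the case $c(x),c(y)>0$. No gaps.
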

\begin{proof} 
Take a path $\gm=(\gm_{0},\ldots,\gm_{n})$ from $x$ to $y$. We estimate using the triangle inequality and the Cauchy-Schwarz inequality
\begin{align*}
|f(x)-f(y)|&\leq\sum_{i=1}^{n}b(\gm_{i-1},\gm_{i})^{1/2} |f(\gm_{i-1})-f(\gm_{i})|
\frac{1}{b(\gm_{i-1},\gm_{i})^{ 1/2 }} \\
&\leq\left(\sum_{i=1}^{n}b(\gm_{i-1},\gm_{i}) |f(\gm_{i-1})-f(\gm_{i})|^{2}\right)^{\frac{1}{2}}
\left(\sum_{i=1}^{n} \frac{1}{b(\gm_{i-1},\gm_{i})}\right)^{\frac{1}{2}} \\
&\leq \ow Q(f)^{\frac{1}{2}}l(\gm)^{\frac{1}{2}}.
\end{align*}
This implies the first  statement of the lemma. The `in particular' statement is then clear from the definition of $\varrho$ and the already mentioned inequality  $d^s \leq d_s$. The second inequality follows directly from
\[|f(x) - f(y)|^2 \leq (c(x)^{-1} + c(y)^{-1}) (c(x)|f(x)|^2 + c(y)|f(y)|^2).\]
This finishes the proof.
\end{proof}

\medskip

\textbf{Remark.} The second inequality of the previous lemma shows that the distance $\varrho$ might become much smaller than the distance $d$ if the killing term $c$  is nonzero. Moreover,  a positive  killing term
gives rise to another length function defined by
\[l_c(x,y)= c(x)^{-1} + c(y)^{-1}\]
if $x$ and $y$ are neighbors.  We will show in Appendix \ref{Reducing} that one can construct a metric where both $b$ and $c$ are involved by introducing a virtual point at infinity.

\medskip

Lemma~\ref{l:DLip} has various consequences which we will now discuss. We start by considering completions of $X$. We recall that the completion of $X$ with respect to a metric $\delta$ is denoted by $\overline{X}^{\delta}$.

\begin{theorem} \label{extension-d-to-varrho} Let $(b,c)$ be a connected  graph over $X$. Then, there exists a unique continuous map $\iota $  from $\overline{X}^d$ to $\overline{X}^\varrho$ extending the identity $X\longrightarrow X, x\mapsto x$. If $\overline{X}^d$ is compact, then $\iota$ is onto and $\overline{X}^\varrho$ is compact as well.
 Any $f \in \ow D$ has unique continuous extensions $f^{(d)}$ and $f^{(\varrho)}$ to $\overline{X}^d$ and $\overline{X}^\varrho$ respectively and $f^{(d)} = f^{(\varrho)} \circ \iota$ holds.
\end{theorem}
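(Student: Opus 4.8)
The plan is to reduce everything to the comparison estimate already recorded in Lemma~\ref{l:DLip}, namely $\varrho^2 \leq d$, together with standard facts about completions of metric spaces. Since $(b,c)$ is connected, both $d$ and $\varrho$ are genuine metrics on $X$ (not merely pseudometrics), so no quotient identifications intervene and the completions $\overline{X}^d$ and $\overline{X}^\varrho$ are honest metric spaces in which $X$ sits densely.

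First I would construct $\iota$. View the identity $X \to X$ as a map $j$ from $(X,d)$ into the complete metric space $(\overline{X}^\varrho, \varrho)$, composed with the canonical dense embedding. By Lemma~\ref{l:DLip} we have $\varrho(x,y) \leq d(x,y)^{1/2}$ for all $x,y \in X$, so $j$ is $\tfrac12$-H\"older continuous, in particular uniformly continuous. By the universal property of completions, a uniformly continuous map from a metric space into a complete metric space extends uniquely to a continuous map on the completion, so $j$ extends to a continuous $\iota : \overline{X}^d \to \overline{X}^\varrho$. Uniqueness is then immediate: $X$ is dense in $\overline{X}^d$, the target is Hausdorff, and two continuous maps agreeing on a dense subset coincide.

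Next, suppose $\overline{X}^d$ is compact. Then $\iota(\overline{X}^d)$ is compact, hence closed in $\overline{X}^\varrho$. As $\iota$ extends the identity, this image contains $X$, which is dense in $\overline{X}^\varrho$; a closed set containing a dense set is the whole space, so $\iota$ is onto, and $\overline{X}^\varrho = \iota(\overline{X}^d)$ is the continuous image of a compact space and therefore compact. For the final assertion, each $f \in \ow D$ is Lipschitz with respect to $\varrho$ (with constant $\leq \ow Q(f)^{1/2}$), as noted right after the definition of $\varrho$, and $\tfrac12$-H\"older with respect to $d$ by Lemma~\ref{l:DLip}; in both cases $f$ is uniformly continuous and extends uniquely to continuous functions $f^{(\varrho)}$ on $\overline{X}^\varrho$ and $f^{(d)}$ on $\overline{X}^d$. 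The maps $f^{(\varrho)} \circ \iota$ and $f^{(d)}$ are continuous on $\overline{X}^d$ and agree on the dense subset $X$, where $\iota$ is the identity and both equal $f$; hence they coincide everywhere.

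I do not expect a genuine obstacle here: all the analytic content sits in the H\"older estimate of Lemma~\ref{l:DLip}, after which the argument is entirely soft, resting on the universal property of metric completions and on density. The only points meriting a line of care are verifying that $d$ and $\varrho$ are honest metrics under connectedness, so that the completions are metric rather than pseudometric spaces, and that the embedding $X \hookrightarrow \overline{X}^\varrho$ has dense image; both are available from the preceding discussion.
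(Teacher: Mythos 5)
Your proposal is correct and follows essentially the same route as the paper's own proof: both rest entirely on the estimate $\varrho^2 \leq d$ from Lemma~\ref{l:DLip} to get the (unique) continuous extension $\iota$, derive surjectivity from compactness of the image together with density of $X$ in $\overline{X}^\varrho$, and obtain $f^{(d)} = f^{(\varrho)} \circ \iota$ by uniqueness of continuous extensions from a dense subset. The only cosmetic difference is that the paper phrases the surjectivity step via completeness of the compact image rather than closedness, which is the same argument.
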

\begin{proof} Uniqueness of such a map $\iota$ is clear. Existence follows  immediately from the inequality $\varrho^2 \leq d$ proven in the previous lemma.

\smallskip

If $\overline{X}^d$ is compact, then its image under $\iota$ is compact as well. Hence, this image is complete and  contains, by construction, the set $X$. Thus, it must contain $\overline{X}^\varrho$. This shows surjectivity.

\smallskip

 By construction, all functions in $\ow D$ are Lipschitz continuous with respect to $\varrho$. Hence, they can be (uniquely) extended to continuous functions on $\overline{X}^{\varrho}$. Moreover, by the previous lemma, all functions in $\ow D$ are $1/2$-H\"older continuous with respect to $d$. Hence, they can be (uniquely) extended to $\overline{X}^d$.

 By continuity of $\iota$, for any $f\in \ow D$ the function  $f^{(\varrho)} \circ \iota$ is a continuous function on  $\overline{X}^{d}$ which agrees with $f$ on $X$. By the discussed uniqueness properties it must then agree with $f^{(d)}$ and
 the last statement of the theorem follows.
\end{proof}

Let us mention that, in general, $\iota$ is not an embedding, i.e., it is not injective as can be seen from Example~\ref{e:iota_no_embedding} in Section~\ref{counter}.

\smallskip

We now collect some basic facts concerning the Yamasaki space (cf. \cite[Lemma (3.14) and Theorem (3.15)]{Soa} or \cite[(2.4) Lemma]{Woe} for the case when $c \equiv 0$).

 We start with a consequence of Lemma~\ref{l:DLip}, namely, continuity of  point evaluation  for functions in $\ow D$.

\begin{lemma}\label{l:pointevaluation} If $(b,c)$ is connected, then the point evaluation map
\begin{align*}
    \de_{x}:(\ow D,\aV{\cdot}_{o})\longrightarrow \C, \quad u\mapsto u(x),
\end{align*}
is continuous for each $x\in X$.
\end{lemma}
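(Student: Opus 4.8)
The plan is to exploit the linearity of $\delta_x$: a linear map between normed spaces is continuous if and only if it is bounded, so it suffices to produce, for each fixed $x\in X$, a constant $C_x$ with $|u(x)|\leq C_x\aV{u}_o$ for all $u\in\ow D$. The natural way to do this is to compare the value $u(x)$ with the value $u(o)$ at the base point, since the latter is directly controlled by the norm $\aV{\cdot}_o$.

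First I would split $u(x)=(u(x)-u(o))+u(o)$ and treat the two summands separately. For the difference, Lemma~\ref{l:DLip} gives immediately
\begin{align*}
|u(x)-u(o)|^2\leq\ow Q(u)\,d(x,o),
\end{align*}
and since $(b,c)$ is connected we have $d(x,o)<\infty$, so $|u(x)-u(o)|\leq d(x,o)^{1/2}\,\ow Q(u)^{1/2}$. For the second summand, the very definition of the norm yields both $|u(o)|\leq\aV{u}_o$ and $\ow Q(u)^{1/2}\leq\aV{u}_o$, because $\aV{u}_o^2=\ow Q(u)+|u(o)|^2$. Combining these estimates gives
\begin{align*}
|u(x)|\leq d(x,o)^{1/2}\,\ow Q(u)^{1/2}+|u(o)|\leq\bigl(1+d(x,o)^{1/2}\bigr)\aV{u}_o,
\end{align*}
so one may take $C_x=1+d(x,o)^{1/2}$, which proves boundedness and hence continuity of $\delta_x$.

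There is no real obstacle here, as the statement is an essentially immediate corollary of Lemma~\ref{l:DLip}; the only point requiring the connectedness hypothesis is the finiteness of $d(x,o)$, which is exactly what guarantees the comparison between $u(x)$ and $u(o)$ is meaningful. An alternative phrasing would invoke $\varrho_o$ directly, using $|u(x)-u(o)|\leq\varrho_o(x,o)\aV{u}_o$ with $\varrho_o(x,o)<\infty$ following from $\varrho_o\leq\varrho\leq d_{1/2}$; but routing the argument through Lemma~\ref{l:DLip} keeps the constant explicit and avoids any circularity with the finiteness of $\varrho$.
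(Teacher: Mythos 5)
Your proof is correct and follows essentially the same route as the paper's: both arguments split off the value at $o$, apply Lemma~\ref{l:DLip} to bound the difference by $\ow Q(\cdot)^{1/2}d(x,o)^{1/2}$, and use connectedness only to guarantee $d(x,o)<\infty$. The paper phrases this as a Lipschitz estimate on $|\de_x f-\de_x g|$ for $f,g\in\ow D$ rather than as boundedness of the linear functional, but by linearity of $\de_x$ the two formulations are identical, with the same constant $1+d(x,o)^{1/2}$.
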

\begin{proof} Let $x\in X$ and $f,g\in \ow D$. Then, we estimate by the previous lemma
\begin{align*}
|\de_{x} f-\de_{x} g|&\leq |(f(x)-g(x))-(f(o)-g(o))| +|f(o)-g(o)| \\ &\leq\ow  Q(f-g)^{\frac{1}{2}}d(x,o)^{\frac{1}{2}}+|f(o)-g(o)|.
\end{align*}
This implies the statement of the lemma as $d(x,o)<\infty$ for all $x$ whenever $b$ is connected.
\end{proof}

The previous important fact yields the following two well-known statements.

\begin{proposition}\label{p:Yamasaki} If $(b,c)$ is a connected graph over $X$, then the Yamasaki space $(\ow D,\as{\cdot,\cdot}_{o})$ is a Hilbert space.
\end{proposition}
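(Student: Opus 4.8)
The plan is to prove that $(\ow D,\aV{\cdot}_o)$ is complete; since connectedness of $b$ guarantees (as recorded above) that $\as{\cdot,\cdot}_o$ is a genuine scalar product and $\aV{\cdot}_o$ a genuine norm, completeness is exactly what is needed to conclude that the Yamasaki space is a Hilbert space. I would start from an arbitrary Cauchy sequence $(f_n)$ in $(\ow D,\aV{\cdot}_o)$ and first produce a pointwise candidate limit.

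The first step exploits Lemma~\ref{l:pointevaluation}. Since each point evaluation $\delta_x$ is a continuous linear functional on $(\ow D,\aV{\cdot}_o)$, there is a constant $C_x$ with $|f_n(x)-f_m(x)|\le C_x\aV{f_n-f_m}_o$, so $(f_n(x))_n$ is Cauchy in $\C$ for every $x\in X$. Setting $f(x):=\lim_n f_n(x)$ defines a function $f\in C(X)$ to which $(f_n)$ converges pointwise. The remaining, and central, task is to show that $f\in\ow D$ and that $f_n\to f$ with respect to $\aV{\cdot}_o$.

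For this I would invoke Fatou's lemma for the (possibly infinite, nonnegative) sum defining $\ow Q$. Given $\varepsilon>0$, choose $N$ with $\ow Q(f_n-f_m)\le\varepsilon$ for all $n,m\ge N$. Fixing $m\ge N$ and letting $n\to\infty$, the pointwise convergence $f_n-f_m\to f-f_m$ together with Fatou's lemma yields $\ow Q(f-f_m)\le\liminf_{n}\ow Q(f_n-f_m)\le\varepsilon$. In particular $f-f_m$ has finite energy, so $f=(f-f_m)+f_m\in\ow D$, and moreover $\ow Q(f-f_m)\to 0$ as $m\to\infty$. Since also $|f(o)-f_m(o)|=\lim_n|f_n(o)-f_m(o)|\to 0$, we get $\aV{f-f_m}_o^2=\ow Q(f-f_m)+|f(o)-f_m(o)|^2\to 0$, which is the desired convergence.

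The step I expect to be the main obstacle is controlling the energy of the limit, i.e.\ passing the limit through the infinite summation. This is precisely where lower semicontinuity matters: $\ow Q$ need not be continuous under pointwise limits, but Fatou's lemma provides exactly the lower semicontinuity that lets one bound $\ow Q(f-f_m)$ by $\liminf_n\ow Q(f_n-f_m)$. Everything else is routine, and it is worth emphasizing that the availability of the pointwise limit itself rests on Lemma~\ref{l:pointevaluation}, which in turn relies on the H\"older estimate of Lemma~\ref{l:DLip} and on connectedness.
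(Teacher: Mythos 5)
Your proof is correct and follows essentially the same route as the paper: continuity of point evaluations (Lemma~\ref{l:pointevaluation}) produces the pointwise limit, and Fatou's lemma supplies the lower semicontinuity of $\ow Q$ needed to place the limit in $\ow D$ and obtain convergence in $\aV{\cdot}_o$. If anything, your version of the Fatou step (applied to $f_n - f_m$ with $m$ fixed) spells out in detail what the paper compresses into ``another application of Fatou's lemma \dots easily yields.''
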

\begin{proof} If $(f_{n})$ is a Cauchy sequence in $(\ow D, \as{\cdot, \cdot}_{o})$, then $(\de_{x}f_{n})$ converges to some $f(x)\in \C$ for each $x\in X$ by the lemma above.  Hence, $(f_{n})$ converges pointwise to a function $f$.  By Fatou's lemma and the fact that $(f_{n})$ is a Cauchy sequence we get
\begin{align*}
\ow Q(f)\leq\liminf_{n\longrightarrow\infty}\ow Q(f_{n})<\infty.
\end{align*}
As $(f_n)$ is a Cauchy sequence, another application of Fatou's lemma together with the pointwise convergence easily yields $\aV{f-f_{n}}_{o} \to0$ as $n\longrightarrow \infty$. \end{proof}

\begin{proposition}\label{l:continuity} Let  $(b,c)$ be a connected graph over $X$. Let $(B, \aV{\cdot})$ be a Banach space and $A : B \longrightarrow C(X)$ be a linear operator such that  $A (B) \subseteq \ow D$  and the map $B \longrightarrow C (X), u\mapsto A u,$ is continuous. Then, $A:B\longrightarrow C(X)$ is a continuous operator.
\end{proposition}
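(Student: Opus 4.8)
The plan is to deduce the continuity from the closed graph theorem, exploiting the completeness of the Yamasaki space. First I would record the two ingredients that put us in the right setting. Since $(b,c)$ is connected, Proposition~\ref{p:Yamasaki} guarantees that $(\ow D,\as{\cdot,\cdot}_{o})$ is a Hilbert space, hence a Banach space; together with the hypothesis that $B$ is a Banach space, this makes the closed graph theorem applicable to the linear map $A\colon B\longrightarrow(\ow D,\aV{\cdot}_{o})$. Thus the entire problem reduces to verifying that the graph of $A$ is closed, after which the continuity (as an operator into the Yamasaki space, and a fortiori into $C(X)$) follows immediately.

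To check closedness of the graph, I would take a sequence $(u_{n})$ in $B$ with $u_{n}\to u$ in $B$ and $Au_{n}\to g$ in $(\ow D,\aV{\cdot}_{o})$, and aim to show $g=Au$. The argument rests on comparing two modes of convergence through a common test, namely pointwise evaluation. On the one hand, convergence $Au_{n}\to g$ in the Yamasaki norm forces pointwise convergence: by Lemma~\ref{l:pointevaluation} each point evaluation $\de_{x}\colon(\ow D,\aV{\cdot}_{o})\longrightarrow\C$ is continuous, so $(Au_{n})(x)\to g(x)$ for every $x\in X$. On the other hand, the hypothesis that $u\mapsto Au$ is continuous as a map into $C(X)$ equipped with the topology of pointwise convergence yields $(Au_{n})(x)\to(Au)(x)$ for every $x\in X$. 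By uniqueness of limits in $\C$ we obtain $g(x)=(Au)(x)$ for all $x$, and since both $g$ and $Au$ lie in $\ow D\subseteq C(X)$ this means $g=Au$. Hence the graph is closed and the closed graph theorem delivers the continuity of $A$.

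The one point requiring care, rather than a genuine obstacle, is the correct matching of topologies: one must be sure that the hypothesis supplies precisely pointwise convergence of $(Au_{n})$, and that Lemma~\ref{l:pointevaluation} is invoked in the right direction, namely that $\aV{\cdot}_{o}$-convergence dominates pointwise convergence. Beyond this, the proof is a routine application of the closed graph theorem, with connectedness of $b$ entering only to ensure that $\aV{\cdot}_{o}$ is a genuine norm and that Proposition~\ref{p:Yamasaki} is available.
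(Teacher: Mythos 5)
Your proof is correct and follows essentially the same route as the paper: apply the closed graph theorem to $A$ viewed as a map into the Yamasaki space $(\ow D,\aV{\cdot}_{o})$ (complete by Proposition~\ref{p:Yamasaki}), and identify limits via the continuity of point evaluation (Lemma~\ref{l:pointevaluation}) against the pointwise convergence supplied by the hypothesis. The only cosmetic difference is that the paper reduces closedness to closability (using that $A$ is everywhere defined and testing at $u=0$), whereas you verify closedness of the graph directly at a general $u$; by linearity these are the same argument.
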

\begin{proof} By the closed graph theorem, it suffices to show that $A$ is a closed operator. As $A$ is defined everywhere, it suffices to show that it is closable. Thus, we have to show that $v \equiv0$  whenever $(u_n)$ converges to $0$ in $B$ and $(A u_n)$ converges to $v\in \ow D$. This follows easily from the continuity property of $A$ and the fact that   point evaluation is continuous in $\ow D$.
\end{proof}


\subsection{Intrinsic metrics}
Intrinsic metrics play an important role in the analysis of Laplacians on manifolds and  --- more generally --- of strongly local Dirichlet forms \cite{Stu}. For graphs (or general regular Dirichlet forms) this concept has only recently begun to be explored. In fact, they were only brought forward and  first treated systematically in \cite{FLW}. A fundamental idea of  \cite{FLW} is that  a metric   is intrinsic if and only if the associated set of Lipschitz functions belongs locally  to the form domain with `gradients' bounded by one  (in a suitable sense). In our context, this translates to the definition given below.
Further studies  involving intrinsic metrics on graphs can now  be found in various references, including \cite{BHK,BKW, HKW, HKMW}. We refer to these works for further discussion and  references.

\bigskip

In this subsection, we are given a graph $(b,c)$ over $X$ and a measure $m$ on $X$.

\smallskip

\begin{definition} Let $(b,c)$ be a graph over $(X,m)$. Then, a (pseudo)metric $\sigma : X\times X\longrightarrow [0, \infty)$ is called \emph{intrinsic} if for any $x\in X$ the inequality
$$ \frac{1}{2} \sum_{y\in X} b(x,y) \sigma^2 (x,y) \leq m(x)$$
holds.
\end{definition}

\textbf{Remark.} Of course, the  factor $1/2$ in the previous definition does not play a particular role and could  be replaced by any positive number. We use it as it simplifies some of the  formulae given  below. Also, with this factor, we are  completely in line with \cite{FLW} as well  as with  various subsequent works. Note, however, that some authors choose this factor to be one.

\medskip

It can easily be seen by examples that $\sigma$ can be unbounded even for a finite measure. At the end of this section, in  Corollary~\ref{c:bound-intrinsic}, we give a  geometric criterium which guarantees the boundedness of $\sigma$.
Also, observe that, even if an intrinsic metric $\sigma$ was allowed to take the value infinity, it would not do so on a connected graph.  This follows by the definition of intrinsic and the triangle inequality, see Lemma \ref{l:bound_sigma-via-d} below.

\smallskip

A consequence of the above definition is that there is a strong connection between intrinsic (pseudo)metrics  with respect to finite measures and functions in $\ow D$. This is discussed in the subsequent two propositions.

\smallskip

The first proposition, which follows immediately from definitions, shows that any function in $\ow D$ gives rise to an intrinsic pseudometric with respect to a measure of total finite mass.

\begin{proposition}\label{sigma-via-f} Let $(b,c)$ be a graph over $X$. Then, for any $f\in \ow D$ the function
$$\sigma_f : X\times X\longrightarrow [0,\infty), \; \sigma_f (x,y) = |f(x) - f(y)|$$
is an intrinsic pseudometric  with respect to the measure $M_f$ defined by
$$ M_f : X\longrightarrow [0,\infty), \: M_f (x) = \frac{1}{2} \sum_{y\in X} b(x,y) |f(x) -  f(y)|^2 + c(x) |f(x)|^2$$
and the  total mass of $X$ is given by $M_f (X) = \ow Q (f)$.
\end{proposition}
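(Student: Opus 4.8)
The plan is to verify the three assertions of the proposition---that $\sigma_f$ is a pseudometric, that it is intrinsic with respect to $M_f$, and that $M_f(X) = \ow Q(f)$---directly from the definitions, since no genuine obstacle is present here. First I would note that $\sigma_f$ is a pseudometric simply because it is the pull-back under $f$ of the standard metric $(p,q)\mapsto |p-q|$ on $\C$: symmetry and vanishing on the diagonal are immediate, and the triangle inequality $|f(x)-f(z)| \leq |f(x)-f(y)| + |f(y)-f(z)|$ is inherited from the triangle inequality in $\C$. It is only a pseudometric and not a metric because $f$ need not be injective, so $\sigma_f(x,y)$ may vanish for $x\neq y$ whenever $f(x)=f(y)$.

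Next I would check that $M_f$ is a well-defined measure of finite total mass equal to $\ow Q(f)$. For each fixed $x\in X$ one has $\tfrac{1}{2}\sum_{y\in X} b(x,y)|f(x)-f(y)|^2 \leq \ow Q(f) < \infty$ since $f\in \ow D$, and $c(x)|f(x)|^2 < \infty$; hence $M_f(x)\in[0,\infty)$ for every $x$, so $M_f$ is indeed a nonnegative function on $X$. Summing over $x$ and using Tonelli's theorem to rearrange the nonnegative double sum gives
\[
M_f(X) = \sum_{x\in X} M_f(x) = \frac{1}{2}\sum_{x,y\in X} b(x,y)|f(x)-f(y)|^2 + \sum_{x\in X} c(x)|f(x)|^2 = \ow Q(f),
\]
which is finite, again because $f\in \ow D$.

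Finally I would establish that $\sigma_f$ is intrinsic with respect to $M_f$. The defining inequality requires, for each $x\in X$, that $\tfrac{1}{2}\sum_{y\in X} b(x,y)\sigma_f^2(x,y) \leq M_f(x)$. But the left-hand side is exactly $\tfrac{1}{2}\sum_{y\in X} b(x,y)|f(x)-f(y)|^2$, which is precisely the first summand in the definition of $M_f(x)$; since the remaining term $c(x)|f(x)|^2$ is nonnegative, the inequality holds, in fact with equality in the case $c\equiv 0$. The only point worth flagging is that the $c$-contribution is what gives the inequality slack rather than equality, so one must be slightly careful to include it in $M_f$ to make the total mass come out to exactly $\ow Q(f)$; beyond that bookkeeping there is no hard step, as the proposition states.
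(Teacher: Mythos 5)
Your proof is correct and takes exactly the approach the paper intends: the paper offers no written proof at all, stating only that the proposition ``follows immediately from definitions,'' and your direct verification (pseudometric as a pull-back of the metric on $\C$, total mass via rearrangement of the nonnegative double sum, and the intrinsic inequality from dropping the nonnegative term $c(x)|f(x)|^2$) is precisely that intended unwinding of the definitions.
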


The next proposition gives a crucial ingredient in our considerations. It shows that intrinsic metrics belong to the domain of $\ow Q$ whenever the total mass of the space is finite.

\begin{proposition} Let $(b,c)$ be a graph over $(X,m)$  with  $m(X)<\infty.$   Let   $\sigma$ be   an intrinsic (pseudo)metric with respect to $m$  and $\ov{X}^\sigma$  the completion of $X$ with respect to $\sigma$. Let  $U\subseteq \ov{X}^\sigma$ be given and consider the  distance
$$\sigma_U : X\longrightarrow [0,\infty),\:\; \sigma_U (x) =\inf\{\sigma (x,a): a\in U\}.$$

 (a) If $c\equiv0$, then  $\sigma_U$ belongs to $\ow D$ and the estimate
$$\ow Q (\sigma_U) \leq   m(X)  $$
 holds.

 (b) If $C:= \sum_{x\in X} c(x) <\infty$ and $\sigma$ is bounded by $S\geq 0$, then $\sigma_U$ belongs to $\ow D$ and the estimate $\ow Q (\sigma_U) \leq  m(X) + C S^2$ holds.

\end{proposition}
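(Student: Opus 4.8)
The plan is to exploit the elementary fact that the distance function to a set is $1$-Lipschitz, and then feed this directly into the defining inequality for an intrinsic metric; no deeper machinery is needed.

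First I would record the Lipschitz estimate. Assuming $U \neq \emptyset$ (so that $\sigma_U$ is a genuine real-valued function, being bounded by $\sigma(\cdot,a_0)$ for any fixed $a_0\in U$), the triangle inequality gives, for all $x,y\in X$ and every $a\in U$,
\[
\sigma_U(x) \leq \sigma(x,a) \leq \sigma(x,y)+\sigma(y,a).
\]
Taking the infimum over $a\in U$ yields $\sigma_U(x)-\sigma_U(y)\leq \sigma(x,y)$, and by symmetry $|\sigma_U(x)-\sigma_U(y)|\leq \sigma(x,y)$. In particular, $\sigma_U$ is a function on $X$, hence automatically lies in $C(X)$ since $X$ is discrete.

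For part (a), where $c\equiv 0$, the energy reduces to the jump term, and combining the Lipschitz bound with the intrinsic inequality (and using that all summands are nonnegative, so the order of summation is immaterial) gives
\[
\ow Q(\sigma_U)=\frac{1}{2}\sum_{x,y\in X}b(x,y)|\sigma_U(x)-\sigma_U(y)|^2
\leq \sum_{x\in X}\Big(\frac{1}{2}\sum_{y\in X}b(x,y)\sigma^2(x,y)\Big)
\leq \sum_{x\in X}m(x)=m(X).
\]
This is finite, so $\sigma_U\in\ow D$, and the claimed estimate is exactly this bound. For part (b), the jump term is controlled in the same way and contributes at most $m(X)$; it remains only to handle the killing term $\sum_{x}c(x)|\sigma_U(x)|^2$. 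Here I would observe that the uniform bound $\sigma\leq S$ on $X\times X$ persists on $\ov{X}^\sigma\times\ov{X}^\sigma$ by continuity, so that $\sigma(x,a)\leq S$ for $x\in X$ and $a\in U\subseteq\ov{X}^\sigma$; taking the infimum over $a$ gives $0\leq\sigma_U(x)\leq S$. Consequently $\sum_{x}c(x)|\sigma_U(x)|^2\leq S^2\sum_{x}c(x)=CS^2$, and adding the two contributions yields $\ow Q(\sigma_U)\leq m(X)+CS^2$.

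There is no serious obstacle in this argument. The only two points meriting a moment's care are the interchange of summations, which is justified purely by nonnegativity of the terms and therefore needs no a priori convergence, and the persistence of the uniform bound $\sigma\leq S$ on the completion $\ov{X}^\sigma$, which is what allows $\sigma_U$ to be controlled by $S$ even when the set $U$ consists of boundary points in $\ov{X}^\sigma\setminus X$.
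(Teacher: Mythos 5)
Your proof is correct and follows essentially the same route as the paper: the $1$-Lipschitz property of $\sigma_U$ via the triangle inequality, fed into the defining inequality of an intrinsic metric. In fact, for part (b) the paper only remarks that it ``follows along very similar lines,'' and your explicit treatment of the killing term --- including the observation that the bound $\sigma \leq S$ persists on the completion, so that $0 \leq \sigma_U \leq S$ --- supplies precisely the details the paper leaves out.
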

\begin{proof} We only consider the case that $\sigma$ is a metric. The case where it is a pseudometric can be treated analogously.

\smallskip

(a)  By the triangle inequality and the definition of $\sigma_U$ we have, for any $x,y\in X$ with $b(x,y)>0$,
$$|\sigma_U (x) - \sigma_U (y) |\leq \sigma (x,y).$$
Now, a short calculation gives
\begin{eqnarray*}
\widetilde{Q} (\sigma_U) & = & \frac{1}{2} \sum_{x,y\in X}  b(x,y) |\sigma_U (x) - \sigma_U (y)|^2 \\
&\leq &  \frac{1}{2} \sum_{x,y\in X}  b(x,y) \sigma^2(x,y) \leq  \sum_{x\in X} m(x) =  m(X).
 \end{eqnarray*}
This finishes the proof  of part (a).

(b) This follows along very similar lines.
\end{proof}

\begin{proposition} \label{bound-sigma-via-varrho}  Let $(b,c)$ be a graph over $(X,m)$  with  $m(X)<\infty.$   Let   $\sigma$ be an intrinsic (pseudo)metric with respect to $m$. Assume $c \equiv 0$ or  $C := \sum_{x \in X} c(x) <\infty$ and boundedness of $\sigma$ if $c \not \equiv 0$. Then,  $$\sigma \leq A \varrho$$
with $ A = \sqrt{ m(X)}$ if $ c\equiv0$ and $A  = \sqrt{ m(X)  + CS^2 }$ with $S$ a bound on $\sigma$ and $C<\infty$ if $c\not \equiv 0$.
\end{proposition}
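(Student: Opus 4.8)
The plan is to produce, for each pair $x,y\in X$, a single competitor function in $\ow D$ whose increment between $x$ and $y$ equals $\sigma(x,y)$ and whose energy is at most $A^2$; testing this function against the variational definition of $\varrho$ then yields the inequality directly. The natural candidate is the distance-to-a-point function, which is exactly the object controlled by the preceding proposition.

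Concretely, fix $x,y\in X$ and apply the preceding proposition with $U=\{y\}$, so that $\sigma_U(z)=\sigma(z,y)$ for all $z\in X$. That proposition gives $\sigma_U\in\ow D$ together with the energy bound $\ow Q(\sigma_U)\le m(X)=A^2$ when $c\equiv0$, and $\ow Q(\sigma_U)\le m(X)+CS^2=A^2$ when $c\not\equiv0$ (here the hypothesis that $\sigma$ is bounded by $S$ is what makes part (b) applicable). In either case $\ow Q(\sigma_U)\le A^2$. Assuming $A>0$, set $f:=\sigma_U/A$; then $f\in\ow D$ and $\ow Q(f)=\ow Q(\sigma_U)/A^2\le1$, so $f$ is admissible in the definition of $\varrho$. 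Since $\sigma_U(y)=\sigma(y,y)=0$ and $\sigma_U(x)=\sigma(x,y)$, the definition of $\varrho$ yields
$$\varrho(x,y)\ge |f(x)-f(y)| = \frac{|\sigma_U(x)-\sigma_U(y)|}{A} = \frac{\sigma(x,y)}{A},$$
that is, $\sigma(x,y)\le A\varrho(x,y)$. As $x,y$ were arbitrary, this is the assertion.

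It remains only to dispose of the degenerate case $A=0$, i.e.\ $m(X)=0$ (and, in the $c\not\equiv0$ case, $CS^2=0$). Then the intrinsic condition forces $\sigma(x,y)=0$ whenever $b(x,y)>0$, and the triangle inequality together with connectedness propagates this to $\sigma\equiv0$, so the inequality holds trivially. The pseudometric case is handled identically, since both the membership $\sigma_U\in\ow D$ and the energy estimate are furnished verbatim by the preceding proposition; one may pass to the associated quotient metric if desired.

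I do not expect a genuine obstacle here: the entire analytic content—that the distance function $\sigma_U$ lies in $\ow D$ with the stated energy bound—is already established in the preceding proposition, and what remains is the one-line normalization against the supremum defining $\varrho$. The only points deserving a word of care are the choice $U=\{y\}$ as a single point (so that the increment of $\sigma_U$ between $x$ and $y$ is precisely $\sigma(x,y)$ rather than a mere lower bound) and the harmless separate treatment of $A=0$.
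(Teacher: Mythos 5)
Your proposal is correct and follows essentially the same route as the paper's own proof: scale the distance-to-a-point function $\sigma_{\{y\}}$ (the paper uses $\sigma_{\{x\}}$) by $1/A$, invoke the preceding proposition for membership in $\ow D$ and the energy bound, and test the resulting function against the variational definition of $\varrho$. Your extra treatment of the degenerate case $A=0$ is harmless but not needed in the paper's setting, where the measure and $\varrho$ are nondegenerate on a connected graph.
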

\begin{proof}
Let $x \in X$ be arbitrary  and consider the function  $f := \frac{1}{A} \sigma_{\{x\}} $ defining the distance to $x$ scaled by the factor $1/ A $.  Then, $f$ belongs to $\widetilde{D}$ and satisfies $\widetilde{Q} (f) \leq 1$ by the previous proposition. The definition of $\varrho$ then gives, for any $y\in X$, the inequality
$$\varrho(x,y) \geq |f(x) - f(y) |\geq \frac{1}{A} \sigma (x,y).$$
This proves the statement.
\end{proof}

The previous results allow one to establish strong relations between  $\varrho$ and intrinsic (pseudo)metrics. This is done in the next two theorems.

\begin{theorem}\label{rho-to-sigma} Let $(b,c)$ be a connected graph over $(X,m)$  with  $m(X) <\infty.$   Let   $\sigma$ be an intrinsic (pseudo)metric with respect to $m$. Assume $c\equiv0$ or $C:=\sum_{x \in X} c(x) <\infty$ and boundedness of $\sigma$ if $c \not \equiv 0$. Then, there exists a unique continuous map $$\gamma=\gamma_\sigma: \ov{X}^\varrho \longrightarrow \ov{X}^\sigma$$ extending the identity $X\longrightarrow X$. If $\ov{X}^\varrho$ is compact, then $\gamma$ is onto and $\ov{X}^\sigma$ is compact as well.
\end{theorem}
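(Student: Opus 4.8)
The plan is to mimic the proof of Theorem~\ref{extension-d-to-varrho}, replacing the inequality $\varrho^2 \leq d$ used there with the inequality $\sigma \leq A\varrho$ supplied by Proposition~\ref{bound-sigma-via-varrho}. The hypotheses on $c$ and on the boundedness of $\sigma$ are exactly what is needed for that proposition to apply and to guarantee that the constant $A$ (equal to $\sqrt{m(X)}$ if $c\equiv 0$ and to $\sqrt{m(X)+CS^2}$ otherwise) is finite. Thus the analytic content of the statement is already packaged in Proposition~\ref{bound-sigma-via-varrho}, and what remains is a soft topological extension argument.

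First I would dispose of uniqueness. Since $X$ is dense in $\ov{X}^\varrho$ and $\ov{X}^\sigma$ is a metric space, and in particular Hausdorff, any two continuous maps $\ov{X}^\varrho \longrightarrow \ov{X}^\sigma$ that agree on $X$ must agree everywhere; this settles uniqueness of $\gamma$.

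For existence, the point is that Proposition~\ref{bound-sigma-via-varrho} gives $\sigma(x,y) \leq A\varrho(x,y)$ for all $x,y\in X$. Hence the map sending $x\in X$ to its image in $\ov{X}^\sigma$, that is, the identity on $X$ followed, in the pseudometric case, by the canonical quotient onto $\widetilde{X}$, is Lipschitz with constant $A$ as a map from $(X,\varrho)$ into $\ov{X}^\sigma$. In particular it is uniformly continuous. By the standard extension theorem for uniformly continuous maps into a complete metric space, and $\ov{X}^\sigma$ is complete by construction, it extends uniquely to a continuous map $\gamma : \ov{X}^\varrho \longrightarrow \ov{X}^\sigma$ which still restricts to the identity on $X$. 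The only bookkeeping to watch here is the pseudometric case, where one works with the quotient $\widetilde{X}$ and the induced metric $\widetilde{\sigma}$; but the relation $\widetilde{\sigma}([x],[y]) = \sigma(x,y) \leq A\varrho(x,y)$ makes the quotient-composed map Lipschitz just as before, so no difficulty arises.

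Finally, for the compactness statement I would argue exactly as in Theorem~\ref{extension-d-to-varrho}: if $\ov{X}^\varrho$ is compact, then $\gamma(\ov{X}^\varrho)$ is compact, hence closed and complete in $\ov{X}^\sigma$; since it contains $X$ and $X$ is dense in $\ov{X}^\sigma$, it must be all of $\ov{X}^\sigma$, whence $\gamma$ is onto and $\ov{X}^\sigma$ is compact. I do not anticipate any genuine obstacle here beyond the mild pseudometric bookkeeping just mentioned, since Proposition~\ref{bound-sigma-via-varrho}, rather than the present theorem, carries the real weight.
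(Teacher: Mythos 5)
Your proposal is correct and follows essentially the same route as the paper: the paper also derives uniqueness from the density of $X$ in $\ov{X}^\varrho$, obtains existence as a direct consequence of the Lipschitz bound $\sigma \leq A\varrho$ from Proposition~\ref{bound-sigma-via-varrho}, and settles surjectivity and compactness by the same argument used in Theorem~\ref{extension-d-to-varrho}. You have merely spelled out the extension-of-uniformly-continuous-maps step and the pseudometric quotient bookkeeping, which the paper leaves implicit.
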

\begin{proof} Uniqueness of the map is clear by the denseness of $X$ in $\ov{X}^\varrho$. Existence  is a direct consequence of the previous proposition. Finally, the last statement on surjectivity and compactness of the range  follows by standard arguments as already used in the proof of Theorem~\ref{extension-d-to-varrho}.
\end{proof}

The previous theorem gives a relationship between $\varrho$ and one intrinsic metric. In fact, it is possible to compute  $\varrho$ via intrinsic (pseudo)metrics. This is done next.

\begin{theorem}\label{characterization-varrho-via-sigma} Let $(b,c)$ be a connected graph over $X$ with $c\equiv 0$.  Then,
$$\varrho = \sup\{ \sigma : \mbox{$\sigma$ intrinsic (pseudo)metric w.r.t. $m$ with $m (X) \leq  1$}\}.$$
\end{theorem}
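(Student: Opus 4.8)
The plan is to read the asserted identity as a pointwise equality of functions on $X\times X$, where the right-hand side denotes, for each pair $x,y$, the supremum of $\sigma(x,y)$ over all intrinsic pseudometrics $\sigma$ admitting a measure $m$ with $m(X)\leq 1$. I would prove the two inequalities separately, each being a direct consequence of one of the two preceding propositions; no genuinely new work is needed beyond assembling them.

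For the inequality $\sup\{\sigma\}\leq\varrho$, I would fix an arbitrary intrinsic pseudometric $\sigma$ with respect to a measure $m$ satisfying $m(X)\leq 1$. Since $c\equiv 0$ and $m(X)\leq 1<\infty$, Proposition~\ref{bound-sigma-via-varrho} applies directly and gives $\sigma\leq \sqrt{m(X)}\,\varrho\leq\varrho$, using $\sqrt{m(X)}\leq 1$. As the intrinsic pseudometric $\sigma$ was arbitrary among the admissible ones, taking the supremum yields $\sup\{\sigma(x,y)\}\leq\varrho(x,y)$ for all $x,y\in X$.

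For the reverse inequality $\varrho\leq\sup\{\sigma\}$, I would produce, for each pair $x,y$ and each $\eps>0$, a single admissible intrinsic pseudometric whose value at $(x,y)$ nearly matches $\varrho(x,y)$. By the definition of $\varrho$, choose $f\in\ow D$ with $\ow Q(f)\leq 1$ and $|f(x)-f(y)|\geq\varrho(x,y)-\eps$. Proposition~\ref{sigma-via-f} then says precisely that $\sigma_f(\cdot,\cdot)=|f(\cdot)-f(\cdot)|$ is an intrinsic pseudometric with respect to the measure $M_f$, whose total mass is $M_f(X)=\ow Q(f)\leq 1$; here the hypothesis $c\equiv 0$ is exactly what makes $M_f(X)$ equal to the energy $\ow Q(f)$ rather than $\ow Q(f)$ plus a potential term. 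Hence $\sigma_f$ is admissible for the supremum and satisfies $\sigma_f(x,y)=|f(x)-f(y)|\geq\varrho(x,y)-\eps$, so $\sup\{\sigma(x,y)\}\geq\varrho(x,y)-\eps$; letting $\eps\to 0$ gives $\sup\{\sigma(x,y)\}\geq\varrho(x,y)$. Alternatively, one may invoke Remark~(a), which shows that the supremum defining $\varrho$ is attained, to get the cleaner statement that a single $f$ already yields $\sigma_f(x,y)=\varrho(x,y)$.

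I do not expect a real obstacle: the content lies entirely in matching the normalization $\ow Q(f)\leq 1$ in the definition of $\varrho$ with the mass constraint $m(X)\leq 1$, which is furnished by the identity $M_f(X)=\ow Q(f)$ valid when $c\equiv 0$. The only point deserving a remark is that the measure $M_f$ produced by Proposition~\ref{sigma-via-f} need not have full support; however, the definition of an intrinsic pseudometric is meaningful for any $m:X\longrightarrow[0,\infty)$, so this causes no difficulty, and connectedness of $(b,c)$ (together with $\varrho(x,y)<\infty$ from Lemma~\ref{l:DLip}) ensures all quantities involved are finite.
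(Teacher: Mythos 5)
Your proof is correct and follows exactly the paper's own argument: the inequality $\sup\{\sigma\}\leq\varrho$ via Proposition~\ref{bound-sigma-via-varrho}, and the reverse inequality by feeding near-optimizers $f$ of the definition of $\varrho$ into Proposition~\ref{sigma-via-f}. One small inaccuracy in your commentary (harmless for the proof): the identity $M_f(X)=\ow Q(f)$ holds for arbitrary $c$, since the potential term $c(x)|f(x)|^2$ is built into the definition of $M_f$, so $c\equiv0$ is not what makes the second inequality work; it is needed only in the first inequality, where for $c\not\equiv0$ Proposition~\ref{bound-sigma-via-varrho} would additionally require boundedness of $\sigma$ and summability of $c$, and would yield the constant $\sqrt{m(X)+CS^2}$, which need not be $\leq 1$.
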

\begin{proof}  Denote the function on the right hand side of the claimed equality by $\varrho^*$.

\smallskip

By Proposition \ref{bound-sigma-via-varrho}, we have  $\varrho \geq \sigma$ for any intrinsic (pseudo)metric with respect to  a measure $m$ with $m(X) \leq 1$. This gives $\varrho\geq \varrho^*$.

Conversely, by Proposition \ref{sigma-via-f}, for any $f\in \ow D$ with $\ow Q (f) \leq 1$ we obtain a  pseudometric $\sigma_f$ which is intrinsic with respect to a measure $M_f$ with $M_f (X) = \ow Q (f) \leq 1$ and
$$|f(x) - f(y) | = \sigma_f (x,y).$$
The definition of $\varrho$ then gives  $\varrho = \sup\{ \sigma_f : \ow Q (f) \leq 1\}$ and this implies $\varrho \leq \varrho^*$.
\end{proof}

\textbf{Remarks.} (a)   The theorem is in line with the ideas that  $\varrho$ is essentially the smallest metric making all functions in $\ow D$ Lipschitz continuous and that a metric is intrinsic if and only if all of its Lipschitz functions belong to the form domain with gradients uniformly bounded (in a certain sense).

(b)  In the proof of the theorem we used $\sigma_f$ for $f \in \ow D$. These are, in general,  not metrics but pseudometrics. For this reason, we have taken the supremum in the theorem  over pseudometrics.
  Let us note, however, that one could actually take the supremum over  metrics as well. The reason is that any pseudometric of the form $\sigma_f$ can be made into an intrinsic  metric by arbitrary small changes to $f$. This follows by a simple induction argument after one notices that for any $x\in X$ and $f \in \ow D$ the function   $$ \R \longrightarrow \R, \lambda \mapsto \ow Q (f + \lambda\delta_x) = \ow Q (f) + 2 \lambda \ow Q(\delta_x,f) + \lambda^2 \ow Q (\delta_x)$$
  is continuous.

(c) In general, $\varrho$ is not an intrinsic metric. This can be seen from Example \ref{e:C_does_not_imply_B}. Indeed, in this example the underlying graph is relatively compact with respect to any intrinsic metric but is not relatively compact with respect to $\varrho$. This also shows that, in general, the supremum above  is not a maximum.

\medskip

The following lemma provides a relationship  between intrinsic metrics and $d$.

As usual, the minimum of two real numbers $p$ and $q$ is denoted by
  $$p\wedge q :=\min\{p,q\}.$$

\begin{lemma}\label{l:bound_sigma-via-d}Let $(b,c)$ be a graph over $(X,m)$ and $\si$ an intrinsic metric. Then, for any path $(x_{0},\ldots,x_{n})$ connecting $x,y\in X$, we have
\begin{align*}
    \si(x,y) \leq \sqrt{2} \sum_{k=0}^{n-1}\left(\frac{m(x_k) \wedge m(x_{k+1})}{b(x_{k},x_{k+1})}\right)^\frac{1}{2}.
\end{align*}
In particular, we then have
$$\sigma (x,y)^2 \leq 2 \ m(\{x_0,\ldots, x_n\}) \sum_{k=0}^{n-1} \frac{1}{ b(x_k,x_{k+1}) }.$$
\end{lemma}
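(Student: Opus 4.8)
The plan is to establish the bound one edge at a time, using the defining inequality of an intrinsic metric, and then chain the estimates along the path by the triangle inequality; the second inequality will then follow from Cauchy--Schwarz.

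First I would fix an edge $(x_{k},x_{k+1})$ of the path, so that $b(x_{k},x_{k+1})>0$. Applying the definition of an intrinsic metric at the vertex $x_{k}$ and retaining only the single summand with $y=x_{k+1}$ gives
\[
\frac{1}{2}\,b(x_{k},x_{k+1})\,\sigma^{2}(x_{k},x_{k+1})\leq\frac{1}{2}\sum_{y\in X}b(x_{k},y)\sigma^{2}(x_{k},y)\leq m(x_{k}),
\]
so that $\sigma^{2}(x_{k},x_{k+1})\leq 2m(x_{k})/b(x_{k},x_{k+1})$. Running the same argument at $x_{k+1}$ yields the analogous bound with $m(x_{k+1})$ in place of $m(x_{k})$, and keeping the smaller of the two produces
\[
\sigma(x_{k},x_{k+1})\leq\sqrt{2}\left(\frac{m(x_{k})\wedge m(x_{k+1})}{b(x_{k},x_{k+1})}\right)^{\frac{1}{2}}.
\]

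Next I would sum these edge estimates along the path. Since $\sigma$ is a metric, the triangle inequality gives $\sigma(x,y)=\sigma(x_{0},x_{n})\leq\sum_{k=0}^{n-1}\sigma(x_{k},x_{k+1})$, and substituting the per-edge bound is exactly the first displayed inequality of the lemma. For the `in particular' statement, I would apply the Cauchy--Schwarz inequality to this sum, writing each summand as $(m(x_{k})\wedge m(x_{k+1}))^{1/2}\cdot b(x_{k},x_{k+1})^{-1/2}$, which bounds the sum by $\bigl(\sum_{k=0}^{n-1}(m(x_{k})\wedge m(x_{k+1}))\bigr)^{1/2}\bigl(\sum_{k=0}^{n-1}b(x_{k},x_{k+1})^{-1}\bigr)^{1/2}$. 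Because the vertices of a path are pairwise distinct, $\sum_{k=0}^{n-1}(m(x_{k})\wedge m(x_{k+1}))\leq\sum_{k=0}^{n-1}m(x_{k})\leq m(\{x_{0},\ldots,x_{n}\})$, and squaring the resulting estimate gives the second inequality.

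The computation is entirely elementary, and there is no real obstacle. The only point that requires a moment of care is the second statement: a naive squaring of the first inequality would go the wrong way, since it produces the square of a sum of $b^{-1/2}$-terms rather than the sum of $b^{-1}$-terms. One therefore genuinely needs the Cauchy--Schwarz rearrangement, together with the observation that the pairwise distinctness of the path vertices lets the total mass $m(\{x_{0},\ldots,x_{n}\})$ dominate $\sum_{k}m(x_{k})$.
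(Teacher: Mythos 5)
Your proposal is correct and follows essentially the same route as the paper's proof: a per-edge bound from the intrinsic inequality at both endpoints (giving the minimum of the two masses), chained by the triangle inequality, followed by Cauchy--Schwarz and the observation that the pairwise distinctness of the path vertices bounds $\sum_{k}\bigl(m(x_k)\wedge m(x_{k+1})\bigr)$ by $m(\{x_0,\ldots,x_n\})$. The only difference is organizational: the paper applies the triangle inequality first and then estimates each summand, while you derive the edge estimate first and then sum, which is the same argument.
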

\begin{proof}
We compute directly, using the triangle inequality and  the intrinsic property,
\begin{eqnarray*}
    \si(x,y)&\leq &\sum_{j=0}^{n-1}\si(x_{j},x_{j+1})\\
     & = &  \sum_{j=0}^{n-1} \frac{1}{b(x_{j},x_{j+1})^{\frac{1}{2}}} (b(x_{j},x_{j+1})\si(x_{j},x_{j+1})^{2})^{\frac{1}{2}}\\
    &\leq & \sqrt{2} \sum_{j=0}^{n-1} \frac{\left(m(x_{j})\wedge m(x_{j+1})\right)^{\frac{1}{2}}}{b(x_{j},x_{j+1})^{\frac{1}{2}}}.
\end{eqnarray*}
This shows the first claim.  As for the second claim, we note that the Cauchy-Schwarz inequality directly gives
\begin{eqnarray*}
\sum_{k=0}^{n-1}\left(\frac{m(x_k) \wedge m(x_{k+1})}{b(x_{k},x_{k+1})}\right)^\frac{1}{2}&\leq& \left(\sum_{k=0}^{n-1} m(x_k)\wedge m(x_{k+1}) \right)^\frac{1}{2} \left( \sum_{k=0}^{n-1} \frac{1}{b(x_k,x_{k+1})}\right)^\frac{1}{2}.
\end{eqnarray*}
Combining this inequality with the first statement, one can now easily obtain the second claim by squaring.
\end{proof}

\textbf{Remark.} Let $\ell_m (x,y)= \sqrt{m(x)m(y)}/b(x,y)$ be the length function  introduced earlier and $d_{m,1/2}$ be the metric associated to the square root of this length function. Then,  the lemma  easily gives
\[\si(x,y) \leq \sqrt{2} \ d_{m,1/2}(x,y).\]
Indeed,  notice that $m(x_k)\wedge m(x_{k+1}) \leq \sqrt{m(x_k)m(x_{k+1})}$ and then take the infimum over all corresponding paths.

\medskip

 We will next discuss  various consequences of the previous lemma.  As a first consequence, we obtain  a condition for boundedness of intrinsic metrics on neighbors. In the case of trees even a converse is valid, see Section~\ref{counter}.

\begin{corollary}\label{l:boundedness_sigma} Let $(b,c)$ be a graph over $(X,m)$. If $$2a:= \inf_{x,y\in X, b(x,y)>0}\frac{b(x,y)}{m(x)\wedge m(y)}>0,$$
then every intrinsic metric is bounded  on neighbors by $a^{-\frac{1}{2}}$.
\end{corollary}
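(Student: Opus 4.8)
The plan is to obtain this as the one-edge specialization of Lemma~\ref{l:bound_sigma-via-d}. Since the statement concerns neighbors, I fix $x,y\in X$ with $b(x,y)>0$. The single-edge sequence $(x,y)$ is then a path connecting $x$ and $y$, so applying the lemma with $n=1$ gives immediately
\begin{align*}
\sigma(x,y) \leq \sqrt{2}\left(\frac{m(x)\wedge m(y)}{b(x,y)}\right)^{\frac{1}{2}}.
\end{align*}
This reduces everything to estimating the single term on the right.

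Next I would feed in the hypothesis. The assumption
\begin{align*}
2a = \inf_{x,y\in X,\, b(x,y)>0}\frac{b(x,y)}{m(x)\wedge m(y)} > 0
\end{align*}
says precisely that $b(x,y)/(m(x)\wedge m(y)) \geq 2a$ for every pair of neighbors, equivalently $(m(x)\wedge m(y))/b(x,y) \leq (2a)^{-1}$. Substituting into the displayed bound and simplifying $\sqrt{2}\,(2a)^{-1/2} = a^{-1/2}$ yields $\sigma(x,y)\leq a^{-1/2}$. Since $x,y$ were arbitrary neighbors, this is the uniform bound claimed, and it holds for \emph{every} intrinsic metric $\sigma$ because Lemma~\ref{l:bound_sigma-via-d} makes no assumption on $\sigma$ beyond being intrinsic.

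There is essentially no obstacle: the entire content is already contained in Lemma~\ref{l:bound_sigma-via-d}, and the corollary is simply its evaluation on the shortest admissible path. The only point meriting a moment's care is the cancellation of the factor $\sqrt{2}$, which originates in the normalizing constant $1/2$ in the definition of an intrinsic metric; it combines cleanly with the $(2a)^{-1/2}$ coming from the hypothesis to produce exactly $a^{-1/2}$, with no leftover constant.
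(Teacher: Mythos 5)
Your proposal is correct and is essentially the paper's own proof: the paper also specializes Lemma~\ref{l:bound_sigma-via-d} to the one-edge path, obtaining $\sigma(x,y)^2 \leq 2\,\frac{m(x)\wedge m(y)}{b(x,y)}$, and then feeds in the hypothesis to get the bound $a^{-\frac{1}{2}}$. The arithmetic, including the cancellation of the factor $\sqrt{2}$ against $(2a)^{-1/2}$, matches exactly.
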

\begin{proof}
From Lemma~\ref{l:bound_sigma-via-d} we infer that
\[\si(x,y)^2 \leq 2 \ \frac{m(x) \wedge m(y)}{b(x,y)}.\]
Hence, the statement follows.
\end{proof}


\medskip

We also get the following immediate estimate. We remark that, unlike in  Proposition~\ref{bound-sigma-via-varrho}, we have no requirements on $c$ (as $c$ is neither  involved in $\si$ nor in  $d$).

\begin{corollary}\label{c:bound_sigma-via-d}Let $(b,c)$ be a graph and $\sigma$ an intrinsic metric for a finite measure $m$. Then, for all $x,y\in X$
\begin{align*}
    \si(x,y)^{2}\leq 2 \ m(X)d(x,y).
\end{align*}
\end{corollary}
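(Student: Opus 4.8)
The plan is to read the result off directly from the second inequality established in Lemma~\ref{l:bound_sigma-via-d}, which already carries out all the analytic work via the triangle inequality and the intrinsic property. That inequality asserts that, for any path $(x_{0},\dots,x_{n})$ connecting $x$ and $y$,
$$\sigma(x,y)^2 \leq 2\, m(\{x_0,\dots,x_n\}) \sum_{k=0}^{n-1} \frac{1}{b(x_k,x_{k+1})}.$$
So the entire task is to pass from this path-by-path estimate to the global one.

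First I would bound the measure term crudely: since $\{x_0,\dots,x_n\}\subseteq X$ and $m$ is a measure, we have $m(\{x_0,\dots,x_n\})\leq m(X)<\infty$. This replaces the path-dependent mass by the fixed total mass, thereby decoupling it from the remaining geometric sum. Next I would recognize that $\sum_{k=0}^{n-1} 1/b(x_k,x_{k+1})$ is precisely the $\ell$-length $\ell(\gamma)$ of the path $\gamma=(x_0,\dots,x_n)$ for the length function $\ell(x,y)=1/b(x,y)$, i.e.\ the length defining $d=d_1$. Hence, for \emph{every} path $\gamma$ from $x$ to $y$,
$$\sigma(x,y)^2 \leq 2\, m(X)\, \ell(\gamma).$$
Finally I would take the infimum over all such paths $\gamma$ and invoke the definition $d(x,y)=\inf_\gamma \ell(\gamma)$ to conclude $\sigma(x,y)^2\leq 2\, m(X)\, d(x,y)$.

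There is no genuine obstacle here, as the corollary is an immediate consequence of Lemma~\ref{l:bound_sigma-via-d}. The only point deserving a word of care is the legitimacy of passing to the infimum over paths: this is sound precisely because, after the first step, the prefactor $2\,m(X)$ no longer depends on $\gamma$, so the bound $2\,m(X)\,\ell(\gamma)$ holds uniformly and the infimum may be taken on the right-hand side without affecting the left. Note also, as the accompanying remark emphasizes, that no hypothesis on $c$ is needed, since neither $\sigma$ nor $d$ involves the killing term.
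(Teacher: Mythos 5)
Your proposal is correct and coincides with the paper's own proof: the paper likewise obtains the corollary by estimating $m(\{x_0,\ldots,x_n\})\leq m(X)$ in the second inequality of Lemma~\ref{l:bound_sigma-via-d} and then taking the infimum over all paths connecting $x$ and $y$. Your additional remark on why the infimum may be taken (the prefactor $2\,m(X)$ is path-independent) is a fine point of care that the paper leaves implicit.
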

\begin{proof} We obtain the statement by estimating $m(\{x_0,\ldots, x_n\}\leq m(X)$ in Lemma~\ref{l:bound_sigma-via-d}  and then taking the infimum over all paths. 
\end{proof}



As we see above, the assumption that an intrinsic metric $\sigma$ is bounded whenever $c$ does not vanish identically is important. Since for an intrinsic metric $\si$ every (pseudo)metric $\si'$ with $\si'\leq \si$ is obviously  intrinsic as well, there  always exist bounded intrinsic metrics.
From Corollary~\ref{c:bound_sigma-via-d} we now   obtain  directly the following criterion for boundedness of intrinsic metrics.

  \begin{corollary} \label{c:bound-intrinsic} Assume $(b,c)$ is such that $\diam \overline{X}^{d}<\infty$. Then, every intrinsic metric with respect to a finite measure is bounded.
  \end{corollary}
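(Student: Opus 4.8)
The plan is to deduce this directly from Corollary~\ref{c:bound_sigma-via-d}, which already provides a quantitative comparison between an arbitrary intrinsic metric and the metric $d$. The whole statement should fall out with essentially no extra work, so the task is mainly to assemble the right ingredients rather than to surmount a genuine difficulty.

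First I would record that, since $X$ is dense in its completion $\overline{X}^d$ and $d$ extends continuously to $\overline{X}^d$, the diameter of the completion agrees with the supremum of $d$ over pairs of vertices:
$$\diam \overline{X}^d = \sup\{d(x,y) : x,y\in X\} = \diam_d(X).$$
Thus the hypothesis $\diam\overline{X}^d<\infty$ is precisely the statement that $d$ is bounded on $X\times X$.

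Next, let $\sigma$ be any intrinsic metric with respect to a finite measure $m$, so that $m(X)<\infty$. Corollary~\ref{c:bound_sigma-via-d} gives, for all $x,y\in X$, the inequality
$$\sigma(x,y)^2 \leq 2\, m(X)\, d(x,y).$$
Inserting the uniform bound $d(x,y)\leq \diam\overline{X}^d$ from the previous step yields
$$\sigma(x,y)^2 \leq 2\, m(X)\, \diam\overline{X}^d$$
for all $x,y\in X$. Since the right-hand side is finite and independent of $x$ and $y$, the metric $\sigma$ is bounded, with explicit bound $\sqrt{2\, m(X)\, \diam\overline{X}^d}$, which is exactly the claim.

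As already indicated, there is no real obstacle: Corollary~\ref{c:bound_sigma-via-d} does all the work, and the only point needing a word of justification is the identification of $\diam\overline{X}^d$ with the supremum of $d$ over pairs of vertices, which is the standard fact that passing to a completion does not change the diameter.
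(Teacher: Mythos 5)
Your proposal is correct and follows exactly the route the paper takes: the paper derives this corollary directly from Corollary~\ref{c:bound_sigma-via-d} with no further argument, just as you do. Your only addition is the (correct and worthwhile) remark that $\diam \overline{X}^{d}$ equals the supremum of $d$ over pairs of vertices, since passing to the completion does not change the diameter.
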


  The corollary can be substantially strengthened in the case  of locally finite graphs.

\begin{theorem}\label{diam-d-finite-implies-C} Let  $(b,c)$ be a locally finite connected  graph over $X$   such that $\diam \overline{X}^{d}<\infty$.  Then, $X$ is totally bounded with respect to any metric which is intrinsic with respect to a finite measure.
\end{theorem}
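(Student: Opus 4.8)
The plan is to produce, for every $\epsilon>0$, a finite $\epsilon$-net for $X$ with respect to a given metric $\sigma$ that is intrinsic for a finite measure $m$; having such a net for each $\epsilon$ is exactly total boundedness. The tempting route is to invoke Corollary~\ref{c:bound_sigma-via-d}, i.e. $\sigma(x,y)^2\le 2\,m(X)\,d(x,y)$, and simply transfer total boundedness from $d$ to $\sigma$. This is doomed, because finite $d$-diameter together with local finiteness does \emph{not} force total boundedness with respect to $d$ itself: a comb whose spine $v_0,v_1,\dots$ has summable edge lengths (so the spine converges) but whose pendant edges $v_k\!-\!w_k$ all have length $1$ has finite $d$-diameter, yet the pendant vertices $\{w_k\}$ form an infinite, uniformly $d$-separated set. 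What saves the statement is that these ``tail'' vertices carry little measure, and intrinsic distances are governed by the measure along a connecting path rather than by $d$ alone. I would therefore use the sharp estimate of Lemma~\ref{l:bound_sigma-via-d} in its second form, $\sigma(x,y)^2\le 2\,m(\{x_0,\dots,x_n\})\sum_k b(x_k,x_{k+1})^{-1}$, in which the measure factor ranges only over the vertices of the chosen path.

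Concretely, fix a base vertex $o$ and set $D:=\diam\overline{X}^d<\infty$, so that $d(o,x)\le D$ for all $x\in X$. Given $\epsilon>0$, choose $\delta>0$ with $2\delta(D+1)<\epsilon^2$. Since $m(X)<\infty$, there is a finite set $F_0\ni o$ with $m(X\setminus F_0)<\delta$. By local finiteness the set $N(F_0)$ of all neighbors of vertices in $F_0$ is finite, so $F_1:=F_0\cup N(F_0)$ is finite. I claim $F_1$ is an $\epsilon$-net for $\sigma$.

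For $x\in F_1$ there is nothing to do. For $x\notin F_1$ (in particular $x\notin F_0$), use connectedness to pick a path $(o=x_0,x_1,\dots,x_n=x)$ whose $d$-length $\sum_{k=0}^{n-1}b(x_k,x_{k+1})^{-1}$ is at most $d(o,x)+1\le D+1$, and let $x_j$ be the last vertex of this path lying in $F_0$ (it exists since $x_0=o\in F_0$ and $x_n=x\notin F_0$, and then $j<n$). Then $x_{j+1}$ is a neighbor of $x_j\in F_0$, hence $x_{j+1}\in N(F_0)\subseteq F_1$, while by maximality of $j$ the vertices $x_{j+1},\dots,x_n$ all lie in $X\setminus F_0$. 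Applying Lemma~\ref{l:bound_sigma-via-d} to the sub-path from $x_{j+1}$ to $x$ gives
\[
\sigma(x,x_{j+1})^2\le 2\,m(\{x_{j+1},\dots,x_n\})\sum_{k=j+1}^{n-1}\frac{1}{b(x_k,x_{k+1})}\le 2\,\delta\,(D+1)<\epsilon^2,
\]
since $m(\{x_{j+1},\dots,x_n\})\le m(X\setminus F_0)<\delta$ and the edge-length sum is bounded by the $d$-length of the whole path. Thus $\sigma(x,F_1)\le\sigma(x,x_{j+1})<\epsilon$, so $\{B_\sigma(z,\epsilon):z\in F_1\}$ covers $X$ and $X$ is totally bounded with respect to $\sigma$.

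The one genuinely delicate point is recognizing that the straightforward comparison with $d$ is insufficient and that one must localize the measure along paths; once that is seen, local finiteness is precisely what guarantees that the ``entry vertices'' $x_{j+1}$ lie in the finite set $N(F_0)$, and finiteness of $\diam\overline{X}^d$ is precisely what bounds the edge-length sum uniformly. I expect this measure-localization idea to be the main conceptual obstacle, with the remaining bookkeeping being routine.
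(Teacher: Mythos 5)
Your proof is correct and takes essentially the same route as the paper's: both hinge on the localized estimate in Lemma~\ref{l:bound_sigma-via-d}, both pick a finite set carrying almost all the measure (the paper uses combinatorial balls $B_n$ around $o$, you use an arbitrary finite $F_0$), enlarge it by one neighborhood step using local finiteness, and then cut a path of uniformly bounded $d$-length at its last exit from that set so that the tail segment has small measure. The remaining differences (balls versus a general finite set, orientation of the path, the constant $2\diam\overline{X}^{d}$ versus $D+1$) are cosmetic.
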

\begin{proof}  Let $m$ be a finite measure on $X$ and $\sigma$ be an intrinsic metric with respect to $m$. Chose $\varepsilon >0$ arbitrary. We have to show that $X$ can be covered by finitely many $\sigma$-balls with radius $\varepsilon$.

 Fix an arbitrary vertex $o\in X$. For $n\in \N$, denote by $B_n$ the set of all vertices in $X$ which can be reached from $o$ in not more than $n$-steps (i.e., those vertices $x\in X$ for which there exist $x_1,\ldots, x_k\in X$ with $k\leq n$, and $x_1 = o$, $x_k =x$ and $b(x_j,x_{j+1})>0$, $j=1,\ldots, k-1$). Then, each $B_n$ is finite (as the graph is locally finite). Moreover, the union of the $B_n$  equals $X$. As  $m (X)$ is finite, we can now chose  $n$ large enough such that
 $$ 4 \: m (X\setminus B_n) \:    \diam\overline{X}^{d} \leq \varepsilon^2.$$
 Set $N:= n+1$. Then, $B_{N}$ is finite.

 \smallskip

 \textit{Claim.} Any  point of $X$ has $\sigma$ distance to $B_{N}$ less than $\varepsilon$.

\emph{Proof of the claim}:
 If the point $p\in X$ belongs to $B_{N}$ this is clear. Otherwise, we can chose a path $p=x_1,\ldots, x_k = o$  from $p$ to $o$ such that  $k\geq N+1$ and
$$ \sum_{j=1}^{k-1} \frac{1}{b(x_j, x_{j+1})} < 2 \diam \overline{X}^{d}.$$
Let $s\in \{1,\ldots, k\}$ be the smallest  integer such that $x_s$ belongs to $B_N$. Then, $x_s$ does not belong to $B_{n}$ (otherwise $x_{s-1}$ would belong to $B_{n+1} = B_N$, which is a contradiction). Hence, $m (\{x_1, \ldots, x_s\}) \leq m (X \setminus B_n)$ holds. Invoking   Lemma \ref{l:bound_sigma-via-d}, we now obtain
\begin{eqnarray*}
\sigma (p,x_s)  & \leq &  \left( 2 m(\{x_1,\ldots, x_s\}) \sum_{j=1}^{s-1} \frac{1}{b(x_j, x_{j+1})}\right)^{1/2} \\
& \leq &  \left(  m(X\setminus B_n) \:  4 \:  \diam \overline{X}^{d}\right)^{1/2}\\
& < &  \varepsilon.
\end{eqnarray*}
Here, we used the definition of $n$ in the last line. This proves the claim.

\smallskip

Due to local finiteness, the set $B_N$ is finite. Thus, the statement of the theorem follows from the claim.
\end{proof}


\subsection{Effective resistance and $\varrho$ via  electrical networks} \label{seceffres}

In this subsection, we express the metric $\varrho$ defined above in the standard terminology of electrical networks as in textbooks like  \cite{JP2, LyonsBook}. In this context, we also highlight  Kigami's work \cite{Kig01,Kig}, which gives a treatment of Dirichlet forms on (metric)  graphs centered around the  resistance metric.

\bigskip

We start by introducing a quantity which can be related to the free effective resistance in the case of locally finite graphs with $c\equiv0$ (see Proposition \ref{rho-free} below). Let $(b,c)$ be a graph over $X$. Define $r : X\times X\longrightarrow [0,\infty)$ via
$$r(x,y)= \sup\{ 1/\ow Q(g) : g\in \ow D,  |g(x) - g(y)|=1\}$$
for $x\neq y$ and $r(x,x)=0$.

\begin{theorem} \label{t:rhofree} Let $(b,c)$ be a connected graph over $X$. Then,
$r= \varrho^2$.
\end{theorem}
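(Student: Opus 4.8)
The plan is to prove $r = \varrho^2$ by a direct homogeneity (scaling) argument relating the two variational problems, after disposing of the trivial diagonal case. For $x = y$ both sides vanish by definition, so I would fix distinct $x, y \in X$ and, for brevity, write $\varrho = \varrho(x,y)$ and $r = r(x,y)$. The whole point is that $\ow Q$ is $2$-homogeneous and the two suprema are ``inverse'' normalizations of the same quantity $|f(x)-f(y)|$, so one passes between them by rescaling the competitor.

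First I would establish $r \geq \varrho^2$. Take any $f \in \ow D$ with $\ow Q(f) \leq 1$. If $|f(x) - f(y)| = 0$, that $f$ contributes nothing. Otherwise set $g := f/|f(x)-f(y)|$, so $g \in \ow D$, $|g(x)-g(y)| = 1$, and $\ow Q(g) = \ow Q(f)/|f(x)-f(y)|^2$. Hence $1/\ow Q(g) = |f(x)-f(y)|^2/\ow Q(f) \geq |f(x)-f(y)|^2$, using $\ow Q(f) \leq 1$. Since $g$ is admissible for the supremum defining $r$, this gives $r \geq |f(x)-f(y)|^2$, and taking the supremum over all admissible $f$ yields $r \geq \varrho^2$.

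For the reverse inequality $\varrho^2 \geq r$, take any $g \in \ow D$ with $|g(x)-g(y)| = 1$. Here I would first note that $\ow Q(g) > 0$: if $\ow Q(g) = 0$, then (since $c \geq 0$) the $b$-term of $\ow Q(g)$ vanishes, which by connectedness, via a path from $x$ to $y$, forces $g$ to be constant and hence $|g(x)-g(y)| = 0$, contradicting $|g(x)-g(y)| = 1$. Thus I may set $f := g/\ow Q(g)^{1/2}$, which satisfies $\ow Q(f) = 1$ and $|f(x)-f(y)| = \ow Q(g)^{-1/2}$. As $f$ is admissible for $\varrho$, we obtain $\varrho \geq \ow Q(g)^{-1/2}$, i.e.\ $\varrho^2 \geq 1/\ow Q(g)$, and taking the supremum over admissible $g$ gives $\varrho^2 \geq r$.

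Combining the two inequalities yields $r = \varrho^2$. The only genuine subtlety — the main obstacle, though a mild one — is the non-degeneracy step $\ow Q(g) > 0$ under the constraint $|g(x)-g(y)| = 1$, which is exactly where connectedness is used; everything else is pure homogeneity of $\ow Q$ under scaling and the elementary observation that normalizing by $\ow Q(g)^{1/2}$ is legitimate precisely because that quantity is strictly positive.
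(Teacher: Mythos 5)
Your proof is correct and takes essentially the same approach as the paper: both directions rest on the $2$-homogeneity of $\ow Q$, rescaling competitors for one variational problem into competitors for the other. The only differences are cosmetic — the paper phrases the argument via near-optimal ($\eps$-approximate) competitors while you rescale every competitor and then take suprema, and you make explicit the non-degeneracy step $\ow Q(g)>0$ (via connectedness) that the paper leaves implicit.
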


\begin{proof}
Let $x,y\in X$.
Recall that $\varrho(x,y)$ is defined as
$$\varrho(x,y)=\sup\{|f(x)-f(y)| : f\in \ow D,\ow Q({f})\leq 1\}.$$
Let $\eps>0$ and $f\in \ow D$, $\ow Q({f})\leq1$, be a function such that $s:=|f(x)-f(y)| \ge \varrho(x,y)-\eps>0$. Then, the function $f/s$ is a candidate for $g$ in the definition of $r$ above.  We estimate
$$r(x,y)\geq \frac 1{\ow Q(f/s)}= \frac{s^{2}}{\ow Q(f)}\geq s^{2}\geq(\varrho(x,y)-\eps)^2.$$
As $\eps>0$  is arbitrary we infer that
$$ r(x,y) \geq \varrho(x,y)^2.$$

Conversely, let $\eps>0$ and $g\in \ow D$, $|g(x)-g(y)|= 1$, be a function such that $1/\ow Q({g})\geq r(x,y)-\eps\ge0$. Then, the function $g/\ow Q(g)^{\frac{1}{2}}$ satisfies $\ow Q(g/\ow Q(g)^{\frac{1}{2}} ) = 1$, and, so, it is a candidate for $f$ in the definition of $\varrho(x,y)$. This means that
$$\varrho(x,y)\geq \frac{|g(x)-g(y)|}{\ow Q(g)^{\frac{1}{2}}}= \frac{1}{\ow Q(g)^{\frac{1}{2}}}\ge ({r(x,y)-\eps})^{\frac{1}{2}}.$$
Combining these inequalities  we obtain  $r= \varrho^2$ as claimed.
\end{proof}

\textbf{Remark.}  As discussed in the first remark of Subsection \ref{Metrics}, the supremun in the definition of $\varrho (x,y)$  can be replaced by a maximum. Using this in the above proof, we can easily infer that the supremum can also be replaced by a maximum in the definition of $r$.

\medskip

We next sketch the connection to the free effective resistance metric. For a  graph $(b,0)$ over a finite set $X$, the map $r$ is called
 \emph{effective resistance}. It is shown in \cite[Theorem~2.1.14]{Kig01} and \cite[Lemma~4.5]{JP2} that $r$ is a metric  and, thus, it is called the \emph{effective resistance metric} in this case.

This definition is, in fact, just one of the many well-known equivalent definitions of effective resistance. The physical intuition behind this is that the electrical potential on $X$ induced by an external voltage source applied to $x$ and $y$ is the energy minimizer among all functions with the given boundary conditions; recall the formula $E= V^2/R_{\mathrm{eff}}$ from elementary physics, expressing the energy $E$ in terms of the imposed voltage $V$ and effective resistance $R_{\mathrm{eff}}$.

When $X$ is infinite, there are at least two standard notions of effective resistance; the one most relevant to this paper, which coincides with $r$ described above, is the free effective resistance defined as follows.

Given a finite subset $X'$ of $X$, we think of the restriction $b'$ of $b$ to $X'$ as a weighted graph and call it the \emph{subgraph} induced by $X'$.
If $(b,0) $ is additionally locally finite, we define the \emph{free effective resistance} between $x$ and $y$ as the infimum over the effective resistances between $x$ and $y$ for all finite subgraphs containing $x$ and $y$.  We denote this quantity by  $r_{\mathrm{free}}(x,y)$.

In this way, $r_{\mathrm{free}}$ is defined via an approximation procedure by finite graphs. It turns out that it can, in fact, be expressed by virtually the same term as $r$ in the case of finite graphs above. This is well-known. For example  \cite[Theorem 4.2]{JP2} combined with \cite[Theorem 4.12]{JP2} or \cite[Theorem 2.3.4]{Kig01}
(see \cite[Exercise~9.41]{LyonsBook}  as well) show the following.

\begin{proposition} \label{rho-free} Let $(b,0)$ be a locally finite connected graph over $X$. Then, $r_{\mathrm{free}} (x,x) =0$ and, for every pair $x,y$ of distinct vertices in $X$, the equality
$$r_{\mathrm{free}}(x,y)=\max\{ 1/\ow Q(g) : g\in \ow D,  |g(x) - g(y)|=1\}=r(x,y)$$
holds. Moreover,  the map
$r$ is a metric on  $X$.
\end{proposition}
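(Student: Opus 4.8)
The plan is to split the proposition into three parts: the replacement of the supremum by a maximum, the identity $r_{\mathrm{free}}=r$, and the metric property of $r$. The first part is immediate from what precedes. By Theorem~\ref{t:rhofree} we have $r=\varrho^2$, and the remark following it already records that the supremum defining $\varrho(x,y)$, hence also the one defining $r(x,y)$, is attained; this settles the middle equality $\max\{1/\ow Q(g) : g\in\ow D,\ |g(x)-g(y)|=1\}=r(x,y)$ and, since $\varrho$ is a metric, also shows $r(x,y)>0$ for $x\neq y$ while $r(x,x)=r_{\mathrm{free}}(x,x)=0$. The real work lies in the identity $r_{\mathrm{free}}=r$.

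Before comparing the two quantities I would rewrite both in capacity form. Since $c\equiv0$, constants have zero energy, so scaling and translating a competitor gives
\[ \frac{1}{r(x,y)}=\inf\{\ow Q(g) : g\in\ow D,\ g(x)=1,\ g(y)=0\},\]
and likewise $1/r_{X'}(x,y)=\min\{Q_{X'}(g) : g(x)=1,\ g(y)=0\}$ for every finite induced subgraph $X'\ni x,y$, where $Q_{X'}$ denotes the energy of the restricted graph. Fix an exhaustion $X_1\subseteq X_2\subseteq\cdots$ of $X$ by finite connected subgraphs with $\bigcup_n X_n=X$ (available by local finiteness and connectedness, e.g.\ combinatorial balls around a base vertex). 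Rayleigh monotonicity --- enlarging the subgraph cannot decrease the minimal energy --- shows that $1/r_{X_n}(x,y)$ is non-decreasing in $n$ and that $r_{\mathrm{free}}(x,y)=\inf_{X'}r_{X'}(x,y)=\lim_n r_{X_n}(x,y)$; write $L:=\lim_n 1/r_{X_n}(x,y)$.

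The \emph{easy} inequality $r_{\mathrm{free}}\geq r$ (equivalently $L\leq 1/r$) follows by restriction: for any admissible $g\in\ow D$ one has $Q_{X_n}(g|_{X_n})\leq\ow Q(g)$, hence $1/r_{X_n}(x,y)\leq\ow Q(g)$, and taking the infimum over $g$ gives $L\leq 1/r(x,y)$. For the reverse I would run a compactness argument on the finite minimizers. Let $g_n$ be the energy minimizer on $X_n$ with $g_n(x)=1$, $g_n(y)=0$; it is harmonic off $\{x,y\}$, so the discrete maximum principle yields $0\leq g_n\leq 1$ on $X_n$. Every vertex lies in all but finitely many $X_n$, so the bounded sequences $(g_n(z))_n$ admit, by a diagonal argument, a subsequence converging pointwise to some $g_\infty:X\to[0,1]$ with $g_\infty(x)=1$, $g_\infty(y)=0$. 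For any finite set $F$ of edges, $\frac12\sum_{(u,v)\in F}b(u,v)|g_\infty(u)-g_\infty(v)|^2=\lim_n\frac12\sum_{(u,v)\in F}b(u,v)|g_n(u)-g_n(v)|^2\leq\liminf_n Q_{X_n}(g_n)=L$, and taking the supremum over $F$ shows $g_\infty\in\ow D$ with $\ow Q(g_\infty)\leq L$. Hence $1/r(x,y)\leq\ow Q(g_\infty)\leq L$, which together with the easy inequality gives $L=1/r(x,y)$ and therefore $r_{\mathrm{free}}=r$.

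Finally, for the metric property of $r$, symmetry is built into the definition and positivity away from the diagonal is the already noted $r=\varrho^2>0$. The triangle inequality I would obtain by passing to the limit in the classical one for finite effective resistances: choosing an exhaustion containing $x,y,z$ and using $r_{X_n}(x,z)\leq r_{X_n}(x,y)+r_{X_n}(y,z)$ together with $r_{X_n}\to r_{\mathrm{free}}=r$ pointwise yields $r(x,z)\leq r(x,y)+r(y,z)$. I expect the main obstacle to be the reverse inequality $r_{\mathrm{free}}\leq r$: the finite minimizers live on different domains, so one cannot simply take a limit in a fixed Hilbert space, and the key is to combine the uniform bound from the maximum principle, a diagonal extraction, and the lower semicontinuity of the energy (a Fatou-type argument, exactly as in the proof of Proposition~\ref{p:Yamasaki}) to produce an admissible limit function of energy at most $L$.
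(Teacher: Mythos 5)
Your proposal is correct, but it takes a genuinely different route from the paper, because the paper does not actually prove Proposition~\ref{rho-free}: it quotes it from the literature (\cite[Theorems~4.2 and 4.12]{JP2}, \cite[Theorem~2.3.4]{Kig01}, \cite[Exercise~9.41]{LyonsBook}), whereas you give an essentially self-contained argument. Your route --- rewriting both $1/r$ and the finite-subgraph quantities as capacities (legitimate since $c\equiv 0$ allows translating and unimodularly rescaling competitors), deducing monotonicity of $1/r_{X_n}$ along an exhaustion by restricting minimizers, obtaining $r\leq r_{\mathrm{free}}$ by restricting any infinite-network competitor, and obtaining the hard inequality $r_{\mathrm{free}}\leq r$ from harmonicity of the finite minimizers off $\{x,y\}$, the maximum principle bound $0\leq g_n\leq 1$, diagonal extraction, and Fatou-type lower semicontinuity of $\ow Q$ --- is sound; the same exhaustion limit combined with the classical finite-network triangle inequality (which is exactly what the paper's citations \cite[Theorem~2.1.14]{Kig01}, \cite[Lemma~4.5]{JP2} supply) then gives the metric property, which indeed needs a separate argument since $r=\varrho^2$ being the square of a metric does not formally yield it. Two small points you should make explicit: the finite minimizers may be taken real-valued (the imaginary part has zero boundary data, so discarding it decreases energy), which is what the maximum principle needs, and the sets $X_n$ must be chosen connected so that the maximum principle applies on all of $X_n$. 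What your approach buys beyond the paper's: it is self-contained modulo elementary finite-network facts, and it shows that local finiteness enters only through the convenience of finite combinatorial balls; since every countable connected graph admits an exhaustion by finite connected subgraphs, your argument essentially addresses the gap the paper itself flags in the remark after the proposition, namely that the proofs in the literature do not cover the non-locally finite setting. What the paper's approach buys is brevity and direct alignment with standard references.
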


\textbf{Remark.} The proofs  of the previous proposition presented in the literature do not seem to cover  a non-locally finite setting. However, for our applications in Section~\ref{counter}, this is no restriction as we consider only locally finite examples there.


\section{Topology: Canonically compactifiable graphs}\label{section-topology}

Let $X$ be a countable set, $m$ a measure on $X$, and $(b,c)$ a weighted graph over $(X,m)$. We give a characterization of when  $X$ is relatively compact in a natural way. This will be used in order to   define a  boundary  $\partial X$ of $X$. Our considerations are based on  $C^\ast$-algebra techniques.


\subsection{Basic definition and features} \label{Canonically}
In this subsection, we study the topology of a graph by assigning a natural $C^\ast$-algebra to it. In order to do so, we need $\ow D$ to be an algebra of bounded functions.

\begin{definition} The graph $(b,c)$  is \emph{\cpt} if $\ow D\subseteq \ell^{\infty}(X)$.
\end{definition}

\textbf{Remark.}
Let us point out that   both $b$ and $c$ can play a role in making a graph \cpt.  In fact, it is  not hard to construct examples of graphs $(b,c)$  such that $(b,0)$ is not {\cpt}  but $(b,c)$ is.  To construct such an example, it suffices to consider   $X=\N$ with $b(n,n') = 0$ for $|n-n'| \neq 1$ and $b(n,n+1)=1$  and $c(n)\longrightarrow \infty$ as $n\longrightarrow \infty$. Then, $(b,c)$ obviously is  \cpt. However, the function
$$ f : \N\longrightarrow \R, \:\; f(n) = \sum_{j=1}^n \frac{1}{j},$$
is unbounded and has finite energy on $(b,0)$.

\medskip

The following lemma shows that the embedding $\ow D\subseteq\ell^{\infty}(X)$ is actually continuous.

\begin{lemma}\label{l:cpt} If $(b,c)$ is \cpt\ and connected, then the embedding
$$j:(\ow D,\aV{\cdot}_{o}) \longrightarrow \ell^{\infty}(X)$$
is continuous.
\end{lemma}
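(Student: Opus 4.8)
The plan is to invoke the closed graph theorem. Since $(b,c)$ is \cpt, we have $\ow D\subseteq\ell^\infty(X)$, so the inclusion $j$ is a well-defined linear map that is defined on all of $\ow D$. Both the source and the target are Banach spaces: $\ell^\infty(X)$ is complete under the supremum norm, and $(\ow D,\aV{\cdot}_o)$ is complete by Proposition~\ref{p:Yamasaki} (here the connectedness hypothesis of the lemma is exactly what is needed for $\aV{\cdot}_o$ to be a norm and for the Yamasaki space to be Hilbert). Hence, to conclude that $j$ is bounded, it suffices to show that its graph is closed.

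To verify this, I would take a sequence $(f_n)$ in $\ow D$ with $f_n\to f$ in $(\ow D,\aV{\cdot}_o)$ and, simultaneously, $j(f_n)=f_n\to g$ in $\ell^\infty(X)$, and then argue that $f=g$. The key point is that both modes of convergence refine pointwise convergence. On the one hand, Lemma~\ref{l:pointevaluation} states that each point evaluation $\de_x$ is continuous on $(\ow D,\aV{\cdot}_o)$, so $f_n(x)\to f(x)$ for every $x\in X$. On the other hand, uniform convergence $f_n\to g$ in $\ell^\infty(X)$ forces $f_n(x)\to g(x)$ for every $x$. Comparing the two pointwise limits yields $f(x)=g(x)$ for all $x$, that is $f=g=j(f)$, which is precisely the closed graph condition. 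This is the same mechanism already exploited in Proposition~\ref{l:continuity}, and I would expect the comparison of the two limits at the level of pointwise convergence to be the only step that genuinely needs care; everything else is formal.

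It is worth explaining why I do not attempt to produce an explicit bound $\aV{f}_\infty\le C\aV{f}_o$ directly, and thus why the abstract route is the natural one. Lemma~\ref{l:DLip} gives $|f(x)|\le\aV{f}_o\,(1+d(x,o)^{1/2})$, but the resulting constant $\sup_{x\in X}(1+d(x,o)^{1/2})$ is finite only when $\diam_d X<\infty$, a condition strictly stronger than requiring $(b,c)$ to be \cpt. So no uniform geometric estimate is available in general, and the substantive content of the lemma lies in passing from the pointwise boundedness encoded in $\ow D\subseteq\ell^\infty(X)$ to an actual norm bound; this passage is supplied precisely by the completeness of the Yamasaki space together with the closed graph theorem.
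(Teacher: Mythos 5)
Your proof is correct and takes essentially the same route as the paper: both invoke the closed graph theorem and use the continuity of point evaluations (Lemma~\ref{l:pointevaluation}) together with the fact that uniform convergence implies pointwise convergence to compare the two limits. The only cosmetic difference is that the paper verifies the equivalent closability-type condition ($f_n\to 0$ in $\aV{\cdot}_o$ and $j(f_n)\to g$ in $\aV{\cdot}_\infty$ force $g\equiv 0$), while you check the graph condition directly and, usefully, make explicit the completeness of $(\ow D,\aV{\cdot}_o)$ from Proposition~\ref{p:Yamasaki} that the closed graph theorem requires.
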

\begin{proof}
Assume that there is a sequence $(f_{n})$ in $\ow D$ and a function $g\in\ow D$ such that $f_{n}\longrightarrow 0$ in $\aV{\cdot}_{o}$ and $j(f_{n})\longrightarrow g$ in $\aV{\cdot}_{\infty}$ as $n\longrightarrow\infty$.
If we show that $g\equiv0$, then the statement follows from the closed graph theorem.  Since, by Lemma \ref{l:pointevaluation}, point evaluation is continuous whenever $b$ is connected, we have that $j(f_{n})(x)=f_{n}(x)\longrightarrow 0$ for all $x\in X$. As  $j(f_{n})\longrightarrow g$ with respect to $\aV{\cdot}_{\infty}$ we infer  $g\equiv0$.
\end{proof}

We next  give a characterization as well as  sufficient conditions   for compactifiability in terms of the metrics $\varrho$ and $d$  and the intrinsic metrics introduced in the previous section.

Recall that $\diam_\sigma (X) :=\sup_{x,y\in X}\sigma(x,y)$ denotes the diameter of $X$ with respect to any {(pseudo)metric} $\sigma$.

\begin{theorem}\label{characterization-cpt} Let $(b,c)$ be a  connected graph over $X$. Then the following assertions are equivalent:

\begin{itemize}

\item[(i)] $(b,c)$ is {\cpt}.

\item[(ii)]  $\diam_{\varrho}(X)<\infty$.

\item[(iii)] $\diam_{r} (X) < \infty$.

\end{itemize}

If $c\equiv0$, this is furthermore equivalent to the  following assertion:
\begin{itemize}
\item[(iv)] $\diam_\sigma (X) < \infty$ for any  pseudometric $\sigma$ intrinsic  with respect to a finite measure $m$ on $X$.

\end{itemize}

\end{theorem}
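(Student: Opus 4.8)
The plan is to treat the three unconditional equivalences first and then, under $c\equiv0$, to close the loop to $(iv)$ by routing through $(i)$ rather than comparing $(iv)$ with $(ii)$ directly. The equivalence $(ii)\Leftrightarrow(iii)$ is immediate from Theorem~\ref{t:rhofree}: since $r=\varrho^2$, we have $\diam_r(X)=\diam_\varrho(X)^2$, so one diameter is finite exactly when the other is. For $(ii)\Rightarrow(i)$ I would use that every $f\in\ow D$ is Lipschitz with respect to $\varrho$ with constant $\ow Q(f)^{1/2}$ (recorded after the definition of $\varrho$, and underlying Lemma~\ref{l:DLip}). Fixing $o\in X$ and writing $R:=\diam_\varrho(X)<\infty$, this gives $|f(x)-f(o)|\le \ow Q(f)^{1/2}R$ for all $x$, hence $\aV{f}_\infty\le |f(o)|+\ow Q(f)^{1/2}R<\infty$ and $\ow D\subseteq\ell^\infty(X)$.

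The direction $(i)\Rightarrow(ii)$ is the crux, and I expect it to be the main obstacle, since it must upgrade the pointwise statement $\ow D\subseteq\ell^\infty(X)$ into a uniform bound on the $\varrho$-diameter. The key tool is Lemma~\ref{l:cpt}: canonical compactifiability makes the embedding $j\colon(\ow D,\aV{\cdot}_o)\to\ell^\infty(X)$ continuous, so there is a constant $C$ with $\aV{f}_\infty\le C\aV{f}_o$. Choosing $o$ as in Proposition~\ref{p:rho} and testing this against the definition of $\varrho_o$, every $f$ with $\aV{f}_o\le1$ satisfies $|f(x)-f(y)|\le 2\aV{f}_\infty\le 2C$, so $\diam_{\varrho_o}(X)\le 2C$. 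Since $\varrho$ and $\varrho_o$ are equivalent for this choice of $o$ by Proposition~\ref{p:rho}, we conclude $\diam_\varrho(X)<\infty$. The detour through $\varrho_o$ is needed because $\ow Q(f)\le1$ does not control $|f(o)|$ when $c\not\equiv0$; in the case $c\equiv0$ one could instead replace $f$ by $f-f(o)$ without changing either $\ow Q(f)$ or the differences $|f(x)-f(y)|$.

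Under $c\equiv0$, I would deliberately avoid proving $(iv)\Leftrightarrow(ii)$ directly: by Theorem~\ref{characterization-varrho-via-sigma}, $\varrho$ is a supremum of intrinsic pseudometrics that need not be attained, so knowing each such pseudometric has finite diameter yields no uniform control. Instead I prove $(i)\Leftrightarrow(iv)$. For $(ii)\Rightarrow(iv)$, Proposition~\ref{bound-sigma-via-varrho} (case $c\equiv0$) gives $\sigma\le\sqrt{m(X)}\,\varrho$ for every intrinsic $\sigma$ with respect to a finite measure $m$, whence $\diam_\sigma(X)\le\sqrt{m(X)}\,\diam_\varrho(X)<\infty$. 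For $(iv)\Rightarrow(i)$ I argue by contraposition: if $\ow D\not\subseteq\ell^\infty(X)$, choose an unbounded $f\in\ow D$; by Proposition~\ref{sigma-via-f} the pseudometric $\sigma_f(x,y)=|f(x)-f(y)|$ is intrinsic with respect to a measure of finite total mass $\ow Q(f)$, while unboundedness of $|f|$ forces $\sup_{x,y}|f(x)-f(y)|=\infty$, so $\diam_{\sigma_f}(X)=\infty$, contradicting $(iv)$. Together with $(i)\Leftrightarrow(ii)$ this closes all equivalences.
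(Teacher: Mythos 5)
Your proposal is correct and follows essentially the same route as the paper: $(ii)\Leftrightarrow(iii)$ via Theorem~\ref{t:rhofree}, $(ii)\Rightarrow(i)$ via the $\varrho$-Lipschitz bound $|f(x)-f(y)|\le \ow Q(f)^{1/2}\varrho(x,y)$, $(i)\Rightarrow(ii)$ via Lemma~\ref{l:cpt} combined with Proposition~\ref{p:rho}, and, for $c\equiv0$, the cycle $(ii)\Rightarrow(iv)$ via Proposition~\ref{bound-sigma-via-varrho} and $(iv)\Rightarrow(i)$ via Proposition~\ref{sigma-via-f}. The only cosmetic difference is that you phrase $(iv)\Rightarrow(i)$ as a contraposition with an unbounded $f$, while the paper bounds an arbitrary $f\in\ow D$ directly by $\diam_{\sigma_f}(X)+|f(o)|$; the underlying argument is identical.
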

\begin{proof}  (ii)$\Longrightarrow$(i):  Set  $C:=\diam_{\varrho}(X)<\infty$ and let $o\in X$.  By the definition of $\varrho$, we then obtain $|f(o)-f(x)|\leq C$  for all $f\in \ow D$ with $\ow Q(f)\leq1$ and all  $x\in X$. Hence, $f\in\ell^{\infty}(X)$ for $f\in \ow D$ with $\ow Q(f)\leq1$ which implies that $\ow D\subseteq \ell^{\infty}(X)$.

\smallskip

(i)$\Longrightarrow$(ii):  By Lemma~\ref{l:cpt}, we have that $\aV{f}_{\infty}\leq || j ||$ for every $f\in\ow D$ with $\aV{f}_{o}\leq1$. Hence, for all $x,y\in X$,
$$\diam_{\varrho_{o}}(X) =\sup_{x,y\in X}\sup_{\aV{f}_{o}\leq 1}|f(x)-f(y)|\leq 2 \sup_{\aV{f}_{o}\leq 1}\aV{f}_{\infty}=2 \ || j ||$$
By Proposition~\ref{p:rho}, we conclude that $\diam_{\varrho}(X)<\infty$.

\smallskip

\smallskip
\smallskip

(ii)$\Longleftrightarrow$(iii): This is immediate from Theorem \ref{t:rhofree}.

Now, assume that $c\equiv0$.

\smallskip

(iv)$\Longrightarrow$(i): Let $f\in \ow D$ be arbitrary. Then, $ \sigma := \sigma_f$ is an intrinsic pseudometric with respect to a finite measure  by Proposition \ref{sigma-via-f}. Fix now an arbitrary $o\in X$. Then,
$$ |f(x)| \leq |f(x) - f(o)| + |f(o)| = \sigma (x,y) + |f(o)| \leq \diam_\sigma (X) + f(o)$$
holds for any  $x\in X$. As $\diam_\sigma (X)$ is finite by (iii), we infer that $f$ is bounded.

\smallskip

(ii)$\Longrightarrow$(iv): This is immediate from Proposition \ref{bound-sigma-via-varrho}.
\end{proof}

\textbf{Remark.} Due to Theorem \ref{characterization-varrho-via-sigma}, condition (iv) in the previous Theorem is equivalent to the (a priori stronger) condition:

\begin{itemize}

\item[(iv')]  There exists a $\beta\geq 0$ such that $\diam_\sigma (X) \leq \beta$ for  every metric $\sigma$ which is intrinsic with respect to  a measure $m$ with $m (X) \leq 1$.
\end{itemize}

\medskip

The following corollaries give  sufficient condition for compactifiability via conditions on the  metric $d$ introduced in Section~\ref{Metrics} and  the intrinsic metrics. Let us mention that, in general, the converse of Corollary \ref{sufficient-condition-cpt-d} does not hold, see Example~\ref{e:diam_d_and_rho} in Section~\ref{counter}.

\begin{corollary}\label{sufficient-condition-cpt-d} If $\diam_{d}(X)<\infty$, then $(b,c)$ is \cpt.
\end{corollary}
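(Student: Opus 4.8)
The plan is to reduce the statement directly to Lemma~\ref{l:DLip}, which already contains all the needed content. Recall that being \cpt{} means precisely $\ow D \subseteq \ell^\infty(X)$, so the task is to produce, for an \emph{arbitrary} $f \in \ow D$, a bound on $\|f\|_\infty$ depending only on $f$ (through $\ow Q(f)$ and one point value) and on $\diam_d(X)$. Fixing a reference vertex $o \in X$, Lemma~\ref{l:DLip} gives
\begin{align*}
|f(x) - f(o)|^2 \leq \ow Q(f)\, d(x,o) \leq \ow Q(f)\, \diam_d(X)
\end{align*}
for every $x \in X$, whence $|f(x)| \leq |f(o)| + \big(\ow Q(f)\,\diam_d(X)\big)^{1/2}$. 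Since the right-hand side is finite and independent of $x$, the function $f$ is bounded, and as $f \in \ow D$ was arbitrary this yields $\ow D \subseteq \ell^\infty(X)$, i.e.\ $(b,c)$ is \cpt.

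An equivalent and slightly more structural route would be to observe that the inequality $\varrho^2 \leq d$ recorded in Lemma~\ref{l:DLip} immediately gives $\diam_\varrho(X) \leq \big(\diam_d(X)\big)^{1/2} < \infty$, and then to invoke the equivalence of assertions (i) and (ii) in Theorem~\ref{characterization-cpt} to conclude canonical compactifiability. Both arguments rest on exactly the same estimate; the first is marginally more self-contained, so I would present that one and perhaps remark on the second as an alternative.

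There is essentially no genuine obstacle here: the corollary is an immediate consequence of the $\tfrac12$-H\"older estimate of Lemma~\ref{l:DLip}, and the only thing to be careful about is that the bound must hold uniformly over $x$ for each fixed $f$ (which it does, since $\diam_d(X)$ and $f(o)$ do not depend on $x$), rather than uniformly over $f$ (which is neither needed nor true). I would also note in passing, as the surrounding text already signals via the reference to Example~\ref{e:diam_d_and_rho}, that the converse fails, so the hypothesis $\diam_d(X) < \infty$ is strictly stronger than \cpt ity and the corollary is genuinely one-directional.
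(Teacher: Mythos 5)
Your proposal is correct and matches the paper's argument: the paper's proof is exactly your second variant (the inequality $\varrho^2 \leq d$ from Lemma~\ref{l:DLip} combined with the equivalence (i)$\Longleftrightarrow$(ii) of Theorem~\ref{characterization-cpt}), and your first, "direct" variant is simply that same argument with the proof of (ii)$\Longrightarrow$(i) inlined. Both rest on the identical estimate, so there is nothing substantively different to compare.
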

\begin{proof}
We have  $\varrho^{2}\leq d$ by Lemma~\ref{l:DLip}. Therefore, the statement follows from the previous theorem.
\end{proof}

\begin{corollary}\label{sufficient-condition-cpt-intrinsic} Let $(b,c)$ be a connected graph over $X$ with $c\equiv0$  such that $\ov{X}^\sigma$ is compact for any  pseudometric $\sigma$ which is intrinsic with respect to a finite measure. Then $X$ is \cpt.
\end{corollary}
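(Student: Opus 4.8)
The plan is to reduce the statement to the characterization of canonical compactifiability already established in Theorem~\ref{characterization-cpt}. Since we are in the case $c\equiv 0$, that theorem provides the equivalence of canonical compactifiability with condition (iv), namely $\diam_\sigma(X)<\infty$ for every pseudometric $\sigma$ that is intrinsic with respect to a finite measure $m$. Thus it suffices to derive condition (iv) from the compactness hypothesis, after which the conclusion follows at once.

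First I would fix an arbitrary pseudometric $\sigma$ that is intrinsic with respect to a finite measure. By hypothesis, the completion $\ov{X}^\sigma$ is compact. Recalling that $\ov{X}^\sigma$ is, by the conventions of Section~\ref{section-analysis}, the completion of the quotient metric space $\widetilde{X}$ obtained by identifying points at $\sigma$-distance zero, I would invoke the elementary fact that any compact metric space is bounded, so that $\diam(\ov{X}^\sigma)<\infty$. Next I would note that the diameter is unchanged under both the passage to the quotient and the completion: $\diam_\sigma(X)=\diam_{\widetilde{\sigma}}(\widetilde{X})=\diam(\ov{X}^\sigma)$, the first equality holding because identifying zero-distance points leaves pairwise distances intact and the second because $\widetilde{X}$ is dense in its completion and the diameter depends only on a dense subset. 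Hence $\diam_\sigma(X)<\infty$.

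Since $\sigma$ was an arbitrary intrinsic pseudometric for a finite measure, condition (iv) of Theorem~\ref{characterization-cpt} holds, and therefore $(b,c)$ is \cpt. I do not expect any genuine obstacle here: the argument is a direct application of the already proven characterization, and the only point demanding a little care is the bookkeeping around pseudometrics versus metrics when identifying the diameter of $X$ with that of its completion, which is routine given the setup.
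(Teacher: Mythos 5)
Your proof is correct and follows exactly the paper's own argument: the paper likewise observes that compactness of $\ov{X}^\sigma$ implies finiteness of the diameter and then invokes condition (iv) of Theorem~\ref{characterization-cpt}. The only difference is that you spell out the routine bookkeeping (quotient and completion preserve the diameter) that the paper dismisses as obvious.
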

\begin{proof} Compactness of $\ov{X}^\sigma$ obviously implies finiteness of the diameter. Thus, the corollary follows from the previous theorem.
\end{proof}

We next give two  classes of examples of graphs where the assumptions of Corollary \ref{sufficient-condition-cpt-d} are clearly satisfied.

\begin{example} (Summable $1/b$)  Let $(b,c)$ be a graph over $X$ such that
$$\sum_{x,y\in X, b(x,y)> 0}\frac{1}{b(x,y)}<\infty.$$ Clearly, in this case, $\diam_{d}(X)<\infty$ and $(b,c)$ is {\cpt} by Corollary \ref{sufficient-condition-cpt-d} (in Subection \ref{key-example} we will have a closer look at this case).
\end{example}

 \bigskip

The condition of summability of $1/b$ is by no means necessary in order to obtain a {\cpt} graph. This can easily be seen by considering trees. A first glimpse at this class of examples will be given next. A further study of this class will be undertaken in Section~\ref{counter}.

\begin{example}
Let $(b,c)$ be a tree, i.e., a connected graph without cycles. Then, between any two points, there exists a unique path and the length of this  path defines the value of the metric $d$ between the points. Fix now an arbitrary vertex  $o\in X$. Then,  $\diam_{d}(X)<\infty$  holds if and only if there is a $C>0$ such that for every natural number $n$ and any  path $\gm = (\gm_{0},\ldots,\gm_{n})$  starting at $o$ we have $\sum_{i=1}^{n}\frac{1}{b(\gm_{i-1},\gm_{i})}\leq C$. In particular, every such path is bounded.
\end{example}






\subsection{The compactification $K$} \label{secBoCo}
Any  graph which is {\cpt} comes naturally with a canonical compactification $K$. This is discussed in this subsection. In the next subsection, we will see that this compactification $K$ actually agrees with the Royden compactification (Theorem~\ref{KR}).

 We start with a lemma which is certainly well-known.

\begin{lemma}\label{l:algebra}
If $(b,c)$ is \cpt, then  $\ow D$ is an algebra and
 $$\A:=\mbox{Closure of $\ow D$ with respect to $\aV{\cdot}_\infty$}$$
is a commutative $C^\ast$-algebra.
\end{lemma}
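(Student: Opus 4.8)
The plan is to check directly that $\ow D$ is closed under pointwise multiplication (hence forms a commutative algebra) and under complex conjugation, and then to observe that a norm-closed, conjugation-invariant subalgebra of a commutative $C^\ast$-algebra is again a commutative $C^\ast$-algebra. The only place where the {\cpt} hypothesis is really used is in establishing the algebra property, and this is the step I would carry out first.

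So, first I would take $f,g\in\ow D$ and show $fg\in\ow D$. By assumption $f,g\in\ell^\infty(X)$. Writing
$$ f(x)g(x)-f(y)g(y)=f(x)\bigl(g(x)-g(y)\bigr)+g(y)\bigl(f(x)-f(y)\bigr) $$
and using $|a+b|^2\leq 2|a|^2+2|b|^2$ together with $|f(x)|\leq\aV{f}_\infty$ and $|g(y)|\leq\aV{g}_\infty$, I obtain the pointwise bound
$$ |f(x)g(x)-f(y)g(y)|^2\leq 2\aV{f}_\infty^2\,|g(x)-g(y)|^2+2\aV{g}_\infty^2\,|f(x)-f(y)|^2. $$
Multiplying by $b(x,y)$ and summing over $x,y$, the edge part of $\ow Q(fg)$ is dominated by $2\aV{f}_\infty^2$ times the edge part of $\ow Q(g)$ plus $2\aV{g}_\infty^2$ times the edge part of $\ow Q(f)$, both finite. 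For the killing part, $\sum_x c(x)|f(x)g(x)|^2\leq\aV{g}_\infty^2\sum_x c(x)|f(x)|^2<\infty$. Hence $\ow Q(fg)<\infty$, so $fg\in\ow D$, and together with the already-noted vector space structure this shows $\ow D$ is a commutative algebra. I would also record that $\ow D$ is invariant under complex conjugation, since $|\ov f(x)-\ov f(y)|=|f(x)-f(y)|$ and $|\ov f(x)|=|f(x)|$ give $\ow Q(\ov f)=\ow Q(f)$, so $\ov f\in\ow D$ whenever $f\in\ow D$.

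Finally I would pass to the $\aV{\cdot}_\infty$-closure $\A$. Since $\ell^\infty(X)$ is a commutative $C^\ast$-algebra under pointwise operations, supremum norm, and complex conjugation as involution, and since multiplication is jointly continuous there (via $\aV{fg-f'g'}_\infty\leq\aV{f}_\infty\aV{g-g'}_\infty+\aV{g'}_\infty\aV{f-f'}_\infty$), the closure of the subalgebra $\ow D$ is again a subalgebra; and because conjugation is an $\aV{\cdot}_\infty$-isometry, $\A$ is conjugation-invariant. Being a norm-closed, conjugation-invariant subalgebra of a commutative $C^\ast$-algebra, $\A$ inherits the $C^\ast$-identity and completeness, hence is itself a commutative $C^\ast$-algebra. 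The one genuinely substantive point is the product estimate above; everything else is standard $C^\ast$-algebra bookkeeping. It is worth stressing that without the hypothesis $\ow D\subseteq\ell^\infty(X)$ the factors $\aV{f}_\infty,\aV{g}_\infty$ are not available, so boundedness is exactly what turns $\ow D$ into an algebra.
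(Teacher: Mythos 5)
Your proof is correct and takes essentially the same approach as the paper: the same key product estimate $\ow Q(fg)\leq 2\aV{f}_\infty^2\,\ow Q(g)+2\aV{g}_\infty^2\,\ow Q(f)$ (which the paper obtains by the ``simple algebraic manipulations'' spelled out in its appendix lemma on products), followed by the standard observation that the $\aV{\cdot}_\infty$-closure of a conjugation-invariant subalgebra of $\ell^\infty(X)$ is a commutative $C^\ast$-algebra.
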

\begin{proof}
Notice that for $f,g\in \ow D\cap \ell^{\infty }(X)$
\begin{align*}
\ow Q(fg)&\leq 2\aV{g}_{\infty}^2 \ow Q(f) + 2\aV{f}_{\infty}^2 \ow Q(g)
\end{align*}
by simple algebraic manipulations (compare Lemma \ref{estimate-product} in Appendix \ref{A:Royden}).
It follows that $\ow D\cap \ell^{\infty }(X)$ is an algebra and, by assumption, $\ow D=\ow D\cap \ell^{\infty}(X)$.
Clearly, with complex conjugation as involution, $\ow D$ is a commutative involutive algebra that satisfies $\aV{|f|^{2}}_{\infty}=\aV{f}_{\infty}^{2}$ for $f\in\ow D$. Hence, the closure of $\ow D$ with respect to $\aV{\cdot}_{\infty}$ is a $C^\ast$ algebra.
\end{proof}

\textbf{Remark.} Under the additional assumption that $c\equiv0$, it is shown in \cite{Soa}  that the space of bounded functions in $\ow D$ with the norm $ \ow Q(\cdot)^{\frac{1}{2}} + \aV{\cdot}_{\infty}$ forms a commutative Banach algebra, called there the \emph{Dirichlet algebra} \cite[Theorem (6.2)]{Soa} (see further discussion in Appendix \ref{A:Royden} as well). \\


For a {\cpt} graph $(b,c)$ over $X$ we define
 $$\A^{+}:=\mbox{smallest $C^\ast$-algebra containing $1$ and $\A$}.$$
Then, $\A^{+}$ is a  commutative unital $C^\ast$-algebra. It will be the key object in our subsequent study in this subsection.

Note that  it can occur that $\A$ already contains $1$. In this case, $\A$ is  unital and agrees with   $\A^{+}$. This case can be characterized as follows:

\begin{proposition}\label{p:one} Assume that $(b,c)$ is \cpt. Then, the following assertions  are equivalent:
\begin{itemize}
  \item [(i)] $1\in\A$
  \item [(ii)] $1\in \ow D$
  \item [(iii)] $\sum_{x}c(x)<\infty$.
\end{itemize}
\end{proposition}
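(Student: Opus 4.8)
The plan is to prove the cyclic chain of implications $(i)\Rightarrow(ii)\Rightarrow(iii)\Rightarrow(i)$, exploiting the fact that the constant function $1$ has zero $b$-energy, so that $\widetilde Q(1)$ reduces entirely to the $c$-contribution.

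\textbf{From $(iii)$ to $(ii)$.} First I would compute $\widetilde Q(1)$ directly. Since the constant function satisfies $1(x)-1(y)=0$ for all $x,y$, the first (edge) sum in the definition of $\widetilde Q$ vanishes, leaving
\begin{align*}
\widetilde Q(1)=\sum_{x\in X}c(x)\,|1(x)|^2=\sum_{x\in X}c(x).
\end{align*}
Thus $\sum_x c(x)<\infty$ is exactly the statement that $\widetilde Q(1)<\infty$, i.e.\ that $1\in\widetilde D$. This gives $(iii)\Rightarrow(ii)$ (and in fact $(ii)\Rightarrow(iii)$ as well by reading the same equality backwards, so these two are trivially equivalent).

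\textbf{From $(ii)$ to $(i)$.} If $1\in\widetilde D$, then $1$ lies in the $\|\cdot\|_\infty$-closure $\A$ of $\widetilde D$ trivially, since $1$ is already an element of $\widetilde D$. Hence $1\in\A$, giving $(ii)\Rightarrow(i)$.

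\textbf{From $(i)$ to $(iii)$, the main obstacle.} The remaining implication $(i)\Rightarrow(iii)$ is the substantive one, because membership in the \emph{supremum-norm closure} $\A$ is a priori much weaker than membership in $\widetilde D$ itself, and one must recover the summability of $c$ from mere uniform approximability. The idea I would pursue is to test against a sequence of finitely supported functions and use the integration-by-parts / form-pairing identity recorded in Section~\ref{Quadratic}. Concretely, suppose $1\in\A$, so there are $f_n\in\widetilde D$ with $\|f_n-1\|_\infty\to 0$. For any $v\in C_c(X)$ one has the pairing $\widetilde Q(f_n,v)=\sum_x \overline{f_n(x)}(\LL v)(x)$; since $f_n\to 1$ uniformly and $v$ has finite support, the right-hand side converges to $\sum_x (\LL v)(x)=\sum_x c(x)v(x)$ (the $b$-part of $\LL v$ summing to zero by symmetry on the finite support). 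The difficulty is to turn this weak information into a genuine energy bound on $1$: one wants to exhibit a single $g\in\widetilde D$, or directly bound $\sum_x c(x)$, from the uniform control. I expect the cleanest route is to fix an exhaustion $v_k\nearrow 1$ by functions in $C_c(X)$ with $0\le v_k\le 1$, estimate $\widetilde Q(v_k)=\widetilde Q(v_k,v_k)$ using the uniform approximation to show $\sup_k\widetilde Q(v_k)<\infty$, and then conclude via Proposition~\ref{p:Yamasaki} (the completeness of the Yamasaki space) together with Fatou's lemma that the pointwise limit $1$ lies in $\widetilde D$, whence $\widetilde Q(1)=\sum_x c(x)<\infty$ as computed above. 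The technical heart — and the step I would flag as the main obstacle — is extracting the uniform energy bound $\sup_k\widetilde Q(v_k)<\infty$ from $1\in\A$, since this is where the canonically compactifiable hypothesis (via Lemma~\ref{l:cpt}, continuity of the embedding $j$) should enter to control the energy of the approximants by their sup-norm distance to $1$.
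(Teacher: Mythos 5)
Your implications (iii)$\Rightarrow$(ii)$\Rightarrow$(i) are correct and coincide with the paper's; both are immediate, as you say. The gap is in (i)$\Rightarrow$(iii), which you yourself flag as unresolved: you never obtain the bound $\sup_k \ow Q(v_k)<\infty$, and the mechanism you propose for it cannot work. Lemma~\ref{l:cpt} gives continuity of $j:(\ow D,\aV{\cdot}_o)\longrightarrow \ell^\infty(X)$, i.e., it bounds the \emph{sup-norm by the energy norm}; your plan needs the opposite inequality, a bound on energy in terms of sup-norm distance to $1$, and no such bound exists: Example~\ref{DnequalA} exhibits a uniform limit of finite-energy functions which itself has infinite energy, so uniform closeness gives no control on $\ow Q$ whatsoever. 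Worse, the object you want --- a sequence $v_k\in C_c(X)$ with $v_k\to 1$ pointwise and $\sup_k \ow Q(v_k)<\infty$ --- does not exist on any infinite connected \cpt\ graph. Indeed, such a sequence would be bounded in the Yamasaki Hilbert space (Proposition~\ref{p:Yamasaki}), hence would have a weakly convergent subsequence whose limit is $1$ by continuity of point evaluations (Lemma~\ref{l:pointevaluation}); since the weak and norm closures of the subspace $C_c(X)$ coincide, this would force $1\in\ow D_o$, contradicting Theorem~\ref{l:c_c}: functions in $\ow D_o$ vanish on $\partial X$, the extension of $1$ is identically $1$, and $\partial X\neq\emptyset$ because an infinite set in which every singleton is open (Theorem~\ref{t:open}) cannot be compact. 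So the exhaustion route is not merely technically incomplete; it is impossible.

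What you missed is that (iii) does not require producing any finite-energy function equal to (or converging to) $1$: it only asserts summability of $c$, and a \emph{single} approximant already yields that. This is the paper's proof: if $1\in\A$, choose $f_N\in\ow D$ with $\aV{f_N-1}_\infty\leq 1/2$; then $|f_N(x)|\geq 1/2$ for all $x\in X$, hence
$$\frac{1}{4}\sum_{x\in X}c(x)\;\leq\;\sum_{x\in X}c(x)|f_N(x)|^2\;\leq\;\ow Q(f_N)<\infty,$$
which is exactly (iii). The point is that only the $c$-part of the energy of $f_N$ is needed, and it dominates $\frac{1}{4}\sum_x c(x)$ as soon as $|f_N|\geq\frac{1}{2}$ everywhere; the edge part of the energy, and the function $1$ itself, play no role.
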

\begin{proof}
Clearly (iii)$\Longrightarrow$(ii)$\Longrightarrow$(i). Assuming (i), we conclude that there are $f_{n}\in \ow D$ such that $f_{n}\longrightarrow 1$ in $\ell^{\infty}(X)$. Hence, there is an $N$ such that $|f_{N}(x)|\geq 1/2$ for all $x\in X$. Since $f_{N}\in\ow D$, we have that $\tfrac{1}{4}\sum_{x}c(x)\leq\sum_{x}c(x)|f_{N}(x)|^{2} \leq \ow Q (f_N) <\infty$,  which implies (iii).
\end{proof}

\textbf{Remarks.} (a)  Of course (ii)$\Longleftrightarrow$(iii) also holds if $(b,c)$ is not \cpt.

(b) The proposition gives one reason for the relevance of graphs $(b,c)$  with summable $c$.

\smallskip

Whenever $(b,c)$ is a {\cpt} graph over $X$, we denote the set of characters of  $\A^+$ by $K$ and the set of characters of $\A$ by $K'$. (Here, the \textit{characters} are the  non-trivial, linear  multiplicative maps from the $C^\ast$-algebra to the complex numbers.) Of course, $K = K'$ holds if $\A = \A^+$.

By the commutative Gelfand Naimark theorem, $K'$ is  a locally compact Hausdorff space and  $\A$ is isometrically isomorphic to $C_0 (K')$ (where $C_0 (K')$ is the space of continuous functions vanishing at infinity on $K'$ which is understood to agree with $C (K')$ if $K'$ is compact). More precisely, the situation is as follows:  The set $K$ is   compact with respect to the weak-*-topology and there is an isometric isomorphism, known as the \textit{Gelfand map},
\begin{align*}
    \A^{+}\longrightarrow C(K),\quad g\mapsto\oh g, \:\;\mbox{with $\oh g (\gamma) = \gamma (g)$}.
\end{align*}
Moreover, if  $1\not\in\ow D$, then $K'$ is not compact and $K$ is its one-point compactification. In fact, in this case, the compact $K$ satisfies
$$K = K'\cup\{\gamma_\infty\}, $$
 where
$$\gm_{\infty}(g+\al1)=\al,\qquad g\in \A,
\al\in\C.$$
In  this case,   the  Gelfand map  induces an  isometric isomorphism
$$\A\longrightarrow \{f\in C(K) : f(\gm_{\infty})=0\}= C_{0}(K\setminus\{\gm_{\infty}\}) = C_{0} (K'), \; f \mapsto \oh f|_{K'}.$$

\smallskip

With slight abuse of notation, we will denote  the supremum norm on both $\A$ and $C(K)$ with  $\aV{\cdot}_{\infty}$.

\smallskip

The next theorem shows that $K$ is a compactification of $X$ in the usual sense.

\begin{theorem}\label{t:open} Let $j:X\longrightarrow K, x\mapsto \de_{x}$ with  $\de_{x}:\A^+ \longrightarrow\C$, $f\mapsto f(x)$ be the canonical embedding. Then,  $j(X)$ is a dense open subset of  $K$. In fact, for every $x\in X$, the set $j(\{x\})$ is open in $K$.
\end{theorem}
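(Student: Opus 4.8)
The plan is to establish the two assertions separately, working with the embedding $j:X\longrightarrow K$, $x\mapsto \de_x$, where $\de_x$ evaluates a function (or rather its Gelfand transform) at $x$. The key observation driving everything is that point evaluations on $X$ are genuine characters of $\A^+$: since $\ow D$ separates points of $X$ (for instance, $\de_x\in C_c(X)\subseteq \ow D$ distinguishes $x$ from the other vertices), the map $j$ is injective, and density of $j(X)$ in $K$ will follow from the fact that $\A^+$ is generated by functions on $X$ and a character vanishing on all of $\ow D$ together with fixing $1$ must be $\gm_\infty$ or coincide with some $\de_x$.

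First I would prove density of $j(X)$. The standard approach is to show that the closure $\ov{j(X)}$ in $K$ cannot be a proper closed subset. If it were proper, then by Urysohn's lemma for the compact Hausdorff space $K$ there would exist a nonzero $\oh g\in C(K)$ vanishing on $\ov{j(X)}$; but $\oh g$ vanishing on $j(X)$ means $g(x)=\de_x(g)=0$ for all $x\in X$, and since every element of $\A$ is a uniform limit of functions in $\ow D\subseteq \ell^\infty(X)$, this forces $g=0$ as an element of $\A$, hence $\oh g=0$, a contradiction. One must handle the adjoined unit carefully: an element of $\A^+$ has the form $g+\al 1$ with $g\in\A$, and the only point of $K$ not automatically approximated by evaluations is $\gm_\infty$, but $\gm_\infty$ lies in the closure of $j(X)$ precisely because $K$ is the one-point compactification (when $1\notin\ow D$). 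This gives density.

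The more delicate claim is that each singleton $j(\{x\})$ is open, which immediately yields that $j(X)=\bigcup_{x\in X} j(\{x\})$ is open. Here I would exploit that the characteristic function $\mathds{1}_{\{x\}}=\de_x$ lies in $C_c(X)\subseteq\ow D\subseteq\A$, so its Gelfand transform $\oh{\de_x}\in C(K)$ is a genuine continuous function on $K$. The set $U_x:=\{\gm\in K:\oh{\de_x}(\gm)>1/2\}$ is open in $K$. I would then argue that $U_x=j(\{x\})$: clearly $j(x)\in U_x$ since $\oh{\de_x}(\de_x)=\de_x(\de_x)=\de_x(x)=1$, and for any other $y\in X$ we have $\oh{\de_x}(\de_y)=\de_x(y)=0$, so no other evaluation lands in $U_x$; finally, since $j(X)$ is dense and $\oh{\de_x}$ is continuous taking only the values $0$ and $1$ on the dense set $j(X)$, any character $\gm\in U_x$ must be a limit of evaluations with value near $1$, and idempotency of $\de_x$ (note $\de_x^2=\de_x$ in $\ow D$, so $\oh{\de_x}^2=\oh{\de_x}$ forces $\oh{\de_x}\in\{0,1\}$ everywhere on $K$) pins down $U_x=\{\oh{\de_x}=1\}=\{j(x)\}$.

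The main obstacle I anticipate is the last step: controlling characters $\gm$ in the boundary $\partial X=K\setminus j(X)$ and verifying that $\oh{\de_x}$ vanishes there rather than taking value $1$. The clean way around this is precisely the idempotency argument: because $\de_x\cdot\de_x=\de_x$ holds pointwise on $X$ and $\ow D$ is an algebra (Lemma~\ref{l:algebra}), the identity $\de_x^2=\de_x$ holds in $\A$, so its Gelfand transform satisfies $\oh{\de_x}^2=\oh{\de_x}$ in $C(K)$, meaning $\oh{\de_x}$ is a $\{0,1\}$-valued continuous function; thus $\{\oh{\de_x}=1\}$ is clopen, contains $j(x)$, and by density together with $\oh{\de_x}(\de_y)=0$ for $y\neq x$ it can contain no other point. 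This reduces the openness of $j(\{x\})$ to a purely algebraic fact about idempotents, sidestepping any subtle analysis of the boundary.
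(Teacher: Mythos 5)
Your treatment of the openness of the singletons $j(\{x\})$ is correct and is essentially the paper's argument: both rest on the characteristic function $1_x\in C_c(X)\subseteq\ow D$ and the open set where its Gelfand transform exceeds $1/2$. Your justification that $\oh{1_x}$ is $\{0,1\}$-valued via the idempotent identity $1_x^2=1_x$ is in fact cleaner than the paper's (which argues via density and continuity), and your use of the open set $U_x\setminus\{\de_x\}$ meeting $j(X)$ avoids the convergent sequences the paper invokes (which strictly speaking require metrizability of $K$, or nets). The genuine gap is in the density argument, exactly at the point you flag and then dismiss: the adjoined unit. Under the Gelfand isomorphism, your Urysohn function is $\oh h$ for some $h=g+\al 1\in\A^+$ with $g\in\A$, $\al\in\C$; vanishing of $\oh h$ on $j(X)$ yields only $g\equiv-\al$ on $X$, not $g\equiv 0$, so your conclusion ``$g=0$, contradiction'' does not follow when $\al\neq 0$. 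Your proposed repair is the assertion that $\gm_\infty\in\ov{j(X)}$ ``precisely because $K$ is the one-point compactification,'' but that is not a valid reason: in a one-point compactification $K'\cup\{\gm_\infty\}$, the point $\gm_\infty$ lies in the closure of a subset $S\subseteq K'$ if and only if $S$ is contained in no compact subset of $K'$, and for $S=j(X)$ this is precisely what must be proved. It is also exactly where the paper does real work, showing $\gm_\infty$ is not isolated because otherwise $K\setminus\{\gm_\infty\}$ would be compact and $\A\cong C_0(K')$ unital, contradicting $1\notin\A$ (Proposition~\ref{p:one} in the case $\sum_x c(x)=\infty$).

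The repair stays entirely within your toolkit; the missing algebraic fact is that an element of $\A$ which is a nonzero constant on $X$ would put $1\in\A$. Concretely: if $\gm_\infty\notin\ov{j(X)}$, Urysohn gives $\oh h$ with $\oh h(\gm_\infty)=1$ and $\oh h\equiv 0$ on $\ov{j(X)}$; writing $h=g+\al 1$ and evaluating at $\gm_\infty$ (which annihilates $\A$) gives $\al=1$, while evaluating at the points $\de_x$ gives $g\equiv -1$ on $X$, whence $1=-g\in\A$, a contradiction. Once $\gm_\infty\in\ov{j(X)}$ is secured, your main Urysohn step closes as well: the separating function then satisfies $\al=\oh h(\gm_\infty)=0$, so $h=g\in\A$ and your ``vanishing on $X$ forces $g=0$'' argument applies. (Equivalently, one can include $\gm_\infty$ in the closed set fed to Urysohn from the start and split into the two cases above.) So your overall route is sound and close to the paper's --- Urysohn plus density of $\ow D$ in $\A$ for density, the idempotent $1_x$ for openness --- but as written the treatment of $\gm_\infty$ is an assertion standing in for the one step that genuinely requires proof.
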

\begin{proof} Clearly, $\de_{x}$ for $x\in X$, is a character of $\A^+$ and, thus, $j$ indeed maps $X$ to $K$ (and even to  $K'$). We will show  denseness of $j(X)$ in $K$ as well as openness of the sets  $\{j(x)\}$ for any $x\in X$.

We restrict to the case that $1$ does not belong to $\ow D$. The other case is simpler and can be treated similarly.

We first show that $X$ is dense in $K$. Choose $k\in K $, $k\neq \gm_{\infty}$, arbitrarily and let $W\subseteq K$ be an open neighborhood of $k$. We show that there is an $x\in X$ such that $j(x)\in W$.
By Urysohn's lemma, there is a function $\ph\in C(K)$ with $\ph(k)=1$, $\ph\equiv 0$ on $K\setminus W$ and $0\leq \ph\leq 1$. As $\ow D$ is dense in $\A$, we can choose $g\in\ow D$ with $\aV{\oh g-\ph}_{\infty}\leq 1/3$. Consider $W_{k}=\{\gm\in K : |\oh g(\gm)|>1/2\}$. Then,  $W_{k}$  is open, $W_{k}\subseteq W$ and $k\in W_{k}$.  Moreover, $1-\aV{\oh g}_{\infty}\leq \aV{\ph-\oh g}_{\infty}\leq 1/3$. As $\aV{\oh g}_{\infty}=\aV{ g}_{\infty}$, this implies that $\aV{ g}_{\infty}\geq 2/3$. Therefore, there exists $x\in X$ such that $|g(x)|\geq2/3$. This implies, $|\oh g(\de_{x})|=|\de_{x} g|=|g(x)|\geq 2/3$ and thus $j(x)=\de_{x}\in W_{k}$.

 Now, let $k=\gm_{\infty}$. If $\gm_{\infty}$ is a discrete point, then $K\setminus \{\gm_{\infty}\}$ is compact. However, this implies that $C(K\setminus\{\gm_{\infty}\})$ is unital which is a contradiction.  Let $W$ be an open neighborhood of $\gm_{\infty}$. Since $\gm_{\infty}$ is not a discrete point of $K$ there exists  $k\in W$, $k\neq\gm_{\infty}$. By Urysohn's lemma there exists $\ph\in C(K)$ with $\ph(k)=1$, $\ph(\gm_{\infty})=0$, $0\leq \ph\leq 1$ and $\ph\equiv 0$ on $K\setminus W$. 
We continue as above to show that there exists $x\in X$ with $j(x)\in W$.

Hence, we have shown that for every $k\in K$ and every neighborhood of $k$ there exists $x\in X$ such that $j(x)$ is in this neighborhood.  Therefore, $j(X)$ is dense in $K$.

\medskip

We are now going to show that the sets $\{j(x)\}=\{\de_{x}\}$ are open for all $x\in X$. It suffices to show that there are $\oh f:K\longrightarrow \C$ continuous and $A\subseteq\C$ open such that $\{\de_{x}\}=\oh f^{-1}(A)$.
Let $f=1_{x} \in \ow D$ be given and consider  and $A=\{z\in \C : |z|>1/2\}$.
Then, $f$ takes only the values $0$ and $1$ and this must then be  true for $\oh f$ as well. Clearly, $\oh f$ assumes the value $1$ on $\delta_x$. This immediately gives  $\{\de_{x}\}\subseteq \oh f^{-1}(A)$.

 Consider now an arbitrary $\gm\neq\de_{x}$. Since $j(X)$ is dense in $K$, there exist $\de_{y_n}$ with $y_{n}\in X$ such that $\de_{y_{n}}\longrightarrow \gm$, $n\longrightarrow\infty$. As $\gamma \neq \delta_x$ we can furthermore assume without loss of generality that  $\delta_{y_n} \neq \delta_x$ for all $n$. This gives
$$\oh f (\gamma) = \lim_{n\longrightarrow \infty} \oh f (\delta_{y_n}) = \lim_{n\longrightarrow \infty} f(y_n) = 0.$$
 Hence, $\gamma$ does not belong to $\oh f^{-1} (A)$. Therefore,  $\{\de_{x}\}= \oh f^{-1}(A)$.  As  $\{j(x)\}$ are open for every $x\in X$, the set  $j(X)$ is a open subset of $K$ as well.

\medskip

Now, all of the statements of the theorem are proven.
\end{proof}

\textbf{Remark.} Consider a {\cpt} graph $(b,0)$ over $X$. Then, it is not hard to see that the associated algebra $\A = \A^+$ does not change if $(b,0)$ is replaced by $(b,c)$ with summable $c$. Thus, $K$, the character space of this algebra, also does not change. In this sense the compactification $K$ is stable under the addition of a summable $c$. This is not surprising since, as observed in  Section~\ref{secWG} and further described in Appendix \ref{Reducing}, the role of $c$ can be simulated by one  imaginary vertex at infinity and this vertex will not change the compactification.

\medskip

The previous theorem shows that we can view $X$ as an open subset of $K$. With slight abuse of notation we will not distinguish between $X$ and $j(X)$ to view $X$ as a subset of $K$.


\subsection{$K$ is the Royden compactification $R$}
It turns out that the compactification we have constructed in the previous subsection  agrees with the well-known Royden compactification $R$.  This is shown in this subsection.  In order to be more specific, we first must clarify the role of the killing term  $c$ in our considerations. The Royden compactification is commonly only considered in the case $c\equiv0$, i.e., for graphs of the form $(b,0)$ over $X$. However, it turns out that the standard construction of this compactification as given, e.g., in \cite{Soa} can be extended to arbitrary graphs $(b,c)$. For the convenience of the reader, we discuss the details  of the corresponding  construction in Appendix \ref{A:Royden}.
The outcome of this construction is what we call the Royden compactification in this section. It is this compactification that we show to be equal to $K$.


\begin{theorem} \label{KR}
Let $(b,c)$ be \cpt.  Then $K$ is homeomorphic to the Royden compactification $R$.
\end{theorem}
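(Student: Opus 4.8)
The plan is to reduce the statement to the general principle that a compactification of the discrete set $X$ is determined, up to a homeomorphism fixing $X$, by the unital $C^\ast$-subalgebra of $\ell^\infty(X)$ consisting of those functions which extend continuously to it. Both $K$ and $R$ are constructed in exactly this way: $K$ is the character space of $\A^{+}$, the smallest unital $C^\ast$-subalgebra of $\ell^\infty(X)$ containing $\A = \overline{\ow D}^{\aV{\cdot}_{\infty}}$, while $R$ is, by the construction recalled in Appendix~\ref{A:Royden}, the character space of the corresponding unital $C^\ast$-algebra built from the bounded functions of finite energy. Thus the entire theorem will follow once these two subalgebras of $\ell^\infty(X)$ are shown to coincide.

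First I would record that, since $(b,c)$ is canonically compactifiable, $\ow D \subseteq \ell^\infty(X)$, so $\ow D$ is precisely the set of \emph{bounded} functions of finite energy. Consequently $\A = \overline{\ow D}^{\aV{\cdot}_{\infty}}$ is exactly the uniform closure of the bounded finite-energy functions, which is the algebra underlying the Royden construction. The only discrepancy to watch is the unit: by Proposition~\ref{p:one}, the constant function $1$ lies in $\ow D$ if and only if $\sum_{x} c(x) < \infty$, so in general the constants must be adjoined by hand. This is precisely what the passage from $\A$ to $\A^{+}$ achieves, and the matching construction in Appendix~\ref{A:Royden} is arranged so that the Royden algebra is this same unital $C^\ast$-algebra; hence it equals $\A^{+}$ as a subalgebra of $\ell^\infty(X)$.

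With the algebras identified, I would invoke the commutative Gelfand--Naimark theorem. Both $K$ and $R$ are the character spaces of one and the same unital commutative $C^\ast$-algebra $\A^{+}$, and in both cases $X$ sits inside via the point-evaluation characters $x \mapsto \de_{x}$, with dense image (by Theorem~\ref{t:open} for $K$, and by the fact that $R$ is a compactification of $X$ for $R$). The canonical homeomorphism between the two Gelfand spectra then restricts to the identity on $X$, because under the Gelfand transform the image function satisfies $\oh g(\de_{x}) = g(x)$; so it is a homeomorphism of compactifications, which is exactly what the statement asserts.

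The main obstacle I anticipate is not analytic but definitional: one must pin down precisely the Royden algebra produced by the construction in Appendix~\ref{A:Royden} and verify that its uniform closure, together with the adjoined unit, reproduces $\A^{+}$ on the nose. The two delicate points are the treatment of the constant function when $\sum_{x} c(x) = \infty$, where $1 \notin \ow D$ and the unitalization is genuinely needed, and the confirmation that the abstract spectral homeomorphism is compatible with the two embeddings of $X$ rather than being merely an abstract homeomorphism of compact Hausdorff spaces.
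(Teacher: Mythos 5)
Your reduction rests on a claim that is false: that the algebra underlying the Royden construction coincides with $\A^{+}$ (equivalently $\A$) as a subalgebra of $\ell^{\infty}(X)$. In Appendix \ref{A:Royden}, the Royden compactification $R$ is \emph{not} obtained from a $C^{\ast}$-algebra; it is the Gelfand spectrum of the \emph{Banach} algebra $\mathcal{B}=\ow D\cap\ell^{\infty}(X)$ equipped with the norm $\aV{\cdot}_{\ow Q,\infty}=\ow Q(\cdot)^{1/2}+\aV{\cdot}_{\infty}$ (followed by a one-point compactification when $c$ is not summable). For a {\cpt} graph one has $\mathcal{B}=\ow D$, and $\ow D$ is in general a \emph{proper}, merely dense, subalgebra of $\A$: Example \ref{DnequalA} exhibits a {\cpt} graph in which $f(n)=1/n$ belongs to $\A$ but not to $\ow D$. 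So $K$ and $R$ are a priori spectra of two genuinely different algebras, carrying different norms, and the coincidence of these spectra is precisely the nontrivial content of Theorem \ref{KR}; it cannot be obtained by matching definitions and then quoting Gelfand--Naimark. (The spectrum of a dense subalgebra of a Banach algebra need not agree with the spectrum of the ambient algebra, so density alone does not settle the matter.)

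The gap can be closed in two ways. The paper's proof sidesteps the comparison of algebras altogether: Theorem \ref{Characterization-Y} characterizes the locally compact space $Y$ underlying $R$, up to homeomorphism, by four properties formulated purely in terms of the sup-norm topology ($X$ dense and open in $Y$, extendability of the elements of $\ow D$ to $C_0(Y)$, separation of points, non-vanishing), the uniqueness being a Stone--Weierstrass argument; one then checks, using Theorem \ref{t:open} and the density of $\ow D$ in $\A$, that the character space $K'$ of $\A$ has these four properties, whence $K'\cong Y$ and hence $K\cong R$ after adjoining the unit, respectively the point at infinity (which is where Proposition \ref{p:one} enters). Alternatively, your $C^{\ast}$-algebraic route can be repaired by proving that every character $\chi$ of the Banach algebra $(\ow D,\aV{\cdot}_{\ow Q,\infty})$ is automatically $\aV{\cdot}_{\infty}$-contractive: iterating Lemma \ref{estimate-product} gives $\ow Q(f^{n})^{1/2}\leq n\aV{f}_{\infty}^{n-1}\ow Q(f)^{1/2}$, so the spectral radius of $f$ in $(\ow D,\aV{\cdot}_{\ow Q,\infty})$ equals $\aV{f}_{\infty}$, and therefore $|\chi(f)|\leq\aV{f}_{\infty}$; consequently characters of $\ow D$ extend uniquely to characters of $\A$, restriction is the inverse map, and the resulting bijection is a weak-$\ast$ homeomorphism fixing the point evaluations. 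Either argument supplies the missing step; the rest of your outline (handling the unit and checking compatibility with the two embeddings of $X$) is then unproblematic.
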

\begin{proof} We will use the considerations of Appendix \ref{A:Royden}. Note that the algebra $\mathcal{B}$ considered there agrees with $\ow D$ as our graph is {\cpt}.  By Theorem \ref{Characterization-Y}, there is then  a unique (up to homeomorphism)  locally compact Hausdorff space $Y$ such that the following holds:
\begin{enumerate}
\item The set $X$ is  a dense  open subset of  $Y$.

\item Any function in $\ow D$  can be extended to an element of $C_0 (Y)$.

\item The algebra $\ow D$ separates points of $Y$.

\item The algebra $\ow D$ does not vanish on any point of $Y$.

\end{enumerate}

We will show that the space $K'$ of characters of $\A$ satisfies these properties. This will then imply  the statement (as, in the case $1\in \ow D$, we have $K = K' = Y = R$ and, in the case $1\notin \ow D$, we have that  $K$ is the one-point compactification of $K'$ and $R$ is the one-point compactification of $Y$.)


Property (1) is  immediate  from Theorem~\ref{t:open}.

\smallskip

 To show  property  (2),  notice that any element of $\ow D$ belongs to the algebra $\A$. The algebra $\A$, in turn, is isometrically isomorphic to $C_0 (K')$ as discussed in the previous section. (Here, we use that $C_0 (K') = C (K')$ if $K'$ is compact.)
Similarly, property (3)  follows: By the definition  of $K'$,  the elements of $\A$ trivially  separate the points of $K'$. Now, by the construction of $\A$, we have that $\ow D$ is dense in $\A$ and hence the elements of $\ow D$ separate the points of $K'$ as well. Finally, property (4) follows as, by the very definition of $K'$ as non-trivial characters, the algebra $\A$ cannot vanish identically on any element of $K'$.
\end{proof}

Let us stop for a moment and give a discussion of the relationship between our approach leading to $K$ and earlier work along related lines concerning $R$:

The idea of embedding a graph in a compact space via Gelfand theory  is not new. It can already be found in the work of Yamasaki \cite{Yam} and Kayano and Yamasaki \cite{KY}.  A thorough discussion  is then undertaken in Chapter VI of \cite{Soa} (compare with Appendix \ref{A:Royden} of the present paper). There,  a graph $(b,c)$ over $X$ with $c\equiv0$ is considered and it is shown that the space
$${\ow D}\cap \ell^\infty (X)$$
is a Banach algebra when equipped with the norm
$$\|u\|_{\ow Q, \infty}:= \ow Q (u)^{1/2} + \|u\|_\infty.$$
Note that this algebra  always contains the constant functions (due to the assumption $c\equiv0$).  By the Gelfand theory, this Banach algebra gives rise to a compact space $R$ and this space is called the \textit{Royden compactification} in \cite{Soa} after the work of Royden on Riemannian surfaces \cite{Roy}.

Now, our approach differs substantially from this approach in a number of ways. Most importantly, the main point of our work is to \textit{single out a class of graphs} which can intrinsically be seen as relatively compact rather than to associate a compactification to any graph.

Also, on  a technical level, two differences are worth pointing out. One difference is that we work with $C^\ast$-algebras rather than Banach algebras. This involves taking an additional limit when going from $\ow D$ to $\A$. This limit  will be of crucial importance in our treatment of the Dirichlet Problem later on. In fact, it is exactly this further limit that will allow us to  solve it in a rather general context.
 The  other difference is that we are not restricted to the  case $c \equiv 0$. This is relevant as our perspective is to deal with arbitrary regular Dirichlet forms on discrete spaces (and these Dirichlet forms are in one-to-one correspondence with  graphs $(b,c)$ with arbitrary $c\geq 0$).  In fact, in order to state the previous theorem, we even had to extend the common definition of Royden compactification to the framework of general graphs $(b,c)$ over $X$.

 Given these differences between our approach to $K$ and the construction of the Royden compactification, the statement of the preceding theorem seems rather  remarkable in that it shows that one still obtains the same compactification.

To underline this last point we now include an example showing that, in general, $\ow D$ is strictly smaller than $\A$ in the case of \cpt\ graphs.

\begin{example}[$\ow D$ strictly contained in $\A$]\label{DnequalA}
We consider the one-way infinite path $X = \N$ with  weights
$$b(n,n+1) = n^3$$
(and $b(x,y) = 0$ for $|x-y|>1$)
and $c\equiv0$.  This  graph  satisfies
$$\sum_{n\in X} \frac{1}{b(n, n+1)}<\infty$$
and  is, therefore, \cpt\ (as shown in Example~\ref{key-example}).
Now, it can be seen directly that the function
$$f(n)=1/n,    \quad n\ge1,$$
is not of finite energy and, hence, does not belong to $\ow D$.  However, for any $k\in \N$,  the function
$$f_k(n) = \frac{1}{ n^{1+1/k} },  \quad     n\ge1,$$
 is of finite energy (as follows by estimating $ (f_k ( n+1)-f_k(n))$  by the mean value inequality). Moreover, $(f_k)$ converges  to $f$ with respect to the supremum norm and, hence, $f$ belongs to $\A$.
\end{example}


\subsection{The Royden boundary}\label{Subsection-Boundary}
In this subsection, we consider a {\cpt} graph $(b,c)$ over $X$.  According to the considerations of the previous subsections, there arises  a canonical compactification $K$ of $X$ and  this compactification agrees with the Royden compactification $R$. Here, we are concerned with the arising boundary
\begin{align*}
    \partial X:=K\setminus X = R\setminus X.
\end{align*}

We first note that points in the boundary are exactly the accumulation points of $X$:

\begin{lemma}\label{l:discrete} If $k\in R$,
then $k$ is a discrete point of $R$ if and only if $k\in X$.
\end{lemma}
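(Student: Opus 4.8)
The plan is to read off both implications directly from Theorem~\ref{t:open}, which supplies the two structural facts we need: that $X$ (viewed via the embedding $j$ as $j(X)$) is dense in $R=K$, and that every singleton $\{x\}$ with $x\in X$ is open in $R$. Recall here that a point $k$ is \emph{discrete} (isolated) in $R$ precisely when $\{k\}$ is open in $R$, so the whole statement reduces to deciding for which $k$ the singleton $\{k\}$ is open.

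For the implication that $k\in X$ forces $k$ to be discrete, there is essentially nothing to prove beyond quoting Theorem~\ref{t:open}: that theorem asserts that $\{x\}$ is open in $R$ for each $x\in X$, and this is exactly the statement that such a $k$ is a discrete point.

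For the converse I would argue by contraposition, showing that no boundary point can be isolated. So suppose $k\in\partial X=R\setminus X$, and let $W$ be an arbitrary open neighborhood of $k$. Since $X$ is dense in $R$, the nonempty open set $W$ must contain some $x\in X$; because $x\in X$ while $k\notin X$, necessarily $x\neq k$, so $W$ contains a point distinct from $k$. As $W$ was arbitrary, $\{k\}$ is not open, and hence $k$ is not a discrete point. Combining the two directions yields the claimed equivalence.

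I do not expect any real obstacle here: the entire content is carried by Theorem~\ref{t:open}, and the only points requiring a moment's care are to use the correct notion of discreteness (namely that the singleton is open) and to invoke density in the form that every neighborhood of a boundary point meets $X$ in a point necessarily different from that boundary point.
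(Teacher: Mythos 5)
Your proposal is correct and follows essentially the same route as the paper's own proof: both directions are read off from Theorem~\ref{t:open}, using openness of singletons $\{x\}$ for $x\in X$ in one direction and density of $X$ in $K=R$ (so that every neighborhood of a boundary point meets $X$ in a point necessarily distinct from it) in the other. Your write-up is just slightly more explicit about why the point of $X$ found in the neighborhood differs from $k$.
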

\begin{proof} We will use the fact that $K=R$ proved above. For the forward direction, note that every singleton set $\{x\}$ is open in $K$ for $x\in X$. For the other direction, we know, by Theorem \ref{t:open}, that $X$ is dense in $K$. This implies that, for every $k\in \partial X$ and every neighborhood $W$ of $k$, there is an $x\in X$ with $x\in W$. This implies that $k$ is not a discrete point.
\end{proof}

We now characterize the elements of $\ow D$ yielding functions which vanish on the boundary.
Recall that $\ow D_o$ is the closure of $C_c (X)$ in $\ow D$ with respect to $\aV{\cdot}_o$ according to Definition \ref{def:Dzero}.

\begin{theorem}\label{l:c_c} If $(b,c)$ is {\cpt} and connected, then
$$\ow D_o = \{ u\in \ow D : \oh u\vert_{\partial X}=0\}. $$
\end{theorem}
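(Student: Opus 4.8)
The plan is to prove the two inclusions separately, using throughout that $\ow D\subseteq\ell^\infty(X)$ (so that the Gelfand extension $\oh u\in C(K)$ of every $u\in\ow D$ is defined and the extension map $u\mapsto\oh u$ is $\aV{\cdot}_\infty$-isometric), that each singleton $\{x\}$ is open in $K$ by Theorem~\ref{t:open} (so $X$ is a discrete open subset of $K$ and $\partial X=K\setminus X$ is closed), and that $K$ is compact.

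For the inclusion ``$\subseteq$'', let $u\in\ow D_o$ and pick $u_n\in C_c(X)$ with $\aV{u_n-u}_o\to 0$. First I would check that $\oh{u_n}|_{\partial X}=0$: writing $u_n=\sum_{x\in F}u_n(x)\,1_x$ with $F$ finite, it suffices to see $\oh{1_x}|_{\partial X}=0$. This holds because $\oh{1_x}$ is continuous with values in $\{0,1\}$, so $\oh{1_x}^{-1}(\{1\})$ is clopen, meets $X$ only in $\{x\}$, and hence, by denseness of $X$, cannot contain any boundary point. Since the embedding $(\ow D,\aV{\cdot}_o)\hookrightarrow\ell^\infty(X)$ is continuous by Lemma~\ref{l:cpt}, we get $\aV{u_n-u}_\infty\to 0$, whence $\oh{u_n}\to\oh u$ uniformly on $K$. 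Restricting to the closed boundary yields $\oh u|_{\partial X}=\lim_n\oh{u_n}|_{\partial X}=0$.

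For the reverse inclusion ``$\supseteq$'', take $u\in\ow D$ with $\oh u|_{\partial X}=0$. The idea is to approximate $u$ by the soft-thresholdings $u_\eps:=C_\eps\circ u$, where $C_\eps(z)=z-T_\eps(z)$ and $T_\eps$ is the radial projection onto the closed disk of radius $\eps$; both $T_\eps$ and $C_\eps$ are normal contractions, so by compatibility of $\ow Q$ with normal contractions one has $u_\eps,\,v_\eps:=u-u_\eps=T_\eps\circ u\in\ow D$ with $\ow Q(u_\eps)\le\ow Q(u)$. The crucial point is that each $u_\eps$ is finitely supported: its support lies in $\{x\in X:|u(x)|>\eps\}\subseteq\{k\in K:|\oh u(k)|\ge\eps\}$, and the latter set is closed in the compact space $K$, hence compact, and is contained in $X$ because $\oh u$ vanishes on $\partial X$; as $X$ is discrete, a compact subset of it is finite. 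Thus $u_\eps\in C_c(X)\subseteq\ow D_o$.

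It then remains to show $\aV{u-u_\eps}_o=\aV{v_\eps}_o\to 0$ as $\eps\to 0$. Here $|v_\eps(x)|\le\min(|u(x)|,\eps)$ and $|v_\eps(x)-v_\eps(y)|\le|u(x)-u(y)|$ since $T_\eps$ is $1$-Lipschitz, while $v_\eps\to 0$ pointwise. Applying dominated convergence to $\tfrac12\sum_{x,y}b(x,y)|v_\eps(x)-v_\eps(y)|^2$ (dominated by the summable $b(x,y)|u(x)-u(y)|^2$) and to $\sum_x c(x)|v_\eps(x)|^2$ (dominated by the summable $c(x)|u(x)|^2$) gives $\ow Q(v_\eps)\to 0$, and $|v_\eps(o)|\to 0$ is clear; hence $u\in\overline{C_c(X)}^{\aV{\cdot}_o}=\ow D_o$. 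I expect the main obstacle to be the finite-support step: it is exactly here that canonical compactifiability enters, through the interplay of compactness of $K$, discreteness of $X$, and the vanishing of $\oh u$ on $\partial X$, while the two dominated-convergence limits are then routine.
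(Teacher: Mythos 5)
Your proof is correct and follows essentially the same route as the paper's: the inclusion ``$\subseteq$'' via the vanishing of $\oh v\vert_{\partial X}$ for $v\in C_c(X)$ combined with the continuity of the embedding from Lemma~\ref{l:cpt}, and the inclusion ``$\supseteq$'' via an $\varepsilon$-cutoff whose support is forced to be finite by compactness of $K$, discreteness of $X$ and the vanishing of $\oh u$ on $\partial X$, followed by dominated convergence in $\ow Q$. Your two variants --- the radial soft-thresholding $C_\varepsilon$, which handles complex-valued $u$ directly where the paper reduces to $u\geq 0$, and the clopen-set/compactness arguments where the paper argues with convergent sequences --- are only minor refinements of the same idea (if anything slightly cleaner, since they avoid sequential arguments in the possibly non-metrizable space $K$).
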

\begin{proof}
We first show the inclusion ``$\subseteq$'': We start by  proving that $\oh v\vert_{\partial X}=0$ for all $v\in C_{c}(X)$.
By Theorem~\ref{t:open} and Lemma~\ref{l:discrete} for every $k\in\partial X$ there exists $(x_{n})$ in $X$ with $x_{n}\longrightarrow k$, $n\longrightarrow \infty$.   Since $(x_{n})$ eventually leaves every finite set, we infer that $v(x_{n})=0$ for $n$ sufficiently large for all $v \in C_c(X)$. Hence, $\oh v\vert_{\partial X}=0$ for all $v\in C_{c}(X)$.

Now, let $u\in  \ow D_o$ and let $(v_{n})$ in $C_{c}(X)$ be converging to $u$  with respect to $\aV{\cdot}_{o}$ and, thus, by Lemma~\ref{l:cpt}, with respect to $\aV{\cdot}_{\infty}$. This implies that $\aV{\oh v_{n}-\oh u}_{\infty}\longrightarrow0$, $n\longrightarrow\infty$, and, in particular, $\oh u\vert_{\partial X}=0$.

\smallskip

We now turn to the opposite inclusion ``$\supseteq$'': Let $u\in \ow D$ with $u\vert_{\partial X}=0$ be given. Without loss of generality, we can assume that $u\geq 0$. Now, for $\varepsilon >0$, consider the map
$$C_\varepsilon : \R \longrightarrow \R,$$
with $C_\varepsilon (x) = 0$ for $x\leq \varepsilon$ and $C_\varepsilon (x) = x - \varepsilon$ for $x\geq \varepsilon$. It is not hard to see  $C_\varepsilon u$ belongs to $\ow D$ as well (as $C_\varepsilon (p) \leq p$ and $|C_\varepsilon (p) - C_\varepsilon (q)| \leq |p - q|$ for all $p,q\in \R$).

\smallskip

\textit{Claim: $C_\varepsilon u$ has finite support.}
Assume the contrary. Then, there exists a  sequence $(x_n)$ in $X$ of pairwise different points  with $u (x_n) \geq \varepsilon$. By the compactness of $R$, we can assume, without loss of generality, that $(x_n)$ converges to some $k$.  By Lemma~\ref{l:discrete}, this $k$ must belong to the boundary $\partial X$. On the other hand, by $u (x_n)\geq \varepsilon$ we have $u (k)\geq \varepsilon$. This is a contradiction to the vanishing of $u$ on the boundary.  This proves the claim.

 \smallskip

 \textit{Claim: $C_\varepsilon u \longrightarrow u$ with respect to $\aV{\cdot}_{o}$.} It is not hard to see that $C_\varepsilon u$ converges pointwise to  $u$. Moreover, a short calculation gives
 \begin{eqnarray*}
 | (C_\varepsilon  u  - u) (x) -  (C_\varepsilon u - u ) (y)|^2 &\leq & 2 | C_\varepsilon u (x) - C_\varepsilon u (y)|^2 + 2 | u (x) - u(y)|^2\\
 &\leq & 4 |u(x) - u(y)|^2.
  \end{eqnarray*}
 Combining this with the pointwise convergence, we infer from the Lebesgue dominated convergence theorem that
 $$\ow Q (C_\varepsilon u - u)\longrightarrow 0,\varepsilon \longrightarrow 0.$$
 This proves the claim.

\smallskip

With the preceding two claims the proof of the theorem is finished.
\end{proof}

We finish this section be discussing a slight generalization of $\aV{\cdot}_o$.
 Assume that $(b,c)$ is \cpt. Let $k\in R$. Define
\begin{align*}
    \as{f,g}_{k}=\ow Q(f,g)+\ov{\oh f(k)}\oh g(k),\quad f,g\in \ow D
\end{align*}
and
$\aV{f}_{k}:=\as{f,f}_{k}^{\frac{1}{2}}$. We call $(\ow D,\as{\cdot,\cdot}_{k})$, $k\in K$, the \emph{generalized Yamasaki space}.

\begin{theorem}\label{p:Yamasaki2} If $(b,c)$ is {\cpt} and connected, then $\aV{\cdot}_{k}$ and $\aV{\cdot}_{o}$ are equivalent for all $o,k\in R$. In particular, $(\ow D,\as{\cdot,\cdot}_{k})$ is a Hilbert space for all $k\in R$.
\end{theorem}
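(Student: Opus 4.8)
The plan is to reduce everything to a single uniform oscillation estimate and then transport the Hilbert space property from the already-settled case $o\in X$ via equivalence of norms. The starting observation is that every norm $\aV{\cdot}_k$ contains the common summand $\ow Q(f)$ and differs from $\aV{\cdot}_o$ only through the boundary value $\oh f(k)$; so to compare two such norms it suffices to compare the point evaluations $\oh f(k)$ and $\oh f(k')$ across $R$, uniformly over $f\in\ow D$.

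First I would prove the oscillation bound
\[
\sup_{k,k'\in R}|\oh f(k)-\oh f(k')|\;\le\; D\,\ow Q(f)^{1/2},\qquad D:=\diam_\varrho(X).
\]
For this, recall that every $f\in\ow D$ is Lipschitz with respect to $\varrho$ with constant $\ow Q(f)^{1/2}$, so $|f(x)-f(y)|\le D\,\ow Q(f)^{1/2}$ for all $x,y\in X$, since $\varrho(x,y)\le\diam_\varrho(X)$. Because $X$ is dense in $R$ (Theorem~\ref{t:open}) and $\oh f$ is continuous, the range $\oh f(R)$ lies in the closure of $f(X)$, so the oscillation of $\oh f$ over $R$ cannot exceed that of $f$ over $X$; this gives the displayed bound. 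The decisive input is that $D<\infty$, which holds exactly because $(b,c)$ is {\cpt}, by the equivalence (i)$\Leftrightarrow$(ii) in Theorem~\ref{characterization-cpt}.

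With this in hand the equivalence of norms is immediate: for any $k,k'\in R$ and $f\in\ow D$,
\[
|\oh f(k)|\le|\oh f(k')|+D\,\ow Q(f)^{1/2}\le(1+D)\aV{f}_{k'},
\]
where I use $|\oh f(k')|\le\aV{f}_{k'}$ and $\ow Q(f)^{1/2}\le\aV{f}_{k'}$. Squaring and adding $\ow Q(f)\le\aV{f}_{k'}^2$ yields $\aV{f}_k^2\le\bigl(1+(1+D)^2\bigr)\aV{f}_{k'}^2$, and exchanging the roles of $k$ and $k'$ gives the opposite inequality, so all norms $\aV{\cdot}_k$, $k\in R$, are mutually equivalent. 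Taking $k'=o\in X$ and recalling $\oh f(\de_o)=f(o)$, the norm $\aV{\cdot}_o$ is precisely the Hilbert space norm of Proposition~\ref{p:Yamasaki}; positive definiteness of $\as{\cdot,\cdot}_k$ then follows from the equivalence, and completeness is preserved under equivalent norms, so $(\ow D,\as{\cdot,\cdot}_k)$ is a Hilbert space for every $k\in R$.

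The one point that requires care, and which I regard as the heart of the argument, is the passage from $X$ to the boundary in the oscillation bound. A priori the character space $R$ bears no relation to the metric completion $\ov X^\varrho$, and $\ov X^\varrho$ need not even be compact, so one cannot argue through any continuous map between the two compactifications. The argument sidesteps this entirely: it uses only density of $X$ in $R$ together with continuity of $\oh f$, which force the oscillation of $\oh f$ over $R$ to coincide with that of $f$ over $X$.
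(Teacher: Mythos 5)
Your proof is correct and follows essentially the same route as the paper: both arguments transfer the $\varrho$-Lipschitz bound $|f(x)-f(y)|\le \ow Q(f)^{1/2}\diam_\varrho(X)$ from $X$ to $R$ using density of $X$ in $K=R$ and continuity of $\oh f$ (the paper via a net converging to $k$, you via closure of the range), with finiteness of $\diam_\varrho(X)$ supplied by Theorem~\ref{characterization-cpt} and the Hilbert space property imported from Proposition~\ref{p:Yamasaki} through the norm equivalence. The only cosmetic difference is that you compare two arbitrary points $k,k'\in R$ directly, whereas the paper reduces to comparing $k\in K$ with a fixed $o\in X$.
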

\begin{proof}
We will use that $K$ agrees with $R$.
It suffices to prove the statement for  $o\in X$ and  $k\in K$. Let $(k_{n})$ be a net  in $X$  converging to $k$. Observe that
\begin{align*}
|\oh f(k)|&\leq|\oh f(k)- f(o)|+| f(o)|\leq
\lim_{{n}\longrightarrow\infty}|f(k_{n})-f(o)|+|f(o)|\\
&\leq
\ow Q(f)^{\frac{1}{2}}\lim_{n\longrightarrow\infty}\varrho(o,k_{n})  +|f(o)|.
\end{align*}
Since, by Theorem~\ref{characterization-cpt}, $\diam_{\varrho}(X)<\infty$ we conclude the equivalence of the norms. Since, by Proposition~\ref{p:Yamasaki}, $(\ow D,\as{\cdot,\cdot}_{o})$ is a Hilbert space for all $o\in X$, we conclude the `in particular'.
\end{proof}





\subsection{Relationship  between  $R$ and metric completions of $X$}
In this subsection, we study how $R$ is related to the  metric completions studied in the preceding subsections.

\bigskip

\begin{theorem}\label{l:embedding-varrho} Let  $(b,c)$  be {\cpt} and connected and $\ov{X}^{\varrho}$ be the metric completion of $X$ with respect to  $\varrho$. Then,  there exists a unique continuous map  $\kappa :\ov{X}^{\varrho}\longrightarrow R$ extending the identity $X\longrightarrow X, x\mapsto x$. For any $f\in \ow D$, its extension $f^{(\varrho)}$ to $\ov{X}^{\varrho}$ is given by $\widehat{f}\circ \kappa$.
The map $\kappa$  is one-to-one. If $\ov{X}^\varrho$ is compact, it is a homeomorphism.
 \end{theorem}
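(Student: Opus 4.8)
The plan is to obtain $\kappa$ from Gelfand duality, using that $R=K$ is precisely the character space of the unital $C^\ast$-algebra $\A^+$ (Theorem~\ref{KR}). Concretely, I would first build an isometric unital $*$-homomorphism $\Phi:\A^+\longrightarrow C(\ov{X}^\varrho)$ and then read off $\kappa$ as the induced map on spectra. The first ingredient is that every $f\in\ow D$ extends uniquely to a continuous function $f^{(\varrho)}\in C(\ov{X}^\varrho)$ (Theorem~\ref{extension-d-to-varrho}), since by the very definition of $\varrho$ each such $f$ is Lipschitz for $\varrho$. As $X$ is dense in $\ov{X}^\varrho$, one has $\sup_{\ov{X}^{\varrho}}|f^{(\varrho)}|=\|f\|_\infty$, so $f\mapsto f^{(\varrho)}$ is a sup-norm isometry; it manifestly respects products and conjugation on the algebra $\ow D$ (Lemma~\ref{l:algebra}), and hence extends by uniform limits to $\A=\overline{\ow D}^{\|\cdot\|_\infty}$ and, by sending $1$ to $1$, to all of $\A^+$. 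This produces the desired isometric unital $*$-homomorphism $\Phi$, $g\mapsto g^{(\varrho)}$.

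Next I would set $\kappa(\xi)(g):=g^{(\varrho)}(\xi)$ for $\xi\in\ov{X}^\varrho$ and $g\in\A^+$. Being the composition of $\Phi$ with evaluation at $\xi$, each $\kappa(\xi)$ is a character of $\A^+$, so $\kappa$ maps $\ov{X}^\varrho$ into $K=R$. It is continuous for the weak-$*$ topology because every coordinate map $\xi\mapsto\kappa(\xi)(g)=g^{(\varrho)}(\xi)$ is continuous, and it extends the identity since $\kappa(x)(g)=g(x)=\delta_x(g)$ shows $\kappa(x)=\delta_x$, which is exactly how $X$ embeds into $R$ (Theorem~\ref{t:open}). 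The asserted formula $\widehat{f}\circ\kappa=f^{(\varrho)}$ for $f\in\ow D$ is then immediate from $\widehat{f}(\kappa(\xi))=\kappa(\xi)(f)=f^{(\varrho)}(\xi)$. Uniqueness of $\kappa$ follows because $X$ is dense in $\ov{X}^\varrho$ and $R$ is Hausdorff, so any two continuous extensions of the identity coincide.

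The step I expect to carry the real content is injectivity, which is where the specific nature of $\varrho$ enters: I would show that the extended functions from $\ow D$ separate the points of $\ov{X}^\varrho$ by recovering $\varrho$ itself from them. Consider $\varrho'(\xi,\eta):=\sup\{|f^{(\varrho)}(\xi)-f^{(\varrho)}(\eta)|:f\in\ow D,\ \ow Q(f)\le 1\}$. By the definition of $\varrho$ it agrees with $\varrho$ on $X\times X$, and both are continuous on $\ov{X}^\varrho\times\ov{X}^\varrho$ (each competing function is $1$-Lipschitz for $\varrho$, so $\varrho'$ is $1$-Lipschitz in each variable), whence $\varrho'=\varrho$ everywhere by density. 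Thus $\xi\neq\eta$ forces $\varrho(\xi,\eta)>0$, so some $f\in\ow D$ satisfies $f^{(\varrho)}(\xi)\neq f^{(\varrho)}(\eta)$, which gives $\widehat{f}(\kappa(\xi))\neq\widehat{f}(\kappa(\eta))$ and hence $\kappa(\xi)\neq\kappa(\eta)$.

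Finally, for the homeomorphism claim, if $\ov{X}^\varrho$ is compact then $\kappa(\ov{X}^\varrho)$ is a compact, hence closed, subset of the Hausdorff space $R$ containing the dense set $X$, so $\kappa$ is onto; a continuous bijection from a compact space to a Hausdorff space is a homeomorphism, which finishes the proof. The main technical care is in the first paragraph (that $\Phi$ really lands in $C(\ov{X}^\varrho)$ and is isometric), while the conceptual crux is the point-separation argument for injectivity; everything else is formal once the Gelfand-functorial setup is in place.
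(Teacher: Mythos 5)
Your proposal is correct and follows essentially the same route as the paper: both construct $\kappa$ via Gelfand duality (evaluation of the continuously extended functions gives multiplicative functionals on the algebra), prove injectivity by separating distinct points of $\ov{X}^{\varrho}$ with a function of energy at most one, and get the homeomorphism statement from the compact-to-Hausdorff bijection argument. If anything, your treatment is slightly more careful at two points where the paper is terse: you work with the unital algebra $\A^{+}$ so that $\kappa(\xi)$ is automatically a nonzero functional (the paper instead adjoins the zero functional to the character space of $\A$ and rules it out separately), and your verification that the sup-metric $\varrho'$ built from extended functions agrees with $\varrho$ on the completion makes rigorous the paper's direct appeal to ``the definition of $\varrho$'' at points of $\ov{X}^{\varrho}\setminus X$.
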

\begin{proof}  We will show a series of claims.

\smallskip

\textit{Claim: Such a $\kappa$ is unique.}
 By denseness of $X$ in $\ov{X}^{\varrho}$,  uniqueness of such a map $\kappa$ is clear.

\smallskip

\textit{Claim: There exists such a $\kappa$:}
We will make use of the fact that $R$ coincides with $K$ (Theorem~\ref{KR}). We start with a slight reformulation of the Gelfand theory already discussed above:
Let $\widetilde{K}$ be the set of all multiplicative linear maps from $\A $ to $\C$. Then, $\widetilde{K}$ consists of the set of characters, i.e., non-vanishing multiplicative functionals  $K_0$ together with the zero multiplicative functional $0_M$.  If $1\in \A$, then $K_0$ is compact and is $\A$ is canonically isometrically isomorphic to $C(K_0)$.  If $1$ does not belong to $\A$, then $K_0$ is not compact, but $\widetilde{K}$ is  and $\A$ is then  canonically isometrically isomorphic to the algebra of continuous functions on $\widetilde{K}$ vanishing at $0_M$.  In this case, $\widetilde{K}$ is homeomorphic to $K$ via the unique map which is the identity on $K_0$ and maps $\gamma_\infty$ to $0_M$.

To show existence of the desired map $\kappa$, it therefore suffices to provide a continuous map from $\ov{X}^\varrho$ to $\widetilde{K}$  extending the identity such that  $0_M$ is not in its range if $1$ belongs to $\A$.

By definition of $\varrho$, every  function $f\in \ow D$ is Lipschitz continuous with respect to $\varrho$  with constant $\ow Q (f)^{\frac{1}{2}}$. Hence, it can be uniquely extended to  a continuous function on $\ov{X}^{\varrho}$. The same is then true for any function $f\in \A$ (as $\A$ is the closure of $\ow D$ with respect to the supremum norm).
 Denote the extension of such a function $f\in \A$ to $\ov{X}^{\varrho}$ by $\widetilde{f}$. Then, clearly  every $k\in \ov{X}^{\varrho}$ defines a unique  multiplicative linear functional $\kappa (k)$ on $\A$ with $\kappa (k) (f) = \widetilde{f} (k)$.  This shows that $\kappa (k)$ is either $0_M$ or an element of $K_0$ and hence belongs to $\widetilde{K}$. Moreover, if $1$ belongs to $\A$, then $\kappa (k)$ can not agree with $0_M$ as it obviously takes the value $1$ on the constant function $1$.

 Moreover,  it is not hard to see that $\kappa$ is continuous since whenever $(k_n)$ converges to $k$ (w.r.t. $\varrho$), then $\kappa (k_n) (f)$ will converge to $\kappa (k) (f)$ for every $f\in \A$. In fact,  this is just continuity of the functions $\widetilde{f}$.

This gives existence of the desired map $\kappa$.

\smallskip

\textit{Claim: The map $\kappa$ is one-to-one:} Let $p,q\in\ov{X}^\varrho$ be given with $p\neq q$. Then, $\varrho (p,q)>0$ holds.  By the definition of $\varrho$, there then exists $f\in \ow D$ with $\ow Q (f) \leq 1$ and
$$|f(p) - f(q)|\geq \frac{1}{2} \varrho (p,q) >0.$$
This immediately implies that $\kappa (p) $ and $\kappa (q)$ are not equal (as they take different values  on $f$).

\smallskip

\textit{Claim: The map $\kappa$ is a homeomorphism if $\ov{X}^\varrho$ is compact:}
 We have already seen continuity and injectivity. It remains to show that $\kappa$ is onto and that the inverse is continuous. By compactness of $\ov{X}^\varrho$, it suffices to show that $\kappa$ has dense range. This, however, is clear as $X$ lies in the range of $\kappa$.


\smallskip

It remains to show the statement on the functions $f$. By the continuity of $\kappa$ and the definition of $\widehat{f}$, the function $\widehat{f}\circ \kappa$ is continuous on $\ov{X}^{\varrho}$. By a short calculation, it can be seen to agree with $f$ on $X$. Hence, it must be equal to the (unique) extension of $X$ to $\ov{X}^{\varrho}$. This proves the remaining  statement of the lemma.
\end{proof}

\textbf{Remark.} In general, $(b,c)$ being canonically compactifiable does not imply that $\overline{X}^{\varrho}$ is compact, as can be seen from Theorem~\ref{implication-D} together with  Example~\ref{e:C_does_not_imply_B}.
More precisely, the mentioned theorem shows that $(C)$ implies  canonical compactifiability,  that is $(D)$, whereas the example shows that $(C)$ does not imply the compactness of $\overline{X}^{\varrho}$, which is $(B)$.
\medskip

As a consequence  of the previous theorem, we note the following.

\begin{corollary} Let  $(b,c)$  be {\cpt} and connected and $\ov{X}^d$ be the metric completion of $X$ with respect to $d$. Then, there exists a unique continuous map $\ov{X}^d\longrightarrow R $ extending the identity $X\longrightarrow X, \,  x\mapsto x$. It is given by $\kappa \circ \iota$.
\end{corollary}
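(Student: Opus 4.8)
The plan is to realize the desired map as the composition of the two extension maps already constructed, namely $\iota \colon \ov{X}^d \longrightarrow \ov{X}^\varrho$ from Theorem~\ref{extension-d-to-varrho} and $\kappa \colon \ov{X}^\varrho \longrightarrow R$ from Theorem~\ref{l:embedding-varrho}. Since the assertion literally names $\kappa \circ \iota$, essentially all the work has been done in those two theorems, and the task reduces to checking that the hypotheses of both apply and that the composition has the claimed properties.

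First I would note that connectedness of $(b,c)$ is exactly the hypothesis needed to invoke Theorem~\ref{extension-d-to-varrho}, which yields a continuous map $\iota \colon \ov{X}^d \longrightarrow \ov{X}^\varrho$ restricting to the identity on $X$. Next, using that $(b,c)$ is both {\cpt} and connected, Theorem~\ref{l:embedding-varrho} provides a continuous map $\kappa \colon \ov{X}^\varrho \longrightarrow R$ that also restricts to the identity on $X$. The composition $\kappa \circ \iota \colon \ov{X}^d \longrightarrow R$ is then continuous as a composition of continuous maps, and on $X$ it satisfies $(\kappa \circ \iota)(x) = \kappa(x) = x$, so it extends the identity. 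This establishes existence.

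For uniqueness I would argue as in the proofs of Theorem~\ref{extension-d-to-varrho} and Theorem~\ref{l:embedding-varrho}: the set $X$ is dense in its completion $\ov{X}^d$, and $R$ is a compact Hausdorff space (being the character space of the unital commutative $C^\ast$-algebra $\A^+$, equivalently the Royden compactification via Theorem~\ref{KR}). Any two continuous maps from $\ov{X}^d$ into a Hausdorff space that agree on the dense subset $X$ must coincide, so the extension is unique and therefore equals $\kappa \circ \iota$.

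There is no real obstacle here; the statement is a formal consequence of the two cited theorems. The only points that require a word of care are bookkeeping ones: verifying that each of $\iota$ and $\kappa$ genuinely restricts to the identity on $X$ (so that the composition does as well), and invoking the Hausdorff property of $R$ to run the density argument for uniqueness.
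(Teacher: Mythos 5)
Your proposal is correct and matches the paper's own proof: the paper likewise obtains existence by composing $\iota$ from Theorem~\ref{extension-d-to-varrho} with $\kappa$ from Theorem~\ref{l:embedding-varrho}, and gets uniqueness from the denseness of $X$ in $\ov{X}^d$ (your explicit appeal to the Hausdorff property of $R$ just spells out what the paper leaves implicit). Nothing is missing.
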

\begin{proof}
 Uniqueness is clear from denseness of $X$ in $\ov{X}^d$. Existence follows from the previous lemma  and Theorem \ref{extension-d-to-varrho}.
\end{proof}

In the case of intrinsic metrics, we can say more.

\begin{theorem} \label{embedding-intrinsic} Let  $(b,c)$  be {\cpt} and connected with $C:=\sum_{x\in X} c(x) < \infty$.  Let $m$ be a measure on $X$ with $m(X) < \infty$ and $\sigma$ be an intrinsic metric with respect to $m$ such that $\ov{X}^\sigma$ is compact.
Then, there exists a unique continuous map
$$\lambda =\lambda_\sigma : R\longrightarrow \ov{X}^\sigma$$
extending the identity on $X$  and this map  is onto.
\end{theorem}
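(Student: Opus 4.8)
The plan is to build $\lambda$ out of the distance functions attached to $\sigma$, which lie in $\ow D$ and hence extend continuously to $R$, and then to recognize $\ov{X}^\sigma$ inside a product of intervals via the Kuratowski embedding. Uniqueness is the easy half: since $X$ is dense in $R$ by Theorem~\ref{t:open} and $\ov{X}^\sigma$ is a metric space, hence Hausdorff, any continuous map $R\to\ov{X}^\sigma$ which is the identity on $X$ is determined on the dense subset $X$ and is therefore unique. So the work lies in producing one such map.

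For existence, I would first record that compactness of $\ov{X}^\sigma$ forces $S:=\diam_\sigma(X)<\infty$. For each $a\in X$ consider the distance function $f_a:X\to[0,S]$, $f_a(x)=\sigma(x,a)=\sigma_{\{a\}}(x)$. By the proposition preceding Proposition~\ref{bound-sigma-via-varrho} (its part~(b), which uses precisely $C<\infty$ and the boundedness of $\sigma$), each $f_a$ belongs to $\ow D$. Since $(b,c)$ is \cpt, we have $\ow D\subseteq\A$, so $f_a\in\A$; as $R=K$ is the character space of $\A^+$ (Theorem~\ref{KR}, with $\A^+=\A$ because $1\in\ow D$ here by Proposition~\ref{p:one}), the Gelfand map extends $f_a$ to $\oh{f_a}\in C(R)$ with $\oh{f_a}|_X=f_a$ and, by density of $X$ and continuity, $0\le\oh{f_a}\le S$ on all of $R$.

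Next I would use these functions to compare $R$ with $\ov{X}^\sigma$ through the product $P:=\prod_{a\in X}[0,S]$. Define $\Phi:\ov{X}^\sigma\to P$ by $\Phi(p)=(\sigma(p,a))_{a\in X}$. This $\Phi$ is a continuous injection, and it recovers the $\sigma$-topology: both facts follow by choosing, for a given point, some $a\in X$ that is $\sigma$-close to it, which is possible since $X$ is $\sigma$-dense. As $\ov{X}^\sigma$ is compact and $P$ is Hausdorff, $\Phi$ is a homeomorphism onto its compact, hence closed, image. In parallel define $\Psi:R\to P$ by $\Psi(k)=(\oh{f_a}(k))_{a\in X}$; it is continuous because each coordinate $\oh{f_a}$ is continuous, and $\Psi|_X=\Phi|_X$ since $\oh{f_a}(x)=\sigma(x,a)$. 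Because $R$ is compact, $\Psi(R)$ is compact, hence closed in $P$, and contains $\Psi(X)=\Phi(X)$; combining this with density of $X$ in both $R$ and $\ov{X}^\sigma$ gives
$$\Psi(R)=\overline{\Phi(X)}=\Phi(\ov{X}^\sigma).$$
Then $\lambda:=\Phi^{-1}\circ\Psi:R\to\ov{X}^\sigma$ is well defined, continuous, onto, and restricts to the identity on $X$, which is exactly what is claimed.

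I expect the main obstacle to be the step $\Psi(R)\subseteq\Phi(\ov{X}^\sigma)$, i.e. showing that a point of $R$ is sent by the functions $\oh{f_a}$ to an honest distance vector $(\sigma(p,a))_a$ of some $p\in\ov{X}^\sigma$, rather than to a merely Lipschitz ``generalized distance function.'' What makes this go through is precisely the compactness of $R$ together with that of $\ov{X}^\sigma$: a convergent subnet in $\ov{X}^\sigma$ pins down the limit point $p$, and the identity $\Psi|_X=\Phi|_X$ forces the limiting coordinates to coincide with $\sigma(p,\cdot)$. The hypotheses $C<\infty$ and boundedness of $\sigma$ enter here in an essential way, since without them the distance functions $f_a$ need not lie in $\ow D$ and would not extend to $R$ at all.
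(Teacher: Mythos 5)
Your proof is correct, but it takes a genuinely different route from the paper's. The paper works with the full algebra $\Lip(\ov{X}^\sigma)$ of Lipschitz functions on $\ov{X}^\sigma$: it checks (by the same computation you invoke, using $C<\infty$ and boundedness of $\sigma$) that restriction to $X$ maps $\Lip(\ov{X}^\sigma)$ into $\ow D\subseteq\A$, applies the Stone--Weierstrass theorem to conclude that $\Lip(\ov{X}^\sigma)$ is dense in $C(\ov{X}^\sigma)$, extends the restriction to an isometric homomorphism $j:C(\ov{X}^\sigma)\to\A=C(K)$, and then defines $\lambda$ abstractly as the induced dual map $j^\ast$ on spectra, with surjectivity following from the usual density-plus-compactness argument. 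You instead use only the countably many distance functions $\sigma(\cdot,a)$, $a\in X$ (placed in $\ow D$ by part (b) of the unnamed proposition, exactly as you say), extend each one individually through the Gelfand transform, and compare $R$ and $\ov{X}^\sigma$ inside the cube $\prod_{a\in X}[0,S]$ via Kuratowski-type embeddings; the identification $\Psi(R)=\overline{\Phi(X)}=\Phi(\ov{X}^\sigma)$ then follows from density of $X$ in both spaces together with compactness, and $\lambda=\Phi^{-1}\circ\Psi$. What the paper's approach buys is brevity and conceptual alignment with its $C^\ast$-algebraic framework (the map arises functorially from an algebra morphism); what yours buys is elementarity -- no Stone--Weierstrass, no duality between algebra homomorphisms and continuous maps -- and an explicit description: $\lambda(k)$ is the unique point of $\ov{X}^\sigma$ whose distance to each $a\in X$ is $\oh{f_a}(k)$. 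One remark: the worry you raise in your final paragraph (that $\Psi(R)$ might contain a ``generalized distance function'' not realized by a point) is already disposed of by the closure argument in your own step $\Psi(R)\subseteq\overline{\Phi(X)}\subseteq\Phi(\ov{X}^\sigma)$, the last inclusion holding because $\Phi(\ov{X}^\sigma)$ is compact, hence closed, and contains $\Phi(X)$; no subnet argument is needed.
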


\begin{proof}
Again, using Theorem~\ref{KR}, we can work with $K$ instead of $R$. Uniqueness of $\lambda$ is clear.  By the usual denseness argument surjetivity of $\lambda$ is clear from compactness of $R$ and continuity of $\lambda$. Thus, it remains to show existence.
Consider the set $\Lip (\ov{X}^\sigma)$ of Lipschitz  functions with respect to $\sigma$. By compactness of $\ov{X}^\sigma$, any such function is bounded.  Moreover, whenever   $f$  belongs to $\Lip (\ov{X}^\sigma)$ there exists a constant $L$ with
$$|f(x) - f(y)| \leq L\sigma (x,y)$$
 and hence $f$
satisfies
\begin{eqnarray*}
& & \frac{1}{2} \sum_{x, y\in X } b(x,y) |f(x) - f(y)|^2 + \sum_{x\in X} c(x) |f(x)|^2 \\
&\leq & \frac{1}{2} \sum_{x, y \in X} b(x,y) L^2 \sigma^2 (x,y) + C \|f\|_\infty\\
&  \leq  & L^2 m(X) + C \|f\|_\infty <  \infty.
\end{eqnarray*}
This shows that $\Lip (\ov{X}^\sigma)$ can be seen as a subset of $\ow D$ and hence a subalgebra of $\A$.
Thus, we have a natural isometric homomorphism
$$ j: \Lip (\ov{X}^\sigma)\longrightarrow \A, \;\: f\mapsto f|_X,$$
where $f|_X$ denotes the restriction to $X$.
Now, the constant function obviously belongs to $\Lip (\ov{X}^\sigma)$. Moreover, $\Lip (\ov{X}^\sigma)$ clearly separates points of $\ov{X}^\sigma$ as it contains  $\sigma(x,\cdot)$ for any $x\in X$. Hence, by the Stone-Weierstrass Theorem, the algebra $\Lip (\ov{X}^\sigma)$ is dense in $C(\ov{X}^\sigma)$. As $j$ is isometric, it can therefore be extended to an isometric map (again denoted by $j$)
$$j: C(\ov{X}^\sigma) \longrightarrow \A = C(K).$$
By standard results from Gelfand theory,
this map $j$ now induces a  continuous map $j^\ast :K \longrightarrow \ov{X}^\sigma$ with $f(j^\ast (k) )  = k(j(f))$. Then, $\lambda = j^\ast$ is the desired map.
\end{proof}



\subsection{A key example: If $1/b$ and $c$ are  summable} \label{key-example}
The results of the previous subsections have given relationships between the various metric completions of a graph $(b,c)$ over $X$. As the counterexamples in the last section show, these completions will be different in general. In this subsection, we study a particular class of examples in which essentially `all' completions agree.
More precisely, we consider connected  graphs $(b,c)$ over $X$ with
$$ B:=\frac{1}{2}\sum_{x,y\in X: b(x,y)\neq 0} \frac{1}{b(x,y)} < \infty.$$
In this case, we can make a thorough study of compactness properties of $\overline{X}^d$ and $\overline{X}^\varrho$.

Whenever $(b,c)$ is a connected graph over $X$ satisfying $B<\infty$, there is a canonical finite measure $M$ on $X$ with total mass $B$  defined by
$$M : X\longrightarrow (0,\infty), \;M(x) = \frac{1}{2} \sum_{y\in X : b(x,y) \neq 0} \frac{1}{b (x,y)}.$$
Note that $M$ vanishes nowhere due to connectedness.

\smallskip

 Now, there are some crucial consequences to $B<\infty$: One consequence is  that $d$ is an intrinsic metric  with respect to a finite measure, namely, $M$. Another most important consequence is that $X$ is totally bounded with respect to $d$.  This is the content of the next proposition.

\begin{proposition} \label{compactness-xd}  Let $(b,c)$ be a connected graph over $X$.  Assume $B<\infty$ and let $M$ be the associated measure. Then, the following hold.

\begin{itemize}

\item[(a)]   The space  $\overline{X}^d$ is compact and so is $\ov{X}^\varrho$.

\item[(b)] The metric  $d$ is an intrinsic metric with respect to $M$.

\end{itemize}

\end{proposition}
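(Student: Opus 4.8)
My plan is to establish (b) by a direct local estimate and to reduce (a) to the total boundedness of $(X,d)$, which I prove by a finite-edge-deletion argument.

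For (b), recall that $d$ is intrinsic for $M$ precisely when $\tfrac12\sum_{y} b(x,y)\,d(x,y)^2\le M(x)$ holds for every $x$. For any neighbor $y$ of $x$ the one-edge sequence $(x,y)$ is itself a path, so $d(x,y)\le 1/b(x,y)$ directly from the definition of $d$; thus $b(x,y)\,d(x,y)^2\le 1/b(x,y)$ when $b(x,y)>0$, while the other summands vanish. Adding over $y$ gives
\[
\tfrac12\sum_{y\in X} b(x,y)\,d(x,y)^2\le \tfrac12\sum_{y:\,b(x,y)\neq0}\frac{1}{b(x,y)}=M(x),
\]
which is the intrinsic inequality.

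For (a), I first note that the compactness of $\overline{X}^\varrho$ comes for free once $\overline{X}^d$ is compact: Theorem~\ref{extension-d-to-varrho} provides a continuous surjection $\iota\colon\overline{X}^d\to\overline{X}^\varrho$, so the image $\overline{X}^\varrho$ is compact as well. Since $\overline{X}^d$ is complete by construction, it suffices to show that $X$ is totally bounded for $d$. Writing $\mathcal E=\{\{x,y\}:b(x,y)>0\}$ for the edge set, the hypothesis $B<\infty$ says exactly that $\sum_{e\in\mathcal E}1/b(e)=B<\infty$. Given $\varepsilon>0$, I would pick a finite $F\subseteq\mathcal E$ with $\sum_{e\in\mathcal E\setminus F}1/b(e)<\varepsilon$ and pass to the graph $G'=(X,\mathcal E\setminus F)$.

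Two estimates on $G'$ finish the argument, the second being the only delicate point. First, any two vertices in the same connected component of $G'$ are joined by a path whose (distinct) edges all lie outside $F$, so its length is at most $\sum_{e\notin F}1/b(e)<\varepsilon$; since $d$ is an infimum over paths, every component of $G'$ has $d$-diameter $<\varepsilon$. Second, the connected graph $(b,c)$ is obtained from $G'$ by restoring the finitely many edges of $F$, and each restored edge lowers the number of components by at most one; this forces $G'$ to have at most $|F|+1$ components, ruling out the a priori possibility of infinitely many components (infinitely many could never collapse to the single component of $(b,c)$ under finitely many edge additions). Choosing one vertex per component then covers $X$ by at most $|F|+1$ open $d$-balls of radius $\varepsilon$, which yields total boundedness and hence compactness of $\overline{X}^d$.
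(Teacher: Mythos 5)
Your proposal is correct and takes essentially the same approach as the paper: part (b) is the paper's exact computation ($d(x,y)\le 1/b(x,y)$ on neighbors, then sum), and for part (a) the paper likewise reduces compactness of $\overline{X}^d$ to covering $X$ by finitely many $d$-balls of radius $\varepsilon$ and then obtains compactness of $\overline{X}^\varrho$ from the continuous surjection $\iota$ of Theorem~\ref{extension-d-to-varrho}. The only difference is that the paper dismisses the total boundedness step with ``this follows easily from $B<\infty$,'' whereas your finite-edge-deletion argument (components of the pruned graph have $d$-diameter $<\varepsilon$, and there are at most $|F|+1$ of them) supplies that omitted step correctly.
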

\begin{proof}  (a)   To show the compactness of $\overline{X}^d$, it suffices to show that, for any $\varepsilon >0$, the space $X$ can be covered by finitely many $\varepsilon$-balls (w.r.t. d). This follows easily from $B<\infty$.

By Theorem \ref{extension-d-to-varrho}, the map $\iota$ is continuous and onto and $\overline{X}^\varrho$ is then compact as well.

\smallskip

(b) Note that  $d(x,y) \leq \frac{1}{b(x,y)}$ whenever $x$ and $y$ are neighbors.  Then, a direct calculation gives
$$ \frac{1}{2} \sum_{y\in X} b(x,y) d(x,y)^2 \leq \frac{1}{2} \sum_{y\in X : b(x,y) \neq 0}\frac{1}{b (x,y)} = M (x).$$
This finishes the proof.
\end{proof}

\textbf{Remarks.} (a)  Let us note that if $B<\infty$ and $c\equiv0$ hold, then   the metric $d$ is -- in a certain sense -- the maximal  metric which is intrinsic with respect to a finite measure. More precisely, whenever  $\sigma$ is an intrinsic metric with respect to a measure $m$ with total mass $m(X)$, then    $\sigma \leq \sqrt{m(X)}  \varrho$ by Theorem \ref{characterization-varrho-via-sigma}. Moreover, by Lemma \ref{l:DLip}, we have $\varrho^2 \leq d$. Putting this together, we infer $\sigma \leq \sqrt{m(X)} d^{1/2}$.

(b) A further discussion of $\overline{X}^d$  in the situation $B<\infty$ is given below in Corollary~\ref{corFC}.

\medskip

From this proposition we immediately infer the first main result of this section.

\begin{corollary} Let $(b,c)$ be a connected graph over $X$. Assume $B<\infty$. Then, the unique canonical map $\kappa : \ov{X}^\varrho \longrightarrow R$ extending the identity on $X$ is an homeomorphism.
\end{corollary}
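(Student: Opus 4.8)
The plan is to invoke the machinery already assembled in this section rather than to argue from scratch. By Theorem~\ref{l:embedding-varrho}, for any connected canonically compactifiable graph there exists a unique continuous map $\kappa : \ov{X}^\varrho \longrightarrow R$ extending the identity on $X$, and this map is always one-to-one. That same theorem asserts that $\kappa$ is a homeomorphism \emph{provided} $\ov{X}^\varrho$ is compact. So the entire content to be supplied here is twofold: first, that the hypothesis $B<\infty$ places us inside the scope of Theorem~\ref{l:embedding-varrho} (i.e.\ that $(b,c)$ is canonically compactifiable), and second, that under $B<\infty$ the space $\ov{X}^\varrho$ is indeed compact.

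First I would observe that canonical compactifiability is immediate: by Proposition~\ref{compactness-xd}(a) the space $\ov{X}^d$ is compact, hence $\diam_d(X)<\infty$, and then Corollary~\ref{sufficient-condition-cpt-d} gives that $(b,c)$ is canonically compactifiable. This is precisely what is needed to apply Theorem~\ref{l:embedding-varrho} at all. Next, the compactness of $\ov{X}^\varrho$ is also handed to us directly by Proposition~\ref{compactness-xd}(a), which states that $\ov{X}^\varrho$ is compact as a consequence of $B<\infty$ (via the continuity and surjectivity of $\iota$ established in Theorem~\ref{extension-d-to-varrho}). With both ingredients in place, the final clause of Theorem~\ref{l:embedding-varrho} applies verbatim and yields that $\kappa$ is a homeomorphism.

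Concretely, the proof I would write is essentially one line of assembly: \emph{Since $B<\infty$, Proposition~\ref{compactness-xd}(a) shows that $\ov{X}^d$, and hence $\ov{X}^\varrho$, is compact; in particular $\diam_d(X)<\infty$, so $(b,c)$ is canonically compactifiable by Corollary~\ref{sufficient-condition-cpt-d}. Theorem~\ref{l:embedding-varrho} then provides the unique continuous extension $\kappa$ of the identity, which is one-to-one, and since $\ov{X}^\varrho$ is compact the same theorem guarantees that $\kappa$ is a homeomorphism.} There is genuinely no further calculation to perform; the two propositions and the embedding theorem do all the work.

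I do not anticipate a real obstacle, since every piece has been proved earlier. The only point that would deserve a moment's care is making sure the hypotheses of Theorem~\ref{l:embedding-varrho} are met without any side condition on $c$: that theorem requires only that $(b,c)$ be canonically compactifiable and connected, with no summability assumption on $c$, so the argument goes through for arbitrary $c$ exactly as stated in the corollary. The substantive mathematical content—that $B<\infty$ forces total boundedness of $\ov{X}^d$ and that injectivity of $\kappa$ follows from the separation property of $\ow D$ with respect to $\varrho$—has already been extracted in Proposition~\ref{compactness-xd} and Theorem~\ref{l:embedding-varrho} respectively, so this corollary is purely a matter of citing them in the right order.
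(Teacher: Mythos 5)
Your proof is correct and follows essentially the same route as the paper: compactness of $\ov{X}^\varrho$ from Proposition~\ref{compactness-xd}(a), then canonical compactifiability, then the final clause of Theorem~\ref{l:embedding-varrho}. The only cosmetic difference is that the paper deduces canonical compactifiability from $\diam_\varrho(X)<\infty$ via Theorem~\ref{characterization-cpt}, whereas you go through $\diam_d(X)<\infty$ and Corollary~\ref{sufficient-condition-cpt-d}; both are one-step citations and equally valid.
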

\begin{proof}  By (a) of the previous proposition, $\ov{X}^\varrho$ is compact. Thus,  its diameter is finite. Theorem \ref{characterization-cpt} then    gives that $(b,c)$ is {\cpt} and the desired statement now follows from Theorem \ref{l:embedding-varrho}.
\end{proof}

Under the additional assumption $$C:=\sum_{x\in X} c(x) <\infty$$ we can even infer equality of all three compact spaces involved.

\begin{theorem} Let $(b,c)$ be a connected  graph over $X$ with $\sum_{x\in X} c(x) < \infty$ and  $\sum_{x,y\in X: b(x,y)\neq 0} \frac{1}{b(x,y)} < \infty$. Then, the unique  continuous map  $\iota : \overline{X}^d\longrightarrow \overline{X}^\varrho$ extending the identity on X is a homeomorphism. In particular, all three spaces $\ov{X}^d$, $\ov{X}^\varrho$ and $R$ are compact and homeomorphic via the  unique continuous maps extending the identity on $X$.
\end{theorem}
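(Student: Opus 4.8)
The plan is to manufacture an explicit continuous inverse for $\iota$ using the intrinsic-metric machinery and then glue the two maps on the dense set $X$. First I would collect what is already available: by Proposition~\ref{compactness-xd} the hypothesis $B<\infty$ guarantees that both $\overline{X}^d$ and $\overline{X}^\varrho$ are compact and that $d$ is an intrinsic metric with respect to the finite measure $M$, where $M(X)=B$. Since $\overline{X}^d$ is compact we have $\diam_d(X)<\infty$, so $d$ is in addition bounded. The map $\iota:\overline{X}^d\longrightarrow\overline{X}^\varrho$ is the continuous surjection supplied by Theorem~\ref{extension-d-to-varrho}.

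The key step is to run Theorem~\ref{rho-to-sigma} in the reverse direction with the choice of intrinsic metric $\sigma:=d$. Its hypotheses are met: $d$ is intrinsic with respect to the finite measure $M$, the killing term is summable by assumption ($\sum_{x\in X}c(x)<\infty$), and $d$ is bounded by the previous paragraph. Hence Theorem~\ref{rho-to-sigma} yields a unique continuous map $\gamma_d:\overline{X}^\varrho\longrightarrow\overline{X}^d$ extending the identity on $X$, and this $\gamma_d$ is the candidate inverse of $\iota$.

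It then remains to verify $\gamma_d\circ\iota=\mathrm{id}_{\overline{X}^d}$ and $\iota\circ\gamma_d=\mathrm{id}_{\overline{X}^\varrho}$. Both compositions are continuous self-maps of a compact metric space that restrict to the identity on the dense subset $X$; since a metric space is Hausdorff, two continuous maps agreeing on a dense set coincide, so both compositions equal the respective identities. Therefore $\iota$ is a continuous bijection with continuous inverse $\gamma_d$, i.e.\ a homeomorphism. (Equivalently, one may instead extract injectivity of $\iota$ from the mere existence of a left inverse $\gamma_d$ and then invoke the standard fact that a continuous bijection from a compact space onto a Hausdorff space is a homeomorphism.)

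For the `in particular' statement, compactness of all three spaces is already recorded (Proposition~\ref{compactness-xd} together with compactness of $R$), and the Corollary following Proposition~\ref{compactness-xd} shows that $\kappa:\overline{X}^\varrho\longrightarrow R$ is a homeomorphism under $B<\infty$. Composing, $\kappa\circ\iota:\overline{X}^d\longrightarrow R$ is a homeomorphism and, by the Corollary following Theorem~\ref{l:embedding-varrho}, it is exactly the unique continuous map extending the identity on $X$. The main obstacle is really only the recognition that $d$ qualifies as an intrinsic metric for a finite measure, namely Proposition~\ref{compactness-xd}(b); this is precisely what lets Theorem~\ref{rho-to-sigma} produce the reverse map $\gamma_d$, after which everything reduces to the formal dense-set gluing.
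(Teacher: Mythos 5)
Your proof is correct: the hypotheses of each result you invoke are genuinely satisfied (in particular, Proposition~\ref{compactness-xd} does make $d$ an intrinsic metric for the finite measure $M$, and summability of $c$ plus boundedness of $d$ legitimize applying Theorem~\ref{rho-to-sigma} with $\sigma=d$), and the dense-set gluing argument is sound. It is, however, a genuinely different route from the paper's. The paper never constructs an explicit inverse of $\iota$: it forms the cycle of surjections $\overline{X}^d \stackrel{\iota}{\longrightarrow} \overline{X}^\varrho \stackrel{\kappa}{\longrightarrow} R \stackrel{\lambda}{\longrightarrow} \overline{X}^d$, where $\lambda$ is supplied by Theorem~\ref{embedding-intrinsic} applied to the intrinsic metric $d$, so the Royden compactification is an essential intermediary; since the composition is a continuous self-map of $\overline{X}^d$ agreeing with the identity on the dense set $X$, it equals the identity, whence all three maps are injective, hence continuous bijections of compact spaces, hence homeomorphisms. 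You instead bypass $R$ for the main claim by using Theorem~\ref{rho-to-sigma} (whose proof is purely metric, resting on the bound $\sigma \leq A\varrho$ of Proposition~\ref{bound-sigma-via-varrho}) in place of Theorem~\ref{embedding-intrinsic} (whose proof needs Gelfand duality and Stone--Weierstrass), and your inverse $\gamma_d$ is continuous by construction, so you never need the principle that a continuous bijection of compact Hausdorff spaces is a homeomorphism; indeed, for the statement about $\iota$ alone, compactness enters your argument only to guarantee that $d$ is bounded. What the paper's cycle buys in exchange is simultaneity: it establishes that $\iota$, $\kappa$ and $\lambda$ are homeomorphisms all at once, whereas your treatment of the ``in particular'' part must separately import the corollary following Proposition~\ref{compactness-xd}, namely that $\kappa$ is a homeomorphism. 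One small point you should make explicit: the corollary following Theorem~\ref{l:embedding-varrho}, which you quote to identify $\kappa\circ\iota$ as the unique continuous extension of the identity, assumes that $(b,c)$ is canonically compactifiable; this does hold here, since $\diam_d(X)\leq B<\infty$ and Corollary~\ref{sufficient-condition-cpt-d} applies, but it deserves a sentence in your write-up.
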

\begin{proof} By Proposition \ref{compactness-xd},  the metric $d$ is an intrinsic metric with compact $\ov{X}^d$. Hence,  Theorem \ref{embedding-intrinsic} gives a unique continuous map $\lambda : R\longrightarrow \ov{X}^d$ extending the identity on $X$ and this map is onto. By Theorem \ref{l:embedding-varrho}, we furthermore  have a unique continuous map $\kappa : \ov{X}^\varrho \longrightarrow R$  extending the identity on $X$  and this map is onto as well. Finally, by Theorem \ref{extension-d-to-varrho}, there is a unique map $\iota : \ov{X}^d\longrightarrow \ov{X}^\varrho$ extending the identity on $X$ and this map is onto as well. Thus, we end up with the following diagram of maps
$$ \ov{X}^d \stackrel{\iota}{\longrightarrow} \ov{X}^\varrho \stackrel{\kappa}{\longrightarrow} R \stackrel{\lambda}{\longrightarrow} \ov{X}^d$$
each map being onto and the unique continuous extension of the identity on $X$. This gives that their composition $ \lambda \circ \kappa \circ \iota$
must be the identity and all maps must be one-to-one. Thus, they are all bijective and hence homeomorphisms (as the underlying spaces are compact).
\end{proof}

It was proved in \cite{Geo} that if $B<\infty$ is satisfied, then $\overline{X}^d$ is canonically homeomorphic to a well-known space called the \emph{end-compactification} or \emph{Freudenthal compactification} of $X$. Combined with the last theorem and Theorem~\ref{KR}, this yields the following corollary.

\begin{corollary} \label{corFC}
Let $(b,c)$ be a connected  graph over $X$ with $\sum_{x\in X} c(x) < \infty$ and  $\sum_{x,y\in X: b(x,y)\neq 0} \frac{1}{b(x,y)} < \infty$. Then each of the spaces $\ov{X}^d$, $\ov{X}^\varrho, K$ and $R$ is homeomorphic to the end-compactification of $X$. 
\end{corollary}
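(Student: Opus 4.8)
The plan is to assemble the statement from three ingredients already at our disposal: the homeomorphism result proved in \cite{Geo}, the theorem immediately preceding this corollary, and Theorem~\ref{KR} identifying $K$ with $R$. Since no new analytic content is needed, the proof amounts to carefully chaining these identifications once the hypotheses are seen to match up.

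First I would observe that the summability assumption $\sum_{x,y\in X:\, b(x,y)\neq 0} \frac{1}{b(x,y)} < \infty$ is precisely the condition $B<\infty$, the factor $1/2$ appearing in the definition of $B$ being irrelevant for finiteness. This is exactly the hypothesis under which \cite{Geo} shows that $\ov{X}^d$ is canonically homeomorphic to the end-compactification of $X$, so the identification of $\ov{X}^d$ with the end-compactification follows directly, with the homeomorphism being the canonical one extending the identity on $X$.

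Next I would invoke the theorem immediately preceding this corollary: under the two summability assumptions in force, the canonical maps $\iota$, $\kappa$ and $\lambda$ extending the identity on $X$ are homeomorphisms, so the three spaces $\ov{X}^d$, $\ov{X}^\varrho$ and $R$ are compact and mutually homeomorphic. It remains to bring $K$ into the picture. Since $B<\infty$ forces $\diam_d(X)<\infty$, Corollary~\ref{sufficient-condition-cpt-d} shows that $(b,c)$ is \cpt, so Theorem~\ref{KR} applies and yields a homeomorphism $K\cong R$. Composing these identifications, all four spaces $\ov{X}^d$, $\ov{X}^\varrho$, $K$ and $R$ are homeomorphic to one another and hence each homeomorphic to the end-compactification of $X$.

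The argument involves no genuine obstacle; the only points requiring care are matching the hypotheses—verifying that the stated summability condition coincides with $B<\infty$ and that \cpt\ holds so that Theorem~\ref{KR} is applicable—and correctly citing the external result of \cite{Geo} for the end-compactification identification. Everything else is a purely formal composition of previously established homeomorphisms, so the corollary follows at once.
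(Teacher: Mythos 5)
Your proposal is correct and follows essentially the same route as the paper: the paper's proof also combines the result of \cite{Geo} identifying $\ov{X}^d$ with the end-compactification under $B<\infty$, the immediately preceding theorem giving the homeomorphisms $\ov{X}^d \cong \ov{X}^\varrho \cong R$, and Theorem~\ref{KR} identifying $K$ with $R$. Your extra care in matching the hypotheses (that the stated summability condition is $B<\infty$, and that canonical compactifiability holds via Corollary~\ref{sufficient-condition-cpt-d}) is implicit in the paper but entirely consistent with it.
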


In particular, we obtain that the boundaries of $\ov{X}^d$, $\ov{X}^\varrho, K$ and $R$ are totally disconnected in this case. \\

\textbf{Remark.} Assume  the  situation of the previous theorem. Then, the $d$-diameter of $X$ is finite. Thus, any  metric $\sigma$ which is intrinsic with respect to a finite measure is bounded by Corollary \ref{c:bound-intrinsic}. By
  Theorem~\ref{rho-to-sigma}, there then exists, for any such metric $\sigma$, a   unique continuous map $\gamma: \overline{X}^{\varrho}\longrightarrow \overline{X}^{\sigma}$ and this map is onto.

\subsection{A little summary}
We summarize a large part of the preceding considerations in the following result.

\begin{theorem}\label{implication-D} Let $(b,c)$ be a graph over $X$ and consider the following statements:

\begin{itemize}
\item[$(A)$] $X$ is totally bounded with respect to $d$.
\item[$(B)$]  $X$ is totally bounded with respect to $\varrho$.
\item[$(C)$]  $X$ is totally bounded with respect to any  metric $\sigma $ which is intrinsic with respect to a finite measure.
\item[$(D)$]  $\ow D$ consists only of bounded functions.
\end{itemize}
Then, the implications $(A)\Longrightarrow (B)\Longrightarrow (D)$ hold. If $c\equiv0$, furthermore the implications $(B)\Longrightarrow (C) \Longrightarrow (D)$ hold.

Moreover,  the following statements hold:
\begin{itemize}
\item $(A)$ implies the existence and uniqueness of a continuous surjective map $\iota : \ov{X}^d\longrightarrow \ov{X}^\varrho$  extending the identity on $X$.
    \item $(B)$ implies the existence and uniqueness of  a homeomorphism $\kappa : \ov{X}^\varrho \longrightarrow R$ extending the identity on $X$.

        \item If $c\equiv0$, then $(B)$ implies existence and uniqueness of a continuous surjective map $\gamma : \ov{X}^\varrho \longrightarrow \ov{X}^\sigma$ for any metric $\sigma$ which is intrinsic with respect to a finite measure.

\item If $c\equiv0$, then $(C)$ implies existence and uniqueness of a continuous surjective map $\lambda : R\longrightarrow \ov{X}^\sigma$ for any metric $\sigma$ which is intrinsic with respect to a finite measure.

\end{itemize}

\end{theorem}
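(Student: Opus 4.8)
The plan is to read this theorem as a bookkeeping result: every assertion is a direct consequence of a statement already proved in Sections~\ref{section-analysis} and \ref{section-topology}, so the proof is an assembly rather than a new argument. The one standard fact I would invoke repeatedly is that, for any metric $\delta$, the set $X$ is totally bounded with respect to $\delta$ if and only if the completion $\overline{X}^\delta$ is compact; indeed, the completion of a totally bounded space is totally bounded and a complete totally bounded metric space is compact, while conversely a subset of a compact metric space is totally bounded. I would use this equivalence to move freely between the conditions $(A)$, $(B)$, $(C)$ and compactness of the corresponding completions.

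For the implications I would argue as follows. To get $(A)\Longrightarrow(B)$, note that $(A)$ makes $\overline{X}^d$ compact, so by Theorem~\ref{extension-d-to-varrho} the map $\iota$ is onto and $\overline{X}^\varrho$ is compact, which is $(B)$. For $(B)\Longrightarrow(D)$, compactness of $\overline{X}^\varrho$ gives $\diam_\varrho(X)<\infty$, and the equivalence (ii)$\Longleftrightarrow$(i) of Theorem~\ref{characterization-cpt} identifies this with canonical compactifiability, i.e.\ $(D)$. Assuming now $c\equiv0$: for $(B)\Longrightarrow(C)$ I would apply Theorem~\ref{rho-to-sigma} to each intrinsic metric $\sigma$ with respect to a finite measure (no boundedness hypothesis on $\sigma$ is needed when $c\equiv0$); compactness of $\overline{X}^\varrho$ then forces $\overline{X}^\sigma$ compact, hence $X$ totally bounded with respect to every such $\sigma$, which is $(C)$. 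Finally $(C)\Longrightarrow(D)$ follows because $(C)$ yields finiteness of $\diam_\sigma(X)$ for every intrinsic $\sigma$, i.e.\ condition (iv) of Theorem~\ref{characterization-cpt}, equivalent to $(D)$ for $c\equiv0$ (one may also cite Corollary~\ref{sufficient-condition-cpt-intrinsic}).

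For the four structural bullets I would proceed analogously. The first is precisely Theorem~\ref{extension-d-to-varrho}, whose surjectivity clause is activated by the compactness of $\overline{X}^d$ supplied by $(A)$. The second combines the already-established $(B)\Longrightarrow(D)$ (so that $R$ is defined) with Theorem~\ref{l:embedding-varrho}: that theorem always produces a unique continuous injective $\kappa$, and its final clause promotes $\kappa$ to a homeomorphism exactly when $\overline{X}^\varrho$ is compact, which $(B)$ guarantees. The third bullet is again Theorem~\ref{rho-to-sigma}, now read as an assertion about the map $\gamma$ rather than about compactness of the target. The fourth bullet is Theorem~\ref{embedding-intrinsic}, applied with $C=\sum_{x\in X}c(x)=0<\infty$ and with $\overline{X}^\sigma$ compact (furnished by $(C)$), which yields the unique continuous surjective $\lambda$.

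The only place I would slow down is the matching of the hypotheses on $c$. The intrinsic-metric statements, namely condition (iv) of Theorem~\ref{characterization-cpt}, Theorem~\ref{rho-to-sigma}, and Theorem~\ref{embedding-intrinsic}, each carry assumptions on $c$ (either $c\equiv0$, or summability of $c$ together with boundedness of $\sigma$), and I would verify that restricting to $c\equiv0$ discharges all of them. In particular, the boundedness-of-$\sigma$ condition appearing when $c\not\equiv0$ is not needed here, and where it would otherwise be required it is automatic, since once $\overline{X}^\varrho$ or $\overline{X}^\sigma$ is known to be compact the relevant diameter, and hence $\sigma$ itself, is bounded. Beyond this compatibility check, the entire proof is routine reference-chasing.
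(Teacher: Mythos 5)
Your proposal is correct and follows essentially the same route as the paper: the paper's own proof is exactly this kind of citation assembly, deducing $(A)\Rightarrow(B)$ from Lemma~\ref{l:DLip}, $(B)\Rightarrow(D)$ from Theorem~\ref{characterization-cpt}, $(B)\Rightarrow(C)$ from Theorem~\ref{rho-to-sigma}, $(C)\Rightarrow(D)$ from Corollary~\ref{sufficient-condition-cpt-intrinsic}, and the four bullets from Theorems~\ref{extension-d-to-varrho}, \ref{l:embedding-varrho}, \ref{rho-to-sigma} and \ref{embedding-intrinsic}, respectively. The only (immaterial) deviation is that you derive $(A)\Rightarrow(B)$ via compactness of $\ov{X}^d$ and surjectivity of $\iota$ from Theorem~\ref{extension-d-to-varrho} rather than directly from the inequality $\varrho^2\leq d$ of Lemma~\ref{l:DLip}, and you are somewhat more explicit than the paper in checking that the hypotheses on $c$ and on boundedness of $\sigma$ are discharged when $c\equiv0$.
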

\begin{proof} We first deal with the two chains of implications.

The implication $(A)\Longrightarrow (B)$ follows from Lemma \ref{l:DLip}. The implication $(B)\Longrightarrow (D) $ follows from Theorem~\ref{characterization-cpt}.

  Under the assumption $c\equiv0$, the  implication $(B)\Longrightarrow (C)$ follows  from Theorem~ \ref{rho-to-sigma}.  Under the assumption $c\equiv0$, the implication $(C)\Longrightarrow (D)$ follows from Corollary \ref{sufficient-condition-cpt-intrinsic}.

\smallskip

We now turn to the proofs of the four last statements:

\smallskip

The first statement follows from Theorem~\ref{extension-d-to-varrho}. The second statement follows from Theorem~\ref{l:embedding-varrho}. The third statement follows from Theorem~\ref{rho-to-sigma}.
The  forth statement  follows  from Theorem~\ref{embedding-intrinsic}.
\end{proof}


\section{Operator theory: Discrete spectrum and Dirichlet problem}\label{Section-Operator}
In this section, we use the canonical compactification discussed above to study operator theory.
 We will show that the corresponding operators have discrete spectrum and that the Dirichlet Problem has a unique solution.  Moreover, we  will explicitly describe the domain of the Dirichlet Laplacian.


\subsection{Discrete spectrum}\label{Discrete}
In this subsection, we give criteria for an operator on a graph of finite measure to have compact resolvent.  In this case, the spectrum of the operator is purely discrete.

\medskip

\begin{theorem}\label{t:ultracontractive} Let the graph  $(b,c)$  over $X$ be  {\cpt} and connected.  Let  $L$ be the  operator associated to a closed, symmetric form $Q$ with $Q^{(D)}\subseteq Q\subseteq Q^{(N)}$. Then,
the heat semigroup and the resolvent are ultracontractive, i.e., $e^{-tL}$, $t>0$, and $(L+\al)^{-1}$, $\al>0$, are bounded operators from  $\ell^{2}(X,m)$ to $\ell^{\infty}(X)$.
If, additionally, $m(X)<\infty$, then
$e^{-tL}$, $t>0$, and $(L+\al)^{-1}$, $\al>0$,  are trace class.
\end{theorem}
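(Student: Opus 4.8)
The plan is to treat $e^{-tL}$ and $(L+\al)^{-1}$ as non-negative self-adjoint operators on $\ell^2(X,m)$ and to read off their traces from the orthonormal basis $(e_x)_{x\in X}$, $e_x:=\delta_x/\sqrt{m(x)}$: for a non-negative operator $T$ the sum $\sum_x\langle Te_x,e_x\rangle$ equals the trace in $[0,\infty]$ and is finite precisely when $T$ is trace class, so everything reduces to bounding the diagonal of the relevant integral kernel and summing it against $m$, with $m(X)<\infty$ doing the final work. The one analytic ingredient I would isolate first is the Sobolev-type inequality
\[
\|f\|_\infty^2\le\frac{2}{m(X)}\,\|f\|^2+2\,\diam_{\varrho}(X)^2\,\ow Q(f),\qquad f\in\ow D\cap\ell^2(X,m).
\]
It follows from $\diam_{\varrho}(X)<\infty$ (which holds by Theorem~\ref{characterization-cpt}, as $(b,c)$ is \cpt): for such $f$ and all $x,y\in X$ the definition of $\varrho$ gives $|f(x)-f(y)|\le\diam_{\varrho}(X)\,\ow Q(f)^{1/2}$, hence $|f(x)|^2\le 2|f(y)|^2+2\,\diam_{\varrho}(X)^2\ow Q(f)$; multiplying by $m(y)$, summing over $y$ and dividing by $m(X)$ yields the inequality.

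For the semigroup I would show that $e^{-tL}$ is Hilbert--Schmidt for every $t>0$ and then factor $e^{-tL}=e^{-(t/2)L}e^{-(t/2)L}$ to obtain a product of two Hilbert--Schmidt operators, hence trace class. The Hilbert--Schmidt property comes from a bounded kernel: writing $p_t$ for the kernel of $e^{-tL}$ with respect to $m$, the ultracontractivity just established gives $\|e^{-tL}\|_{\ell^2\to\ell^\infty}<\infty$; by self-adjointness and the $\ell^1$--$\ell^\infty$ duality the same bound holds for $\|e^{-tL}\|_{\ell^1\to\ell^2}$, and the semigroup law yields $\sup_{x,y}|p_t(x,y)|=\|e^{-tL}\|_{\ell^1\to\ell^\infty}\le\|e^{-(t/2)L}\|_{\ell^2\to\ell^\infty}\|e^{-(t/2)L}\|_{\ell^1\to\ell^2}<\infty$. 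Since $m(X)<\infty$, a bounded kernel is square-integrable, $\|e^{-tL}\|_{\mathrm{HS}}^2=\sum_{x,y}|p_t(x,y)|^2m(x)m(y)\le\big(\sup_{x,y}|p_t(x,y)|\big)^2m(X)^2<\infty$.

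The resolvent is the delicate point and the one I expect to be the main obstacle: one cannot simply integrate the heat-kernel bound, since $\|e^{-tL}\|_{\ell^1\to\ell^\infty}$ grows like $1/t$ as $t\to0$ and the estimate $\operatorname{tr}(L+\al)^{-1}=\int_0^\infty e^{-\al t}\operatorname{tr}e^{-tL}\,dt$ then only gives a logarithmically divergent bound. Instead I would bound the Green's-function diagonal directly. Fix $x$ and put $u:=(L+\al)^{-1}\delta_x$; since $\delta_x\in C_c(X)\subseteq D(Q)$ we have $u\in D(L)\subseteq D(Q)\subseteq\ow D\cap\ell^2(X,m)$, the kernel $G_\al$ of $(L+\al)^{-1}$ satisfies $G_\al(x,x)=u(x)/m(x)$, and pairing $(L+\al)u=\delta_x$ with $u$ gives the energy identity $\ow Q(u)+\al\|u\|^2=u(x)m(x)=:E$; in particular $u(x)\ge0$. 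With $C_\al:=\tfrac{2}{\al\,m(X)}+2\diam_{\varrho}(X)^2$, and using $\ow Q(u)\le E$ and $\|u\|^2\le E/\al$, the Sobolev inequality applied to $u$ gives
\[
u(x)^2\le\|u\|_\infty^2\le\frac{2}{m(X)}\|u\|^2+2\diam_{\varrho}(X)^2\,\ow Q(u)\le C_\al\,E=C_\al\,u(x)\,m(x).
\]
Dividing by $u(x)$ (the case $u(x)=0$ being trivial) gives the uniform diagonal bound $G_\al(x,x)\le C_\al$, independent of $x$, and summing against $m$ finishes the argument: since $(L+\al)^{-1}\ge0$,
\[
\operatorname{tr}(L+\al)^{-1}=\sum_{x\in X}\langle(L+\al)^{-1}e_x,e_x\rangle=\sum_{x\in X}G_\al(x,x)\,m(x)\le C_\al\,m(X)<\infty,
\]
so $(L+\al)^{-1}$ is trace class. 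The subtle point is exactly that one needs the \emph{uniform} diagonal bound $G_\al(x,x)\le C_\al$: the cruder pointwise estimate $G_\al(x,x)\le\|(L+\al)^{-1}\|_{\ell^2\to\ell^\infty}/\sqrt{m(x)}$ coming straight from resolvent ultracontractivity would only yield $\operatorname{tr}(L+\al)^{-1}\le\|(L+\al)^{-1}\|_{\ell^2\to\ell^\infty}\sum_x\sqrt{m(x)}$, which need not converge; it is the energy identity together with the Sobolev inequality that supplies the uniform bound, and notably no Markov property of $Q$ nor any lower bound on $m$ is required.
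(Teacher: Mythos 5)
Your proposal is correct, and it is complete precisely where the paper is terse: it actually proves the trace class property of the resolvent, which the paper delegates to ``standard techniques.'' The semigroup half is essentially the paper's argument: boundedness from $\ell^2(X,m)$ to $\ell^\infty(X)$, the Hilbert--Schmidt property from finiteness of the measure, and the splitting $e^{-tL}=e^{-(t/2)L}e^{-(t/2)L}$ into two Hilbert--Schmidt factors; your kernel computation is exactly the ``direct calculation'' the paper alludes to when it composes $e^{-tL}$ with the embedding $j:\ell^\infty(X)\to\ell^2(X,m)$. Where you genuinely diverge is, first, in method: the paper's ultracontractivity is soft, via $e^{-tL}\ell^2(X,m)\subseteq D(L)\subseteq D(Q)\subseteq\ow D\subseteq\ell^\infty(X)$ and the closed graph theorem, while yours is quantitative, via the Sobolev-type inequality coming from $\diam_\varrho(X)<\infty$ (Theorem~\ref{characterization-cpt}). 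Second, and more substantially, in the resolvent: your energy identity $\ow Q(u)+\al\aV{u}^2=u(x)m(x)$ for $u=(L+\al)^{-1}\delta_x$, combined with the Sobolev inequality, gives the uniform diagonal bound $G_\al(x,x)\le C_\al$ and hence $\operatorname{tr}(L+\al)^{-1}\le C_\al m(X)$. Your warning here is well taken and exposes a real gap in the paper's one-line dismissal: trace class of $e^{-tL}$ for all $t>0$ does \emph{not} imply trace class of $(L+\al)^{-1}$ (eigenvalues $\lm_n=(\log (n+1))^2$ satisfy $\sum_n e^{-t\lm_n}<\infty$ for every $t>0$ but $\sum_n(\lm_n+\al)^{-1}=\infty$), so the resolvent needs its own argument. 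The ``standard technique'' the paper presumably intends is shorter than yours but rests on the same key inclusion: $(L+\al)^{-1/2}$ maps $\ell^2(X,m)$ into $D((L+\al)^{1/2})=D(Q)\subseteq\ow D\subseteq\ell^\infty(X)$, hence is Hilbert--Schmidt by the same closed-graph-plus-embedding reasoning, and $(L+\al)^{-1}$ is then the square of a Hilbert--Schmidt operator. Your route costs more work but yields explicit constants; both are valid.

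Two loose ends you should tighten. First, you repeatedly invoke ``the ultracontractivity just established'' but never derive it: to get $\aV{e^{-tL}g}_\infty\le C_t\aV{g}$ from your Sobolev inequality you must apply it to $f=e^{-tL}g\in D(Q)$ and bound both terms by spectral calculus, namely $Q(e^{-tL}g)\le\frac{1}{2et}\aV{g}^2$ and $\aV{e^{-tL}g}\le\aV{g}$ (and similarly $Q((L+\al)^{-1}g)\le\al^{-1}\aV{g}^2$, $\aV{(L+\al)^{-1}g}\le\al^{-1}\aV{g}$ for the resolvent); alternatively, fall back on the paper's closed graph argument. Second, your derivation of the Sobolev inequality divides by $m(X)$ and so tacitly assumes $m(X)<\infty$, whereas the first assertion of the theorem is for an arbitrary measure; when $m(X)=\infty$, note instead that any $f\in\ell^2(X,m)$ satisfies $\inf_y|f(y)|=0$, which yields the cleaner bound $\aV{f}_\infty^2\le\diam_\varrho(X)^2\,\ow Q(f)$ and the same conclusions. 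Both repairs are one-liners; the substance of your proof stands.
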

\begin{proof} We only consider the semigroup operators $e^{-t L}$, $t >0$. The statements on resolvents can then be derived by standard techniques.

\smallskip

Let $t>0$ be arbitrary. We have
$$e^{-t L} \ell^{2}(X,m)\subseteq D(L) \subseteq D(Q)\subseteq \ow D\subseteq \ell^{\infty}(X).$$
Since $e^{-tL}$ is a continuous operator  on $\ell^2 (X,m)$ we now  obtain, by a simple application of the closed graph theorem, that $e^{-t L} $  can be seen as a continuous map from $\ell^2 (X,m)$ to $\ell^\infty (X)$. This shows the first statement.  If $m(X) <\infty$, there is a canonical continuous  embedding
$$j : \ell^\infty (X)\longrightarrow \ell^2 (X,m), \, f\mapsto f.$$
Then, $e^{-t L } = j e^{-t L}$  is a composition of a continuous maps from $\ell^2 (X,m) $ to $\ell^\infty (X)$ with a continuous map from $\ell^\infty (X)$ to $\ell^2 (X,m)$. Thus, it is a Hilbert-Schmidt operator by factorization principle (or a direct calculation).  Then,
$$e^{-t L }  = e^{- \frac{t}{2}  L} e^{- \frac{t}{2}L}$$
is trace class as a product of two Hilbert-Schmidt operators.
\end{proof}

\begin{corollary}\label{t:discrete_spectrum} Let $(b,c)$ be \cpt, connected and $m(X)<\infty$. If $L$ is the  operator associated to a closed, symmetric form $Q$ with $Q^{(D)}\subseteq Q\subseteq Q^{(N)}$, then
the spectrum of $L$ is purely discrete.
\end{corollary}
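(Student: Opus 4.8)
The plan is to read this off immediately from the preceding Theorem~\ref{t:ultracontractive}. Since we are assuming $m(X)<\infty$, that theorem already guarantees that the resolvent $(L+\alpha)^{-1}$ is trace class for every $\alpha>0$. As every trace class operator is compact, the first thing I would record is that $L$ has compact resolvent. This is really the entire content of the corollary; the analytic substance has been done upstream, and what remains is a standard spectral-theoretic translation.

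The remaining step is to invoke the classical fact that a self-adjoint operator with compact resolvent has purely discrete spectrum. Here I would first note that $L$ is self-adjoint and non-negative: it is the operator associated, via the representation theorem for forms, to the closed symmetric form $Q$, which is non-negative because it is sandwiched between $Q^{(D)}$ and $Q^{(N)}$, both of which are non-negative Dirichlet forms. For such an $L$, compactness of $(L+\alpha)^{-1}$ forces its spectrum to consist of an at most countable set of eigenvalues of finite multiplicity accumulating only at $0$.

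Finally I would transfer this back to $L$ itself. The nonzero eigenvalues $\mu_n$ of the compact self-adjoint operator $(L+\alpha)^{-1}$ correspond to the eigenvalues $\mu_n^{-1}-\alpha$ of $L$, with the same (finite) multiplicities; since $\mu_n\to 0$, these eigenvalues of $L$ form a discrete set tending to $+\infty$. Hence $L$ has purely discrete spectrum, i.e.\ $\se(L)=\emptyset$. I do not expect any genuine obstacle in this argument: the only point needing care is the routine correspondence between the spectrum of $L$ and that of its resolvent, and this is entirely standard once Theorem~\ref{t:ultracontractive} has supplied compactness of the resolvent.
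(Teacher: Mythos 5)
Your proposal is correct and follows essentially the same route as the paper, whose proof consists of exactly the observation you make: Theorem~\ref{t:ultracontractive} yields a trace class (hence compact) resolvent, and the spectral mapping theorem transfers this to purely discrete spectrum of $L$. Your write-up merely spells out the standard details (non-negativity and self-adjointness of $L$, the correspondence $\mu_n \mapsto \mu_n^{-1}-\alpha$ between eigenvalues) that the paper leaves implicit.
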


\begin{proof} This  follows directly  from Theorem~\ref{t:ultracontractive} and the spectral mapping theorem.
\end{proof}

\textbf{Remarks.}  (a) If  the form $Q$ in the previous corollary  is additionally assumed to be a Dirichlet form, then Theorem~2.1.4 and Theorem~2.1.5 of \cite{Davies} give that
\begin{itemize}
\item  $e^{-tL}$, $t>0$, and $(L+\al)^{-1}$, $\al>0$,  are norm analytic and compact on all $\ell^{p}(X,m)$,  $1\leq p\leq\infty $,
\item  the spectra of the generators of $e^{-tL}$ on $\ell^{p}(X,m)$ agree for all  $1\leq p\leq\infty$.
\end{itemize}

(b) There has been quite some recent interest in graphs whose Laplacians have purely discrete spectrum, see, e.g.,  \cite{Gol,Kel,KL3, KLW}.

\bigskip

A general discussion of various equivalent characterizations of purely discrete spectrum in the context of perturbations by potentials  for arbitrary selfadjoint operators in a measure space can be found in
 \cite{LStW}. Here,  we point out the following alternative formulation (and proof) of the previous corollary.

\begin{theorem}\label{t:compact_resolvent} Let $(b,c)$ be a {\cpt} graph over $X$ equipped with a measure $m$ with $m(X)<\infty$. Let  $Q$ be a closed form  with domain $D$ and  $Q^{(D)}\subseteq Q\subseteq Q^{(N)}$. Then, the embedding
$$(D, \aV{\cdot}_Q) \longrightarrow \ell^2 (X,m)$$
is compact.
\end{theorem}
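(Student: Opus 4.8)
The plan is to verify compactness directly from the definition: I will show that every sequence $(u_n)$ which is bounded in $(D,\aV{\cdot}_Q)$ admits a subsequence converging in $\ell^2(X,m)$. The two structural features that make this work are that canonical compactifiability forces a \emph{uniform} supremum bound on such a sequence, while finiteness of $m$ lets me upgrade pointwise convergence to norm convergence via dominated convergence.

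First I would record that the hypothesis $Q^{(D)}\subseteq Q\subseteq Q^{(N)}$ gives $D\subseteq\ow D\cap\ell^2(X,m)$ together with $Q(u)=\ow Q(u)$ for $u\in D$, so a sequence with $\aV{u_n}_Q^2=\ow Q(u_n)+\aV{u_n}^2\leq C$ satisfies both $\ow Q(u_n)\leq C$ and $\aV{u_n}^2\leq C$. Fixing the base point $o\in X$ and using that $m$ has full support, the bound $|u_n(o)|^2 m(o)\leq\aV{u_n}^2\leq C$ shows that $(u_n)$ is bounded in the Yamasaki norm, namely $\aV{u_n}_o^2=\ow Q(u_n)+|u_n(o)|^2\leq C(1+1/m(o))$. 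Then Lemma~\ref{l:cpt}, which uses precisely that $(b,c)$ is {\cpt} and connected, yields a uniform bound $\aV{u_n}_\infty\leq\aV{j}\,\aV{u_n}_o\leq M$ with $M$ independent of $n$.

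Next, since $X$ is countable and the $u_n$ are uniformly bounded pointwise by $M$, a standard diagonal extraction produces a subsequence (still written $u_n$) converging pointwise to some function $u$ with $\aV{u}_\infty\leq M$; alternatively, one could invoke weak sequential compactness in the Hilbert space $(\ow D,\as{\cdot,\cdot}_o)$ (Proposition~\ref{p:Yamasaki}) together with continuity of point evaluation (Lemma~\ref{l:pointevaluation}) to reach the same limit. Finally, because $|u_n(x)-u(x)|^2\leq 4M^2$ for every $x$ and $\sum_{x\in X}4M^2\,m(x)=4M^2 m(X)<\infty$, the dominated convergence theorem gives
\[
\aV{u_n-u}^2=\sum_{x\in X}|u_n(x)-u(x)|^2\,m(x)\longrightarrow 0,
\]
so $u_n\to u$ in $\ell^2(X,m)$. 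Since compactness of the embedding only requires the image of a bounded sequence to have a subsequence converging in the target, this establishes the claim; note that one need not even check $u\in D$.

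The main obstacle is exactly the passage from pointwise to $\ell^2(X,m)$ convergence, and this is the step where both hypotheses are indispensable: canonical compactifiability (through Lemma~\ref{l:cpt}) supplies the uniform dominating constant $M$, and $m(X)<\infty$ makes that constant integrable, so dominated convergence applies; without either ingredient the argument fails, and everything else is routine bookkeeping. (Alternatively, one can deduce the statement from Theorem~\ref{t:ultracontractive}: writing $L$ for the operator associated with $Q$, the map $(L+1)^{1/2}$ is an isometric isomorphism from $(D,\aV{\cdot}_Q)$ onto $\ell^2(X,m)$, so the embedding equals $(L+1)^{-1/2}$ composed with this isomorphism, and $(L+1)^{-1/2}$ is compact because $(L+1)^{-1}$ is even trace class; I prefer the direct route, as it isolates the role of the compactification.)
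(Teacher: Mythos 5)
Your proof is correct and follows essentially the same route as the paper's: a $\aV{\cdot}_Q$-bounded sequence is bounded in $\aV{\cdot}_o$ (you derive the needed inequality directly, where the paper cites Lemma~\ref{l:equivalent_norms}), hence uniformly sup-norm bounded by Lemma~\ref{l:cpt}, and a pointwise convergent subsequence then converges in $\ell^2(X,m)$ by dominated convergence using $m(X)<\infty$. Your parenthetical alternative via Theorem~\ref{t:ultracontractive} is also valid, but your main argument is the paper's own.
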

\begin{proof}
Clearly, $\aV{\cdot}_{Q}$ is a restriction of the norm $\aV{\cdot}_{\ow Q}$ on $\ow D$.  Let $(f_n)$ be a  sequence in $D$ which is  uniformly bounded with respect to  $\aV{\cdot}_Q$ and thus, by Lemma \ref{l:equivalent_norms}, with respect to $\aV{\cdot}_o$.  As $(b,c) $ is {\cpt},   Lemma \ref{l:cpt} then shows that  the sequence $(f_n)$ is uniformly bounded with respect to the supremum norm. Thus, it contains a pointwise converging subsequence. By $m(X)<\infty$, this sequence must then converge in $\ell^2 (X,m)$. This proves the theorem.
\end{proof}

We end this subsection with a general lemma, which will also be used in the characterization of the Dirichlet Laplacian (Theorem \ref{t:Dirichlet_bc}).  Recall that $\aV{f}_{\ow Q} = (\ow{Q}(f) + \aV{f}^2)^{\frac{1}{2}}$ and  $\aV{f}_{o} = (\ow{Q}(f) + |f(o)|^2)^{\frac{1}{2}}$ for $f \in \ow D$ and fixed $o \in X$.

\begin{lemma}\label{l:equivalent_norms}
Let $(b,c)$ be a {\cpt} graph over $X$ equipped with a measure $m$ with $m(X)<\infty$.  Then, $\aV{\cdot}_{\ow Q}$ and $\aV{\cdot}_o$ are equivalent norms on $\ow D$.
\end{lemma}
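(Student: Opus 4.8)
The plan is to establish the two one-sided inequalities separately, exploiting the fact that both norms share the common energy term $\ow Q(f)$. Thus the only quantities that need to be compared are the $\ell^2$-part $\aV{f}^2 = \sum_{x\in X}|f(x)|^2 m(x)$ appearing in $\aV{\cdot}_{\ow Q}$ and the single point evaluation $|f(o)|^2$ appearing in $\aV{\cdot}_o$. Throughout I assume $(b,c)$ to be connected (so that $\aV{\cdot}_o$ is a genuine norm and Lemma \ref{l:cpt} applies) and $m$ to have full support, as in the operator-theoretic setting of this subsection.

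First I would prove $\aV{f}_{\ow Q}\leq c_2\aV{f}_o$, which is the step where canonical compactifiability is genuinely used. By Lemma \ref{l:cpt}, the embedding $j:(\ow D,\aV{\cdot}_o)\longrightarrow \ell^\infty(X)$ is continuous, so there is a constant $C$ with $\aV{f}_\infty\leq C\aV{f}_o$ for all $f\in\ow D$. Since $m(X)<\infty$, this yields
\[
\aV{f}^2=\sum_{x\in X}|f(x)|^2 m(x)\leq \aV{f}_\infty^2\, m(X)\leq C^2 m(X)\,\aV{f}_o^2.
\]
Combining this with the trivial bound $\ow Q(f)\leq \aV{f}_o^2$ gives $\aV{f}_{\ow Q}^2=\ow Q(f)+\aV{f}^2\leq \bigl(1+C^2 m(X)\bigr)\aV{f}_o^2$.

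For the reverse inequality $\aV{f}_o\leq c_1\aV{f}_{\ow Q}$, I would use only that $m(o)>0$. Retaining a single summand in the $\ell^2$-norm gives $|f(o)|^2 m(o)\leq \aV{f}^2$, hence
\[
|f(o)|^2\leq m(o)^{-1}\aV{f}^2\leq m(o)^{-1}\aV{f}_{\ow Q}^2 .
\]
Therefore $\aV{f}_o^2=\ow Q(f)+|f(o)|^2\leq \max\{1,\,m(o)^{-1}\}\,\aV{f}_{\ow Q}^2$.

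Putting these two estimates together establishes the equivalence. The main (indeed the only nontrivial) obstacle is the first inequality: without the hypothesis that $(b,c)$ is canonically compactifiable, the supremum norm of a finite-energy function need not be controlled by $\aV{\cdot}_o$, and then the $\ell^2$-norm could not be bounded in this way. The reverse inequality is purely elementary, requiring nothing beyond $m(o)>0$.
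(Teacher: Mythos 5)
Your proof is correct and follows essentially the same route as the paper's: the nontrivial direction combines Lemma~\ref{l:cpt} with $m(X)<\infty$ to bound $\aV{\cdot}$ by $\aV{\cdot}_o$, and the elementary direction retains the single summand at $o$ (using $m(o)>0$), exactly as in the paper. The only differences are cosmetic bookkeeping of the constants and your (welcome) explicit acknowledgement of the connectedness and full-support assumptions that the paper leaves implicit.
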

\begin{proof}
Obviously,
$$\aV{f}_o \leq \left(1 + m(o)^{-\frac{1}{2}} \right) \aV{f}_{\ow Q}.$$
On the other hand, by Lemma \ref{l:cpt} and $m(X)<\infty$, there exists a $C>0$ with
$$ \|f\|_\infty \leq C \aV{f}_o.$$
Hence,
$$\aV{f}_{\ow Q}^2 = \ow Q (f) + \aV{f} \leq  \ow Q (f) + m(X) C^2 \|f\|_o^2  \leq (1+ m(X)C^2) \aV{f}_o^2.$$
\end{proof}


\subsection{The Dirichlet problem}\label{Boundary}
In this subsection, we consider the boundary value problem
\begin{align*}
\LL u=0 \:\; \mbox{on $X$}\:\; \quad \oh u\vert_{\partial X}=\ph
\end{align*}
on a \cpt\  graph $(b,c)$ over $X$. Note that there is no measure involved in our discussion. For somewhat similar considerations in the context of metric graphs we refer the reader to \cite{Car}. Complementary results on when there are no (bounded) harmonic functions can be found \cite{HuK}.

 \bigskip

Whenever we have a \cpt\ graph $(b,c)$ over $X$,  we denote the set of continuous functions on its boundary by $C (\partial X)$. Moreover, we set
$C_0(\partial X ):= C(\partial X)$ if $ \sum_{x\in X}  c(x) <\infty$ and
$C_0(\partial X )=\{ \ph \in C (\partial X) : \gamma_\infty (\ph) =0\}$, otherwise.
Then, the main result of this subsection reads as follows.

\begin{theorem}\label{t:bvp}(Uniqueness and existence of solutions to the DP) Let $(b,c)$ be a \cpt\ graph over $X$.
Then, the equation
\begin{align*}
\LL u=0 \:\; \mbox{on $X$}\:\; \quad \oh u\vert_{\partial X}=\ph.
\end{align*}
has a unique solution $u\in\A$ for all  $\ph\in C_0 (\partial X)$.
\end{theorem}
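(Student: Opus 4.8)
The plan is to treat uniqueness and existence separately, the common engine being a maximum principle on the compactification $K=R$ (Theorem~\ref{KR}), together with the energy estimate that converts $\varrho$-boundedness into sup-norm control. Throughout I would use that, since $(b,c)$ is \cpt, we have $\A\subseteq\ell^\infty(X)$, and every bounded function lies in $\ow F$ because $\sum_y b(x,y)<\infty$; hence $\LL u$ is well defined for $u\in\A$ and harmonicity is meaningful. By linearity of $\LL$ and the fact that it has real coefficients, it suffices to treat real-valued $\ph$, solving for real $u$ and recombining, and I will assume this reduction silently. Note also that $\partial X\neq\emptyset$, since an infinite discrete set cannot be closed in the compact space $K$.

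\emph{Maximum principle and uniqueness.} First I would record: if $v\in\A$ is real and $\LL v=0$ on $X$, then $\sup_X v\le\max\{0,\sup_{\partial X}\oh v\}$. Indeed $\oh v\in C(K)$ attains its maximum on the compact $K$; if this maximum $M$ were attained at some $x_0\in X$ with $M>\max\{0,\sup_{\partial X}\oh v\}$, then $\LL v(x_0)=0$ forces, term by term, $v(y)=M$ for all neighbours $y$ of $x_0$ and $c(x_0)M=0$, hence $c(x_0)=0$; by connectedness $v\equiv M$ on $X$ and so $\oh v\equiv M$ on $K=\ov X$, contradicting $M>\sup_{\partial X}\oh v$. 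Applying this to $\pm v$ yields $\sup_X|v|\le\sup_{\partial X}|\oh v|$. Uniqueness is then immediate: if $u_1,u_2\in\A$ both solve the problem, then $w=u_1-u_2$ is harmonic with $\oh w|_{\partial X}=0$, so $\sup_X|w|\le0$ and $w\equiv0$.

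\emph{Existence for a dense class of data.} For $w\in\ow D_o$, which by Theorem~\ref{l:c_c} equals $\{u\in\ow D:\oh u|_{\partial X}=0\}$, I would prove the sharper estimate $\|w\|_\infty\le\diam_\varrho(X)\,\ow Q(w)^{1/2}$, using that $\oh w$ vanishes on $\partial X$ (so $\inf_X|w|=0$), the defining Lipschitz bound $|w(x)-w(y)|\le\ow Q(w)^{1/2}\varrho(x,y)$, and $\diam_\varrho(X)<\infty$ (Theorem~\ref{characterization-cpt}). Now, given $f\in\ow D$, minimise the energy $\ow Q(f+w)$ over $w\in\ow D_o$. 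A minimising sequence is $\ow Q$-Cauchy by the parallelogram identity; the displayed estimate upgrades this to a $\|\cdot\|_o$-Cauchy sequence, which converges in the Hilbert space $(\ow D,\as{\cdot,\cdot}_o)$ to some $w^\ast\in\ow D_o$. The minimiser $u:=f+w^\ast$ satisfies the Euler--Lagrange condition $\ow Q(u,v)=0$ for all $v\in\ow D_o$, in particular for $v=\delta_x$; the summation-by-parts formula of Subsection~\ref{Quadratic} then gives $\LL u(x)=0$ for every $x$. Since $u-f\in\ow D_o$, we have $\oh u|_{\partial X}=\oh f|_{\partial X}$. Thus every boundary datum of the form $\oh f|_{\partial X}$, $f\in\ow D$, is attained by a harmonic $u\in\ow D\subseteq\A$.

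\emph{Passage to arbitrary data and the main obstacle.} It remains to pass from this dense class to all of $C_0(\partial X)$ and, crucially, to see that the limit still solves the problem although it need only lie in $\A$. The restriction map $\A\to C_0(\partial X),\ u\mapsto\oh u|_{\partial X}$, is a $\ast$-homomorphism whose image is closed and, by the Gelfand picture $\A\cong C(K)$, resp.\ $C_0(K')$, together with Tietze and Stone--Weierstrass, equals $C_0(\partial X)$ (the precise meaning of $C_0(\partial X)$ being dictated by whether $1\in\A$, i.e.\ whether $\sum_x c(x)<\infty$, via Proposition~\ref{p:one}); since $\ow D$ is dense in $\A$, its image $\{\oh f|_{\partial X}:f\in\ow D\}$ is dense in $C_0(\partial X)$. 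Given $\ph\in C_0(\partial X)$, I would pick $f_n\in\ow D$ with $\oh f_n|_{\partial X}\to\ph$ uniformly and let $u_n\in\ow D$ be the harmonic solutions from the previous step. By the maximum principle, $\|u_n-u_m\|_\infty\le\|\oh f_n|_{\partial X}-\oh f_m|_{\partial X}\|_\infty\to0$, so $(u_n)$ is sup-norm Cauchy and converges to some $u\in\A$, as $\A$ is sup-norm closed. Uniform convergence gives $\oh u|_{\partial X}=\ph$, and, since $\sum_y b(x,y)<\infty$, it also gives $\LL u_n(x)\to\LL u(x)=0$ pointwise, so $u$ is harmonic. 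I expect the main obstacle to be exactly this last step: the solution generally does not lie in $\ow D$, and it is the passage to the $C^\ast$-closure $\A$, legitimised by the sup-norm maximum principle rather than by any Hilbert-space compactness, that makes the Dirichlet problem solvable for all continuous boundary data.
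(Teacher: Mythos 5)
Your proposal is correct and follows essentially the same strategy as the paper: a maximum principle giving uniqueness and sup-norm stability, a variational Hilbert-space argument solving the problem for boundary data of the form $\oh f\vert_{\partial X}$ with $f\in\ow D$, and Stone--Weierstrass density plus uniform limits in $\A$ (with dominated convergence for $\LL$) to reach all of $C_0(\partial X)$. The only cosmetic difference is in the middle step: the paper minimises $\aV{\cdot}_{k}$, $k\in\partial X$, in the generalized Yamasaki space over the closed convex set $\{w\in\ow D : \oh w\vert_{\partial X}=\ph\}$ and invokes the projection theorem, whereas you minimise $\ow Q(f+\cdot)$ over $\ow D_o$ using the parallelogram identity and the estimate $\aV{w}_\infty\leq \diam_\varrho(X)\,\ow Q(w)^{1/2}$ --- both routes yield the same Euler--Lagrange harmonic minimiser.
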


\textbf{Remark.}
Carlson \cite[Theorem~4.5]{CarAft} proved that the Dirichlet problem for continuous functions on the boundary of $\overline{X}^d$ is solvable when $(A)$ and a further strong condition hold. In comparison, Theorem~\ref{t:bvp} assumes only the much weaker $(D)$ but the boundary on which the Dirichlet problem is solved is, in general, smaller. Note that, assuming $(A)$, solvability of the Dirichlet problem on the boundary of $\overline{X}^d$ implies solvability on $\partial X$, since by the first two statements of the second part of Theorem~\ref{implication-D} there is a a continuous surjection from $\overline{X}^d$ to $R$ extending the identity on $X$. It would be interesting to know whether the Dirichlet problem  on the boundary of $\overline{X}^d$ is solvable for every graph satisfying $(D)$; this issue is further  discussed in the last section.

\medskip

The proof of Theorem~\ref{t:bvp} will be given at the end of this subsection after some preliminary considerations.  In particular, for the proof of the theorem  we need a maximum principle which states that every harmonic function takes its maxima and minima on the boundary. As this may be of independent interest, we present  a slightly more general discussion.

\smallskip

We start with two statements on \textit{subharmonic functions}, i.e., functions satisfying $\LL f \leq 0$.

\begin{proposition} Assume that $(b,c)$ is connected. If $f\in \widetilde{F}$  is non-negative and not constant and satisfies  $\LL  f\leq 0$ on $X$, then $f$ does not attain a maximum on $X$.
\end{proposition}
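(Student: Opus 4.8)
The plan is to argue by contradiction via a propagation-of-maximum argument, a discrete analogue of the classical strong maximum principle. First I would suppose that $f$ does attain a maximum, say at a vertex $x_0\in X$, and set $M:=f(x_0)=\max_{x\in X}f(x)$; since $f$ is non-negative we have $M\geq 0$. The goal is then to deduce that $f\equiv M$, contradicting non-constancy. Before exploiting the sign of $\LL f$, I would check that the defining sum is well behaved: because $f\in\ow F$ we have $\sum_{y}b(x_0,y)|f(y)|<\infty$, and by the standing summability assumption $\sum_{y}b(x_0,y)<\infty$, so
\[
\sum_{y\in X} b(x_0,y)\big(f(x_0)-f(y)\big) = M\sum_{y} b(x_0,y) - \sum_{y} b(x_0,y) f(y)
\]
is a difference of two finite quantities and hence an absolutely convergent, genuinely well-defined sum.

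Next I would exploit the signs of the individual terms. Since $M=f(x_0)\geq f(y)$ for every $y$ and $b\geq 0$, each summand $b(x_0,y)(M-f(y))$ is non-negative; likewise $c(x_0)f(x_0)=c(x_0)M\geq 0$ because $c\geq 0$ and $f\geq 0$. Combining this with the subharmonicity hypothesis at $x_0$ gives
\[
0 \;\geq\; \LL f(x_0) \;=\; \sum_{y\in X} b(x_0,y)\big(M-f(y)\big) + c(x_0)M \;\geq\; 0,
\]
so $\LL f(x_0)=0$ and, being a sum of non-negative terms, every term must vanish. In particular $b(x_0,y)(M-f(y))=0$ for all $y$, which forces $f(y)=M$ for every neighbor $y$ of $x_0$, i.e.\ for every $y$ with $b(x_0,y)>0$.

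The final step is to propagate the maximum through the whole graph. Each neighbor $y$ of $x_0$ now also attains the value $M=\max_X f$, so the identical computation applies with $x_0$ replaced by $y$, yielding that all neighbors of $y$ attain $M$ as well. Iterating this along paths and invoking the connectedness of $(b,c)$, I conclude that $f\equiv M$ on all of $X$, so $f$ is constant, contradicting the hypothesis. Hence $f$ cannot attain a maximum on $X$. The only point requiring genuine care, and the main obstacle, is the justification that the sum defining $\LL f(x_0)$ converges and may be manipulated termwise so that non-negativity of each term can be invoked; this is precisely where the membership $f\in\ow F$ together with the summability of $b$ enters, and non-negativity of $f$ (with $c\geq 0$) is exactly what is needed to keep the potential contribution $c(x_0)M$ non-negative rather than of indeterminate sign.
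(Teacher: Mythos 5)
Your proof is correct and follows essentially the same route as the paper's: assume a maximum at a vertex, use $\LL f\leq 0$ together with maximality and non-negativity of $f$ (the latter to keep $c(x_0)M\geq 0$) to force equality at all neighbors, then propagate by connectedness to contradict non-constancy. The only difference is that you spell out the absolute convergence of the defining sum via $f\in\ow F$ and the summability of $b$, a detail the paper leaves implicit.
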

\begin{proof}  Let $f$ be as in the statement. Assume that $f$  attains its maximum  on $X$ at $k\in X$. By  $\LL  f \leq 0$,  we then have
$$ 0 \geq \sum_{x\in X} b(k,x) (f(k) - f(x)) + c(k) f(k).$$
By maximality of $f(k)$ and non-negativity of $f$ we infer $f(x) = f(k)$ for all neighbors $x$ of $k$. Continuing this argument and using connectedness,   we infer that $f$ is constant (and equal to $f(k)$) and this is a contradiction.
\end{proof}

\begin{lemma} Let $(b,c)$ be a graph over $(X,m)$. If $f\in \widetilde{F}$ satisfies $\LL  f = 0$, then $|f|$ satisfies $\LL  |f| \leq 0$.
\end{lemma}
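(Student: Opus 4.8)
The plan is to prove the inequality pointwise by rewriting the hypothesis $\LL f = 0$ as an identity and then applying the triangle inequality for complex numbers. First I would observe that $|f|$ again lies in $\widetilde{F}$: the defining summability condition $\sum_{y\in X} b(x,y)|g(y)| < \infty$ depends only on the modulus of $g$, and $|f|$ has the same modulus as $f$. Hence all the sums appearing below converge absolutely and every rearrangement is justified.

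Now fix $x\in X$. Regrouping the terms in the definition of $\LL$, one has
\[
\LL |f|(x) = |f(x)|\Bigl(\sum_{y\in X} b(x,y) + c(x)\Bigr) - \sum_{y\in X} b(x,y)\,|f(y)|.
\]
The key step is to use harmonicity: the equation $\LL f(x)=0$ is exactly
\[
f(x)\Bigl(\sum_{y\in X} b(x,y) + c(x)\Bigr) = \sum_{y\in X} b(x,y)\,f(y).
\]
Taking moduli on both sides and invoking the triangle inequality for the absolutely convergent sum on the right gives
\[
|f(x)|\Bigl(\sum_{y\in X} b(x,y) + c(x)\Bigr) = \Bigl|\sum_{y\in X} b(x,y)\,f(y)\Bigr| \leq \sum_{y\in X} b(x,y)\,|f(y)|.
\]
Substituting this bound into the expression for $\LL |f|(x)$ yields $\LL |f|(x)\leq 0$. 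Since $x\in X$ was arbitrary, the claim follows.

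The computation is entirely elementary, so there is no genuine obstacle; the only point requiring care is the justification of rearranging the series and applying the triangle inequality termwise, which is precisely what membership in $\widetilde{F}$ guarantees. Note also that the nonnegativity of $c$ is used implicitly: since $c(x)\geq 0$, the modulus of the term $c(x)f(x)$ is exactly $c(x)|f(x)|$, so this contribution matches on both sides when moduli are taken.
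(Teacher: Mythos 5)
Your proof is correct, and it takes a mildly different route from the paper's. The paper does not split the sum: it multiplies the harmonicity identity by a unimodular scalar $\alpha$ chosen so that $\alpha f(x)=|f(x)|$, obtaining
\[
0=\sum_{y\in X} b(x,y)\bigl(|f(x)|-\alpha f(y)\bigr)+c(x)|f(x)|,
\]
and then concludes by taking real parts and using $\mathrm{Re}\,(\alpha f(y))\leq |f(y)|$ termwise, keeping the difference form of $\LL$ intact throughout. You instead decompose $\LL |f|(x)$ into the weighted-degree term $|f(x)|\bigl(\sum_{y}b(x,y)+c(x)\bigr)$ minus $\sum_{y}b(x,y)|f(y)|$, and apply the triangle inequality to the whole neighbor sum. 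Both arguments are a single application of the triangle inequality; the only substantive difference is that your regrouping needs the standing summability assumption $\sum_{y\in X}b(x,y)<\infty$ (together with $f\in\widetilde{F}$) to justify separating the two absolutely convergent pieces, whereas the paper's rotation trick never separates them. In this paper's setting both justifications are available, and you correctly flag the points that need care: that $|f|\in\widetilde{F}$, that the rearrangement is legitimate, and that $c\geq 0$ is what makes the potential term come out with the right sign.
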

\begin{proof} This follows by a direct computation: $ \LL f =0$ implies $\alpha \LL f = 0$ for all complex numbers $\alpha$. Thus, for any $k\in X$ and complex number $\alpha$, we obtain
$$ 0 = \sum_{x\in X} b(k,x) (\alpha f(k) - \alpha f(x)) + c(k) \alpha f(k).$$
Choosing $\alpha$ with $|\alpha| = 1 $ and $\alpha f (k) =|f(k)|$, we infer
$$0 = \sum_{x\in X} b (k,x) (|f(k)| - \alpha f(x)) + c(k) |f(k)|$$
and the desired statement follows easily.
\end{proof}

\begin{corollary}\label{p:maximumprinciple}(Maximum  principle) Assume that $(b,c)$ is \cpt\ and connected. If $f\in \A$  satisfies $\LL f=0$ on $X$, then
\begin{align*}
\aV{f}_{\infty}=    {\| \oh f\vert_{\partial X}\|}_{\infty}.
\end{align*}
\end{corollary}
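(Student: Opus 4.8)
The plan is to combine the two results immediately preceding the corollary with the fact that the Gelfand transform $\widehat{f}$ is continuous on the compact space $K = R$ and that $X$ sits densely inside $K$. First I would record that, since $f\in\A$, the map $\widehat{f}$ is continuous on the compact Hausdorff space $K$ and the Gelfand map is an isometry, so that $\|f\|_\infty = \sup_{K}|\widehat{f}|$. Because $j(X)$ is dense in $K$ by Theorem~\ref{t:open} and $\widehat{f}(j(x)) = f(x)$, this supremum equals $\sup_{x\in X}|f(x)| = \|f\|_\infty$, and by compactness and continuity it is attained at some point $k^\ast\in K$. The inequality $\|\widehat{f}|_{\partial X}\|_\infty \le \|f\|_\infty$ is then immediate, since $\partial X\subseteq K$; the whole content of the statement is the reverse inequality, and for this it suffices to show that the maximizer $k^\ast$ may be taken in $\partial X$.

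Next I would invoke the Lemma just above the corollary: since $f\in\ow F$ with $\LL f = 0$, the function $|f|$ again lies in $\ow F$ (as $\sum_y b(x,y)\,\bigl||f(y)|\bigr| = \sum_y b(x,y)|f(y)| < \infty$), is non-negative, and satisfies $\LL|f|\le 0$. Assume first that $|f|$ is not constant. Then the Proposition above shows that $|f|$ does not attain a maximum on $X$. Were $k^\ast\in X$, we would have $|f(k^\ast)| = |\widehat{f}(k^\ast)| = \sup_{x\in X}|f(x)|$, i.e.\ $|f|$ would attain its maximum at the vertex $k^\ast$, contradicting the Proposition. Hence $k^\ast\in K\setminus X = \partial X$, which yields $\|\widehat{f}|_{\partial X}\|_\infty \ge |\widehat{f}(k^\ast)| = \|f\|_\infty$ and therefore equality.

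Finally I would dispose of the degenerate case in which $|f|$ is constant, say $|f|\equiv M$ on $X$. Then $|\widehat{f}|\equiv M$ on the closure $K$ of $X$ by continuity, and in particular $|\widehat{f}|\equiv M$ on $\partial X$. Since $\partial X\neq\emptyset$ (a countably infinite set $X$ with each singleton open, by Theorem~\ref{t:open}, cannot coincide with the compact space $K$), we again obtain $\|\widehat{f}|_{\partial X}\|_\infty = M = \|f\|_\infty$.

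The only genuine obstacle is the transfer between the phrase \emph{``attains its maximum on $X$''}, which is the language of the Proposition and refers to the discrete vertex set, and \emph{``attains its supremum on the compact $K$''}, which is where compactness actually guarantees a maximizer. This gap is bridged precisely by the density of $X$ in $K$ together with the continuity of $\widehat{f}$, so that a maximizer on $K$ lying in $X$ would force a maximum of $|f|$ on $X$ itself. Everything else is routine bookkeeping with the Gelfand isometry and the continuous extension $\widehat{f}$.
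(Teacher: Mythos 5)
Your proof is correct and takes essentially the same route as the paper's: continuity of $\oh f$ on the compact space $K$ yields a maximizer of $|\oh f|$, and the lemma ($\LL f = 0$ implies $\LL |f| \leq 0$) combined with the proposition (a non-negative, non-constant subharmonic function attains no maximum on $X$) forces that maximizer into $\partial X$. The only difference is that you explicitly handle the degenerate case where $|f|$ is constant and verify $\partial X \neq \emptyset$, details the paper's one-line proof leaves implicit.
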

\begin{proof} As $f$ belongs to $\A$, the function $|f|$  attains its maximum on $K$. Combining the previous lemma and the preceding proposition, we infer that $|f|$ attains its maximum on $\partial X$. This directly gives the statement of the corollary.
\end{proof}

\begin{proof}[Proof of Theorem~\ref{t:bvp}] We only consider the case where $\sum_{x\in X}  c(x) < \infty$.  The other case can be treated similarly.  We have to show existence and uniqueness of solutions.

\smallskip

\textit{Uniqueness of solutions.}  Uniqueness is a direct consequence of Corollary \ref{p:maximumprinciple}.

 \smallskip

 \textit{Existence of solutions.}
 Consider the set $G\subseteq  C(\partial X)$ of functions $\ph$ such that there exists $f\in \ow D$ with $\ph=\oh f\vert_{\partial X}$.

\emph{Step~1.} For all $\ph \in G$ there is a unique solution of the boundary value problem.

\emph{Proof of Step~1.}  Fix $k \in \partial X$ with $k\neq \gamma_\infty$.  Recall that  $\langle \cdot,\cdot\rangle_k$ is an inner product on $\ow D$ making it into a Hilbert space by Theorem~\ref{p:Yamasaki2}. Let $\ph\in G$. Consider
$$C:=\{w\in \ow D : \oh w\vert_{\partial X}=\ph\}. $$
Then, $C$ is clearly convex. Moreover, it is closed with respect to $\aV{\cdot}_{k}$ by Theorem~\ref{p:Yamasaki} and Lemma \ref{l:cpt}.

  By standard  Hilbert space theory, see, e.g., \cite{Weidmann}, there exists  then a minimizer $u$ of $\aV{\cdot}_{k}$ on $C$. Hence, since $\oh v\vert_{\partial X}\equiv0$ for all $v\in C_{c}(X)$ by Theorem~\ref{l:c_c},  we get
\begin{align*}
\aV{u}_{k}^{2} & \leq \aV{u+tv}_{k}^{2} =\aV{u}_{k}^{2} +2t\as{u,v}_{k}+  t^2 \aV{v}_{k}^{2}\\
 &=\aV{u}_{k}^{2} +2t Re \  \ow Q({u,v})+ t^2 \ow Q(v)
\end{align*}
for all $v\in C_{c}(X)$ and all $t\in \R$.
Therefore, the real part of $\ow Q(u,v)$ is 0 for all $v\in C_{c}(X)$ and repeating the argument with $it$ in place of $t$, gives that $\ow Q(u,v) = 0$ for all $v\in C_{c}(X)$. By the discussion on integration by parts in Section~\ref{Quadratic} we have  $\ow D\subseteq \ow F$ as well as  $\ow Q(u,v)=\sum_{x\in X}\ov{\LL u(x)}v(x)$ for all $v\in C_{c}(X)$. This implies that $\LL u=0$.

\emph{Step~2.}  Suppose that $(f_{n}) $ in $\ow D$ solves $\LL f_{n}=0$, $\oh f_{n}\vert_{\partial X}=\ph_{n}$, for some $(\ph_{n})$ in $C(\partial X)$. Suppose, furthermore, that $\ph_{n}\longrightarrow\ph$ uniformly as $n\longrightarrow\infty$. Then, $(f_{n})$ converges uniformly to some $f\in\A$ that solves $\LL f=0$, $\oh f\vert_{\partial X}=\ph$.

\emph{Proof of Step~2.}
By the maximum principle, Corollary~\ref{p:maximumprinciple},
\begin{align*}
\aV{f_{n}-f_{m}}_{\infty}=\aV{\ph_{n}-\ph_{m}}_{\infty}\to0
\end{align*}
which implies the uniform convergence of $f_{n}$ to some $f\in \ell^{\infty}(X)$.
By the definition of $\A$, we have  $f\in \A$.
 Since  $f_{n}\longrightarrow f$ in $\ell^{\infty}(X)$ we conclude that $\LL f_{n}\longrightarrow \ow Lf$ by Lebesgue's theorem of dominated convergence. This finishes the proof of Step 2.

\smallskip

We can now conclude the existence proof as follows:  By construction, the algebra $\ow D$ is dense in $\A$. In particular,  $\ow D$ separates points of $K$. Hence, $G$ separates points of $\partial X$.  Moreover, by  Proposition~\ref{p:one}, the algebra $G$ does not vanish identically on any point of $\partial X$. Now, the Stone-Weierstrass theorem gives that $G$ is dense in $C (\partial X)$.  The statement of the theorem then follows by combining Step~1 and Step~2.
\end{proof}


\subsection{The Laplacian with Dirichlet boundary conditions}\label{s:Dirichlet_bc}
In this subsection, we give an explicit description of the domain of the form $Q^{(D)}$ and of the domain of the associated Laplacian in the case of \cpt\ graphs over discrete finite measure spaces. We will show that functions in the domain are exactly those vanishing on the boundary and use this to give an explicit description of the Laplacian as well.

\bigskip

Let $(b,c)$ be a graph over  $(X,m)$. Recall that
the form $Q^{(D)}$ has the domain
\begin{align*}
D(Q^{(D)})=\overline{C_{c}(X)}^{\aV{\cdot}_{Q}},
\end{align*}
where
$$\aV{f}_{Q}^2 = \ow Q (f) + \aV{f}^2.$$

\begin{theorem}\label{t:Dirichlet_bc}Let $(b,c)$ be a {\cpt}  graph over  $(X,m)$ with $m(X) <\infty$.  Let $Q^{(D)}$ be the associated form with Dirichlet boundary conditions and $L^{(D)}$ be the corresponding operator.  Then, the form domain is given by
$$D(Q^{(D)}) = \{u \in \ow  D : \oh u\vert_{\partial X} = 0\} = \ow D_o.$$
Moreover, the domain of  $L^{(D)}$ is given by
$$ D(L^{(D)}) = \{ f\in \ow D :  \oh f \vert_{\partial X} = 0 \;\:\mbox{and}\;\:  \ow Lf \in \ell^2 (X,m)\}.$$
\end{theorem}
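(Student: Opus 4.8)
The plan is to assemble both equalities from results already established in the paper, handling the form domain first and then bootstrapping to the operator domain through the standard correspondence between a closed form and its self-adjoint operator.

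For the form domain, the key preliminary observation is that, since $(b,c)$ is {\cpt} and $m(X)<\infty$, we have $\ow D\subseteq\ell^{\infty}(X)\subseteq\ell^{2}(X,m)$, so that $\ow D=\ow D\cap\ell^{2}(X,m)$ and the norm $\aV{\cdot}_{Q}$ coincides with $\aV{\cdot}_{\ow Q}$ on all of $\ow D$. By Lemma~\ref{l:equivalent_norms} the norms $\aV{\cdot}_{\ow Q}$ and $\aV{\cdot}_{o}$ are equivalent on $\ow D$, and since $(\ow D,\as{\cdot,\cdot}_{o})$ is complete (Proposition~\ref{p:Yamasaki}) it is complete for $\aV{\cdot}_{\ow Q}$ as well. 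Hence a $\aV{\cdot}_{Q}$-Cauchy sequence in $C_{c}(X)$ is $\aV{\cdot}_{o}$-Cauchy with the same limit, and the two closures of $C_{c}(X)$ agree:
\[
D(Q^{(D)})=\overline{C_{c}(X)}^{\aV{\cdot}_{Q}}=\overline{C_{c}(X)}^{\aV{\cdot}_{o}}=\ow D_{o}.
\]
Theorem~\ref{l:c_c} then identifies $\ow D_{o}$ with $\{u\in\ow D:\oh u\vert_{\partial X}=0\}$, which gives the first displayed equality.

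For the operator domain, I would use the characterization that $f\in D(L^{(D)})$ if and only if $f\in D(Q^{(D)})$ and there is some $g\in\ell^{2}(X,m)$ with $Q^{(D)}(f,v)=\as{g,v}_{m}$ for all $v\in D(Q^{(D)})$, in which case $L^{(D)}f=g$. The inclusion ``$\subseteq$'' is then immediate: the form-domain part gives $f\in\ow D$ with $\oh f\vert_{\partial X}=0$, while $L^{(D)}f=\ow Lf$ (by \cite[Proposition~3.3]{HKLW}) together with $L^{(D)}f\in\ell^{2}(X,m)$ yields $\ow Lf\in\ell^{2}(X,m)$. For the reverse inclusion, given $f$ in the right-hand set, I would check that $g:=\ow Lf$ does the job. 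The integration-by-parts formula of Section~\ref{Quadratic} (valid since $\ow D\subseteq\ow F$) gives $Q^{(D)}(f,v)=\ow Q(f,v)=\as{\ow Lf,v}_{m}$ for every $v\in C_{c}(X)$. Both sides are continuous in $v$ with respect to $\aV{\cdot}_{Q}$ — the left by the Cauchy--Schwarz inequality for the form, the right because $\ow Lf\in\ell^{2}(X,m)$ makes $v\mapsto\as{\ow Lf,v}_{m}$ bounded on $\ell^{2}(X,m)$ — so the identity extends from $C_{c}(X)$ to its $\aV{\cdot}_{Q}$-closure $D(Q^{(D)})$. This places $f$ in $D(L^{(D)})$ with $L^{(D)}f=\ow Lf$.

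The step I expect to need the most care is this reverse inclusion, and in particular the precise role of the hypothesis $\ow Lf\in\ell^{2}(X,m)$: it is exactly this assumption that converts the formally defined functional $v\mapsto\as{\ow Lf,v}_{m}$ into a \emph{bounded} functional on $\ell^{2}(X,m)$, which is what licenses the density extension from the core $C_{c}(X)$ to all of $D(Q^{(D)})$. Without it, one obtains only a relation on $C_{c}(X)$ and not membership in the operator domain. The remaining ingredients — equivalence of the norms, completeness of the Yamasaki space, the boundary characterization of $\ow D_{o}$, and the integration-by-parts identity — are quoted directly from earlier in the paper and require no new estimates.
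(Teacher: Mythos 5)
Your proposal is correct and takes essentially the same route as the paper: the form-domain identity follows from the equivalence of $\aV{\cdot}_{\ow Q}$ and $\aV{\cdot}_o$ (Lemma~\ref{l:equivalent_norms}) combined with Theorem~\ref{l:c_c}, and the operator-domain identity from the definition of the operator associated to a closed form together with the `integration by parts' formula of Subsection~\ref{Quadratic}. You merely make explicit the completeness and density-extension steps (via Proposition~\ref{p:Yamasaki} and the Cauchy--Schwarz continuity argument) that the paper's terse proof leaves implicit.
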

\begin{proof}  We first show the statement on the form domain.  Clearly, $\aV{\cdot}_{Q}$ is a restriction of the norm
$\aV{\cdot}_{\ow Q}$ on $\ow D$.
By Theorem~\ref{l:c_c} and the definition of $\ow  D_o$, it then  suffices to show that the norms
$\aV{\cdot}_o$ and  $\aV{\cdot}_{\ow Q}$ are equivalent on $\ow D$.  But this is precisely the statement of Lemma \ref{l:equivalent_norms}.

 As for the last statement of the theorem, we note that $L^{(D)}$ is a restriction of $\ow L$ and that $\ow D$ is contained in $\ow F$ by the discussion of Subection~\ref{Quadratic}. Now, the statement follows from
  from the definition of $L^{(D)}$ and the formulas for `integration by parts' discussed in Subection~\ref{Quadratic}.
\end{proof}


\section{Geometry: The eigenvalue $0$ and heat kernel convergence}\label{Section-Geometry}
In this section, we first present an analogue of a result of Yau \cite{Yau} on manifolds characterizing the situation of finite measure and then present an application of an analogue to a theorem of Li.

We start with a spectral characterization of the finiteness of $m(X)$ when $c \equiv 0$.

\begin{theorem}\label{t:finite_meassure}(Characterization of finite measure) If $(X,m)$ is a discrete measure space, then the following assertions are equivalent:
\begin{itemize}

\item[(i)] $m(X)<\infty$.

\item[(ii)] The constant functions are contained in $D(Q^{(N)})$ for some (any) graph $(b,0)$ over $(X,m)$.

\item[(iii)] The constant functions are contained in $D (L^{(N)})$ for some (any) graph $(b,0)$ over $(X,m)$.

\item[(iv)] $E_0 = 0$ is an eigenvalue of $L^{(N)}$ for some (any) graph $(b,0)$ over $(X,m)$.

\end{itemize}
Let, in this case,  $(b,0)$ be a graph over $(X,m)$ and let $L$ be the self adjoint operator associated to a closed form $Q$ satisfying $Q^{(D)}\subseteq Q \subseteq Q^{(N)}$. If the constant functions belong to the domain of $Q$, then they are eigenfunctions of $L$ to the eigenvalue $E_0 =0$.
\end{theorem}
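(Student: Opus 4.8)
First I would isolate the single computation that drives the whole theorem: since the graph is of the form $(b,0)$, i.e.\ $c\equiv0$, the constant function $\mathbf 1\equiv 1$ has vanishing energy,
\[
\ow Q(\mathbf 1)=\frac{1}{2}\sum_{x,y\in X}b(x,y)|1-1|^2=0,
\]
for \emph{every} choice of $b$. I work throughout under the paper's standing connectedness assumption on the graphs in question, and recall $D(Q^{(N)})=\ow D\cap\ell^2(X,m)$. The plan is to prove the cyclic chain of implications and the concluding eigenfunction statement together, since they share this observation.

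Next I would establish $(i)\Longleftrightarrow(ii)$. Because $\ow Q(\mathbf 1)=0<\infty$, the constant function always lies in $\ow D$; hence $\mathbf 1\in D(Q^{(N)})$ if and only if $\mathbf 1\in\ell^2(X,m)$, i.e.\ if and only if $\aV{\mathbf 1}^2=m(X)<\infty$. This yields $(i)\Longleftrightarrow(ii)$, and since the computation never sees $b$, it simultaneously shows that the property in $(ii)$ holds for some graph precisely when it holds for any graph.

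I would then treat the concluding eigenfunction statement, which also delivers $(i)\Longrightarrow(iii),(iv)$. Let $Q$ be any closed form with $Q^{(D)}\subseteq Q\subseteq Q^{(N)}$ and $\mathbf 1\in D(Q)$. By the admissibility of such forms (Subsection~\ref{Quadratic}), $Q=\ow Q$ on $D(Q)$, so for all $v\in D(Q)$,
\[
Q(\mathbf 1,v)=\ow Q(\mathbf 1,v)=\frac{1}{2}\sum_{x,y\in X}b(x,y)\ov{(1-1)}(v(x)-v(y))+\sum_{x\in X}c(x)\ov 1\,v(x)=0,
\]
the second sum vanishing because $c\equiv0$. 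By the first representation theorem for closed forms, the identity $Q(\mathbf 1,v)=\as{0,v}$ for all $v\in D(Q)$ shows $\mathbf 1\in D(L)$ with $L\mathbf 1=0$, and by linearity every constant is an eigenfunction for $E_0=0$. Applying this with $Q=Q^{(N)}$, and using that $(i)$ gives $\mathbf 1\in D(Q^{(N)})$ from the previous step, produces both $(iii)$ and $(iv)$.

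Finally I would close the cycle back to $(i)$. The implication $(iii)\Longrightarrow(ii)$ is immediate from $D(L^{(N)})\subseteq D(Q^{(N)})$, and $(ii)\Longrightarrow(i)$ is already done. The step I expect to require real care is $(iv)\Longrightarrow(i)$: taking a nonzero $u$ with $L^{(N)}u=0$ gives $\ow Q(u)=\as{L^{(N)}u,u}=0$, so that $u(x)=u(y)$ across every edge; here connectedness is essential to conclude that $u$ is a nonzero constant $\alpha\mathbf 1$, whence $u\in\ell^2(X,m)$ forces $|\alpha|^2 m(X)<\infty$ and thus $m(X)<\infty$. Without connectedness this fails---a finitely supported eigenfunction on an isolated component would give a $0$-eigenvalue with $m(X)=\infty$---so I would make the connectedness hypothesis explicit, which is the only genuine obstacle in an otherwise bookkeeping argument.
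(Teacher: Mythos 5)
Your proof is correct and follows essentially the same route as the paper's: both rest on the three observations that $\ow Q(\mathbf{1})=0$ when $c\equiv 0$, that $Q(u)=0$ forces $u$ to be constant on a connected graph, and that the constants lie in $\ell^2(X,m)$ precisely when $m(X)<\infty$. The only cosmetic differences are that you invoke the first representation theorem where the paper's sketch cites the spectral theorem (via $Q(u)=\aV{L^{1/2}u}^2$) to pass between null vectors of the form and the kernel of $L$, and that you make explicit the connectedness needed for (iv)$\Longrightarrow$(i), which the paper leaves to its standing assumption (and which its subsequent corollary on the multiplicity of the eigenvalue $0$ confirms is genuinely necessary).
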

\begin{proof}  Let  $(b,0)$ be a graph over $(X,m)$ and let  $Q$ satisfy $Q^{(D)}\subseteq Q\subseteq Q^{(N)}$. Let $L$ be the operator associated to $Q$.  Then, the following three facts are easy to establish:

\textit{Fact $1$:} $E_0 =0$ is an eigenvalue to $L$ if and only if there exists an non-trivial $u\in D(Q)$ with $Q(u) = 0$.

\textit{Proof.}  This is true for any non-negative form $Q$ by the spectral theorem.

\textit{Fact $2$:} $Q(u) =0$ if and only if $u$ is constant on any connected component of the graph.

\textit{Proof.}  This is a direct consequence of $Q$ satisfying $Q^{(D)}\subseteq Q\subseteq Q^{(N)}$, i.e., $Q(u)=\ow Q(u)=\frac{1}{2}\sum_{x,y} b(x,y)|u(x)-u(y)|^{2}$.

\textit{Fact $3$:}  The constant functions belong to $\ell^2 (X,m)$ if and only if $m(X)<\infty$.

\textit{Proof.} This is clear.

Given these three facts, the theorem follows easily.
\end{proof}

\begin{corollary} Let  $(b,c)$ be a graph over the discrete measure space $(X,m)$ and $L^{(N)}$ be the associated Neumann operator.   The multiplicity of the eigenvalue $E_0 =0$ of $L^{(N)}$ is the number of connected components  $W$  of $X$ with $m(W)<\infty$ and $c\equiv 0$ on $W$.
\end{corollary}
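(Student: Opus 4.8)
The plan is to identify the eigenspace of $L^{(N)}$ at $E_0 = 0$ with the closed span of an explicit orthogonal family of indicator functions of connected components, and then count that family. First I would recall, just as in Fact~1 of the proof of Theorem~\ref{t:finite_meassure}, that for the nonnegative self-adjoint operator $L^{(N)}$ one has $\ker L^{(N)} = \{u \in D(Q^{(N)}) : Q^{(N)}(u) = 0\}$: indeed $L^{(N)} u = 0$ forces $Q^{(N)}(u) = \langle L^{(N)} u, u\rangle = 0$, while conversely $Q^{(N)}(u) = 0$ together with $|Q^{(N)}(u,v)|^2 \le Q^{(N)}(u)\,Q^{(N)}(v)$ yields $Q^{(N)}(u,v) = 0$ for all $v \in D(Q^{(N)})$ and hence $u \in D(L^{(N)})$ with $L^{(N)} u = 0$. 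Thus the sought multiplicity equals $\dim \ker L^{(N)}$.

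Next I would split the energy. Since $D(Q^{(N)}) = \ow D \cap \ell^2(X,m)$ and
$$Q^{(N)}(u) = \frac{1}{2}\sum_{x,y\in X} b(x,y)|u(x)-u(y)|^2 + \sum_{x\in X} c(x)|u(x)|^2,$$
vanishing of $Q^{(N)}(u)$ is equivalent to the vanishing of both sums. The first forces $u(x) = u(y)$ whenever $b(x,y) > 0$, i.e. $u$ is constant on each connected component $W$; the second forces $u(x) = 0$ wherever $c(x) > 0$, so that on any $W$ with $c \not\equiv 0$ the constant value of $u$ must be $0$. Membership in $\ell^2(X,m)$ then forces $m(W) < \infty$ for every component $W$ on which $u$ takes a nonzero value.

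Calling $W$ \emph{admissible} when $c \equiv 0$ on $W$ and $m(W) < \infty$, I would conclude that every $u \in \ker L^{(N)}$ is a superposition of the indicators $\mathbf{1}_W$ over admissible $W$, and conversely that each such $\mathbf{1}_W$ lies in $\ker L^{(N)}$: it is in $\ell^2(X,m)$ because $m(W) < \infty$, and its energy vanishes because $\mathbf{1}_W$ is constant on every component, $b$ vanishes between distinct components, and $c \equiv 0$ on $W$. As the admissible indicators have pairwise disjoint supports, they are mutually orthogonal and nonzero; by the above, their closed linear span is exactly $\ker L^{(N)}$. Hence they form an orthogonal basis of the eigenspace and $\dim \ker L^{(N)}$ equals the number of admissible components, which is the assertion.

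I expect the only real point requiring care to be the bookkeeping when infinitely many components are present: one must verify that an arbitrary kernel element $u = \sum_W \alpha_W \mathbf{1}_W$ with $\sum_W |\alpha_W|^2 m(W) < \infty$ genuinely has zero energy, so that the closed span of $\{\mathbf{1}_W\}$ exhausts $\ker L^{(N)}$ and ``multiplicity'' is read as the Hilbert-space dimension of the eigenspace. This is immediate once one notes that the energy decouples over components and each admissible summand contributes nothing. The remainder is the routine splitting of the nonnegative form into its $b$- and $c$-parts.
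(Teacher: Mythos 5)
Your proposal is correct and is essentially the paper's own argument: the paper's proof reduces to the connected case (implicitly decomposing $\ell^2(X,m)$ and $L^{(N)}$ as direct sums over components) and then invokes Theorem~\ref{t:finite_meassure}, whose Facts 1--3 are exactly your identification of $\ker L^{(N)}$ with the null space of the form --- functions constant on components, vanishing wherever $c>0$, and square-summable. Your version merely makes explicit the orthogonal bookkeeping over (possibly infinitely many) components that the paper compresses into ``it suffices to consider the case when $(b,c)$ is connected.''
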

\begin{proof}  It suffices to consider the case when $(b,c)$ is connected.
It then suffices to show that $E_0 =0$ cannot be an eigenvalue if $c\not \equiv 0$.  However, this is clear.
\end{proof}

Let $Q$ satisfy $Q^{(D)}\subseteq Q\subseteq Q^{(N)}$ and let $L$ be the corresponding self adjoint operator.
Then, the heat semigroup $e^{-tL}$ defined by the spectral theorem has a kernel $p:X\times X\times [0,\infty)\longrightarrow\C$, called the \emph{heat kernel}, satisfying
\begin{align*}
    e^{-tL} f(x)=\sum_{y\in X}p_{t}(x,y)f(y)m(y)
\end{align*}
for all $f\in  \ell^2(X,m)$. By \cite[Theorem~7.3]{HKLW} the heat kernel is positive on every connected component.

The following is a consequence of an analogue of a theorem of Li \cite{Li} for graphs (see \cite{HKLW,KLVW} which give an analogue to a result of \cite{CK} from which the theorem of Li follows).

\begin{corollary} Let $(b,0)$ be a connected  graph and $m(X) <\infty$. Let $Q$ be a form satisfying $Q^{(D)}\subseteq Q \subseteq Q^{(N)}$ and let $L$ be the associated operator. If the constant functions belong to the domain of $Q$, then
$$p_t (x,y) \longrightarrow  \frac{1}{m(X)},\;\: t\longrightarrow \infty,$$
for all $x,y\in X$. In particular, this holds for $Q=Q^{(N)}$.
\end{corollary}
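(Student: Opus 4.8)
The plan is to realize $E_0=0$ as the bottom of the spectrum of $L$ with the constant function as its simple, strictly positive ground state, and then to read off the long-time limit of $p_t$ from the spectral projection onto this ground state. This is exactly the graph analogue of the theorem of Li invoked before the statement, and I will obtain it from the spectral theorem together with dominated convergence.

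\emph{Step 1 (identifying the ground state).} Since $m(X)<\infty$ and the constant functions lie in $D(Q)$, Theorem~\ref{t:finite_meassure} shows that the constant function $\mathbf 1$ is an eigenfunction of $L$ with eigenvalue $E_0=0$. As $L$ is non-negative, $\mathrm{spec}(L)\subseteq[0,\infty)$, so $0$ is in fact the bottom of the spectrum. Moreover, by Fact~2 in the proof of Theorem~\ref{t:finite_meassure} and the connectedness of $(b,0)$, any $u$ with $Q(u)=0$ is constant; hence $\ker L$ is one-dimensional and spanned by $\mathbf 1$. Let $P_0$ denote the orthogonal projection onto $\ker L$, i.e. onto the constants, so that $P_0 f=m(X)^{-1}\langle \mathbf 1,f\rangle\,\mathbf 1$.

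\emph{Step 2 (spectral theorem and dominated convergence).} Writing $\mathbf 1_x$ for the indicator of $\{x\}$, the defining relation of the heat kernel gives $e^{-tL}\mathbf 1_y(x)=p_t(x,y)m(y)$ and hence $p_t(x,y)=\langle \mathbf 1_x,e^{-tL}\mathbf 1_y\rangle/(m(x)m(y))$, where $\mathbf 1_x,\mathbf 1_y\in\ell^2(X,m)$ because $m(X)<\infty$. Let $\mu_{x,y}$ be the finite, complex spectral measure of $L$ associated with $\mathbf 1_x$ and $\mathbf 1_y$, so that $\langle \mathbf 1_x,g(L)\mathbf 1_y\rangle=\int_{[0,\infty)}g\,d\mu_{x,y}$ for bounded Borel $g$ and $|\mu_{x,y}|([0,\infty))\leq(m(x)m(y))^{1/2}$. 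Applying this with $g(\lambda)=e^{-t\lambda}$ and letting $t\to\infty$, the integrands $e^{-t\lambda}$ are bounded by $1$ and converge pointwise on $[0,\infty)$ to $\chi_{\{0\}}$, so dominated convergence yields
\[
\langle \mathbf 1_x,e^{-tL}\mathbf 1_y\rangle=\int_{[0,\infty)}e^{-t\lambda}\,d\mu_{x,y}(\lambda)\longrightarrow \mu_{x,y}(\{0\})=\langle \mathbf 1_x,P_0\mathbf 1_y\rangle=\frac{m(x)m(y)}{m(X)} .
\]
Dividing by $m(x)m(y)$ gives $p_t(x,y)\to 1/m(X)$, which is the assertion.

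\emph{Main obstacle and the case $Q=Q^{(N)}$.} The point to be careful about is that one cannot invoke a spectral gap: in general $0$ need not be isolated in $\mathrm{spec}(L)$, so $\|e^{-tL}-P_0\|$ need not tend to $0$ and the convergence is not merely exponential. This is precisely the difficulty addressed by the theorem of Li; here it is circumvented by the fact that the finite measure $\mu_{x,y}$ makes the dominated convergence argument applicable at every fixed pair $(x,y)$ (positivity of the kernel from \cite[Theorem~7.3]{HKLW} is consistent with the positive limit but is not needed for this pointwise statement). Finally, for $Q=Q^{(N)}$ one only checks the hypothesis: $\ow Q(\mathbf 1)=0<\infty$ and, since $m(X)<\infty$, $\mathbf 1\in\ell^2(X,m)$, so $\mathbf 1\in D(Q^{(N)})=\ow D\cap\ell^2(X,m)$; thus the corollary applies in this case.
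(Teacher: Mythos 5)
Your proof is correct, but it takes a genuinely different route from the paper. The paper's own argument is a two-line reduction: it uses Theorem~\ref{t:finite_meassure} to identify $\Phi=\frac{1}{\sqrt{m(X)}}\mathbf 1$ as the normalized positive eigenfunction for $E_0=0$, and then cites the graph analogue of the Chavel--Karp theorem (\cite[Theorem~8.1]{HKLW}, \cite[Corollary~5.6]{KLVW}) as a black box, which asserts $e^{tE_0}p_t(x,y)\to\Phi(x)\Phi(y)$ with $\Phi$ the unique $\ell^2$-normalized positive eigenfunction (or $\Phi\equiv 0$ if $E_0$ is not an eigenvalue). You instead prove the convergence from scratch: you identify $\ker L$ as exactly the constants (using Fact~2 from the proof of Theorem~\ref{t:finite_meassure} plus connectedness, so that $Lu=0$ forces $Q(u)=\langle u,Lu\rangle=0$, hence $u$ constant), you express $p_t(x,y)=\langle \mathbf 1_x,e^{-tL}\mathbf 1_y\rangle/(m(x)m(y))$ -- legitimate in the discrete setting since point indicators lie in $\ell^2(X,m)$ -- and you pass to the limit by dominated convergence against the finite complex spectral measure $\mu_{x,y}$, landing on $\langle\mathbf 1_x,E(\{0\})\mathbf 1_y\rangle=m(x)m(y)/m(X)$. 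What the paper's route buys is brevity and uniformity with its other heat-kernel results (the cited theorem also covers $E_0>0$ and the case where $E_0$ is not an eigenvalue, which the paper exploits in its subsequent remark (b)). What your route buys is self-containedness and transparency: it needs neither positivity of the heat kernel nor Perron--Frobenius-type uniqueness of the positive ground state, only the explicit description of $\ker L$, and it makes visible why the discrete case is easier than the manifold case underlying Li's theorem (point evaluations are $\ell^2$ vectors, so no spectral gap or off-diagonal estimates are needed). Your closing verification of the hypothesis for $Q=Q^{(N)}$ matches the paper's.
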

 \begin{proof} By the assumptions and Theorem~\ref{t:finite_meassure} of this section, we obtain that  $\Phi = \frac{1}{\sqrt{m(X)}} 1$ is a normalized positive eigenfunction of $L$.
By an analogue of a theorem of Chavel and Karp for graphs (see \cite[Theorem~8.1]{HKLW},\cite[Corollary~5.6]{KLVW}) there exists a unique non-negative $\Phi$ on $X$ such that
$$e^{t E_0} p_t (x,y) \longrightarrow \Phi (x) \Phi (y),\quad t\longrightarrow \infty$$
for all $x,y\in X$. Here, $\Phi \equiv 0$ if the $E_0$ is not an eigenvalue and $\Phi$ is the unique $\ell^2$-normalized positive eigenfunction to $E_0$, otherwise.
Hence, we have
$$p_t (x,y) \longrightarrow \Phi (x) \Phi (y) = \frac{1}{m(X)}$$
and the corollary follows. The `in particular' follows, since we already know that $1\in D(Q^{(N)})$.
\end{proof}

\textbf{Remarks.}
(a) There are examples such that $m(X) <\infty$ and the form $Q^{(N)}$ is bounded, i.e., satisfies $Q^{(N)}(u,u) \leq \kappa  \|u\|^2$ for some $\kappa \geq 0$. More precisely, by \cite[Theorem~9.3]{HKLW}, this is exactly the case if there is $C\geq 0$ such that $\sum_{y}b(x,y)+c(x)\leq C m(x)$ for all $x\in X$. In this case   $Q^{(N)} = Q^{(D)}$. In particular, the previous corollaries then  apply to $Q^{(D)}$ as well.

(b) Even if $m(X) <\infty$ one can still have $p_t(x,y)\longrightarrow 0 $ as $t\longrightarrow \infty$ for $Q^{(D)}$.
Namely, there are examples such that  $m(X)< \infty $ and $1\not\in D(Q^{(D)})$ (see \cite[Section~4]{KL1} and \cite[Appendix~A]{HKLW}). Now, let $E_0$ denote the infimum of the spectrum of the associated operator $L^{(D)}$.  If $E_0>0$, then, by the already mentioned   analogue to a theorem of Chavel and Karp \cite[Theorem~8.1]{HKLW},  we have $p_t(x,y)\longrightarrow 0$ as $t\longrightarrow \infty$. Otherwise, if $E_0 =0$, then we have $p_t(x,y)\longrightarrow \Phi(x)\Phi(y)$. But since $0$ is not an eigenvalue (as $1\notin D(Q^{(D)})$) we conclude that $\Phi\equiv 0$ and once more $p_t (x,y) \longrightarrow 0$ follows.


\section{Probability:  Recurrence and stochastic completeness}\label{Section-Probability}

In this section, we discuss  a characterization for recurrence. We assume that $c\equiv0$ and let $(b,0)$ be a connected graph over $(X,m)$.

The heat semigroup $e^{-tL^{(D)}}$, $t\geq 0$, defined by the spectral theorem on $\ell^{2}(X,m)$ can be extended by monotone limits to a semigroup  on $\ell^{p}(X,m)$, $1\leq p\leq \infty$.

The form $Q^{(D)}$ is called \emph{recurrent} if
\begin{align*}
    \int_{0}^{\infty}e^{-tL^{(D)}}f(x)dt\in\{0,\infty\}
\end{align*}
for all $f\in \ell^{1}(X,m)$ with  $f\geq0$  and all $x\in X$.  The form $Q^{(D)}$ is called \emph{transient} if
\begin{align*}
    \int_{0}^{\infty}e^{-tL^{(D)}}f(x)dt<\infty,
\end{align*}
for all $f\in \ell^{1}(X,m)$ with  $f\geq0$  and all $x\in X$. By \cite[Lemma~1.6.4]{Fuk} a connected graph $b$ is either recurrent or transient.

 We call $Q^{(D)}$ \emph{stochastically complete} if
\begin{align*}
    e^{-t L^{(D)}}1 \equiv 1
\end{align*}
for some (all) $t > 0$, where $1$ is the constant function $1$ on $X$. Otherwise, we call $Q^{(D)}$ \emph{stochastically incomplete}. In \cite[Section~6]{HKLW} it was discussed that the definition of  stochastic completeness only makes sense for the Dirichlet Laplacian $L^{(D)}$.

In general, it is known that recurrence implies stochastic completeness which implies that $Q^{(D)} = Q^{(N)}$ (see \cite[Lemma 1.6.5]{Fuk} for the first implication and \cite[Corollary 5.3]{HKLW} for the second one).  We now discuss a result of \cite{Schm} giving   that, in the case of finite measure, these three properties are equivalent.

\begin{theorem}\label{t:probality}  (Theorem of M. Schmidt \cite{Schm}) If $(b,0)$ is connected and $m(X)<\infty$, then the following are equivalent:
\begin{itemize}
  \item [(i)] $Q^{(D)}$ is recurrent.
  \item [(ii)]$Q^{(D)}$ is stochastically complete.
  \item [(iii)] $Q^{(D)}=Q^{(N)}$.
\end{itemize}
\end{theorem}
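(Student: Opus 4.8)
The plan is to observe that two of the three implications hold for arbitrary measure and come straight from general Dirichlet form theory, so that the only implication with genuine content is the one using $m(X)<\infty$. As recalled just before the statement, recurrence always implies stochastic completeness by \cite[Lemma~1.6.5]{Fuk}, and stochastic completeness always implies $Q^{(D)}=Q^{(N)}$ by \cite[Corollary~5.3]{HKLW}. This yields $(i)\Rightarrow(ii)\Rightarrow(iii)$ with no hypothesis on the measure, so it remains only to prove $(iii)\Rightarrow(i)$, and it is precisely here that finiteness of $m(X)$ enters.

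To prove $(iii)\Rightarrow(i)$, I would first invoke Theorem~\ref{t:finite_meassure}: since $c\equiv0$ and $m(X)<\infty$, the constant function $1$ lies in $D(Q^{(N)})$ and satisfies $Q^{(N)}(1)=\ow Q(1)=0$. Assuming $(iii)$, i.e.\ $Q^{(D)}=Q^{(N)}$, this gives $1\in D(Q^{(D)})$ together with $Q^{(D)}(1)=\ow Q(1)=0$. Because $D(Q^{(D)})=\overline{C_{c}(X)}^{\aV{\cdot}_{\ow Q}}$ by definition, the membership $1\in D(Q^{(D)})$ produces a sequence $v_n\in C_c(X)$ with $\aV{v_n-1}_{\ow Q}\to0$; from the triangle inequality for the seminorm $\ow Q^{1/2}$ together with $\ow Q(1)=0$ we then obtain $\ow Q(v_n)\to0$, while $\aV{v_n-1}\to0$ shows $v_n\to1$ in $\ell^2(X,m)$ and hence, along a subsequence, $m$-almost everywhere.

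Having produced finitely supported $v_n\to1$ with $\ow Q(v_n)\to0$, I would bring them into the standard form of the recurrence criterion by a normal contraction: replacing $v_n$ by $\min\{\max\{\mathrm{Re}\,v_n,0\},1\}\in C_c(X)$ only decreases the energy (compatibility of $\ow Q=Q^{(D)}$ with normal contractions) and preserves convergence to $1$, so one obtains cut-off functions bounded in $[0,1]$, of finite support, converging to $1$ with $Q^{(D)}(v_n)\to0$. Since $Q^{(D)}$ is a regular Dirichlet form (Subsection~\ref{Quadratic}), this is exactly the standard characterization of recurrence \cite{Fuk}; equivalently, $1$ belongs to the extended Dirichlet space of $Q^{(D)}$ with vanishing energy, which characterizes recurrence. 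This closes the cycle and proves the theorem.

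I expect the main obstacle to be the precise link between $1\in D(Q^{(D)})$ and recurrence: one must cite (or reprove) the recurrence criterion in a form valid for the regular Dirichlet form $Q^{(D)}$ and check that the approximating sequence obtained above meets its hypotheses. This step is clean here because $m(X)<\infty$ forces $1\in\ell^2(X,m)$, so that constants already lie in the ordinary form domain rather than merely in the extended Dirichlet space; the two free implications and the normal-contraction truncation are then routine.
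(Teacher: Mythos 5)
Your proposal is correct, and its first half coincides with the paper's proof: both dispose of $(i)\Longrightarrow(ii)\Longrightarrow(iii)$ by the same citations (\cite[Lemma~1.6.5]{Fuk} and \cite[Corollary~5.3]{HKLW}, no finiteness needed), and both begin $(iii)\Longrightarrow(i)$ by noting that $m(X)<\infty$ puts $1$ in $D(Q^{(N)})$ via Theorem~\ref{t:finite_meassure}, hence in $D(Q^{(D)})$, hence approximable in form norm by functions in $C_c(X)$. Where you diverge is the final step. The paper converts this approximation into recurrence by citing Soardi's network-theoretic criterion \cite[Theorem~(3.63)]{Soa}, which concerns the discrete-time random walk, and therefore must add the observation that recurrence of the continuous-time process is equivalent to recurrence of its jump chain. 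You instead invoke the Fukushima--\={O}shima--Takeda characterization of recurrence for Dirichlet forms (existence of $u_n$ in the form domain with $u_n\to 1$ $m$-a.e.\ and $Q^{(D)}(u_n)\to 0$), which applies directly to the regular Dirichlet form $Q^{(D)}$ and the continuous-time semigroup through which recurrence is defined in this paper; your normal-contraction truncation is harmless but not even required by that criterion, since it only asks for a.e.\ convergence and vanishing energy. The trade-off: the paper's route leans on graph-specific literature and needs the time-change bridge, while yours stays entirely inside abstract Dirichlet form theory and matches the paper's own definition of recurrence without any translation; this is also closer in spirit to how the result is proved in the cited source \cite{Schm}. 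Both are complete proofs.
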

\begin{proof}  The implications (i)$\Longrightarrow$(ii)$\Longrightarrow$(iii) follow from the references above and do not require finiteness of the measure. The implication (iii)$\Longrightarrow$(i) can be shown as follows: Due to the finiteness of the measure, the constant function $1$ belongs to $D (Q^{(N)})$. By (iii) it then belongs to $D (Q^{(D)})$ as well. As $D (Q^{(D)})$ is the closure of $C_c (X)$ with respect to $\aV{\cdot}_{Q^{(N)}}$  we obtain, in particular, that the constant function $1$  can be approximated  with respect to $\|\cdot\|_o$ by functions with compact support. Given this, (i) follows from the characterization of recurrence given in  Theorem~(3.63) of  \cite{Soa} (after observing that recurrence of a continuous time random process is equivalent to the recurrence of that process restricted to jumps only in discrete time). 
\end{proof}

\begin{corollary}\label{c:probability} If $(b,0)$ is {\cpt}, connected and $m(X)<\infty$, then $Q^{(D)}\neq Q^{(N)}$.  In particular, $Q^{(D)}$ is stochastically incomplete and transient.
\end{corollary}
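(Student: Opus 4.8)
The plan is to reduce the whole statement to the single assertion $Q^{(D)}\neq Q^{(N)}$ and then feed this into Schmidt's theorem. Indeed, by Theorem~\ref{t:probality}, for a connected graph $(b,0)$ with $m(X)<\infty$ the three properties recurrence, stochastic completeness, and $Q^{(D)}=Q^{(N)}$ are equivalent. Hence, once I know $Q^{(D)}\neq Q^{(N)}$, none of them can hold: $Q^{(D)}$ fails to be stochastically complete, i.e.\ is stochastically incomplete, and it fails to be recurrent. Since a connected graph is either recurrent or transient (by \cite[Lemma~1.6.4]{Fuk}, quoted at the start of Section~\ref{Section-Probability}), the failure of recurrence forces transience. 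So it remains only to establish $Q^{(D)}\neq Q^{(N)}$.

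To separate the two forms I would exhibit a function lying in $D(Q^{(N)})$ but not in $D(Q^{(D)})$, and the natural candidate is the constant function $1$. Since $c\equiv 0$ we have $\ow Q(1)=0$, so $1\in\ow D$; and since $m(X)<\infty$, the constant function is square summable, whence $1\in\ow D\cap\ell^{2}(X,m)=D(Q^{(N)})$. (This membership is also precisely the content of the equivalence in Theorem~\ref{t:finite_meassure}.) It remains to check that $1\notin D(Q^{(D)})$.

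For this I would invoke the explicit description of the Dirichlet form domain from Theorem~\ref{t:Dirichlet_bc}, which applies exactly because the graph is \cpt\ and $m(X)<\infty$: namely $D(Q^{(D)})=\{u\in\ow D:\oh u\vert_{\partial X}=0\}$. Because $c\equiv 0$ we have $1\in\ow D$, so by Proposition~\ref{p:one} the algebra $\A=\A^{+}$ is unital and $K=K'=R$ carries no point $\gamma_\infty$; the Gelfand extension of the constant function $1$ is then the constant function $1$ on all of $K$, as every character sends the unit to $1$. Thus $\oh 1\vert_{\partial X}\equiv 1$, and $1\notin D(Q^{(D)})$ as soon as $\partial X\neq\emptyset$. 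This nonemptiness is the only point needing a short argument: by Theorem~\ref{t:open} every singleton $\{x\}$, $x\in X$, is open in the compact space $K$, and by Lemma~\ref{l:discrete} the points of $X$ are exactly the isolated points of $R=K$; were $\partial X=\emptyset$, the compact space $K=X$ would be infinite and discrete, which is impossible. Hence $\partial X\neq\emptyset$, $1\notin D(Q^{(D)})$, and therefore $D(Q^{(D)})\subsetneq D(Q^{(N)})$, giving $Q^{(D)}\neq Q^{(N)}$.

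The argument is essentially a bookkeeping assembly of earlier results, so I do not anticipate any serious obstacle. The only genuine (and minor) point is the nonemptiness of $\partial X$, which is exactly where the infinitude of $X$ enters; everything else is the observation that canonical compactifiability together with finite measure makes the constant function an element of $\ow D\cap\ell^{2}(X,m)$ that does not vanish on the boundary, combined with Schmidt's equivalence.
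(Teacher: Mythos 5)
Your proof is correct and follows essentially the same route as the paper: constants lie in $D(Q^{(N)})$ by finite measure (Theorem~\ref{t:finite_meassure}) but cannot lie in $D(Q^{(D)})$ because its elements vanish on $\partial X$, and Schmidt's theorem (Theorem~\ref{t:probality}) then converts $Q^{(D)}\neq Q^{(N)}$ into stochastic incompleteness and transience. The only differences are cosmetic: the paper invokes the boundary-vanishing statement (Theorem~\ref{l:c_c}) directly where you route through Theorem~\ref{t:Dirichlet_bc}, and you make explicit the nonemptiness of $\partial X$ (no infinite discrete space is compact), a point the paper leaves implicit.
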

\begin{proof} By Lemma~\ref{l:c_c} we conclude that $\oh u\vert_{\partial X}=0$ for all $u\in D(Q^{(D)})$. This implies that the constant functions are not in $D(Q^{(D)})$. However, they are in $D(Q^{(N)})$ by Theorem~\ref{t:finite_meassure}. Hence,  $Q^{(D)}\neq Q^{(N)}$. The second statement now follows by Theorem~\ref{t:probality}.
\end{proof}

\section{(Counter)examples, trees and outlook} \label{counter}
In this final section, we  have a closer look at trees and show that, in general, the implications $(A)\Longrightarrow (B)\Longrightarrow (C) \Longrightarrow (D)$ are not equivalences. We conclude with some open questions.

It turns out that in the case of trees, we have $\varrho^{2}=d$. This has strong consequences, in particular, $(A)\Longleftrightarrow (B)$ and $(C)\Longleftrightarrow(D)$. On the other hand, we show that $(C)$ does not imply $(B)$ even in this case.

Furthermore, we present (non-tree) examples showing that infinite  diameter with respect to $d$ does not imply infinite  diameter  with respect to $\varrho$ and, more importantly, that $(B)$ does not imply $(A)$ in general. We also show that the map $\iota:\overline{X}^{d}\longrightarrow\overline{X}^{\varrho}$ is not necessarily an embedding.

\subsection{Trees and $(A)\Longleftrightarrow(B)\not\Longleftarrow (C)\Longleftrightarrow(D)$}

In this subsection, we assume that $b$ induces a \emph{tree}, that is, $b$ is connected and there exists no  path $(x_{0},\ldots, x_{n})$ with $n\geq 2$ such that $b(x_{n},x_{0})>0$. Furthermore, we assume that $c\equiv 0$.

\begin{lemma}\label{l:trees} Let $(b,0)$ be a tree. Then, $\varrho^{2}=d$.
\end{lemma}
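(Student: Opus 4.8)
The plan is to prove the two inequalities $\varrho^2 \leq d$ and $\varrho^2 \geq d$ separately. The first is free: it is exactly the content of the ``in particular'' part of Lemma~\ref{l:DLip}, which holds for any connected graph. So the real work is to establish $\varrho^2 \geq d$, and this is where I expect to use the tree structure crucially. The guiding idea is that the Cauchy--Schwarz step in the proof of Lemma~\ref{l:DLip} is an equality precisely when the increments of $f$ across each edge of the path are proportional to $1/b$, so I will construct a competitor function realizing this equality.

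Concretely, I would fix $x,y\in X$. Since $b$ is a tree, there is a \emph{unique} path $\gm=(x_0,\ldots,x_n)$ from $x=x_0$ to $y=x_n$, and consequently $d(x,y)=\sum_{k=1}^n b(x_{k-1},x_k)^{-1}$ with no infimum to worry about. Writing $D:=d(x,y)$, I would define $f$ on the path by $f(x_0)=0$ and
\begin{align*}
f(x_k)=\frac{1}{\sqrt{D}}\sum_{i=1}^{k}\frac{1}{b(x_{i-1},x_i)},\qquad k=1,\ldots,n,
\end{align*}
so that each increment satisfies $f(x_k)-f(x_{k-1})=\frac{1}{\sqrt{D}}\,b(x_{k-1},x_k)^{-1}$. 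A direct computation then gives $f(y)-f(x)=\frac{1}{\sqrt D}\sum_{k=1}^n b(x_{k-1},x_k)^{-1}=\sqrt{D}$, and the energy contributed by the path edges is
\begin{align*}
\sum_{k=1}^{n}b(x_{k-1},x_k)\,|f(x_k)-f(x_{k-1})|^2=\frac{1}{D}\sum_{k=1}^{n}\frac{1}{b(x_{k-1},x_k)}=1.
\end{align*}
Extending $f$ to all of $X$ by the tree projection, i.e.\ setting $f(z):=f(\pi(z))$ where $\pi(z)$ is the unique vertex of the path $\{x_0,\ldots,x_n\}$ lying on the (unique) geodesic from $z$ to the path, then yields a globally defined function which I will verify lies in $\ow D$ with $\ow Q(f)=1$. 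The definition of $\varrho$ then gives $\varrho(x,y)\geq|f(x)-f(y)|=\sqrt D$, hence $\varrho(x,y)^2\geq d(x,y)$, and combining with the first inequality finishes the proof.

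The step I expect to be the main obstacle, and the one where the tree hypothesis is indispensable, is checking that the extension $f(z)=f(\pi(z))$ adds no energy, so that $\ow Q(f)$ equals the path-edge sum computed above. The point is that for any edge $\{u,v\}$ with $b(u,v)>0$ one of three things happens in a tree: either both endpoints lie on the path (forcing them to be consecutive, since a non-consecutive chord would create a cycle), or exactly one endpoint lies on the path (in which case the off-path endpoint projects to that same vertex), or neither lies on the path (in which case both project to the same vertex, again because a differing projection would force a cycle). In the latter two cases $f$ takes equal values on the two endpoints and the edge contributes $0$. I would spell out this case analysis to conclude that only the $n$ path edges contribute, which legitimizes the energy computation and confirms $f\in\ow D$; the remaining assertions are then immediate.
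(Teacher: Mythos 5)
Your proposal is correct and is essentially the paper's own proof: the paper also constructs the function whose increments along the unique $x$--$y$ path are $b^{-1}$ on path edges and zero on all other edges (it just skips the normalization by $1/\sqrt{D}$ and instead divides by $\ow Q(f)^{1/2}$ at the end), then invokes Lemma~\ref{l:DLip} for the reverse inequality. Your explicit case analysis showing that off-path edges contribute no energy is simply a careful spelling-out of what the paper leaves implicit in the phrase ``$f(x)-f(y)=0$ if $x$ and $y$ are neighbors not on the path.''
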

The proof of this lemma is rather immediate using the electrical network connection: In the context of  electrical networks, it is a well-known rule that,  when connecting networks in series, effective resistances add up. Thus, one figures that the  effective resistance $r$  equals $d$ on a finite tree. Now, using  Proposition~\ref{rho-free} and Theorem~\ref{t:rhofree} we obtain the statement of the lemma. However, as we omitted the proof of Proposition~\ref{rho-free}, we give a slightly different proof for the sake of being self-contained.

\begin{proof}[Proof of Lemma~\ref{l:trees}]
We know from Lemma~\ref{l:DLip} that $\varrho^2 \leq d$, so we now only  need to prove the converse direction.  Let $x,y\in X$ be arbitrary and let $(x_0,\dots,x_n)$ be a path with $x_0 = x$ and $x_n = y$. We construct a function $f$ as follows: we let $f(x_0)=0$, $f(x_j)- f(x_{j+1})= \frac{1}{b(x_j,x_{j+1})}$ if $x_j$ and $x_{j+1}$ are neighbors in the path, and $f(x)-f(y)=0$ if $x$ and $y$ are neighbors not on the path. Thus,
\begin{eqnarray*}
\widetilde{Q}(f) &=& \frac{1}{2} \sum_{x,y\in X} b(x,y)(f(x)-f(y))^2\\
&=& \sum_{j=1}^n b(x_j,x_{j-1})(f(x_j) - f(x_{j-1}))^2 \\
&=& \sum_{j=1}^n \frac{1}{b(x_j,x_{j-1})}.
\end{eqnarray*}
As the path above is the unique path connecting $x_0$ and $x_n$, the last quantity equals $d(x_0,x_n)$. As the function $f$ has finite $\widetilde{Q}$ energy, we have, by the definition of $\varrho$, that
\[\frac{|f(x_0)-f(x_n)|}{\widetilde{Q}(f)^{1/2} } \leq \varrho(x,y).\]
 Moreover, we obviously have
 $$|f(x_0)-f(x_n)|= f(x_n)-f(x_0) = \sum_{j=1}^n f(x_j)-f(x_{j-1})= d(x_0, x_n).$$
 Combining this with  the already shown $\ow Q (f) = d(x_0, x_n)$, we obtain from the previous inequality  $d^{1/2} (x_0, x_n) \leq \varrho (x_0, x_n)$
 and the proof is finished.
\end{proof}

Lemma~\ref{l:trees} immediately implies that the canonical map $\iota:\overline{X}^{d}\longrightarrow\overline{X}^{\varrho}$ is a homeomorphism and so we have the following corollary.

\begin{corollary}\label{c:trees1} Let $(b,0)$ be a tree. Then 
$$(A)\Longleftrightarrow(B).$$
\end{corollary}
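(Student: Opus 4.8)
The plan is to read off the equivalence directly from the identity $\varrho^2 = d$ furnished by Lemma~\ref{l:trees}. Writing this as $\varrho = \sqrt{d}$, the decisive point is that on a tree the two metrics have the very same balls once the radius is reparametrized: for every $x \in X$ and every $\varepsilon > 0$,
\[
\{y \in X : \varrho(x,y) < \varepsilon\} = \{y \in X : d(x,y) < \varepsilon^2\}.
\]
Consequently a finite cover of $X$ by $\varrho$-balls of radius $\varepsilon$ is literally a finite cover by $d$-balls of radius $\varepsilon^2$, and a finite cover by $d$-balls of radius $\delta$ is a finite cover by $\varrho$-balls of radius $\sqrt{\delta}$.

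Concretely, to prove $(A)\Longrightarrow(B)$ I would fix $\varepsilon > 0$, invoke total boundedness with respect to $d$ at scale $\varepsilon^2$ to obtain finitely many points $x_1,\dots,x_N$ with $X = \bigcup_{i} \{y : d(x_i,y)<\varepsilon^2\}$, and then rewrite each of these sets as a $\varrho$-ball of radius $\varepsilon$ via the displayed identity. The implication $(B)\Longrightarrow(A)$ is entirely symmetric, covering at $\varrho$-scale $\sqrt{\delta}$ to produce a $d$-cover at scale $\delta$. Alternatively, one may argue through completions: since $d$ and $\varrho^2$ agree on $X$, a sequence is $d$-Cauchy \iff\ it is $\varrho$-Cauchy and the two metrics induce the same uniformity, so $\iota : \overline{X}^d \longrightarrow \overline{X}^\varrho$ is a homeomorphism; as $X$ is totally bounded with respect to a given metric precisely when its completion is compact, $(A)$ and $(B)$ follow from one another.

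I do not expect any genuine obstacle here: all the substance sits in Lemma~\ref{l:trees}, and the remaining work is merely the observation that the reparametrization $r \mapsto r^2$ is an increasing bijection of $[0,\infty)$ fixing the origin, so that it carries arbitrarily small scales to arbitrarily small scales in both directions. The only point requiring a moment's care is to transform the radius consistently (using $\varepsilon^2$ when passing from $d$ to $\varrho$ and $\sqrt{\delta}$ in the reverse direction), which the displayed ball identity handles automatically.
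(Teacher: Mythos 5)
Your proposal is correct and rests on exactly the same foundation as the paper's own derivation: all the substance is in Lemma~\ref{l:trees} ($\varrho^{2}=d$ on trees), after which the equivalence of $(A)$ and $(B)$ is immediate bookkeeping. The paper phrases this step by observing that $\iota:\overline{X}^{d}\longrightarrow\overline{X}^{\varrho}$ is a homeomorphism --- which is precisely your alternative completion argument --- and your primary ball-covering argument with the radius reparametrization $\varepsilon\mapsto\varepsilon^{2}$ is an equally valid, slightly more elementary way to finish.
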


A further consequence of Lemma~\ref{l:trees} is the next corollary.

\begin{corollary}\label{c:trees2} Let $(b,0)$ be a tree that is canonically compactifiable. Then, $(b,0)$ is locally finite.
\end{corollary}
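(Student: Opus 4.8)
The plan is to trade canonical compactifiability for finiteness of the $d$-diameter, using the tree identity $\varrho^2 = d$, and then to show that a vertex of infinite degree is incompatible with a finite $d$-diameter.

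First I would record the equivalences available for trees. By Theorem~\ref{characterization-cpt}, for a connected graph canonical compactifiability is equivalent to $\diam_\varrho(X) < \infty$. For a tree we have the identity $\varrho^2 = d$ from Lemma~\ref{l:trees}, and taking suprema over all pairs gives $\diam_d(X) = \diam_\varrho(X)^2$ since $t \mapsto t^2$ is increasing on $[0,\infty)$; in particular one of these diameters is finite exactly when the other is. Hence canonical compactifiability of $(b,0)$ is equivalent to $\diam_d(X) < \infty$.

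Next I would argue by contradiction and assume $(b,0)$ is not locally finite. Then some vertex $x$ has infinitely many neighbors $y_1, y_2, \ldots$. The summability condition $\sum_{y\in X} b(x,y) < \infty$ forces $b(x, y_i) \to 0$ as $i \to \infty$. Since $(b,0)$ is a tree, every edge is a bridge, so the unique path joining the neighbors $x$ and $y_i$ is the single edge between them; consequently $d(x, y_i) = 1/b(x, y_i)$, which tends to $\infty$. This yields $\diam_d(X) = \infty$, contradicting the equivalence established above. Therefore $(b,0)$ must be locally finite.

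The argument is short, and the only point requiring a little care is the claim $d(x,y_i) = 1/b(x,y_i)$ for neighbors in a tree: this is precisely where the tree structure (the absence of any alternative path between two neighbors) enters, and it is the crux of the corollary. Everything else is a direct combination of Theorem~\ref{characterization-cpt} and Lemma~\ref{l:trees} together with the summability of $b(x,\cdot)$.
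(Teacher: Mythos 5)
Your proof is correct and follows essentially the same route as the paper: both reduce canonical compactifiability to $\diam_d(X) < \infty$ via Theorem~\ref{characterization-cpt} and Lemma~\ref{l:trees}, and both exploit that in a tree neighbors satisfy $d(x,y) = 1/b(x,y)$ together with the summability of $b(x,\cdot)$. The only cosmetic difference is that the paper first derives a uniform lower bound $b(x,y)\geq\eps$ on all edge weights and then invokes summability, whereas you run the contradiction directly at a single vertex of infinite degree; the logical content is identical.
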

\begin{proof} By Theorem~\ref{characterization-cpt}, for a graph being canonically compactifiable is equivalent to $\diam\overline{X}^{\varrho}<\infty$. By Lemma~\ref{l:trees} this is equivalent to $\diam\overline{X}^{d}<\infty$ in the case of trees. In particular, this implies that $b(x,y)\ge \eps$ for some $\eps>0$ (if $x_n,y_n\in X$ existed with $b(x_{n},y_{n})\longrightarrow 0$, then $\diam\overline{X}^{d}\ge d(x_{n},y_{n}) = 1/ b(x_{n},y_{n}) \longrightarrow\infty$ as ${n\to\infty}$). Hence, $b$ is locally finite by the assumption $\sum_{y\in X}b(x,y)<\infty$, $x\in X$.
\end{proof}

Note that the corollary above does not hold in the general case:
\begin{example}[$(D)\not\Longrightarrow$ locally finite] \label{example-one}
Let $(b,c)$ be a locally finite, {\cpt} graph over $X$. Pick a vertex $x\in X$ and an infinite set of vertices $y_{n}\in X$, $n\ge0$, such that $b(x,y_n)=0$. Define the graph $b'$ by setting $b'(x,y_{n})=b'(y_{n},x)>0$, $n\ge0$, such that $\sum_{n}b'(x,y_{n})<\infty$ and $b'(y,z)=b(y,z)$ for all other $y,z\in X$. The graph $(b',c)$ is  not locally finite as  $x$ has infinitely many neighbors. Let $\ow Q'$ be the form with respect to $(b',c)$ with generalized domain $\ow D'$. Since $b\le b'$ we have $\ow Q(f)\leq\ow Q'(f)$ for all $f\in \ow D'$ and, thus, $\ow D'\subseteq \ow D\subseteq \ell^{\infty}(X)$. In conclusion, $(b',c)$ is {\cpt} but not locally finite.
\end{example}

\begin{corollary}\label{c:trees3} Let $(b,0)$ be a tree. Then,
$$(C)\Longleftrightarrow(D).$$
\end{corollary}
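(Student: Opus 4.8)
The plan is to observe that one of the two implications is already free, and then to assemble earlier results for the other. Since $c\equiv0$ throughout this subsection, the implication $(C)\Longrightarrow(D)$ is part of the chain established in Theorem~\ref{implication-D} and requires nothing special about trees. Hence the entire content of the corollary is the converse $(D)\Longrightarrow(C)$.

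For this converse I would proceed in three quick steps. First, rephrase $(D)$: by Theorem~\ref{characterization-cpt}, the graph being {\cpt} is equivalent to $\diam_{\varrho}(X)<\infty$. Second, let the tree structure enter through Lemma~\ref{l:trees}, which gives $\varrho^{2}=d$; consequently $\diam_{\varrho}(X)<\infty$ is the same as $\diam_{d}(X)=\diam\overline{X}^{d}<\infty$ (the diameter of $X$ and that of its completion coincide, as $X$ is dense). Third, invoke Corollary~\ref{c:trees2}: a {\cpt} tree is automatically locally finite. With local finiteness and $\diam\overline{X}^{d}<\infty$ both in hand, Theorem~\ref{diam-d-finite-implies-C} applies verbatim and yields that $X$ is totally bounded with respect to every metric that is intrinsic with respect to a finite measure, which is precisely $(C)$.

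The proof is therefore a short composition of prior results, with no genuine analytic obstacle to overcome. The one point worth flagging is that the argument leans crucially on local finiteness, and this is exactly where the tree hypothesis is indispensable: for general graphs, local finiteness can fail even under $(D)$, as Example~\ref{example-one} shows. It is Corollary~\ref{c:trees2} that supplies local finiteness in the tree case and thereby opens the door to Theorem~\ref{diam-d-finite-implies-C}.
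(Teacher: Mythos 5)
Your proposal is correct and follows essentially the same route as the paper's own proof: reduce to $(D)\Longrightarrow(C)$ via Theorem~\ref{implication-D}, translate $(D)$ into $\diam\overline{X}^{d}<\infty$ using Theorem~\ref{characterization-cpt} and Lemma~\ref{l:trees}, obtain local finiteness from Corollary~\ref{c:trees2}, and conclude with Theorem~\ref{diam-d-finite-implies-C}. Your closing remark on why local finiteness (and hence the tree hypothesis) is indispensable is a correct and worthwhile observation, consistent with Example~\ref{example-one}.
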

\begin{proof} By Theorem~\ref{implication-D} it suffices to show $(D)\Longrightarrow(C)$. We assume $(D)$ which is equivalent to $\diam \overline{X}^{\varrho}<\infty$ by Theorem~\ref{characterization-cpt}. This, in turn, is  equivalent to
$$\diam \overline{X}^{d}<\infty$$
 in the case of trees by Lemma~\ref{l:trees}.  Moreover, by Corollary  \ref{c:trees2} we know local finiteness. Now,  $(C)$ follows immediately from Theorem~ \ref{diam-d-finite-implies-C}.
\end{proof}

Next, we give an example of a tree which is totally bounded for each intrinsic pseudo metric with respect to a finite measure but it is not totally bounded for $\varrho$.

\begin{example}[$(C)\not\Longrightarrow (B) $]\label{e:C_does_not_imply_B}
The example is an infinite comb with finite $d$ diameter, i.e., it can be thought of as a one sided infinite ray with weights $b$ increasing along the line (which gives it finite length) and from each vertex a copy of the same  one sided infinite ray with finite length emanates.

Let $X_n=\{x_{n,k}\mid k\ge0\}$, $n\ge0$, be one sided infinite rays and $X=\bigcup_{n\ge0}X_{n}$. We let the edge weight $b$ on $X_{n}$ be given by
\begin{align*}
    b(x_{n,k-1},x_{n,k})&=2^{k}, \quad \text{ for every }  k\ge1
\end{align*}
Moreover,
\begin{align*}
b(x_{n-1,0},x_{n,0})&=2^{n},\quad \text{ for every } n\ge 1.
\end{align*}
For all other pairs of vertices we let $b$ be zero.
Moreover, we let $m\equiv 1$ and $c\equiv 0$. We have
$\diam \overline{X_{n}}^{d}=1$ for $n\ge0$ and observe that $X$ cannot be covered by finitely many $d$ balls of radius $1$. Thus, $(A)$ is not satisfied. On the other hand, we have $\diam \overline{X}^{d}=3<\infty$ and by Corollary~\ref{sufficient-condition-cpt-d} we deduce $(D)$.
Hence, $(D)\not\Longrightarrow (A)$ and since the graph $(b,0)$ is a tree we have $(C)\not\Longrightarrow (B)$ by Corollaries~\ref{c:trees1} and~\ref{c:trees3}.
\end{example}

We end this subsection with a lemma which shows that a (strengthening of the) converse of Corollary~\ref{l:boundedness_sigma} is also true in the case of trees.

\begin{lemma}\label{l:boundedness_sigma_trees} Let $(b,0)$ be a tree over $(X,m)$. If
every intrinsic metric is bounded by $a^{-\frac{1}{2}}<\infty$ then
$$\inf_{x,y\in X, b(x,y)>0}\frac{b(x,y)}{m(x)\wedge m(y)}\ge a>0.$$
\end{lemma}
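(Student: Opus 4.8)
The plan is to argue by contradiction: assuming the infimum is strictly less than $a$, I will produce a single intrinsic \emph{metric} whose value on one pair of neighbors exceeds $a^{-1/2}$, contradicting the hypothesis. Throughout I take $m$ to be strictly positive, which is what makes intrinsic metrics exist in the first place; if some vertex had zero measure, the intrinsic inequality would force any intrinsic $\sigma$ to vanish on an adjacent edge, so no genuine metric could be intrinsic. So suppose there are neighbors $x_0,y_0$ with $b(x_0,y_0) < a\,(m(x_0)\wedge m(y_0))$, and write $M := m(x_0)\wedge m(y_0)$. Since $\tfrac12 b(x_0,y_0)a^{-1} < \tfrac12 M < M$, I can fix a number $L > a^{-1/2}$ with $\tfrac12 b(x_0,y_0)L^2 < M$; equivalently $L$ lies strictly between $a^{-1/2}$ and $\sqrt{2M/b(x_0,y_0)}$, an interval which is nonempty precisely because $b(x_0,y_0)/M < a < 2a$. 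This strict inequality is the slack I will exploit.

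The key to the construction is that $(b,0)$ is a tree: deleting the edge $\{x_0,y_0\}$ splits $X$ into the components containing $x_0$ and $y_0$, the unique path between two vertices crosses this edge at most once, and the unique path joining the neighbors $x_0,y_0$ is the single edge between them. I define a length function $\ell$ by $\ell(\{x_0,y_0\})=L$ and, on every other edge $e=\{z,w\}$, a small positive value $\delta_e$; concentrating the length almost entirely on one edge is exactly the flexibility pointed out in the remark following Definition~\ref{length_function}. For the resulting path pseudometric $\sigma:=d_\ell$, the tree structure gives $\sigma(z,w)=\ell(\{z,w\})$ for neighbors $z,w$, so the intrinsic inequality at a vertex $z$ reduces to $\tfrac12\sum_{w\sim z} b(z,w)\,\ell(\{z,w\})^2 \le m(z)$ and involves only the edges incident to $z$.

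It remains to choose the $\delta_e$. At $x_0$ and $y_0$ the dominant term $\tfrac12 b(x_0,y_0)L^2$ is strictly below $M\le m(x_0),m(y_0)$, leaving a positive budget $r_{x_0},r_{y_0}>0$; at every other vertex the full budget $r_z=m(z)>0$ is free. Using the summability $\sum_{w} b(z,w)<\infty$, I enumerate the non-special edges as $e_2,e_3,\dots$ and pick $\delta_{e_k}$ so small that $\tfrac12 b(e_k)\delta_{e_k}^2 \le 2^{-k}\min_{z\in e_k} r_z$; summing the edges incident to any fixed vertex then contributes at most $\tfrac12 r_z$ beyond the special term, so the intrinsic inequality holds everywhere. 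Since all edge lengths are positive and the graph is a tree, every two distinct vertices have positive distance, so $\sigma=d_\ell$ is a genuine intrinsic metric, and $\sigma(x_0,y_0)=L>a^{-1/2}$ gives the desired contradiction. Hence the infimum is at least $a$.

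The routine parts are the verification of the intrinsic inequality vertex-by-vertex and the fact that $d_\ell$ separates points. The main obstacle — and the reason the statement asserts only the bound $a$ rather than the sharper $2a$ that the single-edge pseudometric $\sigma_f$ with $f=L\mathbf 1_A$ would suggest — is precisely the passage from that concentrated pseudometric to an honest metric, which the hypothesis requires. Choosing $L$ strictly below $\sqrt{2M/b(x_0,y_0)}$ is what reserves room to spread tiny positive lengths over all the other edges without breaking the intrinsic condition; the weaker conclusion $\ge a$ is exactly the slack that makes this conversion painless.
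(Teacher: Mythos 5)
Your proof is correct, and it shares the paper's central idea: on a tree, a length function concentrated on the single edge $\{x_0,y_0\}$ produces a path (pseudo)metric whose value on that pair of neighbors equals the prescribed length, and the intrinsic condition can be checked vertex by vertex, edge by edge. The difference is in how the remaining edges are treated. The paper's proof is direct rather than by contradiction: it sets $\ell\equiv 0$ off the chosen edge and applies the boundedness hypothesis to the resulting intrinsic \emph{pseudometric} $d_\ell$ with $d_\ell(x,y)=\bigl(\tfrac{m(x)\wedge m(y)}{b(x,y)}\bigr)^{1/2}$, accepting the mismatch that the hypothesis speaks of metrics while $d_\ell$ is only a pseudometric (the remark after Definition~\ref{length_function} concedes that avoiding this would take ``some additional effort''). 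Your proof supplies precisely that effort: the contradiction hypothesis buys the slack $a^{-1/2}<L<\sqrt{2M/b(x_0,y_0)}$ (your $M=m(x_0)\wedge m(y_0)$), and the geometrically decaying choice $\tfrac12 b(e_k)\delta_{e_k}^2\le 2^{-k}\min_{z\in e_k}r_z$ spreads strictly positive lengths over all other edges while keeping the metric intrinsic, so you contradict the hypothesis with an honest metric. In this respect your argument proves the lemma as literally stated, and your insistence on $m>0$ is well placed: if $m$ vanished at a vertex, no intrinsic metric would exist, the hypothesis would be vacuous, and the stated conclusion could fail, so either positivity of $m$ or the pseudometric reading that the paper implicitly uses is genuinely needed.

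One claim in your closing paragraph is inaccurate, though it does not affect the proof. The factor-$2$ loss in the statement (conclusion $\ge a$ rather than $\ge 2a$) is not forced by the passage from pseudometrics to metrics. Running your own construction from the weaker assumption $b(x_0,y_0)<2a\,M$ still leaves the interval $\bigl(a^{-1/2},\sqrt{2M/b(x_0,y_0)}\bigr)$ nonempty, and the identical perturbation then produces an intrinsic metric exceeding $a^{-1/2}$; hence your argument in fact yields $\inf b/(m\wedge m)\ge 2a$, the exact converse of Corollary~\ref{l:boundedness_sigma}. The constant $a$ in the lemma is simply not optimized; it is not an unavoidable price of converting the concentrated pseudometric into a genuine metric.
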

\begin{proof}
Let $x,y\in X$ with $b(x,y)>0$.
On a tree, a pseudometric $d_{\ell}$ given by a length function $\ell$ satisfies
\begin{align*}
    d_{\ell}(x,y)=\ell(x,y).
\end{align*}
 So, let $\ell$ be such that $\ell(x,y)=\ell(y,x)=(\frac{b(x,y)}{m(x)\wedge m(y)})^{-\frac{1}{2}}$ and $\ell\equiv0 $ on $X\times X\setminus\{(x,y),(y,x)\}$. It can be directly checked that $d_{\ell}$ is an intrinsic metric and, thus, bounded by  $ a^{-\frac{1}{2}}$ by assumption. Hence, assuming without loss of generality $m(x)\leq m(y)$ and using that $d_{\ell}$ is an intrinsic metric, we arrive at
 \begin{align*}
    \frac{b(x,y)}{m(x)\wedge m(y)}\geq a \frac{b(x,y)}{m(x)\wedge m(y)}d_{\ell}^{2}(x,y)= a.
 \end{align*}
\end{proof}


\subsection{$(B)\not\Longrightarrow (A)$}

In this subsection, we observe that, although $\varrho^2\leq d$ by Lemma~\ref{l:DLip}, there can be no upper bound of $d$ in terms of $\varrho$ when the graph is not a tree. In particular, we show that $(B)\not\Longrightarrow (A)$, and that the map $\iota:\overline{X}^{d}\longrightarrow\overline{X}^{\varrho} $ from Theorem~\ref{extension-d-to-varrho} is, in general, not injective. 




First, we present the example that shows $(B)\not\Longrightarrow (A)$. With the help of Theorem~\ref{characterization-cpt}, this example also serves as a counterexample to the converse in Corollary~\ref{sufficient-condition-cpt-d}, which states that  $\diam(\ov{X}^{d})<\infty$ implies that $X$ is {\cpt}.

\begin{example}[$(B)\not\Longrightarrow (A) $  and $\diam(\ov{X}^{\varrho})<\infty\not\Longrightarrow \diam(\ov{ X}^d) <\infty$]\label{e:diam_d_and_rho}
The example can be thought of as a one sided infinite ray, where we put over each edge an increasing number of additional paths in the form of triangles. It turns out that $d$ is not effected by these additional paths but $\varrho$ is.

Let $X=\{x_{n}\}_{n\ge1}\cup\bigcup_{n\ge1}\{x_{n}^{(1)},\ldots, x^{({n})}_{n}\}$ and let a symmetric $b $ be given such that
\begin{align*}
    2b(x_{n},x_{n+1})=
    b(x_{n},x_{n}^{(k)})=b(x_{n}^{(k)}, x_{n+1})=n,
\end{align*}
for $n\ge1$, $1\leq k\le {n}$,
and $b$ takes the value zero otherwise. Let further assume that $c\equiv0$ and $m\equiv1$.

We will show that $\overline {X}^{\varrho}\setminus X$ consists of one point and $X$ is totally bounded with respect to $\varrho$.

To show this, we can either calculate $\varrho(x_{1},x_{n})\leq 1/n^{2}$ directly using basic calculus and Lagrange multipliers or we use the electrical network connection and calculate  $r(x_{n},x_{n+1})$, $n\ge1$, first. Note that the free effective resistance $r(x_{n},x_{n+1})$ is smaller than or equal to the resistance between $x_{n}$ and $x_{n+1}$  on the subgraph induced by $\{x_{n},x_{n+1},x_{1}^{(1)},\ldots,x_{1}^{(n)}\}$. This, however, can be computed either using the rules for networks connected in series and in parallel \cite[Chapter~2.3]{LyonsBook} or by basic calculus to be $1/n^{2}$. Hence, $r(x_{n},x_{n+1})\leq 1/n^{2}$ and by Theorem~\ref{t:rhofree} and the triangle inequality for $r$
\begin{align*}
    \varrho(x_{1},x_{n+1})^\frac{1}{2}=r(x_{1},x_{n+1}) =\sum_{j=1}^{n}r(x_{j},x_{j+1})\leq
    \sum_{j=1}^{\infty}\frac{1}{j^{2}}.
\end{align*}
Moreover, $\varrho^{2}\leq d$ by Lemma~\ref{l:DLip}  and, therefore, $$\varrho(x_{n},x_{n}^{(k)})=\varrho(x_{n+1},x_{n}^{(k)}) \leq\frac{1}{\sqrt{2n}},$$ for all $n\ge1$ and $1\le k\le n$. Hence, $\diam(\overline {X}^{\varrho})<\infty$, the $\varrho$-boundary consists of one point and $X$ is totally bounded with respect to $\varrho$, that is, $(B)$ is fulfilled.

On the other hand, $$ d(x_{1},x_{n})=\sum_{j=1}^{n} \frac{1}{b(x_{j},x_{j+1})}=
2\sum_{j=1}^{n}\frac{1}{j}\to\infty,$$ as $n\to\infty$ and, therefore, $\diam(\overline {X}^{d})=\infty$.
As additionally $d(x_{1},x_{n}^{(k)})\ge d(x_{1},x_{n})$ for all $n\ge1$ and $1\leq k\leq n$, we  conclude  that $\overline {X}^{d}= X$. Hence, $(A)$ is not fulfilled, although $(B)$ is.
\end{example}

By similar means we can also show that $\iota:\overline{X}^{d}\longrightarrow\overline{X}^{\varrho} $ from Theorem~\ref{extension-d-to-varrho} is in general not an embedding (i.e., it is not injective).


\begin{example}[$\iota$ is not injective]\label{e:iota_no_embedding}
 We can think of the graph as two one sided infinite rays $(x^{(0)}_n)_n$ and $(x^{(1)}_n)_n$ where we attach to each pair $x^{(0)}_n, x^{(1)}_n$ of corresponding vertices additional paths via $n$ additional vertices.

Let $X=\{x_{n}^{(k)}\mid n\ge0, 0\le k\le n+1\}$ and let $b$ be symmetric and satisfy
\begin{align*}
    b(x^{(i)}_{n},x_{n+1}^{(i)})&=2^{n},\quad i=0,1,\\
    b(x^{(i)}_{n},x^{(j)}_{n})&=1,\quad i=0,1,j=0,\ldots,n+1, j\neq i.
\end{align*}
and let $b$ be zero otherwise as well as $c\equiv0$.
One sees immediately that the $d$ boundary of $X$ consists of two distinct points $\oh{x}^{(0)}$,  $\oh{x}^{(1)}$, which arise from the $d$ Cauchy sequences $(x_{n}^{(0)})_{n\ge0}$, $(x_{n}^{(1)})_{n\ge0}$ and  satisfy $d(\oh{x}^{(0)},\oh{x}^{(1)})=1$. Now, $\iota(\oh{x}^{(0)})$ and $\iota(\oh{x}^{(1)})$ belong to the $\varrho$ boundary of $X$. However, one can compute as in the example above  that $\varrho(x_{n}^{(0)},x_{n}^{(1)})\to0$, $n\to\infty$ and, therefore, $\iota(\oh{x}^{(0)})=\iota(\oh{x}^{(1)})$ with respect to $\varrho$. Thus, $\iota$ is not injective.
\end{example}

\subsection{Problems and outlook}
In this subsection, we discuss some open questions and problems raised by our considerations.

\bigskip

In the subsections above, we show that $(C)\not\Longrightarrow (B)\not\Longrightarrow (A)$. In Corollary~\ref{c:trees3} we have $(C)\Longleftrightarrow (D)$ for trees. This gives rise to the following question:

\begin{problem}Does $(D) $ imply $(C)$ in general?
\end{problem}

\medskip

On a different issue, it would be interesting to know whether the converse of Lemma~\ref{l:algebra} holds.

\begin{problem}
Does the fact that $\ow D$ is an algebra imply that $(b,c)$ is \cpt?
\end{problem}
To show this, one could try to present an unbounded $f\in \ow D$ such that $f^{2}\not\in \ow D$.

\medskip

Corollary~\ref{t:discrete_spectrum} tells us that being \cpt\ and having finite measure implies discrete spectrum, i.e., compact resolvent for the operator $L$ arising from any $Q^{(D)}\subseteq Q\subseteq Q^{(N)}$ . It would be interesting whether some kind of converse holds (see also the remark after Corollary~\ref{t:discrete_spectrum}).  
\begin{problem} Is $(b,c)$ {\cpt} if $L^{(N)}$ has compact resolvent for all $m$ with $m(X)<\infty$?
\end{problem}

We have shown that the Dirichlet problem can always be solved on the boundary of the Royden compactification $R$  under condition $(D)$ (Theorem~\ref{t:bvp}). It is natural to ask whether it can also be solved
 on the (larger) boundary of $\overline{X}^d$.
Here, we can point out the following: Example~\ref{e:iota_no_embedding} shows that $\iota : \overline{X}^d \longrightarrow K$ is, in general, not an embedding (even if $(A)$ holds).  This means that the functions of finite energy (which define the set $K$) will not separate points of  $\overline{X}^d \setminus X$. Thus, the  Dirichlet problem with the $d$-boundary of $X$ can, in general, not be solved with functions of finite energy (or uniform limits thereof). This does not preclude, however, the possibility that it can be solved within a larger class of functions (contained in the domain of the generalized Laplacian). This leads to the following problem.

\begin{problem} For which continuous functions on the boundary of $\ov{X}^d$ is it possible  to solve the Dirichlet problem if (D) holds?
\end{problem}

\medskip

In Subsection~\ref{s:Dirichlet_bc}, we showed that for \cpt\ graphs it becomes very clear why  $L^{(D)}$ is called  the Laplacian with Dirichlet boundary conditions. It seems to be essentially harder to even formulate a corresponding result for $L^{(N)}$, i.e., the Laplacian with Neumann boundary conditions. The reason is that one has to define the derivative on the boundary. Thus, we formulate the corresponding  question rather vaguely.

\begin{problem} What can be said about $L^{(N)}$ and $Q^{(N)}$ in view of Theorem~\ref{t:Dirichlet_bc}?
\end{problem}

\begin{appendix}

\section{Reducing summable $c\not \equiv 0$ to the case $c \equiv 0$}\label{Reducing}
In this section, we discuss how we can reduce the case of nonzero
summable  $c$ to the case $c\equiv0$. The basic idea is to add a virtual vertex to $X$ and extend functions from the domain of the form $\ow Q$ by zero at this vertex.

We assume here that $X$ is an infinitely countable set and $(b,c)$ is a weighted graph over $X$ which is connected.  Moreover, we assume  that $c$ is summable, i.e.,
\[\sum_{x\in X} c(x) < \infty.\]
Using this assumption, we add the element $\heartsuit$ to $X$, i.e., set $X_\heartsuit = X \cup \{\heartsuit\}$ and extend the weight $b$ to $b_\heartsuit$ on $X_\heartsuit \times X_\heartsuit$ by $b_\hrt(\hrt,\hrt)=0$ and
\[b_\heartsuit(x,\heartsuit)= c(x) = b_\heartsuit(\heartsuit, x).\]
By the assumption on $c$, this new graph satisfies the summability condition on the vertices. Consider the extended form
\[\widetilde{Q}_\heartsuit (f) = \frac{1}{2} \sum_{x,y\in X_\heartsuit} b_\heartsuit(x,y) |f(x) - f(y)|^2\]
defined for all functions $f$ on $X_\heartsuit$ (and, possibly, taking the value $\infty$). Define its  domain via
 $\widetilde{D}_\hrt = \{ f \in C(X_\hrt) : \ow{Q}_\hrt(f) < \infty \}$.  Let  $\LL_\hrt$ be the corresponding formal operator given by
\[ \LL_\hrt f(x) = \sum_{y \in X_\hrt} b_\hrt(x,y)(f(x) - f(y)) \]  which acts on $\ow{F}_\hrt = \{ f \in C(X_\hrt) : \sum_{y \in X_\hrt} b_\hrt(x,y) |f(y)| < \infty \mbox{ for all } x \in X_\hrt\}.$

 \smallskip

 \textbf{Remark.} The restriction of $\ow{F}_\hrt$ to $X$ just gives $\ow{F}$ and  for  $f\in \ow{F}_\hrt$ with $f (\hrt) =0$ we have
$\LL_\hrt f (x) = \LL f|_X (x)$ for all $x\in X$.

\smallskip

The next theorem shows that we can embed $\ow{D}$ into $\ow{D}_\heartsuit$. At the same time, it  demonstrates  the difference between these two spaces.

\begin{theorem}
The space $\widetilde{D}$ can be identified with a closed subspace of $\widetilde{D}_\heartsuit$, i.e., we have
\[\widetilde{D}=\{ f \in \widetilde{D}_\heartsuit : f(\heartsuit)=0\},\]
and
\[\widetilde{Q}_\heartsuit |_{\widetilde{D}} = \widetilde{Q}.\]
Furthermore, with respect to the inner product $\as{f,g}_\hrt = \ow{Q}_\hrt(f,g) + \ov{f(\hrt)}g(\hrt)$,  we have the orthogonal decomposition
\[\widetilde{D}_\heartsuit = \widetilde{D} \oplus H_\heartsuit\]
where $H_\heartsuit$ is a one dimensional space of functions $f$ which are harmonic on $X$, i.e.,  satisfy
\[\LL_\hrt f(x) =0\]
for $x\in X$.
\end{theorem}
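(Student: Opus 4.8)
The plan is to handle the two assertions in turn, with everything resting on the single observation that $(\ow{D}_\hrt, \as{\cdot,\cdot}_\hrt)$ is precisely the Yamasaki space of the auxiliary graph $(b_\hrt,0)$ over $X_\hrt$ with base point $o=\hrt$. First I would establish the identification of $\ow D$ with $\{f\in\ow{D}_\hrt : f(\hrt)=0\}$ by splitting the energy sum. For a function $f$ on $X_\hrt$ with $f(\hrt)=0$, I would separate the double sum defining $\ow{Q}_\hrt(f)$ into the pairs lying in $X$ and the pairs involving $\hrt$ and use $b_\hrt(x,\hrt)=c(x)=b_\hrt(\hrt,x)$ together with $f(\hrt)=0$ to obtain
\begin{align*}
\ow{Q}_\hrt(f)=\frac{1}{2}\sum_{x,y\in X}b(x,y)|f(x)-f(y)|^2+\sum_{x\in X}c(x)|f(x)|^2=\ow Q(f|_X),
\end{align*}
the two boundary sums over $x\in X$ and over $y\in X$ together cancelling the factor $1/2$. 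This shows simultaneously that extension-by-zero carries $\ow D$ into $\ow{D}_\hrt$, that restriction carries $\{f\in\ow{D}_\hrt : f(\hrt)=0\}$ into $\ow D$, and that $\ow{Q}_\hrt|_{\ow D}=\ow Q$. Closedness of this subspace is then immediate, since $|f(\hrt)|\leq\aV{f}_\hrt$ shows that $f\mapsto f(\hrt)$ is a bounded linear functional on $\ow{D}_\hrt$, whence its kernel is closed.

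For the orthogonal decomposition I would first note that, as $(b,c)$ is connected and $c\not\equiv0$ (so $\hrt$ is joined by an edge to at least one vertex of $X$), the graph $(b_\hrt,0)$ is connected; hence Proposition~\ref{p:Yamasaki} applies and $(\ow{D}_\hrt,\as{\cdot,\cdot}_\hrt)$ is a genuine Hilbert space. Setting $H_\hrt:=\ow D^{\perp}$, the orthogonal complement of the closed subspace $\ow D=\ker(f\mapsto f(\hrt))$, the decomposition $\ow{D}_\hrt=\ow D\oplus H_\hrt$ follows at once. Since the functional $f\mapsto f(\hrt)$ is nonzero — the constants lie in $\ow{D}_\hrt$ and do not vanish at $\hrt$ — its kernel has codimension one, so $H_\hrt$ is exactly one-dimensional.

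It remains to show that every $h\in H_\hrt$ is harmonic on $X$, and here I would test orthogonality against $C_{c}(X)\subseteq\ow D$. For $v\in C_{c}(X)$, extended by $v(\hrt)=0$, orthogonality yields $0=\as{h,v}_\hrt=\ow{Q}_\hrt(h,v)$, the boundary term dropping out. Since $\ow{D}_\hrt\subseteq\ow{F}_\hrt$, the integration-by-parts identity of Subsection~\ref{Quadratic}, applied to $(b_\hrt,0)$, gives $\ow{Q}_\hrt(h,v)=\sum_{x\in X_\hrt}\ov{(\LL_\hrt h)(x)}\,v(x)$; taking $v$ to be the indicator of a single vertex $x\in X$ then forces $(\LL_\hrt h)(x)=0$ for every $x\in X$, which is exactly the claimed harmonicity.

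The main obstacle is bookkeeping rather than conceptual: one must keep straight which form, Laplacian, and inner product belong to which graph, and must verify the hypotheses under which the cited results apply, namely the connectedness of $(b_\hrt,0)$ and the inclusion $\ow{D}_\hrt\subseteq\ow{F}_\hrt$. The only genuinely substantive input is the completeness supplied by Proposition~\ref{p:Yamasaki}, since it is precisely the Hilbert-space structure that makes the orthogonal decomposition, and hence the existence of the harmonic complement $H_\hrt$, available.
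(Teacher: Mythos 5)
Your proof is correct and follows essentially the same route as the paper: realize $(\ow{D}_\hrt, \as{\cdot,\cdot}_\hrt)$ as the Yamasaki space of $(b_\hrt,0)$ based at $\hrt$ (using connectedness of $(b_\hrt,0)$ so that Proposition~\ref{p:Yamasaki} applies), obtain the orthogonal decomposition from this Hilbert space structure, and derive harmonicity of elements of $H_\hrt$ by testing orthogonality against $C_c(X)$ via the integration-by-parts formula of Subsection~\ref{Quadratic}. The differences are cosmetic: you spell out the energy-splitting computation that the paper dismisses as clear, you get closedness directly from $|f(\hrt)|\leq \aV{f}_\hrt$ rather than from Lemma~\ref{l:pointevaluation}, and you obtain one-dimensionality from the codimension-one kernel of the nonzero functional $f\mapsto f(\hrt)$ (the paper instead shows two elements of $H_\hrt$ with value $1$ at $\hrt$ coincide) --- your version has the small merit of explicitly exhibiting, via the constants, that $H_\hrt\neq\{0\}$.
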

\begin{proof}
The identification is clear. Note that, with $ \heartsuit$ instead of $o$  and $\ow{Q}_\hrt$ instead of $\ow Q$, we are exactly in the situation of Lemma \ref{l:pointevaluation}. Now, by this lemma,
convergence with respect to  $\aV{\cdot}_\heartsuit$ implies pointwise convergence and  the subspace  $\ow{D}$ is closed.  Thus, we have an orthogonal decomposition of $\widetilde{D}_\heartsuit$ into $\widetilde{D}$ and its orthogonal complement, which we denote by $H_\heartsuit$. We now show that
this space is one dimensional: Let $h_1,h_2 \in H_\heartsuit$ with $h_1(\heartsuit)=h_2(\heartsuit)=1$ be given.  Then $h_1-h_2 \in H_\heartsuit$ and $h_1-h_2 \in \widetilde{D}$. Thus $h_1-h_2 \equiv 0$ and we are done.

It remains to show that  functions from $H_\heartsuit$ are harmonic in $X$. This  follows from the definition of the orthogonal complement of $\widetilde{D}$ and an `integration by parts' formula as found in Subsection \ref{Quadratic} above, see \cite{HK, HKLW} for details as well. Specifically, this formula gives
$$\ow{Q}_\heartsuit (f,g) = \sum_{x\in X_\hrt}  \ov{\LL_\hrt f(x)} g(x)$$
for  all $f\in  \widetilde{D}_\heartsuit$ and  $g\in C_c (X_\heartsuit)$. Using this formula  for $f$ in the orthogonal complement of  $\ow{D}$  with respect to $\as{\cdot,\cdot}_\hrt$ and arbitrary $g\in C_c (X)$ (considered as elements in $\ow{D}\subset \ow{D}_\hrt$)  we then obtain
 \begin{eqnarray*} 0 &=& \as{f,g}_\hrt\\
  &=&  \ow{Q}_\hrt(f,g) + \ov{f(\hrt)} g(\hrt)\\
    &= & \sum_{x\in X_\hrt} \ov{\LL_\hrt f(x)} g(x)\\
     &= & \sum_{x\in X} \ov{\LL_\hrt f(x)} g(x).
     \end{eqnarray*}
Here, we used  $g(\hrt) =0$ in the last two steps.  As $g\in C_c (X)$ is arbitrary,
this easily gives the desired statement.
\end{proof}

\smallskip

\textbf{Remark.} The theorem asserts that any function in $H_\hrt$ is harmonic in $X$. If the graph  $(b_\hrt,0)$ on $X_\hrt$ is recurrent, we can also prove a converse: In that  case, $\ow{D}_\hrt$ is the closure of $C_c (X_\hrt)$ with respect to $\as{\cdot,\cdot}_\hrt$ by Theorem 3.63 of \cite{Soa}. It is not hard to see then that any $g\in \ow{D}_\hrt$  with $g(\hrt) =0$, i.e., any $g\in \ow{D}$  can be approximated by elements of $C_c (X_\hrt)$ with $g(\hrt) =0$ with  respect to $\as{\cdot,\cdot}_\hrt$. Let now $f\in \ow{D}_\hrt$ with $\LL_\hrt f (x) =0$ for all $x\in X$ be given. Reading the corresponding part of the previous proof backwards, we then find that
$0 = \as{f,g}_\hrt$ for all $g\in C_c (X)$. By the discussed denseness of these $g$, we obtain that $f$ is orthogonal to $\ow{D}$.

\bigskip

As an application, we have a look at the metric $\varrho$. We set
\[\varrho_\heartsuit(x,y)= \sup \{|f(x) - f(y)| : f\in \widetilde{D}_\heartsuit, \widetilde{Q}_\heartsuit(f) \leq 1 \}.\]
By $\varrho$ we denote the respective metric on the original graph $(b,c)$.
\begin{proposition}
For all $x,y \in X$ we have
\[\varrho(x,y) \leq \varrho_\heartsuit(x,y) \leq \varrho(x,y) + |f_H(x) - f_H(y)|\]
where $f_H$ is the unique (up to sign) function in $H_\heartsuit$ with $\widetilde{Q}_\heartsuit (f_H)=1$.
\end{proposition}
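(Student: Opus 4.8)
The plan is to prove the two inequalities separately, using only the orthogonal decomposition $\widetilde{D}_\heartsuit = \widetilde{D} \oplus H_\heartsuit$ established in the previous theorem, together with the identity $\widetilde{Q}_\heartsuit|_{\widetilde{D}} = \widetilde{Q}$.

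For the lower bound $\varrho(x,y) \leq \varrho_\heartsuit(x,y)$, I would argue that every competitor for $\varrho$ is a competitor for $\varrho_\heartsuit$. Indeed, any $f \in \widetilde{D}$ with $\widetilde{Q}(f) \leq 1$ is identified (via extension by zero at $\heartsuit$) with an element of $\widetilde{D}_\heartsuit$ satisfying $\widetilde{Q}_\heartsuit(f) = \widetilde{Q}(f) \leq 1$. Since $x,y \in X$, the number $|f(x) - f(y)|$ occurs in the supremum defining $\varrho_\heartsuit(x,y)$; taking the supremum over the smaller family thus yields $\varrho(x,y) \leq \varrho_\heartsuit(x,y)$.

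For the upper bound, I would fix an arbitrary $f \in \widetilde{D}_\heartsuit$ with $\widetilde{Q}_\heartsuit(f) \leq 1$ and decompose it as $f = g + h$ with $g \in \widetilde{D}$ and $h \in H_\heartsuit$, orthogonally with respect to $\as{\cdot,\cdot}_\hrt$. Since $H_\heartsuit$ is one-dimensional and spanned by $f_H$, I may write $h = \alpha f_H$ for some scalar $\alpha$. The triangle inequality then gives
$$|f(x) - f(y)| \leq |g(x) - g(y)| + |\alpha|\,|f_H(x) - f_H(y)|,$$
and the goal is to bound the two summands by $\varrho(x,y)$ and by $|f_H(x) - f_H(y)|$, respectively.

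The key step — and the only point requiring a genuine observation — is to upgrade the $\as{\cdot,\cdot}_\hrt$-orthogonality of $g$ and $h$ to orthogonality with respect to the form $\widetilde{Q}_\heartsuit$ itself. Because $g(\heartsuit) = 0$, the boundary term in $\as{g,h}_\hrt = \widetilde{Q}_\heartsuit(g,h) + \overline{g(\heartsuit)}\,h(\heartsuit)$ vanishes, so $\widetilde{Q}_\heartsuit(g,h) = 0$. The Pythagorean identity then gives $\widetilde{Q}_\heartsuit(f) = \widetilde{Q}_\heartsuit(g) + \widetilde{Q}_\heartsuit(h) = \widetilde{Q}(g) + |\alpha|^2$, where I used $\widetilde{Q}_\heartsuit(g) = \widetilde{Q}(g)$ and $\widetilde{Q}_\heartsuit(f_H) = 1$. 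From $\widetilde{Q}_\heartsuit(f) \leq 1$ and nonnegativity of both summands I read off $\widetilde{Q}(g) \leq 1$ and $|\alpha| \leq 1$. The first bound gives $|g(x) - g(y)| \leq \varrho(x,y)$ directly from the definition of $\varrho$ (the restriction of $g$ to $X$ being an admissible competitor), and the second gives $|\alpha|\,|f_H(x) - f_H(y)| \leq |f_H(x) - f_H(y)|$. Substituting these into the triangle inequality and taking the supremum over all admissible $f$ yields $\varrho_\heartsuit(x,y) \leq \varrho(x,y) + |f_H(x) - f_H(y)|$, completing the proof.
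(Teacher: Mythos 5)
Your proof is correct and takes essentially the same route as the paper's: the first inequality by observing that the supremum for $\varrho_\heartsuit$ runs over a larger set of competitors, and the second via the orthogonal decomposition $f=g+h$, the vanishing of the boundary term $\overline{g(\heartsuit)}h(\heartsuit)$ (so that $\widetilde{Q}_\heartsuit(g,h)=0$), the Pythagorean splitting $\widetilde{Q}_\heartsuit(f)=\widetilde{Q}(g)+|\alpha|^2$, and the triangle inequality. The only difference is one of detail: the paper asserts $\widetilde{Q}_\heartsuit(f_o,f_h)=0$ and the final scaling bound in the one-dimensional space $H_\heartsuit$ without comment, whereas you make both steps explicit.
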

\begin{proof}
The first inequality is clear, as the set over which the supremum is taken is bigger. On the other hand, for $f\in \widetilde{D}_\heartsuit$ we use the decomposition $f=f_o + f_h$ from the previous theorem. As $f_o$ and $f_h$ are orthogonal with respect to $\as{\cdot, \cdot}_\heartsuit$, we obtain $\widetilde{Q}_\heartsuit(f_o,f_h)=0$. Thus, if $\widetilde{Q}_\heartsuit(f)\leq 1$, we have $\widetilde{Q}(f_o) \leq 1$ and $\widetilde{Q}_\heartsuit (f_h) \leq 1$. Taking suprema over the relevant sets we obtain the claim.
\end{proof}

As a second application, we consider length metrics.  We now consider the set of all paths over $X_\hrt$ and denote by
\[ d_\hrt(x,y) = \inf \{ \sum_{i=1}^n \frac{1}{b_\hrt(x_{i-1},x_i)} : (x_0, \ldots, x_n) \mbox{ is a path from } x \mbox{ to } y \} \]
where $d$ is the corresponding metric over paths in $X$ as before.
If $c>0$, then $d_c(x,y)$ is the infimum of lengths of paths connecting $x$ and $y$ in $X$, where the length of an edge is given by $\ell_c(x,y) = c(x)^{-1} + c(y)^{-1}$.
 As the set of paths joining $x$ and $y$ in the graph $X$ is smaller than the corresponding set in $X_\heartsuit$, we obtain
\[ d_\heartsuit (x,y) \leq d(x,y)\]
and
\[d_\heartsuit (x,y) \leq d_c(x,y).\]
Furthermore, we obtain the following improved version of Lemma \ref{l:DLip}.
\begin{lemma}
For all $f\in \widetilde{D}$ we have
\[ |f(x)-f(y)|^2 \leq \widetilde{Q}(f) d_\heartsuit(x,y).\]
In particular, we have
\[\varrho^2 \leq d_\heartsuit.\]
\end{lemma}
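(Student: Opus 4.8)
The plan is to mimic the proof of Lemma~\ref{l:DLip}, but carried out on the \emph{augmented} graph $(b_\heartsuit,0)$ over $X_\heartsuit$, which carries no killing term. The whole point is the bookkeeping identity $\widetilde{Q}_\heartsuit(f)=\widetilde{Q}(f)$ recorded in the theorem above: when $f\in\widetilde{D}$ is viewed as the element of $\widetilde{D}_\heartsuit$ with $f(\heartsuit)=0$, the potential energy $\sum_{x}c(x)|f(x)|^2$ appearing in $\widetilde{Q}(f)$ is nothing but the edge energy of the edges joining $X$ to $\heartsuit$ in $\widetilde{Q}_\heartsuit(f)$. Thus this energy becomes available to be \emph{traversed} by paths, which is exactly why a path rerouted through $\heartsuit$ can be shorter and why $d_\heartsuit$ may be strictly smaller than $d$ and $d_c$.

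First I would fix $x,y\in X$ and a path $\gamma=(\gamma_0,\ldots,\gamma_n)$ from $x$ to $y$ \emph{in $X_\heartsuit$}, i.e.\ one which is allowed to pass through $\heartsuit$. Applying the triangle inequality followed by the Cauchy--Schwarz inequality exactly as in Lemma~\ref{l:DLip}, but with $b_\heartsuit$ in place of $b$, I would obtain
\begin{align*}
|f(x)-f(y)| \leq \left(\sum_{i=1}^n b_\heartsuit(\gamma_{i-1},\gamma_i)\,|f(\gamma_{i-1})-f(\gamma_i)|^2\right)^{1/2}\left(\sum_{i=1}^n \frac{1}{b_\heartsuit(\gamma_{i-1},\gamma_i)}\right)^{1/2}.
\end{align*}
Since the vertices of a path are pairwise distinct, the first factor is a subsum of $\sum_{\{u,v\}} b_\heartsuit(u,v)|f(u)-f(v)|^2 = \widetilde{Q}_\heartsuit(f) = \widetilde{Q}(f)$, hence bounded by $\widetilde{Q}(f)^{1/2}$, while the second factor is the $\tfrac{1}{b_\heartsuit}$-length of $\gamma$. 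Taking the infimum over all such paths and invoking the definition of $d_\heartsuit$ then yields $|f(x)-f(y)|^2 \leq \widetilde{Q}(f)\, d_\heartsuit(x,y)$, which is the first claim. (Working with the path argument directly rather than quoting Lemma~\ref{l:DLip} also sidesteps the connectedness hypothesis of that lemma in the degenerate case $c\equiv 0$.)

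Finally, the ``in particular'' follows immediately: for any $f\in\widetilde{D}$ with $\widetilde{Q}(f)\leq 1$ the first claim gives $|f(x)-f(y)|^2 \leq d_\heartsuit(x,y)$, and taking the supremum over all such $f$ in the definition of $\varrho(x,y)$ gives $\varrho(x,y)^2 \leq d_\heartsuit(x,y)$. I do not anticipate a genuine obstacle here; the only point requiring care is the identity $\widetilde{Q}_\heartsuit(f)=\widetilde{Q}(f)$ for $f\in\widetilde{D}$, which is precisely where the normalization $f(\heartsuit)=0$ enters and which is already established above.
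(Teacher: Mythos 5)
Your proposal is correct and follows essentially the same route as the paper: rerun the triangle-inequality/Cauchy--Schwarz path argument of Lemma~\ref{l:DLip} on the augmented graph $(b_\heartsuit,0)$ over $X_\heartsuit$, and then use the identity $\widetilde{Q}_\heartsuit(f)=\widetilde{Q}(f)$ for $f\in\widetilde{D}$ (normalized by $f(\heartsuit)=0$) to replace $\widetilde{Q}_\heartsuit$ by $\widetilde{Q}$. The only (inessential) difference is in the ``in particular'' step: you deduce $\varrho^2\leq d_\heartsuit$ directly from the first inequality and the definition of $\varrho$ as a supremum, whereas the paper routes it through the comparison $\varrho\leq\varrho_\heartsuit$ of the preceding proposition; both derivations are immediate and equally valid.
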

\begin{proof}
The proof of Lemma~\ref{l:DLip} shows that this inequality holds for all $f\in \widetilde{D}_\heartsuit$ with $\ow{Q}_\heartsuit(f)$ in place of $\ow{Q}(f)$. As $\widetilde{Q}_\heartsuit$ agrees with $\widetilde{Q}$ on $\widetilde{D}$ we obtain the inequality. The second claim follows from the proposition above.
\end{proof}

We finish this section with a short discussion concerning the case when $c$ is not summable. Let $\{X_i \mid i\in I\}$ be a partition of $X$, where $I$ is a countable set, such that, for all $i\in I$, we have $\sum_{x\in X_i} c(x) <\infty$. To each such $X_i$, we can adjoin an element $\heartsuit_i$. In particular, an analogous decomposition as above is valid. In the case of a finite $I$, one gets that the space of harmonic functions is $|I|$ dimensional. Denoting by $\varrho_I$ and $d_I$ the respective metrics on the supergraph, we obtain similar results as above for each partition. In particular, we can consider the infimum of these metrics over all partitions and we obtain also analogous versions of the previous results.

\section{Royden compactification for graphs with $c\not \equiv 0$}\label{A:Royden}
So far, the Royden compactification has only been considered for graphs with vanishing killing term $c$. Here, we show how to construct a compactification for general graphs $(b,c)$ over $X$. The construction is a rather direct modification of the construction of the Royden compactification for graphs of the form $(b,0)$ as presented, e.g., in \cite{Soa}.  For this reason, we continue to refer to the outcome as the
Royden compactification. We also shortly discuss how this (generalized) construction of the Royden compactification is stable under replacing $(b,0)$ by $(b,c)$ with summable $c$.

\medskip

Let  $(b,c)$ be a graph  over $X$ with associated form $\ow Q$ and consider the space
$$\mathcal{B}:={\ow D}\cap \ell^\infty (X)$$
 with the norm
$$\|u\|_{\ow Q, \infty}:= \ow Q (u) ^{1/2} + \|u\|_\infty. $$

The following lemma is well-known.

\begin{lemma} \label{estimate-product} For any $f,g\in \mathcal{B}$ the product $fg$ belongs to $\mathcal{B}$ and the  inequality
$$\ow Q (fg) \leq \left( \|f\|_\infty \ow Q (g)^{1/2} + \|g\|_\infty \ow Q (f)^{1/2}\right)^2$$
holds.
\end{lemma}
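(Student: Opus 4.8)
The plan is to treat $\ow Q^{1/2}$ as the norm of a discrete gradient in a single $\ell^2$-space, so that the claimed estimate becomes an instance of Minkowski's inequality. Concretely, I would introduce the index set $E=(X\times X)\sqcup X$, where the first copy records edges and the second copy records the killing contributions, equipped with the weight $w(x,y)=\tfrac12 b(x,y)$ on a pair $(x,y)$ and $w(x)=c(x)$ on a diagonal-copy point $x$. Defining the gradient $Dh$ by $(Dh)(x,y)=h(x)-h(y)$ on pairs and $(Dh)(x)=h(x)$ on the diagonal copy, one has exactly $\ow Q(h)=\aV{Dh}_{\ell^2(E,w)}^2$. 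The whole argument then reduces to a pointwise bound on $D(fg)$ followed by the triangle inequality in $\ell^2(E,w)$.

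The first key step is the pointwise inequality $|(D(fg))(e)|\le \aV{f}_\infty\,|(Dg)(e)|+\aV{g}_\infty\,|(Df)(e)|$ for every $e\in E$. On a pair $(x,y)$ I would use the product identity $f(x)g(x)-f(y)g(y)=f(x)(g(x)-g(y))+g(y)(f(x)-f(y))$, which gives $|(D(fg))(x,y)|\le\aV{f}_\infty|(Dg)(x,y)|+\aV{g}_\infty|(Df)(x,y)|$ directly. On a diagonal-copy point $x$ one has $(D(fg))(x)=f(x)g(x)$, whence $|(D(fg))(x)|=|f(x)|\,|g(x)|\le\aV{f}_\infty|g(x)|=\aV{f}_\infty|(Dg)(x)|$; since the omitted term $\aV{g}_\infty|(Df)(x)|$ is nonnegative, the same bound holds here as well. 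Thus the desired pointwise estimate is valid on all of $E$.

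The second step is to apply Minkowski's inequality in $\ell^2(E,w)$ to the pointwise bound, yielding
$$\ow Q(fg)^{1/2}=\aV{D(fg)}_{\ell^2(E,w)}\le \aV{f}_\infty\aV{Dg}_{\ell^2(E,w)}+\aV{g}_\infty\aV{Df}_{\ell^2(E,w)}=\aV{f}_\infty\ow Q(g)^{1/2}+\aV{g}_\infty\ow Q(f)^{1/2},$$
and squaring gives the stated inequality. Finiteness of the right-hand side then shows $\ow Q(fg)<\infty$, while $\aV{fg}_\infty\le\aV{f}_\infty\aV{g}_\infty$ gives boundedness, so $fg\in\mathcal{B}$. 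The only mild subtlety is reconciling the killing term with the edge term inside one inequality; I expect this to be the single point needing care, and it is dispatched by simply discarding the nonnegative $\aV{g}_\infty|(Df)(x)|$ on the diagonal. Anyone preferring to avoid introducing $E$ could instead split $\ow Q$ into its edge part and its killing part, apply Minkowski to each, and recombine, but the unified formulation keeps the computation shortest.
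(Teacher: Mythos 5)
Your proof is correct, and it reaches the paper's inequality by a genuinely different (though closely related) route. You recast $\ow Q^{1/2}$ as the norm of a discrete gradient $D$ in a single weighted space $\ell^2(E,w)$, with the killing term encoded as a diagonal copy on which $Dh(x)=h(x)$; the estimate then becomes a pointwise Leibniz bound followed by Minkowski's inequality. The paper instead stays at the level of squared differences: it expands $|f(x)g(x)-f(y)g(y)|^2$ using the very same decomposition $f(x)(g(x)-g(y))+g(y)(f(x)-f(y))$, bounds the factors by sup norms, sums against $\tfrac12 b(x,y)$, absorbs the killing contribution via $\sum_x c(x)|f(x)g(x)|^2\le \aV{f}_\infty^2\sum_x c(x)|g(x)|^2$ (the same asymmetric treatment of the $c$-term as your diagonal step), and finally applies Cauchy--Schwarz to the cross term $\tfrac12\sum_{x,y}b(x,y)|f(x)-f(y)|\,|g(x)-g(y)|$ to reassemble the perfect square. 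At bottom the two arguments are equivalent --- Minkowski in $\ell^2$ is the triangle inequality, which rests on Cauchy--Schwarz --- but yours buys a cleaner package: edge and killing parts are handled in one stroke, no squares need to be expanded and recombined, and the single point requiring care (the asymmetry in $f$ and $g$ on the killing term) is isolated explicitly; the paper's computation buys self-containedness, needing no auxiliary index set or gradient formalism.
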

\begin{proof} It suffices to show the inequality (as the right hand side of it is obviously finite for $f,g\in \mathcal{B}$). A direct computation shows {\small
\begin{align*}
\lefteqn{ |  f (x) g (x) - f(y) g(y)|^2 = | f(x) (g(x) - g(y)) + g(y) (f(x) - f(y))|^2}\\
&\leq  |f(x)|^2 |g(x) - g (y)|^2 + 2 |f(x) g(y)| |f(x) - f(y)| |g(x) - g(y)| \\
 & \qquad + |g(y)|^2 |f(x) - f(y)|^2\\
&\leq   \|f\|_\infty^2 |g(x) - g (y)|^2 + 2 \|f\|_\infty \|g\|_\infty  |f(x) - f(y)| |g(x) - g(y)| \\
& \qquad + \|g\|_\infty^2 |f(x) - f(y)|^2
\end{align*}
}
Multiplication by $\frac{1}{2}b(x,y)$, summing over $x, y$, and addition of
$$\sum_{x\in X}  c(x) |f(x)|^2 |g(x)|^2  \leq \|f\|_\infty^2 \sum_x c(x) |g(x)|^2$$
 now easily  gives that $\ow Q (f g)$ is bounded above by
$$
  \|f\|_\infty^2 \ow Q (g) +  2 \|f\|_\infty \|g\|_\infty  \frac{1}{2} \sum_{x,y\in X} b(x,y) |f(x) - f(y)| |g(x) - g(y)| + \|g\|_\infty^2 \ow Q (f).$$
Now, an application of the Cauchy-Schwarz inequality gives the desired inequality.
\end{proof}

\begin{theorem} When equipped with $\aV{\cdot}_{\ow Q, \infty}$, the space  $\mathcal{B}$ becomes  a Banach algebra.
\end{theorem}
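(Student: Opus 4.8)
The plan is to show that $(\mathcal{B}, \aV{\cdot}_{\ow Q,\infty})$ is a submultiplicative normed algebra and then that it is complete. That it is a normed vector space is immediate: $\mathcal{B} = \ow D \cap \ell^\infty(X)$ is a subspace, and since $\ow Q^{1/2}$ is a seminorm while $\aV{\cdot}_\infty$ is a genuine norm, their sum $\aV{\cdot}_{\ow Q,\infty}$ is a norm (it separates points because its $\aV{\cdot}_\infty$ part does). The two substantive tasks are therefore submultiplicativity and completeness; I expect completeness to be the only real obstacle.

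For submultiplicativity I would start from Lemma~\ref{estimate-product}. Taking square roots there gives
\[
\ow Q(fg)^{1/2} \leq \aV{f}_\infty \ow Q(g)^{1/2} + \aV{g}_\infty \ow Q(f)^{1/2},
\]
and combining this with the elementary bound $\aV{fg}_\infty \leq \aV{f}_\infty \aV{g}_\infty$ yields
\[
\aV{fg}_{\ow Q,\infty} \leq \aV{f}_\infty \ow Q(g)^{1/2} + \aV{g}_\infty \ow Q(f)^{1/2} + \aV{f}_\infty \aV{g}_\infty.
\]
On the other hand, expanding $\aV{f}_{\ow Q,\infty}\aV{g}_{\ow Q,\infty} = (\ow Q(f)^{1/2}+\aV{f}_\infty)(\ow Q(g)^{1/2}+\aV{g}_\infty)$ produces exactly these three terms together with the extra nonnegative summand $\ow Q(f)^{1/2}\ow Q(g)^{1/2}$. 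Comparing term by term then gives $\aV{fg}_{\ow Q,\infty} \leq \aV{f}_{\ow Q,\infty}\aV{g}_{\ow Q,\infty}$, which in particular re-confirms that $\mathcal{B}$ is closed under products.

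The main work is completeness, and here I would mimic the Fatou argument used in the proof of Proposition~\ref{p:Yamasaki}. Let $(f_n)$ be Cauchy in $\aV{\cdot}_{\ow Q,\infty}$. Since $\aV{f_n-f_m}_\infty \leq \aV{f_n-f_m}_{\ow Q,\infty}$, the sequence is uniformly Cauchy and converges uniformly to some $f \in \ell^\infty(X)$; in particular $f_n \to f$ pointwise. Applying Fatou's lemma to the nonnegative summands $\tfrac12 b(x,y)|f_n(x)-f_n(y)|^2$ and $c(x)|f_n(x)|^2$ defining $\ow Q$, and using that the bounded Cauchy sequence satisfies $\liminf_n \ow Q(f_n) < \infty$, I would conclude $\ow Q(f) \leq \liminf_n \ow Q(f_n) < \infty$, so $f \in \ow D$ and hence $f \in \mathcal{B}$. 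To get convergence in the seminorm, fix $\varepsilon>0$ and $N$ with $\ow Q(f_n-f_m)<\varepsilon$ for $n,m\geq N$; holding $n\geq N$ fixed and letting $m\to\infty$, Fatou again gives $\ow Q(f_n-f) \leq \liminf_m \ow Q(f_n-f_m) \leq \varepsilon$, so $\ow Q(f_n-f)\to 0$. Together with $\aV{f_n-f}_\infty \to 0$ this yields $\aV{f_n-f}_{\ow Q,\infty}\to 0$, establishing completeness. The only delicate point is the interchange of limit and infinite sum in the $\ow Q$-part, which is precisely what Fatou's lemma supplies, exactly as in Proposition~\ref{p:Yamasaki}.
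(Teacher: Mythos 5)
Your proof is correct and takes essentially the same route as the paper: submultiplicativity is deduced from Lemma~\ref{estimate-product} by the term-by-term comparison you describe, and completeness is proved via uniform convergence of a Cauchy sequence followed by Fatou's lemma applied to the energy form. The only cosmetic difference is that you invoke Fatou twice (once to get $f\in\ow D$, once for $\ow Q(f_n-f)\to 0$), whereas the paper applies it once directly to $\ow Q(f-f_n)\leq \limsup_m \ow Q(f_m-f_n)$, which yields both conclusions simultaneously.
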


\textbf{Remark.} In the case $c\equiv0$, the algebra $\mathcal{B}$ is called the  \emph{Dirichlet algebra} \cite[Theorem~(6.2)]{Soa}.

\begin{proof} By the first part of the  previous lemma, the space $\mathcal{B}$ is an algebra. Moreover, the inequality in the  previous lemma easily gives that   $\aV{\cdot}_{\ow Q, \infty}$ is submultiplicative, i.e., satisfies
$$\|f g\|_{\ow Q,\infty} \leq \|f\|_{\ow Q, \infty} \|g\|_{\ow Q, \infty}.$$
It remains to show that $\mathcal{B}$ is complete with respect to $\aV{\cdot}_{\ow Q, \infty}$.
Let $(f_n)$ be a Cauchy sequence with respect to $\aV{\cdot}_{\ow Q, \infty}$. Then, it is a Cauchy sequence with respect to $\aV{\cdot}_\infty$.  Hence, it converges uniformly to a function $f$ on $X$. In particular, it converges pointwise to $f$. Now, an application of Fatou's lemma gives, for any natural number $n$, the estimate
$$\ow  Q (f - f_n) \leq \limsup_{m\longrightarrow \infty} \ow Q ( f_m - f_n) \leq \limsup_{m\longrightarrow \infty} \aV{ f_m - f_n}_{\ow Q, \infty}$$
and the desired statement follows.
\end{proof}

The previous theorem shows that we can associate to any graph $(b,c)$ over $X$ a commutative Banach algebra $(\mathcal{B}, \aV{\cdot}_{\ow Q, \infty})$.
We now apply the Gelfand theory of commutative Banach algebras. Recall that $C_0 (Y)$ denotes the space of continuous functions vanishing at infinity whenever $Y$ is a locally compact Hausdorff space.

\begin{theorem} \label{Characterization-Y}  Let $(b,c)$ be a graph over $X$ and $(\mathcal{B}, \aV{\cdot}_{\ow Q, \infty})$ the associated Banach algebra. Then, there exists a unique (up to homeomorphism) locally compact Hausdorff space $Y$ containing $X$  such that the following assertions  hold:

\begin{enumerate}

\item The set $X$ is  a dense  open subset of  $Y$.

\item Any function in $\mathcal{B}$ can be extended to an element of $C_0 (Y)$.
\item The algebra $\mathcal{B}$ separates points of $Y$.

\item The algebra $\mathcal{B}$ does not vanish on any point of $Y$.

\end{enumerate}

If the constant function $1$ belongs to $\mathcal{B}$, then  $Y$ is compact.

\end{theorem}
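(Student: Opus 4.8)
The plan is to reduce the statement to the commutative Gelfand--Naimark theorem by passing from the Banach algebra $(\mathcal{B}, \aV{\cdot}_{\ow Q, \infty})$ to its closure in the supremum norm. First I would set $\mathcal{A}$ to be the closure of $\mathcal{B}$ with respect to $\aV{\cdot}_\infty$. By Lemma~\ref{estimate-product} the space $\mathcal{B}$ is closed under products, and it is obviously closed under complex conjugation and satisfies $\aV{|f|^2}_\infty = \aV{f}_\infty^2$; hence, exactly as in Lemma~\ref{l:algebra}, $\mathcal{A}$ is a commutative $C^\ast$-algebra, and $\mathcal{A} \subseteq \ell^\infty(X)$ since uniform limits of bounded functions are bounded. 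I would then let $Y$ be the space of characters of $\mathcal{A}$ equipped with the weak-$\ast$ topology. By the commutative Gelfand--Naimark theorem, $Y$ is a locally compact Hausdorff space, compact precisely when $\mathcal{A}$ is unital, and the Gelfand map $g \mapsto \oh g$ is an isometric isomorphism of $\mathcal{A}$ onto $C_0(Y)$. Since $1_x \in C_c(X) \subseteq \mathcal{B}$, each point evaluation $\de_x \colon \mathcal{A} \to \C$ is a nonzero character, and it is injective in $x$ because $\oh{1_x}$ distinguishes $x$ from every other vertex; thus $x \mapsto \de_x$ embeds $X$ into $Y$, and I identify $X$ with this image.

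Next I would verify the four listed properties. Property (2) is immediate: for $f \in \mathcal{B}$ the transform $\oh f \in C_0(Y)$ restricts to $f$ on $X$ because $\oh f(\de_x) = f(x)$. Property (4) holds because every point of $Y$ is by construction a nonzero character. For property (3), density of $\mathcal{B}$ in $\mathcal{A}$ together with the isometry of the Gelfand map shows that $\widehat{\mathcal{B}}$ is dense in $C_0(Y)$, hence separates points. The only part of (1) requiring work is the density and openness of $X$, and here I would copy the argument of Theorem~\ref{t:open} almost verbatim, adapted to the locally compact setting: openness of each $\{\de_x\}$ follows from $\{\de_x\} = \oh{1_x}^{-1}(\{z : |z| > 1/2\})$, while density follows from Urysohn's lemma for locally compact Hausdorff spaces combined with the approximation of a suitable bump function by some $\oh g$ with $g \in \mathcal{B}$, which forces the existence of $x \in X$ with $\de_x$ in any prescribed neighborhood. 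The compactness assertion is then the standard fact that the character space of a unital commutative $C^\ast$-algebra is compact, applied once $1 \in \mathcal{B}$ gives $1 \in \mathcal{A}$.

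The remaining, and in my view principal, obstacle is the uniqueness statement. Suppose $Y'$ is another locally compact Hausdorff space containing $X$ and satisfying (1)--(4). Properties (3) and (4) say that the algebra of extensions of $\mathcal{B}$ to $Y'$ separates points of $Y'$ and vanishes nowhere, so by the locally compact Stone--Weierstrass theorem this algebra is dense in $C_0(Y')$. The key extra ingredient is property (1): since $X$ is dense in $Y'$ and the extensions are continuous, the supremum of $\oh f$ over $Y'$ equals $\aV{f}_\infty$ computed over $X$, so $f \mapsto \oh f$ is an $\aV{\cdot}_\infty$-isometry of $\mathcal{B}$ into $C_0(Y')$ and therefore extends to an isometric isomorphism $\mathcal{A} \to C_0(Y')$. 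Gelfand duality then identifies $Y'$ homeomorphically with the character space of $\mathcal{A}$, that is, with $Y$. The delicate point to check carefully is that this chain of identifications is compatible with the embeddings of $X$, so that the resulting homeomorphism restricts to the identity on $X$; this holds because under every identification a vertex $x \in X$ corresponds to the character $\de_x$.
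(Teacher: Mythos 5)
Your proposal is correct, and every step (the four properties, uniqueness, and the compactness claim) is sound; but it takes a genuinely different route from the paper's, which is worth spelling out. The paper applies Gelfand theory for commutative \emph{Banach} algebras directly to $(\mathcal{B},\aV{\cdot}_{\ow Q,\infty})$: its $Y$ is the set of nonzero multiplicative linear functionals on $\mathcal{B}$, obtained by deleting the zero functional from the compact space $Y_0$ of all multiplicative functionals; elements of $\mathcal{B}$ are then continuous on $Y_0$ and vanish at the deleted point, openness of points of $X$ comes from the idempotents $1_x\in\mathcal{B}$, density of $X$ and uniqueness are dispatched via Stone--Weierstrass, and compactness (when $1\in\mathcal{B}$) follows because the nonzero functionals then form a closed subset of $Y_0$. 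You instead complete $\mathcal{B}$ in the supremum norm to a commutative $C^\ast$-algebra $\mathcal{A}$ and take $Y$ to be its character space, which amounts to transplanting the construction of $K$ from Section~\ref{secBoCo} (Lemma~\ref{l:algebra} and Theorem~\ref{t:open}) into the appendix setting. What your route buys is that the Gelfand--Naimark isometry $\aV{\oh g}_\infty=\aV{g}_\infty$ comes for free, and this identity is exactly what powers both your density-of-$X$ argument and your uniqueness argument, where property (1) forces extension to be a sup-norm isometry $\mathcal{B}\to C_0(Y')$, hence $\mathcal{A}\cong C_0(Y')$ and $Y'\cong Y$ by a homeomorphism fixing $X$ pointwise. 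On the paper's route the corresponding fact is that the spectral radius in $(\mathcal{B},\aV{\cdot}_{\ow Q,\infty})$ equals $\aV{\cdot}_\infty$; this does follow from Lemma~\ref{estimate-product}, since induction gives $\ow Q(g^n)^{1/2}\leq n\aV{g}_\infty^{n-1}\ow Q(g)^{1/2}$ and hence $\aV{g^n}_{\ow Q,\infty}^{1/n}\to\aV{g}_\infty$, but the paper leaves it implicit both in the claim that Stone--Weierstrass density of $\mathcal{B}$ in $C_0(Y)$ yields density of $X$ in $Y$ and in its one-line uniqueness argument (``$C_0(Y)$ determines $Y$''). In this respect your write-up, particularly the uniqueness part with its explicit verification that every identification sends a vertex $x$ to the character $\de_x$, is more complete than the paper's. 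What the paper's route buys in exchange is economy and fidelity to Soardi's classical Royden-algebra construction, which is the appendix's stated purpose: no completion is taken, and $Y$ sits inside the already-compact space $Y_0$, so the final compactness statement is immediate.
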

\begin{proof}
Note that, by the Stone-Weierstrass theorem, (2), (3) and (4) just say that the algebra $\mathcal{B}$ is dense in $C_0 (Y)$ with respect to $\aV{\cdot}_\infty$. This easily shows \textit{uniqueness}  of $Y$ up to homeomorphism (as $C_0 (Y)$  determines $Y$).

\smallskip

To show \textit{existence} we consider the set $Y_0$ of all multiplicative linear  functionals on $\mathcal{B}$.  This set is a compact space in the weak-$\ast$-topology and contains the zero functional $\gamma_\infty$. We will show that the locally  compact space  $Y:= Y_0 \setminus \{\gamma_\infty\}$ has the desired properties.

Obviously, $Y$  contains the set $X$ as any $x\in X$ provides a non-vanishing  multiplicative functional via point evaluation.
By the definition of the weak-$\ast$-topology,  the
 functions of $\mathcal{B}$ can be considered as   continuous functions on $Y_0$  which obviously vanish at $\gamma_\infty$. They therefore belong to $C_0 (Y)$.

 Moreover, as the characteristic function of any $x\in X$ is obviously an element of $\mathcal{B}$, it is continuous on $Y$ and hence any point $x\in X$ is an open set in $Y$.

By construction, the algebra $\mathcal{B}$ separates points of $Y$ and does not vanish on any point of $Y$. By the Stone-Weierstrass theorem, it is then dense in $C_0 (Y)$ and this implies that $X$ is dense in $Y$.

This shows that $Y$ indeed has all of the desired properties.

\smallskip

It remains to show the last statement. If the constant function $1$ belongs to $\mathcal{B}$, then the set $Y$ of all non-vanishing multiplicative functionals on $\mathcal{B}$ can easily be seen to be a  closed  subset of $Y_0$.  As a closed subset of a compact space, $Y$ is then compact as well.
\end{proof}

\textbf{Remarks.} (a) Note that the constant function $1$ belongs to $\mathcal{B}$ if and only if $c$ is summable.

(b) If the constant function $1$ belongs to $\mathcal{B}$, then property (4) of the above theorem is trivially satisfied and, in (3), the space $C_0 (Y)$ is just the space $C(Y)$.

(c) From (a) and (b) we see that  if $c\equiv0$, then the constant function 1 belongs to $\mathcal{B}$ and the compact space $Y$ is characterized by (1), (2) and (3). In this way, we recover Theorem~6.4 of \cite{Soa}.

(d) It is not hard to see that  $\mathcal{B}$ does not change if $(b,0)$ is replaced by $(b,c)$ with summable $c$. Moreover, the corresponding norms can easily be seen to be equivalent. Thus, the space $Y$ does not change when one replaces  $(b,0)$ by $(b,c)$ with summable $c$.

\medskip

\begin{definition} The \emph{Royden compactification} of the graph  $(b,c)$  over $X$ is defined to be the space $Y$ from the previous theorem if $c$ is summable and to be the one-point compactification of $Y$, otherwise.
\end{definition}

\end{appendix}

\end{document}